\newcommand{\defstyle}[1]{\textbf{#1}}
\newcommand{\myprob}[1]{\mathbb P \left[ #1 \right]}
\newcommand{\omid}[1]{\mathbb E \left[ #1 \right]}
\newcommand{\omidCond}[2]{\mathbb E \left[ #1 \left| #2 \right. \right]}
\newcommand{\norm}[1]{\left| #1 \right|}
\newcommand{\nei}[2]{N_{#1} \left(#2\right)}
\newcommand{\identity}[1]{1_{#1}}
\newcommand{\bs}[1]{\boldsymbol{#1}}
\newcommand{\card}[1]{\# #1}
	\newcommand{\todel}[1]{{\color{gray} #1}}
	\newcommand{\del}[1]{{\color{red} #1}v}
	\newcommand{\invisible}[1]{}
	\newcommand{\todel}[1]{}
	\newcommand{\del}[1]{}
	\newcommand{\invisible}[1]{}
\newcommand{\egw}{\texttt{EGW}}
\newcommand{\dimMu}[1]{\overline{\mathrm{{udim}}}_M(#1)}
\newcommand{\dimMl}[1]{\underline{\mathrm{{udim}}}_M(#1)}
\newcommand{\dimM}[1]{{\mathrm{{udim}}}_M(#1)}
\newcommand{\dimH}[1]{\mathrm{{udim}}_H(#1)}
\newcommand{\contentH}[2]{\mathcal H^{#1}_{#2}}
\newcommand{\measH}[1]{\mathcal M^{#1}}
\newcommand{\floor}[1]{\left\lfloor #1 \right\rfloor}
\newcommand{\growthl}[1]{ \underline{\mathrm{growth}}\left(#1 \right)}
\newcommand{\growthu}[1]{ \overline{\mathrm{growth}}\left(#1\right)}
\newcommand{\growth}[1]{{ \mathrm{growth}}\left(#1\right)}
\newcommand{\decayl}[1]{ \underline{\mathrm{decay}}\left(#1\right)}
\newcommand{\decayu}[1]{ \overline{\mathrm{decay}}\left(#1\right)}
\newcommand{\decay}[1]{{ \textrm{decay}}\left(#1\right)}
\newcommand{\essinf}{\mathrm{ess\: inf \: }}
\newcommand{\dstar}{{\mathcal D}_*}
\newcommand{\intensity}[2]{\rho_{#1}(#2)}
\newcommand{\rooot}{origin}
\newcommand{\rooted}{pointed}
\newcommand{\densityU}{\overline{d}}
\newcommand{\densityL}{\underline{d}}
\newcommand{\xeq}[1]{{\normalfont (#1)}}
\numberwithin{equation}{section}
\theoremstyle{theorem}
\newtheorem{theorem}{Theorem}[section]
\newtheorem{lemma}[theorem]{Lemma}
\newtheorem{proposition}[theorem]{Proposition}
\newtheorem{corollary}[theorem]{Corollary}
\newtheorem{conjecture}[theorem]{Conjecture}
\newtheorem{problem}[theorem]{Problem}
\newtheorem{guess}[theorem]{Guess}
\theoremstyle{definition}
\newtheorem{definition}[theorem]{Definition}
\newtheorem{example}[theorem]{Example}
\theoremstyle{definition}
\newtheorem{remark}[theorem]{Remark}
\newtheorem{convention}[theorem]{Convention}
\theoremstyle{theorem}
\numberwithin{equation}{section}
\let\orgdescriptionlabel\descriptionlabel
\renewcommand*{\descriptionlabel}[1]{%
	\let\orglabel\label
	\let\label\@gobble
	\phantomsection
	\edef\@currentlabel{#1}%
	\let\label\orglabel
	\orgdescriptionlabel{#1}%
}
\begin{document}
	\title{Unimodular Billingsley and Frostman Lemmas}
	\author{Fran\c{c}ois Baccelli\footnote{The University of Texas at Austin, baccelli@math.utexas.edu}, Mir-Omid Haji-Mirsadeghi\footnote{Sharif University of Technology, mirsadeghi@sharif.ir}, and Ali Khezeli\footnote{Tarbiat Modares University, khezeli@modares.ac.ir}}

	\maketitle
	
%\begin{frontmatter}
%\title{Unimodular Billingsley and Frostman Lemmas}
%\runtitle{Unimodular Billingsley and Frostman Lemmas}
%\thankstext{T1}{Supported in part by a grant of the Simons Foundation (\#197982) to The University of Texas at Austin.}
%
%
%\begin{aug}
%        \author{\fnms{Fran\c cois } \snm{Baccelli}\thanksref{bm1}\ead[label=e1]{baccelli@math.utexas.edu}},
%        \author{\fnms{Mir-Omid} \snm{Haji-Mirsadeghi}\thanksref{bm2}\ead[label=e2]{mirsadeghi@sharif.ir}},
%        \and
%        \author{\fnms{Ali} \snm{Khezeli}\thanksref{bm3}
%                \ead[label=e3]{khezeli@modares.ac.ir}}
%\affiliation{The University of Texas at Austin\thanksmark{bm1}, Sharif University\thanksmark{bm2} and Tarbiat Modares University \thanksmark{bm3}}
%
%\end{aug}

\begin{abstract}
The notions of unimodular Minkowski and Hausdorff dimensions are defined in~\cite{I}
for unimodular random discrete metric spaces. The present paper is focused on the connections
between these notions and the polynomial growth rate of the underlying space. It is shown that 
bounding the dimension is closely related to finding suitable equivariant weight functions (i.e., measures)
on the underlying discrete space.
The main results are unimodular versions of the mass distribution principle, Billingsley's lemma and Frostman's lemma,
which allow one to derive upper bounds on the unimodular Hausdorff dimension from the growth rate of
suitable equivariant weight functions. %\ali{\sout{Also, a unimodular version of Frostman's lemma is provided, which shows that these upper bounds are sharp.}}
These results allow one to compute or bound both types of unimodular dimensions in a large set of
examples in the theory of point processes, unimodular random graphs, and self-similarity.
Further results of independent interest are also presented, like a version of the max-flow min-cut 
theorem for unimodular one-ended trees and a weak form of pointwise ergodic theorems for all unimodular discrete spaces.
%
%\ali{Keywords: later, later, later.}
\end{abstract}
%\end{frontmatter}

%\tableofcontents

\section{Introduction}

%\mar{\invisible{1. Add keywords.\\ 2. Add acknowledgment. }}
This paper is a companion to \cite{I}. %the current paper %and \cite{III})},
For short, the latter will referred to as Part I. 
The present paper uses the definitions and symbols of Part~I.\del{
A list of notation is provided in Table~\ref{table:symbols} at the end of the paper to ease the reading. }
For cross~referencing the definitions and results of Part~I, the prefix `I.' is used.
For example, Definition~I.3.16 refers to Definition~3.16 in Part~I.

Part I introduced the notion of unimodular random discrete metric space and two notions of
dimension for such spaces, namely the unimodular Min\-kow\-ski dimension (Section I.3.1)
and the unimodular Hausdorff dimension (Section I.3.3).

The present paper is centered on the connections between these dimensions and the \textit{growth rate}
of the space, which is the polynomial growth rate of $\card{N_r(\bs o)}$,
where $N_r(\bs o)$ represents the closed ball of radius $r$ centered at the \rooot{}
and $\card{N_r(\bs o)}$ is the number of points in this ball.
Section \ref{sec:volumeGrowth} is focused on the basic properties of these connections. It is first shown  
that the upper and lower polynomial growth rates of $\card{N_r(\bs o)}$ (i.e., limsup and liminf
of $ {\log(\card{N_r(\bs o)})}/{\log r}$ as $r\rightarrow \infty$) provide upper and lower bound
for the unimodular Hausdorff dimension, respectively. This is a discrete analogue of Billingsley's
lemma (see e.g., \cite{bookBiPe17}). 
A discrete analogue of the \textit{mass distribution principle} is {also} provided, which is
useful to derive upper bounds on the unimodular Hausdorff dimension. In the {Euclidean case (i.e., for point-stationary
point processes equipped with the Euclidean metric)}, it is shown that the unimodular Minkowski dimension
is bounded from above by the polynomial decay rate of $\omid{1/ \card{N_n(\bs o)}}$.
Weighted versions of these inequalities, where a weight is assigned to each point, are also presented. {As a corollary, a weak form of Birkhoff's pointwise ergodic theorem is established for all unimodular discrete spaces.}

{The bounds derived in Section~\ref{sec:volumeGrowth} are fundamental for calculating the unimodular dimensions. These bounds are used in Section~\ref{sec:examples} to complete the examples of Part~I. Some new examples are also presented in Section~\ref{sec:examples} for further illustration of the results.}% and to obtain the dimension of some new examples presented in that section.

\iffalse
\textcolor{gray}{Section \mar{{The second part of this paragraph is vague for the reader. I suggest the above paragraph for a replacement.\\ Francois: OK.	}} \ref{sec:examples} is devoted to examples. It continues the main example section of Part II
	and introduces further examples. It is shown there that the bounds established in 
	Section \ref{sec:volumeGrowth} are very useful for calculating the unimodular dimensions
	in the main instances of unimodular discrete spaces discussed in Part I, namely point processes,
	random graphs and self similar random discrete sets.}
\fi

Section \ref{sec:frostman} gives a unimodular analogue of Frostman's lemma. Roughly speaking, 
this result states that { there exists a weight function on the points such that the upper bound in the mass distribution principle is sharp.}  
This result {is} a powerful tool to study the unimodular Hausdorff dimension {and is the basis of many of the results of~\cite{III}; e.g., connections to scaling limits, discrete dimension and capacity dimension.} 
%\ali{\sout{, in particular, to assess the dimension of subspaces and product spaces.}} 
In the Euclidean case, another proof of the unimodular Frostman
lemma is provided using a unimodular version of the max-flow min-cut theorem, which is of independent interest.

{It should be noted that the results regarding upper bounds on the dimension (e.g., in the unimodular mass distribution principle, Billingsley lemma and Frostman lemma) are valid for arbitrary gauge functions as well (see Subsection~I.3.8.2). The lower bounds are also valid for gauge functions satisfying the doubling condition. However, these more general cases are skipped for simplicity of reading.}

\section{Connections to Growth Rate}
\label{sec:volumeGrowth}

Let %\mar{\invisible{Later: Where to say that MDP, Billingsley and Frostman hold for arbitrary gauge functions?}}
$D$ be a discrete space and $o\in D$. The \defstyle{upper and lower (polynomial) growth rates} of $D$ are 
\begin{eqnarray*}
	\growthu{\card{N_r(o)}} &=& \limsup_{r\rightarrow\infty} {\log \card{N_r(o)}}/{\log r},\\
	\growthl{\card{N_r(o)}} &=& \liminf_{r\rightarrow\infty} {\log \card{N_r(o)}}/{\log r}.
\end{eqnarray*}
$D$ has \defstyle{polynomial growth} if $\growthu{\card{N_r(o)}}<\infty$.
If the upper and lower growth rates are equal, the common value is called the \defstyle{growth rate} of $D$. 
Note that for $v\in D$, one has $N_r(o)\subseteq N_{r+c}(v)$ and $N_r(v)\subseteq N_{r+c}(o)$, where $c:=d(o,v)$.
This implies that $\growthu{\card{N_r(o)}}$ and $\growthl{\card{N_r(o)}}$ do not depend on the choice of the point $o$. 

In various situations in this paper, some \textit{weight} in $\mathbb R^{\geq 0}$ can be assigned to each point of $D$. 
In these cases, it is natural to redefine the growth rate by considering the weights; i.e., by replacing $\card{N_r(o)}$
with the sum of the weights of the points in $N_r(o)$. This will be formalized below using the notion of
\textit{equivariant~processes} of Subsection~I.2.5. 
Recall that an equivariant process should be defined for all discrete spaces $D$.
%In the following definition, weights should be defined for all discrete spaces $D$.
However, if a random \rooted{} discrete space $[\bs D, \bs o]$ is considered,
it is enough to define weights in almost every realization (see Subsection~I.2.5 for more on the matter).
Also, given $D$, the weights are allowed to be random.

\begin{definition}
\label{def:weight}
An \defstyle{equivariant weight function} $\bs w$ is an equivariant process (Definition I.2.5)
with values in $\mathbb R^{\geq 0}$. For all discrete spaces $D$ and $v\in D$, the (random) value $\bs w(v):=\bs w_{D}(v)$
is called the \defstyle{weight} of $v$. Also, for $S\subseteq D$, let  $\bs w(S):=\bs w_D(S):=\sum_{v \in S}\bs w(v).$
\end{definition}
The last equation shows that one could also call $\bs w$ an \textit{equivariant measure}.

Assume $[\bs D, \bs o]$ is a unimodular discrete space (Subsection~I.2.4).
Lemma~I.2.11 shows that $[\bs D, \bs o; \bs w_{\bs D}]$ is a random \rooted{}
marked discrete space and is unimodular. 
%\del{\mar{This is already mentioned above}Also, one can let $\bs w$ be undefined for a class of discrete spaces,
%as long as $\bs w_{\bs D}$ is almost surely defined.}

In the following, `\textit{$\bs w_{\bs D}(\cdot)$ is non-degenerate
(i.e., not identical to zero) with positive probability}' means that
%\begin{equation}
%\label{eq:nonzero}
$\myprob{\exists v\in \bs D: \bs w_{\bs D}(v)\neq 0}>0.$
%\end{equation}
If $[\bs D, \bs o]$ is unimodular, Lemma~I.2.14 implies
that the above condition is equivalent to $\omid{\bs w(\bs o)}>0.$
Also, `\textit{$\bs w_{\bs D}(\cdot)$ is non-degenerate a.s.}' means 
$\myprob{\exists v\in \bs D: \bs w_{\bs D}(v)\neq 0}=1$.

\subsection{Unimodular Mass Distribution Principle}
\invisible{Later: Intro. Mention the continuum analogue?  Say that in most cases, it is enough to consider non-weighted versions? (i.e., $\bs w:= 1$ and $\bs w(S)=\card{S}$).  }

\begin{theorem}[Mass Distribution Principle]
\label{thm:mdp-simple}
Let $[\bs D, \bs o]$ be a unimodular discrete space. 
\begin{enumerate}[(i)]
\item Let $\alpha,c,M>0$ and assume there exists an equivariant weight function $\bs w$ such that
\del{the weight of the ball {with} center $\bs o$ and radius $r$ satisfies}$\forall r\geq M: \bs w(N_r(\bs o))\leq c r^{\alpha}$, a.s.
%\begin{equation}
%\label{eq:thm:mdp:assump}
%\forall r\geq M: \bs w(N_r(\bs o))\leq c r^{\alpha},\quad a.s.
%\end{equation}
Then, $\contentH{\alpha}{M}(\bs D)$ defined in~\xeq{I.3.3} satisfies
$\contentH{\alpha}{M}(\bs D)\geq \frac 1 c \omid{\bs w(\bs o)}.$
\item If in addition\del{ to~\eqref{eq:thm:mdp:assump}}, $\bs w_{\bs D}(\cdot)$ is non-degenerate with positive probability, then
$\dimH{\bs D}\leq\alpha.$
\end{enumerate}
\end{theorem}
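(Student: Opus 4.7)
The plan is to imitate the classical proof of the mass distribution principle, with the unimodular mass transport principle (Lemma I.2.14) taking the place of countable additivity of a measure.

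\textbf{Part (i).} I would fix an arbitrary equivariant covering of $\bs D$ by closed balls, encoded by an equivariant set of centers $C\subseteq\bs D$ and an equivariant radius map $r:C\to [M,\infty)$ with $\bigcup_{v\in C} N_{r(v)}(v)=\bs D$ (and extend $r$ to all of $\bs D$ by zero outside $C$). For every $v\in C$, the weight hypothesis applied at $v$ as an origin gives $\bs w(N_{r(v)}(v))\leq c\,r(v)^{\alpha}$. To turn this per-ball inequality into a per-vertex one, I would introduce the transport
\[
t(x,v)\;:=\;\frac{\bs w(x)}{k(x)}\,\indic{v\in C,\ x\in N_{r(v)}(v)},\qquad k(x):=\card{\{u\in C:x\in N_{r(u)}(u)\}}\geq 1.
\]
Then $\sum_{v} t(x,v)=\bs w(x)$ for every $x$, while $\sum_{x} t(x,v)\leq \indic{v\in C}\,\bs w(N_{r(v)}(v))\leq c\,\indic{v\in C}\,r(v)^{\alpha}$. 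The mass transport principle yields
\[
\omid{\bs w(\bs o)}\;=\;\omid{\sum_{v} t(\bs o,v)}\;=\;\omid{\sum_{x} t(x,\bs o)}\;\leq\;c\,\omid{\indic{\bs o\in C}\,r(\bs o)^{\alpha}}.
\]
Taking the infimum over all admissible equivariant covers and matching with the definition of $\contentH{\alpha}{M}(\bs D)$ at (I.3.3) gives $\contentH{\alpha}{M}(\bs D)\geq \tfrac{1}{c}\omid{\bs w(\bs o)}$.

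\textbf{Part (ii).} Under the additional non-degeneracy assumption, Lemma I.2.14 (as recalled right before the theorem) gives $\omid{\bs w(\bs o)}>0$; combining with part (i) yields $\contentH{\alpha}{M}(\bs D)>0$. Moreover, since the weight bound $\bs w(N_r(\bs o))\leq cr^{\alpha}$ holds for every $r\geq M$, the same reasoning applies at every scale $M'\geq M$, so $\contentH{\alpha}{M'}(\bs D)>0$ uniformly in $M'\geq M$. Letting $M'\to\infty$ and using the monotonicity of the content in its scale parameter, the unimodular Hausdorff measure $\measH{\alpha}(\bs D)$ is positive, and hence $\dimH{\bs D}\leq \alpha$ by the definition of unimodular Hausdorff dimension.

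\textbf{Expected obstacle.} The delicate step is the construction of the transport in part (i): I need every point to contribute exactly its own weight while each covering ball receives no more than its ``capacity'' $c\,r(v)^{\alpha}$. Dividing the weight $\bs w(x)$ by the covering multiplicity $k(x)$ is what enforces both constraints simultaneously; all ingredients $C$, $r$ and $k$ are built from an equivariant cover, so the transport is automatically equivariant and the mass transport principle applies. A secondary concern is that the right-hand side $\omid{\indic{\bs o\in C}r(\bs o)^{\alpha}}$ must match the precise form of $\contentH{\alpha}{M}(\bs D)$ in (I.3.3); if that definition is stated in terms of equivariant partitions the argument simplifies (take $k(x)\equiv 1$), and if it involves an extra per-ball normalisation such as $1/\card{N_{r(v)}(v)}$, the same scheme with the weight $\bs w(x)/\card{N_{r(v_x)}(v_x)}$ produces the corresponding bound.
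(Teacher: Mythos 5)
Your proof is correct and takes essentially the same route as the paper: a mass transport argument turns the a.s.\ bound on ball weights into a lower bound on $\omid{\bs R(\bs o)^{\alpha}}$ for an arbitrary equivariant covering $\bs R$, which is then infimized. Your transport (splitting $\bs w(x)$ equally among the $k(x)$ covering balls, giving equality on the sending side and the weight hypothesis on the receiving side) is a cosmetic variant of the paper's (which sends $\bs w(v)$ from each covering center to each covered point $v$ and only uses that the covering multiplicity is $\geq 1$), and part~(ii) is likewise the same deduction; just note (a)~the paper makes the pair $(\bs w,\bs R)$ jointly equivariant via an explicit \emph{independent coupling} before invoking the mass transport principle, a step you elide, and (b)~in this paper's conventions $\contentH{\alpha}{M'}(\bs D)>0$ for all $M'$ corresponds to $\measH{\alpha}(\bs D)$ being \emph{finite} rather than positive, though $\dimH{\bs D}\leq\alpha$ still follows directly from $\contentH{\alpha}{\infty}(\bs D)>0$.
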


\begin{proof}
Let $\bs R$ be an arbitrary {equivariant} covering {such that $\bs R(\cdot)\in \{0\}\cup [M,\infty)$ a.s.}
By the assumption {on $\bs w$}, $\bs R(\bs o)^{\alpha}\geq \frac 1 c {\bs w(N_{\bs R}(\bs o))}$ a.s. Therefore, 	
\begin{equation}
\label{eq:thm:mdp-simple}
\omid{\bs R(\bs o)^{\alpha}}\geq \frac 1{c} \omid{\bs w(N_{\bs R}(\bs o))}.
\end{equation}
Consider the \textit{independent coupling} of $\bs w$ and $\bs R$; i.e., for each deterministic discrete space $G$, choose $\bs w_G$ and $\bs R_G$ independently (see Definition~I.2.8). Then, it can be seen that the pair $(\bs w, \bs R)$ is an equivariant process. So by Lemma~I.2.11, $[\bs G, \bs o; (\bs w, \bs R)]$ is unimodular. Now, the mass transport principle~(I.2.2) can be used for  $[\bs G, \bs o; (\bs w, \bs R)]$.
By letting $g(u,v):= \bs w(v)\identity{\{v\in N_{\bs R}(u)\}}$,
one gets $g^+(\bs o) = \bs w(N_{\bs R}(\bs o))$. Also, 
$g^-(\bs o) = \bs w(\bs o) \sum_{u\in\bs D} \identity{\{\bs o\in N_{\bs R}(u)\}} \geq \bs w(\bs o)$ a.s.,
where the last inequality follows from the fact that $\bs R$ is a covering.
Therefore, the {mass transport principle} implies that
$\omid{\bs w(N_{\bs R}(\bs o))} \geq \omid{\bs w(\bs o)}$
(recall that by convention, $N_{\bs R}(\bs o)$ is the empty set when $\bs R(\bs o)=0$).
So by~\eqref{eq:thm:mdp-simple}, one gets $\omid{\bs R(\bs o)^{\alpha}} \geq \frac 1 c \omid{\bs w(\bs o)}$.
Since this holds for any $\bs R$, one gets that 
$\contentH{\alpha}{M}(\bs D)\geq \frac 1{c} \omid{\bs w(\bs o)}$ and the first claim is proved. 
		
If, with positive probability, $\bs w_{\bs D}(\cdot)$ is non-degenerate, then Lemma~I.2.14 
implies that $\bs w(\bs o)>0$ with positive probability. So $\omid{\bs w(\bs o)}>0$. Therefore,
$\contentH{\alpha}{1}(\bs D)>0$ and the second claim is proved.
\end{proof}

\del{
\begin{example}
	%\mar{I made this in-text in the next remark}
Theorem~\ref{thm:mdp-simple}, applied to the counting measure $\bs w\equiv 1$, implies that
$\dimH{\delta \mathbb Z^k}\leq k$ and $\measH{k}(\delta \mathbb Z^k) \geq (2/\delta)^k$.
As already proved in Propositions~I.\ref{I-prop:lattice-Hausdorff} and~I.3.29, equality holds in both.
\end{example}
}

\iffalse
\begin{remark}
The assumption of Theorem~\ref{thm:mdp-simple} implies a uniform polynomial bound
on the weight of all balls \del{by {the fact that what holds a.s. at the root, holds a.s. at all points}
}(see Lemma~I.2.14). 
{In principle, by choosing $\bs w$ suitably, this lemma is enough for bounding the Hausdorff dimension from above (see the unimodular Frostman lemma in Section~\ref{sec:frostman}).
However,} in practice, this assumption
\del{is not as applicable as its continuum counterpart (Lemma~1.2.8 in \cite{bookBiPe17}), except for some simple examples.}is only applicable to simple examples.
{For instance, it implies that $\dimH{\delta \mathbb Z^k}\leq k$ and $\measH{k}(\delta \mathbb Z^k) \geq (2/\delta)^k$ (equality holds in both as already proved in Proposition~I.3.29).}
The unimodular Billingsley lemma in the next subsection will actually be more useful.
See also Lemma~\ref{lem:MDPstronger2} below.
\end{remark}
\fi

\subsection{Unimodular Billingsley Lemma}
\label{sec:bounds}

The main result of this subsection is Theorem~\ref{thm:billingsley}. %The upper bound in this result is handier than the unimodular mass transport principle (Theorem~\ref{thm:mdp-simple}).
It is based on Lemmas~\ref{lem:MDPstronger2} and~\ref{lem:lowerboundiid} below. Lemma~\ref{lem:MDPstronger2}
is a stronger version of the mass distribution principle (Theorem~\ref{thm:mdp-simple}).

\begin{lemma}[{An Upper Bound}]
\label{lem:MDPstronger2}
Let $[\bs D, \bs o]$ be a unimodular discrete space and $\alpha\geq 0$.
\begin{enumerate}[(i)]
\item If there exist $c\geq 0$ and $\bs w$ is an equivariant weight function such that 
$
{\limsup_{r\rightarrow\infty}{\bs w(N_r(\bs o))}/{r^{\alpha}} \leq c, \quad \mathrm{a.s.},}
$
then
$
\contentH{\alpha}{\infty}(\bs D) \geq {\frac 1{2^{\alpha}c} \omid{\bs w(\bs o)}}.
$
\item \label{lem:MDPstronger2:2}
In addition, if $\bs w_{\bs D}(\cdot)$ is non-degenerate with positive probability,
then $\dimH{\bs D}\leq\alpha$.
\end{enumerate}
\end{lemma}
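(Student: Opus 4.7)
My strategy would be to reduce the limsup assumption to the uniform polynomial bound required by Theorem~\ref{thm:mdp-simple} via a truncation of the weight function. The factor $2^{\alpha}$ should emerge from a ball-doubling argument that transfers the limsup bound from vertices where it already ``kicks in'' to arbitrary vertices.

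First I would fix $\epsilon>0$ and, using that the limsup assumption is equivariant and hence (by Lemma~I.2.14) holds simultaneously at every $v\in\bs D$ almost surely, introduce the equivariant quantity
\[
M_{\epsilon}(v):=\min\left\{n\geq 1 : \bs w(N_r(v))\leq (c+\epsilon)r^{\alpha}\ \text{for all } r\geq n\right\},
\]
which is a.s.\ finite at every vertex. I would then truncate the weight by setting $\bs w_K(v):=\bs w(v)\identity{M_{\epsilon}(v)\leq K}$ for $K\in\mathbb{N}$; this is still a nonnegative equivariant weight function and $\bs w_K(\bs o)\uparrow \bs w(\bs o)$ as $K\to\infty$.

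The heart of the argument, and the origin of the constant $2^{\alpha}$, is the uniform estimate
\[
\bs w_K(N_r(v))\leq 2^{\alpha}(c+\epsilon)r^{\alpha},\qquad \forall v\in\bs D,\ \forall r\geq K,\ \text{a.s.}
\]
If the left hand side is zero there is nothing to prove; otherwise I pick any $u^{*}\in N_r(v)$ with $M_{\epsilon}(u^{*})\leq K$. Since $d(u^{*},v)\leq r$ one has $N_r(v)\subseteq N_{2r}(u^{*})$, and because $2r\geq K\geq M_{\epsilon}(u^{*})$ the definition of $M_{\epsilon}$ at $u^{*}$ gives $\bs w_K(N_r(v))\leq \bs w(N_{2r}(u^{*}))\leq (c+\epsilon)(2r)^{\alpha}$. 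The key obstacle here is that the limsup bound is available only at vertices in $\{M_{\epsilon}\leq K\}$, and one cannot a priori assume $v$ itself lies in that set; the doubling trick with a witness $u^{*}$ circumvents this, at the price of the factor $2^{\alpha}$.

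With this uniform bound, Theorem~\ref{thm:mdp-simple}\,(i) applied to $\bs w_K$ with constant $2^{\alpha}(c+\epsilon)$ and threshold $M=K$ yields $\contentH{\alpha}{K}(\bs D)\geq \frac{1}{2^{\alpha}(c+\epsilon)}\,\omid{\bs w_K(\bs o)}$. Letting $K\to\infty$, monotone convergence gives $\omid{\bs w_K(\bs o)}\to \omid{\bs w(\bs o)}$, while $\contentH{\alpha}{K}(\bs D)\uparrow \contentH{\alpha}{\infty}(\bs D)$ from the definition of the latter as a monotone limit in $K$; sending $\epsilon\to 0$ then finishes part~(i). Part~(ii) is immediate: non-degeneracy with positive probability gives $\omid{\bs w(\bs o)}>0$ via Lemma~I.2.14, hence $\contentH{\alpha}{\infty}(\bs D)>0$, which by the definition of the unimodular Hausdorff dimension forces $\dimH{\bs D}\leq \alpha$.
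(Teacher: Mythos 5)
Your proof is correct and takes essentially the same approach as the paper: your set $\{v : M_{\epsilon}(v)\le K\}$ is exactly the paper's $A_m$, the ball-doubling around a witness $u^{*}\in N_r(v)\cap A_m$ (which produces the $2^{\alpha}$) is identical, and both arguments finish by sending $K\to\infty$ and the slack $\epsilon$ (resp.\ $c'-c$) to zero. The only difference is in packaging: you truncate the weight to $\bs w_K=\bs w\cdot\identity{\{M_{\epsilon}\le K\}}$ and then invoke Theorem~\ref{thm:mdp-simple} as a black box, whereas the paper keeps the full $\bs w$ and instead re-runs the mass-transport computation with the indicator $\identity{\{N_{\bs R}(\bs o)\cap A_m\neq\emptyset\}}$ inserted -- an equivalent bookkeeping, with your route being slightly more modular.
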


\begin{proof}
	Let $c'>c$ be arbitrary. The assumption implies that 
	$ \sup \{r\geq 0: \bs w(N_r(\bs o))>c'r^{\alpha} \}<\infty $ a.s.
	For $m\geq 1$, let $A_m:=\{v\in \bs D: \forall r\geq m: \bs w(N_r(v))\leq c' r^{\alpha} \},$
	which is an increasing sequence of equivariant subsets. So
	\begin{equation}
	\label{eq:limeqp2}
	\lim_{m\rightarrow\infty} \myprob{\bs o\in A_m} {= 1}.
	\end{equation}
	
	Let $\bs R$ be an equivariant covering such that $\bs R(\cdot)\in \{0\}\cup [m,\infty)$ a.s. One has
	\begin{equation}
	\label{eq:lem:MDPstronger2:1}
	\omid{\bs R(\bs o)^{\alpha}}\geq \omid{\bs R(\bs o)^{\alpha} \identity{\{N_{\bs R}(\bs o)\cap A_m \neq \emptyset \}}}.
	\end{equation}
	If $N_{\bs R}(\bs o)\cap A_m \neq \emptyset$, then $\bs R(\bs o)\neq 0$ and hence $\bs R(\bs o)\geq m$. 
        In the next step, assume that this is the case.
	Let $v$ be an arbitrary point in $N_{\bs R}(\bs o)\cap A_m$. By the definition of $A_m$,
        one gets that for all  $r\geq m$, $\bs w(N_r(v))\leq c' r^{\alpha}.$
	Since $N_{\bs R(\bs o)}(\bs o)\subseteq N_{2\bs R(\bs o)}(v)$, it follows that
        $\bs w(N_{\bs R}(\bs o))\leq \bs w(N_{2\bs R(\bs o)}(v))\leq 2^{\alpha}c'\bs R(\bs o)^{\alpha}$.
	Therefore, \eqref{eq:lem:MDPstronger2:1} gives
	\begin{equation}
	\label{eq:lem:MDPstronger2:2}
	\omid{\bs R(\bs o)^{\alpha}}\geq \frac 1{2^{\alpha}c'} \omid{\bs w(N_{\bs R}(\bs o))\identity{\{N_{\bs R}(\bs o)\cap A_m \neq \emptyset \}}}.
	\end{equation}
	
	By letting $g(u,v):= \bs w(v)\identity{\{v\in N_{\bs R}(u)\}} \identity{\{N_{\bs R}(u)\cap A_m \neq \emptyset\}}$,
	one gets that $g^+(\bs o) = \bs w(N_{\bs R}(\bs o))\identity{\{N_{\bs R}(\bs o)\cap A_m \neq \emptyset\}}$.
	Also, since there is a ball $N_{\bs R}(u)$ that covers $\bs o$ a.s., one has
        $g^-(\bs o) \geq \bs w(\bs o)\identity{\{\bs o \in A_m\}}$ a.s.
        Therefore, the mass transport principle~(I.2.2)
	and~\eqref{eq:lem:MDPstronger2:2} imply that
	$
	\omid{\bs R(\bs o)^{\alpha}}\geq \frac 1{2^{\alpha}c'} \omid{\bs w(\bs o)\identity{\{\bs o\in A_m\}}}.
	$
	This implies that 
	$\contentH{\alpha}{m}(\bs D)\geq \frac 1{2^{\alpha}c'} \omid{\bs w(\bs o)\identity{\{\bs o\in A_m\}}}$. 
        Using~\eqref{eq:limeqp2} and letting $m$ tend to infinity gives
        $\contentH{\alpha}{\infty}(\bs D) \geq \frac 1{2^{\alpha}c'} \omid{\bs w(\bs o)}$.
	Since $c'>c$ is arbitrary, the first claim is proved.
	Part~\eqref{lem:MDPstronger2:2} is also proved by arguments similar to those in Theorem~\ref{thm:mdp-simple}.
\end{proof}

\begin{lemma}[{Lower Bounds}]
	\label{lem:lowerboundiid}
	Let $[\bs D, \bs o]$ be a unimodular discrete space,  $\alpha\geq 0$ and $c>0$.
	Let $\bs w$ be an arbitrary equivariant weight function such that $\omid{\bs w(\bs o)}<\infty$.
	\begin{enumerate}[(i)]
		\item \label{part:lem:lowerbound:all}
		If $\exists r_0: \forall r\geq r_0: \bs w(N_r(\bs o))\geq c r^{\alpha}$ a.s.,
		then $\dimMl{\bs D}\geq \alpha$. 
		\item \label{part:lem:lowerbound:liminf}
		If {$\growthl{\bs w(N_r(\bs o))}\geq \alpha$}
		a.s., then $\dimH{\bs D}\geq \alpha$.
		\item \label{part:lem:lowerbound:E} 
		If {$\lim_{\delta \downarrow 0} \liminf_{r\rightarrow\infty} \myprob{\bs w(N_r(\bs o))\leq \delta r^{\alpha}}=0$,}
		then $\dimH{\bs D}\geq \alpha$. 
		\item \label{part:lem:lowerbound:exp}
		If $\decayl{\omid{\mathrm{exp}\left(-\frac{\bs w(N_n(\bs o))}{n^{\alpha}}\right)}}\geq \alpha$,
                then $\dimMl{\bs D}\geq \alpha$.
	\end{enumerate}
\end{lemma}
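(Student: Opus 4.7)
The plan is to prove all four parts using variants of the mass transport principle (MTP, I.2.2), in each case converting the given lower bound on $\bs w(N_r(\bs o))$ into a lower bound on the relevant dimensional observable. A recurrent identity, obtained from MTP with $g(u,v) = \bs w(v) \identity{v \in N_r(u)}$, is
\begin{equation*}
\omid{\bs w(N_r(\bs o))} = \omid{\bs w(\bs o)\,\card{N_r(\bs o)}},
\end{equation*}
bridging weighted ball volumes and plain ball sizes.

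I would prove (iv) first, as the other Minkowski statement (i) reduces to it. Starting from the elementary inequality $e^{-x} \geq e^{-1}\identity{x \leq 1}$, the hypothesis that $\omid{\exp(-\bs w(N_n(\bs o))/n^\alpha)}$ decays at rate $\geq \alpha$ implies that $\myprob{\bs w(N_n(\bs o)) \leq n^\alpha}$ also decays at rate $\geq \alpha$. Combining this with an MTP-based bound on the complementary event $\{\bs w(N_n) > n^\alpha\}$ (on which $1/\card{N_n(\bs o)} \leq \max_{v \in N_n(\bs o)} \bs w(v)/n^\alpha$, whose expectation is controlled by $\omid{\bs w(\bs o)} < \infty$) yields the polynomial decay of the observable from Section~I.3.1 whose rate bounds $\dimMl{\bs D}$ from below, hence $\dimMl{\bs D} \geq \alpha$.

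Part (i) follows from (iv). Under the a.s.\ bound $\bs w(N_r(\bs o)) \geq c r^\alpha$ for $r \geq r_0$ and any $\beta < \alpha$, one has
\begin{equation*}
\exp\bigl(-\bs w(N_n(\bs o))/n^\beta\bigr) \leq \exp(-c\,n^{\alpha - \beta}) \quad \text{a.s.\ for } n \geq r_0,
\end{equation*}
so $\decayl{\omid{\exp(-\bs w(N_n(\bs o))/n^\beta)}} = \infty \geq \beta$, and applying (iv) with $\beta$ in place of $\alpha$ gives $\dimMl{\bs D} \geq \beta$; letting $\beta \uparrow \alpha$ concludes.

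For the Hausdorff parts, (ii) reduces to (iii) since $\growthl{\bs w(N_r(\bs o))} \geq \alpha$ a.s.\ implies, for every $\epsilon > 0$, $\bs w(N_r) \geq r^{\alpha - \epsilon}$ eventually a.s., which yields the in-probability hypothesis of (iii) at level $\alpha - 2\epsilon$; applying (iii) and letting $\epsilon \downarrow 0$ gives (ii). The heart of the proof is (iii): fixing $\alpha' < \alpha$, I would construct, for each suitable large $r$, an equivariant covering $\bs R$ with $\bs R(v) \in \{0, r\}$ and $\omid{\bs R(\bs o)^{\alpha'}}$ arbitrarily small, showing $\contentH{\alpha'}{\infty}(\bs D) = 0$ and hence $\dimH{\bs D} \geq \alpha'$, whence $\dimH{\bs D} \geq \alpha$. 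Using independent coupling (Definition~I.2.8) to attach i.i.d.\ uniform marks to $\bs D$, declare $v$ a \emph{candidate} if its mark is at most $p_r := r^{-\alpha + 2\epsilon}$, and enlarge the set of definite centers to include any point not covered by a candidate within distance $r$. On the good event $\{\bs w(N_r(\bs o)) \geq \delta r^\alpha\}$ (which has probability close to $1$ by the hypothesis), a weighted union-bound computation after truncating $\bs w$ to $\bs w \wedge L$ gives $\myprob{\bs o \text{ uncovered}} \leq \omid{\exp(-p_r(\bs w \wedge L)(N_r(\bs o)))}$, which is super-polynomially small for suitable $L = L(r)$; hence $\myprob{\bs o \text{ is a center}} \leq p_r + o(p_r)$ and $\omid{\bs R(\bs o)^{\alpha'}} \lesssim r^{\alpha' - \alpha + 2\epsilon} \to 0$. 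The main technical obstacle is the weight-to-count passage in this construction when $\bs w$ is unbounded, handled via the truncation $\bs w \wedge L$ with $L = L(r)$ growing slowly and MTP-based tail estimates exploiting $\omid{\bs w(\bs o)} < \infty$.
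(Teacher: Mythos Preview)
Your reductions (i)$\Rightarrow$(iv) and (ii)$\Rightarrow$(iii) are fine, but the two base cases have real problems.

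\textbf{Part (iv).} There is no ``observable from Section~I.3.1'' whose decay rate directly lower-bounds $\dimMl{\bs D}$ in general unimodular spaces. To show $\dimMl{\bs D}\geq\alpha$ you must \emph{construct} equivariant $n$-coverings of intensity roughly $n^{-\alpha}$; you never do this. Your substitute route via $1/\card{N_n(\bs o)}\leq \max_{v\in N_n(\bs o)}\bs w(v)/n^{\alpha}$ does not close: the quantity $\omid{\max_{v\in N_n(\bs o)}\bs w(v)}$ is \emph{not} controlled by $\omid{\bs w(\bs o)}$ alone---by MTP it is at most $\omid{\bs w(N_n(\bs o))}=\omid{\bs w(\bs o)\,\card{N_n(\bs o)}}$, which typically grows with $n$. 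And even if you did control $\omid{1/\card{N_n(\bs o)}}$, that only bounds $\dimMl{}$ in the Euclidean setting (Theorem~\ref{thm:lowerBoundR^d}), not in general. The paper instead builds a covering directly: select each $v$ independently with probability $1\wedge(n^{-\beta}\bs w(v))$ to get $\bs S$, and set $\bs R_n(v)=n$ if $v\in\bs S$ or $N_n(v)\cap\bs S=\emptyset$, else $0$. Then
\[
\myprob{\bs R_n(\bs o)\neq 0}\leq n^{-\beta}\omid{\bs w(\bs o)}+\omid{\exp\bigl(-n^{-\beta}\bs w(N_n(\bs o))\bigr)},
\]
and for $\beta<\alpha$ the second term is dominated by $\omid{\exp(-\bs w(N_n(\bs o))/n^{\alpha})}$, whose decay is the hypothesis.

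\textbf{Part (iii).} Your construction is inconsistent: you declare candidates by a \emph{uniform} threshold $p_r$, but then claim the uncovered probability is bounded by $\omid{\exp(-p_r(\bs w\wedge L)(N_r(\bs o)))}$. That bound requires \emph{weighted} selection with probability $p_r(\bs w(v)\wedge L)$; with uniform selection the uncovered probability is $(1-p_r)^{\card{N_r(\bs o)}}$, which does not see $\bs w$ at all. The fix is exactly the paper's device: select $v$ with probability $1\wedge(n^{-\beta}\bs w(v))$, which makes the uncovered probability at most $\exp(-n^{-\beta}\bs w(N_n(\bs o)))$ directly, and then no truncation is needed. With that single change your outline for (iii) becomes essentially the paper's proof.
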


\begin{proof}
	The proofs of the first two parts are very similar. The second part is proved first.
	
	\eqref{part:lem:lowerbound:liminf}.
	Let $\beta$, $\gamma$ and $\kappa$ be such that $\gamma < \beta < \kappa < \alpha$. 
	Fix $n\in\mathbb N$. Let $\bs S= \bs S_{\bs D}$ be the equivariant subset obtained by selecting
	each point $v\in\bs D$ with probability {$1\wedge (n^{-\beta}\bs w(v))$} (the selection variables are assumed to be conditionally independent given $[\bs D, \bs o; \bs w]$). Let $\bs R_n(v)=n$ if $v\in \bs S_{\bs D}$,
	$\bs R_n(v)=1$ if $N_n(v)\cap \bs S_{\bs D}=\emptyset$, and $\bs R_n(v)=0$ otherwise.
	Then $\bs R_n$ is an {equivariant} covering. It is shown below that 
	$\omid{\bs R_n(\bs o)^{\gamma}}\rightarrow 0$.
	Let $M:={\sup}\{r\geq 0: \bs w(N_r(\bs o))< r^{\kappa}\}$.
	By the assumption, $M<\infty$ a.s.  One has 
	\begin{eqnarray*}
		\omid{\bs R_n(\bs o)^{\gamma}} &=& n^{\gamma}\myprob{\bs o\in \bs S_{\bs D}} + \myprob{N_n(\bs o)\cap \bs S_{\bs D}=\emptyset}\\
		&=&	n^{\gamma}{\omid{1\wedge n^{-\beta}\bs w(\bs o)}} + \omid{\prod_{v\in N_n(\bs o)}\left(1-(1\wedge {n^{-\beta}}{\bs w(v)})\right)}\\
		&\leq & {n^{\gamma-\beta}\omid{\bs w(\bs o)} + \omid{\exp\left(-n^{-\beta}{\bs w(N_n(\bs o))}  \right)}} \\
		&=& n^{\gamma-\beta}\omid{\bs w(\bs o)} + \omidCond{{\exp\left(-n^{-\beta}{\bs w(N_n(\bs o))}  \right)}}{M<n}\myprob{M<n} \\ & & \quad\quad\quad\quad\quad\quad\hspace{0.2mm}  + \, \omidCond{{\exp\left(-n^{-\beta}{\bs w(N_n(\bs o))}  \right)}}{M\geq n}\myprob{M\geq n}\\
		&\leq & n^{\gamma-\beta}\omid{\bs w(\bs o)} + \exp\left(-n^{\kappa-\beta} \right) + \myprob{M\geq n},
	\end{eqnarray*}
	{where the first inequality holds because $1-(1\wedge x)\leq e^{-x}$ for all $x\geq 0$.}
	%\ali{Note that the first inequality is trivial if $\bs w(v)\geq n^{\beta}$ for at least one $v\in N_n(\bs o)$.}
	Therefore, $\omid{\bs R_n(\bs o)^{\gamma}}\rightarrow 0$ when $n\to\infty$. It follows that $\dimH{\bs D}\geq \gamma$.
	Since $\gamma$ is arbitrary, this implies $\dimH{\bs D}\geq \alpha$.
	
	\eqref{part:lem:lowerbound:all}. Only a small change is needed in the above proof.
	For $n\geq r_0$, let $\bs R_n(v)=n$ if either $v\in \bs S_{\bs D}$ or $N_n(v)\cap \bs S_{\bs D}=\emptyset$,
	and let $\bs R_n(v)=0$ otherwise. Note that $\bs R_n$ is a covering by balls {of equal radii}. 
	By the same computations and the assumption  {$M\leq r_0$}, one gets
	\[
	\myprob{\bs R_n(\bs o)\neq 0} \leq {n^{-\beta}\omid{\bs w(\bs o)} + \exp\left(-n^{\kappa-\beta}\right),}
	\]
	which is of order $n^{-\beta}$ for large $n$.
	This implies that $\dimMl{\bs D}\geq \beta$.
	Since $\beta$ is arbitrary, one gets $\dimMl{\bs D}\geq \alpha$ and the claim is proved.
	
	\eqref{part:lem:lowerbound:E}. 
	Let $\beta<\alpha$. It will be proved below that under the assumption of \eqref{part:lem:lowerbound:E}, there is a sequence
	$r_1,r_2,\ldots$ such that $\omid{\exp\left(-r_n^{-\beta}{\bs w(N_{r_n}(\bs o))}  \right)}\rightarrow 0$.
	If so, by a slight modification of the proof of part~\eqref{part:lem:lowerbound:liminf},
	one can find a sequence of equivariant coverings $\bs R_n$ such that
	$\omid{\bs R_n(\bs o)^{\beta}}<\infty$ and \eqref{part:lem:lowerbound:E} is proved.
	
	Let $\epsilon>0$ be arbitrary. By the assumption, there is $\delta>0$
	and $r\geq 1$ such that $\myprob{\bs w(N_r(\bs o))\leq \delta r^{\alpha}}<\epsilon$. So
	\begin{eqnarray*}
		\omid{\exp\left(-r^{-\beta}{\bs w(N_{r}(\bs o))}  \right)} &\leq& \omidCond{\exp\left(-r^{-\beta}{\bs w(N_{r}(\bs o))}\right)}{\bs w(N_r(\bs o))>\delta r^{\alpha}}\\
		&& + \myprob{\bs w(N_r(\bs o))\leq \delta r^{\alpha}}\\
		&\leq & \mathrm{exp}(-\delta r^{\alpha-\beta}) + \epsilon.
	\end{eqnarray*}
	Note that for fixed $\epsilon$ and $\delta$ as above, $r$ can be arbitrarily large.
	Now, choose $r$ large enough for the right hand side to be at most $2\epsilon$.
	This shows that $\omid{\exp\left(-r^{-\beta}{\bs w(N_{r}(\bs o))}  \right)}$ can be arbitrarily small and the claim is proved.

	\eqref{part:lem:lowerbound:exp}.
	As before, let $\bs R_n(v)=n$ if either $v\in \bs S_{\bs D}$ or $N_n(v)\cap \bs S_{\bs D}=\emptyset$,
	and let $\bs R_n(v)=0$ otherwise. The calculations in the proof of part~\eqref{part:lem:lowerbound:liminf} show that 
	\[
	\myprob{\bs R_n(\bs o)\neq 0} \leq n^{-\beta}\omid{\bs w(\bs o)} + \omid{\exp\left(-n^{-\beta}\bs w(N_n(\bs o))\right)}.
	\]
	Now, the assumption implies the claim.
\end{proof}

\begin{remark}
The assumption in part~\eqref{part:lem:lowerbound:E} of Lemma~\ref{lem:lowerboundiid} is
equivalent to the condition that 
{there exists a sequence $r_n\to \infty$ such that the family of random variables $r_n^{\alpha}/\bs w(N_{r_n}(\bs o))$ is tight.}
Also, 
from the proof of the lemma, one can see that this assumption is equivalent to
\[\liminf_{n\rightarrow\infty} \omid{\mathrm{exp}\left(-\frac{\bs w(N_n(\bs o))}{n^{\alpha}}\right)} =0.\] 
\end{remark}

\begin{theorem}[Unimodular Billingsley Lemma] 
\label{thm:billingsley}
%\mar{\invisible{1. Conj: growth rates do not depend on $\bs w$\\ 2. Conj: the upper bound is always an equality (implied by the prev conjecture). Say this? (here or after Birkhoff)}\\ \invisible{Note: In the classical Billingsley lemma, the sides are $\sup_x \liminf_{\epsilon}$ and $\inf_x\liminf_{\epsilon}$. Can we say that the form of our formula is different from this? (since we have both limsup and liminf)}}
Let $[\bs D, \bs o]$ be a unimodular discrete metric space. {Then,} for all equivariant weight functions $\bs w$ such that $0<\omid{\bs w(\bs o)}<\infty,$ one has
\begin{eqnarray*}
	\essinf \left(\growthl{\bs w(N_r(\bs o))}\right) 
	&\leq& \dimH{\bs D} \\ 
	&\leq& \essinf \left(\growthu{\bs w(N_r(\bs o))}\right)\\
	&\leq& \growthu{\omid{\bs w(N_r(\bs o))}}.
\end{eqnarray*}
%\begin{enumerate}[(i)]
%\item 
%\label{thm:billingsley:1}
%If the upper and lower growth rates of $\bs D$ are almost surely constant
%(e.g., when $[\bs D,\bs o]$ is \textit{ergodic}), then, almost surely,
%\begin{eqnarray*}
%{\growthl{\bs w(N_r(\bs o))}} \leq &{\dimH{\bs D}} &\leq \growthu{\bs w(N_r(\bs o))}\\
% &&\leq \growthu{\omid{\bs w(N_r(\bs o))}}.
%\end{eqnarray*}
%\item 
%\label{thm:billingsley:2}
%In general,
%\begin{eqnarray*}
%\essinf \left(\growthl{\bs w(N_r(\bs o))}\right) 
%&\leq& \dimH{\bs D} \\ 
%&\leq& \essinf \left(\growthu{\bs w(N_r(\bs o))}\right)\\
%&\leq& \growthu{\omid{\bs w(N_r(\bs o))}}.
%\end{eqnarray*}
%\end{enumerate}
\end{theorem}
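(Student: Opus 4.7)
The proof splits into three inequalities, handled in order. For the lower bound, fix any $\alpha<\essinf(\growthl{\bs w(N_r(\bs o))})$; the definition of essential infimum gives $\growthl{\bs w(N_r(\bs o))}\geq \alpha$ almost surely, and part~\eqref{part:lem:lowerbound:liminf} of Lemma~\ref{lem:lowerboundiid} immediately yields $\dimH{\bs D}\geq \alpha$. Letting $\alpha$ increase to $\essinf(\growthl{\bs w(N_r(\bs o))})$ closes this inequality.

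For the middle upper bound, fix $\alpha>\essinf(\growthu{\bs w(N_r(\bs o))})$ and choose $\beta$ with $\essinf(\growthu{\bs w(N_r(\bs o))})<\beta<\alpha$, so that the event $A_v:=\{\growthu{\bs w(N_r(v))}<\beta\}$ has positive probability at $v=\bs o$. A key observation is that the ball-inclusion comparison $N_r(u)\subseteq N_{r+d(u,v)}(v)$, already used to show that polynomial growth rates do not depend on the base point, applies equally to $\bs w(N_r(\cdot))$; hence $A_v$ depends only on the pointed isomorphism class of the component of $v$, so $\bs w'(v):=\bs w(v)\identity{A_v}$ is an equivariant weight function. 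By construction, $\limsup_r \bs w'(N_r(\bs o))/r^\alpha=0$ almost surely, since on $A_{\bs o}$ the upper growth rate of $\bs w(N_r(\bs o))$ is strictly below $\alpha$ while off $A_{\bs o}$ the weight vanishes. Lemma~\ref{lem:MDPstronger2}\eqref{lem:MDPstronger2:2} then yields $\dimH{\bs D}\leq \alpha$ as soon as $\bs w'$ is non-degenerate with positive probability, and letting $\alpha\downarrow\essinf(\growthu{\bs w(N_r(\bs o))})$ closes the bound.

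For the last inequality, I plan to establish the stronger pointwise statement $\growthu{\bs w(N_r(\bs o))}\leq \growthu{\omid{\bs w(N_r(\bs o))}}$ almost surely by a Markov--Borel--Cantelli argument. For any $\gamma>\growthu{\omid{\bs w(N_r(\bs o))}}$ and $\epsilon>0$, the eventual bound $\omid{\bs w(N_r(\bs o))}\leq r^\gamma$ together with Markov's inequality gives $\myprob{\bs w(N_r(\bs o))\geq r^{\gamma+\epsilon}}\leq r^{-\epsilon}$. Along the geometric subsequence $r_n:=2^n$ these probabilities are summable, so Borel--Cantelli produces $\bs w(N_{r_n}(\bs o))<r_n^{\gamma+\epsilon}$ for all but finitely many $n$ almost surely; monotonicity of $r\mapsto \bs w(N_r(\bs o))$ then extends this to all $r\geq 1$, yielding $\growthu{\bs w(N_r(\bs o))}\leq \gamma+\epsilon$ a.s., after which letting $\epsilon\downarrow 0$ and $\gamma\downarrow \growthu{\omid{\bs w(N_r(\bs o))}}$ concludes.

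The main obstacle is the non-degeneracy step in the middle inequality: one must exclude the pathology in which $A_{\bs o}$ occurs precisely on components where $\bs w$ vanishes identically. In the ergodic case $\growthu{\bs w(N_r(\bs o))}$ is almost surely constant and $A_{\bs o}$ has full probability once $\alpha$ exceeds this value, so the issue disappears; in general it requires an ergodic-decomposition argument or the convention that the essinf be taken over the support of $\bs w$.
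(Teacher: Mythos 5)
Your approach matches the paper's on all three inequalities. The lower bound is handled identically, via part~\eqref{part:lem:lowerbound:liminf} of Lemma~\ref{lem:lowerboundiid}. The third inequality is a self-contained reproof of Lemma~\ref{lem:logbound} in the appendix (which the paper simply cites): you apply Markov along the dyadic subsequence $r_n=2^n$ and close with Borel--Cantelli plus monotonicity, whereas the paper bounds $\myprob{\exists k\geq n: X_k>k^\alpha}$ directly by the same dyadic union bound; the two are the same estimate. For the middle inequality, the indicator weight $\bs w'(v)=\bs w(v)\identity{A_v}$ with $A_v$ root-invariant is exactly what the paper's proof does implicitly when it passes from ``on this event'' to invoking Lemma~\ref{lem:MDPstronger2}, and your justification that $A_v$ is component-invariant via the ball-inclusion comparison is correct.

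The non-degeneracy obstruction you flag is genuine, and it is not an artifact of your route: the paper's proof has the same unaddressed step. In fact Remark~\ref{rem:bil-nec} explicitly concedes it, noting that almost-sure non-degeneracy of $\bs w$ is ``trivially necessary for the upper bound,'' a condition strictly stronger than the stated hypothesis $\omid{\bs w(\bs o)}>0$. Under that stronger reading your middle step closes cleanly: if $\bs w$ is non-degenerate a.s., then $\bs w(N_r(\bs o))>0$ eventually in every realization (so $\growthu{\bs w(N_r(\bs o))}\geq 0$), and $\bs w'$ is automatically non-degenerate with positive probability whenever $\myprob{A_{\bs o}}>0$. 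Your suggestion of taking the essinf over the support of $\bs w$ is an equivalent fix. The ergodic-decomposition idea, by itself, does not rescue the literal statement, because the components on which $\bs w\equiv 0$ still drag the essinf down: for instance take $\bs D$ equal to $\mathbb Z$ or $\mathbb Z^2$ each with probability $\tfrac12$, with $\bs w\equiv 1$ on $\mathbb Z$ and $\bs w\equiv 0$ on $\mathbb Z^2$; then $\omid{\bs w(\bs o)}=\tfrac12\in(0,\infty)$ and $\essinf\left(\growthu{\bs w(N_r(\bs o))}\right)=-\infty$ on the $\mathbb Z^2$ branch, while $\dimH{\bs D}=1$. So the gap you identify lies in the theorem's stated hypothesis, not in your argument.
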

{The proof is given below. In fact, in many examples, it is enough to consider $\bs w\equiv 1$ in Billingsley's lemma; i.e., $\bs w(N_r(\bs o))=\card{N_r(\bs o)}$.  
}

\begin{corollary}
	\label{cor:billingsley-ergodic}
Under the assumptions of Theorem~\ref{thm:billingsley}, 
if the upper and lower growth rates of $\bs D$ are almost surely constant
(e.g., when $[\bs D,\bs o]$ is \textit{ergodic}), then, 
\begin{eqnarray}
\label{eq:billingsly-simple}
{\growthl{\bs w(N_r(\bs o))}} \leq &{\dimH{\bs D}} &\leq \growthu{\bs w(N_r(\bs o))} \quad a.s.
\end{eqnarray}
{In particular,} if $\growth{\bs w(N_r(\bs o))}$ exists and is constant a.s., then 
\[\dimH{\bs D} = \growth{\bs w(N_r(\bs o))}.\]
\end{corollary}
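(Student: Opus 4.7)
My plan is to obtain the corollary as an essentially immediate consequence of Theorem \ref{thm:billingsley}, by observing that under the stated hypothesis, each essential infimum appearing there reduces to the almost sure value of the corresponding growth rate.

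First, I would check that the weighted growth rates $\growthl{\bs w(N_r(\bs o))}$ and $\growthu{\bs w(N_r(\bs o))}$ do not depend on the choice of the root. This is the same sandwich argument used earlier for $\card{N_r(\bs o)}$: for $v\in\bs D$ at distance $c:=d(\bs o,v)$, the inclusions $N_r(\bs o)\subseteq N_{r+c}(v)$ and $N_r(v)\subseteq N_{r+c}(\bs o)$ give $\bs w(N_r(\bs o))\leq \bs w(N_{r+c}(v))$ together with its symmetric counterpart. Dividing $\log \bs w(N_r(\cdot))$ by $\log r$ and passing to $\limsup$ or $\liminf$ then yields the same value at $\bs o$ and at $v$. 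Hence both weighted growth rates are functionals of the unrooted marked space $[\bs D;\bs w]$, invariant under any re-rooting.

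Under the ergodicity hypothesis on $[\bs D,\bs o;\bs w]$, any such root-invariant functional is a.s.\ constant; in general, the stated hypothesis should be read as a.s.\ constancy of the (weighted) upper and lower growth rates. In either case, $\essinf\!\left(\growthl{\bs w(N_r(\bs o))}\right)$ and $\essinf\!\left(\growthu{\bs w(N_r(\bs o))}\right)$ coincide with the corresponding a.s.\ values, and substituting into Theorem \ref{thm:billingsley} immediately yields the bound~\eqref{eq:billingsly-simple}. For the \emph{in particular} part, existence of $\growth{\bs w(N_r(\bs o))}$ a.s.\ forces the lower and upper weighted growth rates to coincide, so the sandwich collapses to the claimed equality.

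The only potentially delicate point is promoting a.s.\ constancy from the unweighted growth rates of $\bs D$ to the weighted ones; the root-independence observation above is what handles this under ergodicity of the marked space. Beyond this, the proof is essentially a direct substitution into the preceding theorem, so I do not anticipate any further obstacle.
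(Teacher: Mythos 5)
Your proof is correct and matches what the paper evidently intends (the paper gives no explicit proof of this corollary, treating it as an immediate consequence of Theorem~\ref{thm:billingsley}). The root-invariance of the weighted growth rates is exactly the observation that collapses the essential infima in Theorem~\ref{thm:billingsley} to a.s.\ values, and your remark that the hypothesis must be read as a.s.\ constancy of the \emph{weighted} growth rates (equivalently, of any deck-invariant functional of $[\bs D,\bs o;\bs w]$, not merely of $[\bs D,\bs o]$) correctly identifies the only delicate point in the statement. Note that the second inequality in~\eqref{eq:billingsly-simple} actually holds without any constancy hypothesis, since $\dimH{\bs D}\leq\essinf\left(\growthu{\bs w(N_r(\bs o))}\right)$ already gives the a.s.\ bound; the constancy is only needed for the first inequality.
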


{In fact, without the assumption of this corollary, an inequality similar to~\eqref{eq:billingsly-simple} is valid} for the \textit{sample Hausdorff dimension} of $\bs D$,
which will be studied in~\cite{III}.

\begin{proof}[Proof of Theorem~\ref{thm:billingsley}]
	\del{The first part is implied by the second one. So it is enough to prove the second part.
		
	}The first inequality is implied by part~\eqref{part:lem:lowerbound:liminf} of Lemma~\ref{lem:lowerboundiid}. 
	The last inequality is implied by Lemma~\ref{lem:logbound}. 
	For the second inequality, assume that ${\growthu{\bs w(N_r(\bs o))}}<\alpha$
	with positive probability.
	On this event, one has $\bs w(N_r(\bs o))\leq r^{\alpha}$ {for large $r$}; i.e., $\limsup_r \bs w(N_r(\bs o))/r^{\alpha}\leq 1$. Now, Lemma~\ref{lem:MDPstronger2} implies that
	$\dimH{\bs D}\leq \alpha$. This proves the result.
\end{proof}

\begin{remark}
\label{rem:bil-nec}
In fact, the assumption $\omid{\bs w(\bs o)}<\infty$ in Theorem~\ref{thm:billingsley}
is only needed for the lower bound while the assumption $\omid{\bs w(\bs o)}>0$ is only needed for the upper bound.
These assumptions are also necessary as shown below.
	\\	
For example, assume {$\Phi$} is a point-stationary point process in $\mathbb R$ (see Example~I.2.6).
For $v\in \Phi$, let $\bs w(v)$ be the sum of the distances of $v$ to its next and previous points in $\Phi$.
This equivariant weight function satisfies $\bs w(N_r(v))\geq 2r$ for all $r$, and hence
$\growthl{\bs w(N_r(\bs o))}\geq 1$. But $\dimH{\Phi}$ can be strictly less than 1 as shown in Subsection~\ref{subsec:image}.
	\\
Also, the condition that $\bs w_{\bs D}$ is non-degenerate a.s. is trivially necessary for the upper bound.
\end{remark}

\del{\begin{remark}
In many examples, it is enough to consider $\bs w\equiv 1$ in Billingsley's lemma
(i.e., $\bs w(N_r(\bs o))=\card{N_r(\bs o)}$). Examples where other weight functions
are used are 2-ended trees (Subsection~\ref{subsec:two-ended}), point-stationary point processes
(Proposition~\ref{prop:upperbound-Rd}), and embedded spaces (Subsection~\ref{subsec:embedded}).
\end{remark}
}

\del{\begin{remark}
		\mar{In fact, Frostman is a converse to MDP. I moved this to the previous subsection.}
A converse to the unimodular Billingsley lemma is \textit{the unimodular Frostman lemma}, 
which will be discussed in Section~\ref{sec:frostman}.
\end{remark}
}

\del{\begin{remark}
		\mar{Mentioned this after the corollary without a remark.}
Without the assumption of part~\eqref{thm:billingsley:1} of Theorem~\ref{thm:billingsley},
the claim is still valid for the \textit{sample Hausdorff dimension} of $\bs D$,
which will be discussed in~\cite{I}.
\end{remark}}

%Here is a first application of the unimodular Billingsley lemma.

\begin{corollary}
\label{cor:graphs-lowerbound}
Let $[\bs G, \bs o]$ be a unimodular random graph equipped with the graph-distance metric.
If $\bs G$ is infinite almost surely, then $\dimMl{\bs G}\geq 1$ and else, $\dimM{\bs G}=\dimH{\bs G}=0$.
\end{corollary}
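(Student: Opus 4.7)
The plan is to apply Lemma~\ref{lem:lowerboundiid} and Theorem~\ref{thm:billingsley} with the constant weight $\bs w\equiv 1$, for which $\omid{\bs w(\bs o)}=1$ automatically discharges all positivity and integrability hypotheses.

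For the first claim, assume $\bs G$ is a.s.\ infinite. Viewing $\bs G$ as the (infinite, connected) component of $\bs o$ in the graph-distance metric, an easy induction on $k\in\mathbb N$ yields $\card{N_k(\bs o)}\geq k+1$: indeed $N_0(\bs o)=\{\bs o\}$, and as long as $N_k(\bs o)\neq\bs G$ (which always holds since $\bs G$ is infinite) connectedness guarantees at least one vertex at distance exactly $k+1$, forcing $\card{N_{k+1}(\bs o)}\geq \card{N_k(\bs o)}+1$. Since in any graph $N_r(\bs o)=N_{\floor{r}}(\bs o)$, this gives $\card{N_r(\bs o)}\geq \floor{r}+1\geq r$ for every $r\geq 0$. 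Part~\eqref{part:lem:lowerbound:all} of Lemma~\ref{lem:lowerboundiid}, applied with $\alpha=c=1$ and $r_0=0$, then delivers $\dimMl{\bs G}\geq 1$.

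For the second claim, assume $\bs G$ is a.s.\ finite. Then $\card{N_r(\bs o)}\leq \card{\bs G}<\infty$ is eventually constant in $r$, so $\growthu{\card{N_r(\bs o)}}=0$ almost surely. The second inequality in Theorem~\ref{thm:billingsley} applied with $\bs w\equiv 1$ then gives $\dimH{\bs G}\leq \essinf \growthu{\card{N_r(\bs o)}}=0$, and since Hausdorff dimensions are nonnegative, $\dimH{\bs G}=0$. The equality $\dimM{\bs G}=0$ follows equally easily from the definition of the unimodular Minkowski dimension: once $n$ exceeds the (a.s.\ finite) diameter of $\bs G$, a single ball of radius $n$ centred at $\bs o$ covers the whole space, so every Minkowski-type covering quantity collapses faster than any polynomial rate.

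The argument encounters no real obstacle; the only care required is to note that the trivial choice $\bs w\equiv 1$ automatically satisfies the integrability and non-degeneracy conditions that would otherwise be the delicate part of invoking these lemmas, and to check the elementary combinatorial fact that infinite connected graphs have at least linear ball growth.
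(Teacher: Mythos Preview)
Your argument for the first claim is correct and matches the paper's exactly: both use $\bs w\equiv 1$, the elementary bound $\card{N_r(\bs o)}\geq r$ for infinite connected graphs, and part~\eqref{part:lem:lowerbound:all} of Lemma~\ref{lem:lowerboundiid}.

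There is, however, a genuine gap in the second claim. The word ``else'' means that $\bs G$ is finite \emph{with positive probability}, not almost surely; nothing in the statement rules out a non-ergodic mixture like $\bs G=\mathbb Z$ with probability $1/2$ and a single point with probability $1/2$. Your Billingsley argument for $\dimH{\bs G}=0$ is easily repaired: on the positive-probability event $\{\bs G\text{ finite}\}$ one has $\growthu{\card{N_r(\bs o)}}=0$, and since Theorem~\ref{thm:billingsley} bounds $\dimH{\bs G}$ by the \emph{essential infimum} of this quantity, the conclusion still follows. (The paper instead uses the weight $\bs w_G\equiv\identity{\{G\text{ finite}\}}$, which achieves the same effect.)

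Your argument for $\dimM{\bs G}=0$, however, does not extend: on the infinite realizations no single ball covers the space, so the ``single ball of radius $n$'' covering is unavailable. Moreover, the phrase ``collapses faster than any polynomial rate'' points in the wrong direction---fast decay of $\lambda_r$ would give \emph{large} Minkowski dimension. What one actually needs is that $\lambda_r$ stays bounded away from zero. This does hold: for any equivariant $r$-covering $\bs S$ one has $\bs S\neq\emptyset$ on finite realizations, and since the root of a finite unimodular graph is uniform on vertices, $\myprob{\bs o\in\bs S}\geq \omid{\identity{\{\bs G\text{ finite}\}}/\card{\bs G}}>0$; hence $\lambda_r$ has zero decay rate. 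The paper sidesteps this computation by citing Example~I.3.17.
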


\begin{proof}
If $\bs G$ is infinite a.s., then for $\bs w_{\bs G}\equiv 1$, one has $\bs w(N_r(\bs o))\geq r$ for all $r$.
So part~\eqref{part:lem:lowerbound:all} of Lemma~\ref{lem:lowerboundiid} implies the first claim.
{The second claim is implied by Example~I.3.17 (this can be deduced from the unimodular Billingsley lemma as well).}
\del{For all discrete spaces $G$, let $\bs w_G(\cdot)\equiv 1$ if $G$ is finite and $\bs w_G(\cdot)\equiv 0$ otherwise.
One has $\growthu{\bs w(N_r(\bs o))}=0$. If $\bs G$ is finite with positive probability, 
then $\omid{\bs w(\bs o)}>0$. Therefore, the unimodular Billingsley lemma (Theorem~\ref{thm:billingsley})
implies that $\dimH{\bs G}=0$, which in turn implies the second claim.}
\end{proof}

\begin{corollary}
	The unimodular Minkowski and Hausdorff 
	dimensions of any unimodular
	two-ended tree are equal to one.
\end{corollary}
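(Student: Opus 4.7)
In a two-ended tree $\bs G$, there is a unique bi-infinite geodesic, the \emph{trunk} $T$, and each connected component of $\bs G \setminus T$ together with its attachment point is a finite subtree $S_v$ rooted at a trunk vertex $v$ (otherwise $\bs G$ would have a third end). Since $T$ is determined by the graph structure, $\indic{v \in T}$ is an equivariant weight function.

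Both Hausdorff bounds will come directly from the unimodular Billingsley lemma (Theorem~\ref{thm:billingsley}). For the lower bound, I apply it with $\bs w \equiv 1$: following $T$ in both directions from $\bs o$ gives $\card{N_r(\bs o)} \geq 2(r - d(\bs o, T)) + 1$ for $r \geq d(\bs o, T)$, so $\growthl{\card{N_r(\bs o)}} \geq 1$ a.s., yielding $\dimH{\bs G} \geq 1$. For the upper bound, I apply it with $\bs w(v) := \indic{v \in T}$: since $T$ is isometric to $\mathbb{Z}$ and path-distance in a tree is preserved along geodesics, one has $\bs w(N_r(\bs o)) \leq 2r + 1$ and hence $\growthu{\bs w(N_r(\bs o))} \leq 1$ a.s.; since $T$ is a.s. non-empty, Lemma~I.2.14 gives $\omid{\bs w(\bs o)} > 0$, so the lemma applies and yields $\dimH{\bs G} \leq 1$. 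Combined with the standard inequality $\dimH \leq \dimMl$, this also gives $\dimMl{\bs G} \geq 1$.

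To obtain $\dimMu{\bs G} \leq 1$, I plan to construct, at each integer scale $n$, an explicit equivariant covering by selecting a Bernoulli$(1/n)$ subset of trunk vertices (using auxiliary i.i.d.\ marks) and assigning each selected $v$ a radius just large enough to cover the trunk up to the next selected vertex in both directions, together with the finite subtrees attached along the way. A key ingredient is the mass-transport identity $\omid{\card{S_{\bs o}}\indic{\bs o \in T}} = 1$, obtained by transporting unit mass from each vertex to its nearest trunk vertex, which shows that the subtree sizes are integrable at trunk vertices.

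The main obstacle will be that subtree heights may be heavy-tailed, so the maximum subtree height over an interval of $n$ trunk vertices need not be of order $n$. I expect to overcome this by treating ``deep'' subtrees (those of height exceeding $n$) separately, covering each such subtree by its own ball, so that the integrability of $\card{S_{\bs o}}$ together with a union bound keeps the exceptional contribution to $\omid{\bs R(\bs o)^{\alpha}}$ negligible for every $\alpha > 1$, completing the proof.
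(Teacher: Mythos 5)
Your Hausdorff-dimension argument is exactly the one the paper sketches: apply the unimodular Billingsley lemma (Theorem~\ref{thm:billingsley}) with the trunk indicator for the upper bound and with $\bs w \equiv 1$ for the lower bound; both bounds are correctly verified. The gap is in the Minkowski step. You invoke ``the standard inequality $\dimH{\bs D} \leq \dimMl{\bs D}$'', but that is the \emph{continuum} ordering; in the unimodular framework the ordering is reversed, $\dimMl{\bs D} \leq \dimMu{\bs D} \leq \dimH{\bs D}$. This is precisely why, throughout the paper (see the proofs of Propositions~\ref{prop:palm}, \ref{thm:EGWdimension} and~\ref{thm:drainage}), a Minkowski \emph{lower} bound is paired with a Hausdorff \emph{upper} bound to pin down all three quantities. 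Consequently $\dimH{\bs T} \geq 1$ does not give you $\dimMl{\bs T} \geq 1$, and the covering construction you sketch at the end — which would, at best, give $\dimMu{\bs T} \leq 1$ — addresses the wrong half: that inequality already follows for free from $\dimMu{\bs T} \leq \dimH{\bs T} \leq 1$.

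The piece genuinely missing from your proof is $\dimMl{\bs T} \geq 1$. This is Corollary~\ref{cor:graphs-lowerbound}, which follows from Lemma~\ref{lem:lowerboundiid}\eqref{part:lem:lowerbound:all} applied with $\bs w \equiv 1$, since $\card{N_r(\bs o)} \geq r$ deterministically in any infinite connected graph. With that, $1 \leq \dimMl{\bs T} \leq \dimMu{\bs T} \leq \dimH{\bs T} \leq 1$ closes the argument, and the heavy-tailed subtree analysis you were planning becomes unnecessary.
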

This result has already been shown in Theorem~I.4.1, but can also be deduced from the unimodular Billingsley lemma directly. For this, let $\bs w(v)$ be 1 if $v$ belongs to the trunk of the tree and 0 otherwise.

\begin{problem}
	In the setting of Corollary~\ref{cor:billingsley-ergodic}, is it always the case that $\dimH{\bs D}=\growthu{\bs w(N_r(\bs o))}$?
\end{problem}
The claim of this problem holds in all of the examples in which both quantities are computed in this work. This problem is a corollary of Problem~\ref{prob:growth} and the unimodular Frostman lemma (Theorem~\ref{thm:frostmanGeneral}) below. Note that there are examples where $\growthl{\cdot}\neq \growthu{\cdot}$ as shown in 
Subsections~\ref{subsec:canopy-generalized} and~\ref{subsec:digits}.

%As another simple application, {the unimodular Billingsley lemma %{(and Lemma~\ref{lem:lowerboundiid})} 
%	can be used to give another proof of the property that the Minkowski and Hausdorff 
%	dimensions of any unimodular
%	two-ended tree are equal to one (see Theorem~I.4.1).} This can be shown by letting $\bs w(v)$ be 1 if $v$ belongs to the trunk of the tree and 0 otherwise.

%
%{
%\begin{example}[No Growth Rate]
%	
%\end{example}
%}

\subsection{Bounds for Point Processes}
\label{subsec:euclidean}

%\marginpar{\invisible{{Later: Define stationary lattice $\Psi$ and use it throughout}}}

The next results use the following equivariant covering.
Let $\varphi$ be a discrete subset of $\mathbb R^k$
equipped with the $l_{\infty}$ metric and $r\geq 1$.
Let $C:=C_r:=[0,r)^k$, $\bs U:=\bs U_r$ be a point chosen uniformly at random in $-C$,
and consider the partition $\{C+\bs U+z: z\in r\mathbb Z^k\}$ of $\mathbb R^k$ {by cubes}.
Then, for each $z \in r\mathbb Z^k$, choose a random element in $(C+\bs U+z)\cap \varphi$ independently
(if the intersection is nonempty). The distribution of this random element should depend on the set 
$(C+\bs U+z)\cap \varphi$ in a translation-invariant way (e.g., choose with the uniform distribution
or choose the least point in the lexicographic order).
Let ${\bs R=}\bs R_{\varphi}$ assign the value  $r$ to the selected points and zero to the other points of $\varphi$.
\del{As in Example~I.\ref{I-ex:coveringR}, one can show that }{Then,} $\bs R$ is an equivariant covering. 
Also, each point is covered at most $3^k$ times. So $\bs R$ is $3^k$-bounded (Definition~I.3.9).

\begin{theorem}[Minkowski Dimension in the Euclidean Case]
\label{thm:lowerBoundR^d}
Let $\Phi$ be a point-stationary point process in $\mathbb R^k$
and assume the metric in $\Phi$ is equivalent to the Euclidean metric.
Then, for all equivariant weight functions $\bs w$ such that ${\bs w_{\Phi}(0)}>0$ a.s., one has 
\begin{eqnarray*}
\dimMu{\Phi}  = \decayu {\omid{\bs w(0)/ \bs w(C_r+\bs U_r)}} &\leq& \decayu{\omid{\bs w(0)/ \bs w(N_r(0))}}\\
& \leq & \growthu{\omid{\bs w(N_r(0))}},\\
\dimMl{\Phi}  = \decayl {\omid{\bs w(0)/ \bs w(C_r+\bs U_r)}} &\leq& \decayl{\omid{\bs w(0)/ \bs w(N_r(0))}}\\
& \leq & \growthl{\omid{\bs w(N_r(0))}},
\end{eqnarray*}
where $\bs U_r$ is a uniformly at random point in $-C_r$ independent of $\Phi$ and $\bs w$. 
\end{theorem}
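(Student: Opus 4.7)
I would treat the $\dimMu$ and $\dimMl$ cases in parallel, with $\limsup$'s replaced by $\liminf$'s throughout. The proof has three components: the equality with the cube-weight expression, the middle comparison with the ball-weight expression, and the Cauchy--Schwarz bound by the growth rate.

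\textbf{Cube-partition covering.} I adapt the equivariant covering $\bs R_r$ defined just before the theorem by letting the selected point in each nonempty cube $\Phi \cap (C_r + \bs U_r + z)$ be drawn with probability proportional to $\bs w$ (conditionally independently across cubes). Since $\bs U_r$ is uniform on $-C_r$, one has $0 \in C_r + \bs U_r$ almost surely, so conditioning on $\Phi$ and $\bs U_r$ gives $\myprob{\bs R_r(0) \neq 0} = \omid{\bs w(0)/\bs w(C_r + \bs U_r)}$. Because $\bs R_r$ is a $3^k$-bounded equivariant covering with radii equal to $r$, the very definitions of $\dimMu$ and $\dimMl$ yield $\dimMu{\Phi} \geq \decayu{\omid{\bs w(0)/\bs w(C_r + \bs U_r)}}$ and $\dimMl{\Phi} \geq \decayl{\omid{\bs w(0)/\bs w(C_r + \bs U_r)}}$.

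\textbf{Matching upper bound and trailing inequalities.} For the reverse inequality, let $\bs R'$ be an arbitrary bounded equivariant covering with radii at most $r$; via the independent coupling (Definition~I.2.8) I may assume $\bs R'$ is independent of $\bs U_r$ given $\Phi$. The mass transport principle applied to
\[
g(u,v) \;=\; \bs w(u)\,\identity{\{v\in\bs C',\ u\in N_r(v)\}}\big/\bs w\bigl(C_r + \bs U_r + z_v\bigr),
\]
where $\bs C'$ denotes the centers of $\bs R'$ and $z_v \in r\mathbb Z^k$ is the lattice vector with $v \in C_r + \bs U_r + z_v$, combined with the fact that every $u$ has an $\bs R'$-center within $l_\infty$-distance $r$ (hence in the $3^k$-cube neighborhood of $u$'s own cube), produces $\myprob{\bs R'(0) \neq 0} \geq c_k\,\omid{\bs w(0)/\bs w(C_r + \bs U_r)}$ for a constant $c_k > 0$ depending only on $k$, completing the equality. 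The middle inequality is immediate from $C_r + \bs U_r \subseteq N_r(0)$: the cube has $l_\infty$-diameter $r$ and contains $0$, so $\bs w(C_r + \bs U_r) \leq \bs w(N_r(0))$, giving $\omid{\bs w(0)/\bs w(C_r + \bs U_r)} \geq \omid{\bs w(0)/\bs w(N_r(0))}$ and the reversed inequality for both $\decayu$ and $\decayl$. The rightmost inequality follows from Cauchy--Schwarz applied to $X = \sqrt{\bs w(0)/\bs w(N_r(0))}$ and $Y = \sqrt{\bs w(N_r(0))}$: since $XY = \sqrt{\bs w(0)}$ and $\omid{\sqrt{\bs w(0)}} > 0$ under the hypothesis $\bs w(0) > 0$ a.s., one gets $\omid{\sqrt{\bs w(0)}}^2 \leq \omid{\bs w(0)/\bs w(N_r(0))}\cdot\omid{\bs w(N_r(0))}$, and dividing the logarithm of this inequality by $\log r$ then passing to $\limsup$ or $\liminf$ yields the bounds by $\growthu{\omid{\bs w(N_r(0))}}$ and $\growthl{\omid{\bs w(N_r(0))}}$ respectively.

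\textbf{Main obstacle.} The delicate step is upgrading the matching upper bound to the cube version. A naive MTP using only $N_r(u) \subseteq N_{2r}(v)$ for $u \in N_r(v)$ gives only $\dimMu \leq \decayu{\omid{\bs w(0)/\bs w(N_r(0))}}$, which is already implied by the middle inequality and therefore does not yield the equality. Sharpening to the cube expression requires genuine use of the translation-invariance of the partition (under $r\mathbb Z^k$-shifts of $\bs U_r$) and the independence of $\bs U_r$ from $\bs R'$, so that the mass transport can normalize by the weight of the specific cube containing each center rather than by the weight of a ball.
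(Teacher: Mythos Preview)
Your overall architecture matches the paper exactly: the weighted cube-partition covering with $\myprob{\bs R_r(0)\neq 0}=\omid{\bs w(0)/\bs w(C_r+\bs U_r)}$, the middle inequality via $C_r+\bs U_r\subseteq N_r(0)$, and the rightmost inequality via Cauchy--Schwarz with $X=\sqrt{\bs w(0)/\bs w(N_r(0))}$, $Y=\sqrt{\bs w(N_r(0))}$ are all precisely what the paper does.

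The divergence is in the equality step. The paper does not argue directly: it simply invokes Lemma~I.3.10, whose content is that any uniformly $M$-bounded family of equivariant $r$-coverings realizes the Minkowski dimension (both upper and lower). So the paper's proof of the equality is a one-line citation.

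Your attempt to re-derive this via the transport
\[
g(u,v)=\bs w(u)\,\identity{\{v\in\bs C',\ u\in N_r(v)\}}\big/\bs w(C_r+\bs U_r+z_v)
\]
does not work as written. The outgoing mass from $0$ is $\bs w(0)\sum_{v}1/\bs w(C_r+\bs U_r+z_v)$ where the sum is over $\bs R'$-centers covering $0$; you know there is at least one such $v$, but you only know $z_v$ lies among the $3^k$ cubes neighboring the cube of $0$, and the weight of \emph{that} cube can be arbitrarily large compared with $\bs w(C_r+\bs U_r)$. So you do not get $g^+(0)\geq c_k\,\bs w(0)/\bs w(C_r+\bs U_r)$, and the argument collapses. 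Injecting $\bs w$ into the transport is the wrong move here: the weight function has already done its job in computing $\myprob{0\in\bs R_r}$, and the comparison of $\bs R'$ to $\bs R_r$ is a purely combinatorial fact about bounded coverings. The clean transport (and presumably the proof of Lemma~I.3.10) is $g(u,v)=\identity{\{u\in\bs R',\ v\in\bs R_r,\ u\in N_r(v)\}}$: then $g^+(0)\leq 3^k\identity{\{0\in\bs R'\}}$ by $3^k$-boundedness of $\bs R_r$, and $g^-(0)\geq\identity{\{0\in\bs R_r\}}$ because $N_r(0)$ must contain an $\bs R'$-center. This yields $\myprob{0\in\bs R'}\geq 3^{-k}\myprob{0\in\bs R_r}=3^{-k}\omid{\bs w(0)/\bs w(C_r+\bs U_r)}$ directly, with no need for the independence of $\bs U_r$ from $\bs R'$ or any normalization by cube weights.
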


\begin{proof}
By Theorem~I.3.31, one may assume the metric on $\Phi$ is the $l_{\infty}$ metric without loss of generality. 
Given any $r>0$, consider the equivariant covering $\bs R$ described above, but when choosing a random 
element of $(C_r+\bs U_r+z)\cap \varphi$, choose point $v$ with probability
$\bs w_{\varphi}(v)/\bs w_{\varphi}(C_r+\bs U_r+z)$ (conditioned on $\bs w_{\varphi}$). One gets
	$
	\myprob{0 \in \bs R} = \omid{ {\bs w(0)}/{\bs w(C_r+\bs U_r)}}.
	$
As mentioned above,
$\bs R$ is equivariant and uniformly bounded (for all $r>0$).
So Lemma~I.3.10 implies both equalities in the claim.
The inequalities are implied by the facts that $\bs w(C_r+\bs U_r)\leq \bs w(N_r(0))$ and
	\[
	\omid{\frac {\bs w(0)}{\bs w(\nei{r}{0})}} \omid{\bs w(\nei{r}{0})} \geq \omid{\sqrt{\bs w(0)}}^2>0,
	\]
	which is implied by the Cauchy-Schwartz inequality.
\end{proof}

\del{\mar{This is already mentioned in the prev subsection.}In many examples, the case where $\bs w(\cdot)\equiv 1$ is used.
}An example where the decay rate of $\omid{1/ \card{N_n(\bs o)}}$ is strictly smaller
than the growth rate of $\omid{\card{N_n(\bs o)}}$ can be found in~\cite{III}.
%Example~III.\ref{III-ex:TZ} for suitable parameters. 
\del{\mar{I think we should either delete this or ask this question: Is there an ergodic example?}However, this example is not ergodic (see Remark~I.\ref{I-rem:nonergodic}).}

\begin{proposition}
\label{prop:upperbound-Rd}
If $\Phi$ is a point-stationary point process in $\mathbb R^k$ and the metric on $\Phi$ is equivalent 
to the Euclidean metric, then $\dimH{\Phi}\leq k$.
\end{proposition}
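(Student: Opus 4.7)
The plan is to apply the unimodular mass distribution principle (Theorem~\ref{thm:mdp-simple}) with exponent $\alpha=k$. This requires producing an equivariant weight function $\bs w$ on $\Phi$ for which $\bs w(\bs o) > 0$ a.s.\ and $\bs w(N_r(\bs o)) \leq c\, r^{k}$ holds a.s.\ for all $r$ sufficiently large.

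First I would use the metric equivalence hypothesis together with Theorem~I.3.31 to reduce to the case where the metric on $\Phi$ is the Euclidean (or equivalently $l_\infty$) metric on $\mathbb R^k$. Then, fixing any radius $R>0$, I would define, for each $v\in\Phi$,
$$\bs w(v) \;:=\; \mathrm{Vol}\!\left(V_{\Phi}(v) \cap B(v,R)\right),$$
where $V_{\Phi}(v)$ denotes the Voronoi cell of $v$ in $\mathbb R^k$ with respect to the points of $\Phi$. Since the Voronoi tessellation is constructed from $\Phi$ in a translation-equivariant way, $\bs w$ is an equivariant weight function in the sense of Definition~\ref{def:weight}. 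Moreover, $V_{\Phi}(\bs o)$ contains an open Euclidean neighbourhood of $\bs o$, so $\bs w(\bs o)>0$ a.s., which in particular gives $\omid{\bs w(\bs o)}>0$.

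The desired growth bound follows from the pairwise disjointness of the Voronoi cells. For any $r>0$, the sets $V_{\Phi}(v)\cap B(v,R)$, as $v$ ranges over $\Phi\cap N_r(\bs o)$, are pairwise disjoint and all contained in $B(\bs o,r+R)$. Hence
$$\bs w(N_r(\bs o)) \;\leq\; \mathrm{Vol}\!\left(B(\bs o,r+R)\right) \;\leq\; c_k (r+R)^{k},$$
which is at most $2^{k}c_k\, r^{k}$ for every $r\geq R$. Applying Theorem~\ref{thm:mdp-simple}(ii) with $M=R$ and $\alpha=k$ then yields $\dim_{H}(\Phi)\le k$.

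The only subtle step is verifying that $\bs w$, being defined through the Euclidean Voronoi tessellation, qualifies as an equivariant process in the sense of Subsection~I.2.5; but this is essentially the observation that the Voronoi construction depends equivariantly on the underlying configuration, so that re-rooting at another point $v\in\Phi$ merely shifts the tessellation. With the weight in hand, the rest is a one-line application of the mass distribution principle, and no further computation is needed.
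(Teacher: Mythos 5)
Your proof is correct, and it takes a genuinely different (though closely related) route from the paper's. The paper's proof introduces a random unit-cube partition of $\mathbb R^k$ (translations of $[0,1)^k$ shifted by an independent uniform $\bs U$), defines $\bs w_\varphi(v) := 1/\card{\varphi\cap \bs C(v)}$ where $\bs C(v)$ is the cube containing $v$, observes that each nonempty cube has total weight $1$ so that $\bs w(N_r(\bs o))\leq (2r+1)^k$, and concludes via the unimodular Billingsley lemma. You instead attach to each point the Euclidean volume of its (truncated) Voronoi cell and use disjointness to get the same $O(r^k)$ bound, then conclude via the mass distribution principle. Both weight functions encode the same geometric idea — a finitely additive ``Lebesgue-like'' equivariant measure supported on $\Phi$ with polynomial ball mass — and both conclusions are legitimate since the ball-mass bound you prove holds almost surely for all $r\geq R$ (not merely in the $\limsup$ sense). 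Your version avoids the extra randomness of the shifted cubical partition (your $\bs w$ is deterministic given $\Phi$), whereas the paper's version avoids any appeal to properties of Voronoi tessellations; the cube-counting bound in the paper is perhaps marginally more elementary, but your Voronoi-volume construction is arguably more geometrically transparent and generalizes cleanly. The one step you flag as subtle — that the Voronoi construction gives a valid equivariant weight function — is indeed the thing to check, and your reasoning is right: the construction is translation-equivariant, and since the paper only requires equivariant processes to be defined on almost every realization of $[\bs D,\bs o]$ (Subsection~I.2.5), restricting to discrete subsets of $\mathbb R^k$ is fine. You should also note explicitly that $V_\Phi(\bs o)$ contains an open Euclidean ball because $\Phi$ is boundedly finite, so $\bs w(\bs o)>0$ without exception; you state the conclusion but the reason is worth making precise.
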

\begin{proof}
One may assume the metric on $\Phi$ is the $l_{\infty}$ metric without loss of generality.
Let $C:=[0,1)^k$ and $\bs U$ be a random point in $-C$ chosen uniformly. For all discrete subsets
$\varphi\subseteq\mathbb R^k$ and $v\in \varphi$, let $\bs C(v)$ be the cube containing $v$ of the
form $C+\bs U+z$ (for $z\in \mathbb Z^k$) and $\bs w_{\varphi}(v):= 1/\card{(\varphi\cap \bs C(v))}$.
Now, $\bs w$ is an equivariant weight function. The construction readily implies that $\bs w(N_r(\bs o))\leq (2r+1)^k$.
Moreover, by $\bs w\leq 1$, one has $\omid{\bs w(0)}<\infty$. Therefore, the unimodular Billingsley
lemma (Theorem~\ref{thm:billingsley}) implies that $\dimH{\Phi}\leq k$.
\end{proof}

\begin{proposition}
\label{prop:palm}
%\mar{\ali{Add the assumption of finite intensity? (automatically implied if Palm is defined)}}
If $\Psi$ is a stationary point process in $\mathbb R^k$ {with finite intensity} and $\Psi_0$
is its Palm version, then
$\dimM{\Psi_0}=\dimH{\Psi_0}=k.$
Moreover, {the modified unimodular Hausdorff measure of $\Psi_0$ satisfies}
	$
	\mathcal M'_k(\Psi_0)= {2^k}{\rho(\Psi)},
	$
where $\rho(\Psi)$ is the intensity of $\Psi$.
\end{proposition}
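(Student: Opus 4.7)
The plan is to reduce to the $l_\infty$ metric using Theorem~I.3.31 and then apply the bounds of this section with the trivial weight $\bs w\equiv 1$. The upper bound $\dimH{\Psi_0}\le k$ is immediate from Proposition~\ref{prop:upperbound-Rd}, since $\Psi_0$ is point-stationary. The upper bound $\dimMu{\Psi_0}\le k$ follows from Theorem~\ref{thm:lowerBoundR^d}, via $\dimMu{\Psi_0}\le\growthu{\omid{|N_r(0)|}}$ and Campbell's formula $\omid{|N_r(0)|}=1+\rho(2r)^k$, which makes the right-hand growth rate equal to $k$.

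For the matching lower bound, I would use Corollary~\ref{cor:billingsley-ergodic} with $\bs w\equiv 1$, for which it suffices to show $\growth{|N_r(0)|}=k$ almost surely. The spatial ergodic theorem applied to $\Psi$ gives $|\Psi\cap[-r,r]^k|/(2r)^k\to\rho^*$ a.s., where $\rho^*$ is the conditional intensity given the invariant $\sigma$-algebra; the same limit persists under the Palm distribution $\Psi_0$, and since the Palm conditioning $\{0\in\Psi\}$ places all mass on ergodic components with $\rho^*>0$, one has $\rho^*>0$ a.s.\ under $\Psi_0$. Hence $\growth{|N_r(0)|}=k$ almost surely, the corollary yields $\dimH{\Psi_0}\ge k$, and combining with the standard inequality $\dimH{\cdot}\le\dimMl{\cdot}$ and the upper bounds above gives $\dimMu{\Psi_0}=\dimMl{\Psi_0}=\dimH{\Psi_0}=k$.

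For the value $\mathcal{M}'_k(\Psi_0)=2^k\rho$, I would use the cube-based equivariant covering of Theorem~\ref{thm:lowerBoundR^d} with $\bs w\equiv 1$. The selection probability $\myprob{0\in\bs R}=\omid{1/|\Psi_0\cap(C_r+\bs U_r)|}$ can be computed by a refined Campbell--Mecke exchange between $\Psi$ and $\Psi_0$: the intensity of the selected subset computed stationarily is $r^{-k}\myprob{\Psi\cap C_r\neq\emptyset}$, and equals $\rho\,\myprob{0\in\bs R}$ by Palm inversion, whence $\myprob{0\in\bs R}=\myprob{\Psi\cap C_r\neq\emptyset}/(\rho r^k)\to 1/(\rho r^k)$ as $r\to\infty$. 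Matching this asymptotic to the definition of $\mathcal{M}'_k$ (which compares the covering cost to the $l_\infty$-ball volume $(2r)^k$) produces the claimed value $2^k\rho$, with a matching lower bound obtained from the mass distribution principle (Theorem~\ref{thm:mdp-simple}) applied to the same weight.

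The main obstacle is handling the non-ergodic case cleanly: outside ergodicity, $\rho^*$ and the limiting selection probability become genuine random variables, and one must either invoke an ergodic decomposition or work conditionally on the invariant $\sigma$-algebra. A secondary difficulty is producing the matching lower bound for $\mathcal{M}'_k$ from the mass distribution principle; while the cube covering gives the right asymptotic order, obtaining the exact constant $2^k\rho$ requires carefully tracking the $l_\infty$-ball volume factor $(2r)^k$ and the radius-to-diameter conversion in the normalization of $\mathcal{M}'_k$.
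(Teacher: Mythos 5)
Your proof of the dimension equalities takes a genuinely different route from the paper. The paper avoids any appeal to the spatial ergodic theorem: it superposes $\Psi$ with an independent uniformly shifted lattice $\Psi'$ to form $\Psi''$, identifies the Palm version $\Phi''$ of $\Psi''$ as a $(p,1-p)$-mixture of $(\Phi \cup \text{lattice})$ and $(\mathbb Z^k \cup \Psi)$, observes that $\Phi''$ contains an equivariant copy of a lattice so that $\dimMl{\Phi''}\geq k$ by Lemma~\ref{lem:lowerboundiid}, and then invokes Theorem~I.3.34 on equivariant subsets to transfer $\dimMl{\Phi''}=k$ down to $\Phi$ itself. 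Your route via Wiener's spatial ergodic theorem and the fact that a.s.\ convergence to the (positive) conditional intensity persists under the Palm measure is correct but imports nontrivial classical Palm theory, whereas the paper's argument stays inside the discrete unimodular framework; in exchange, your argument is self-contained in the sense of not needing Theorem~I.3.34.

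For the Hausdorff measure $\mathcal M'_k(\Psi_0)=2^k\rho$, however, there is a genuine gap that you have only half-identified. The mass distribution principle in the form of Theorem~\ref{thm:mdp-simple} requires a \emph{uniform} a.s.\ bound $\bs w(N_r(\bs o))\leq c\,r^k$ for all $r\geq M$; the natural Voronoi-type weight with $\omid{\bs w(0)}=1/\rho$ only satisfies $\bs w(N_r(0))\sim (2r)^k$ asymptotically. Switching to Lemma~\ref{lem:MDPstronger2}, which does accept an asymptotic bound, costs an extra factor $2^\alpha=2^k$, so a direct application yields $\contentH{k}{\infty}(\Psi_0)\geq (4^k\rho)^{-1}$ rather than the needed $(2^k\rho)^{-1}$. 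In other words, the two bounds you propose to match do not match to the sharp constant without additional work. The paper circumvents all constant-tracking by applying Theorem~I.3.34 twice to the superposition $\Phi''$, obtaining $\mathcal M'_k(\Phi)=p\,\mathcal M'_k(\Phi'')$ and $\mathcal M'_k(\mathbb Z^k)=(1-p)\,\mathcal M'_k(\Phi'')$, which gives $\mathcal M'_k(\Phi)=\tfrac{p}{1-p}\,\mathcal M'_k(\mathbb Z^k)=\rho(\Psi)\cdot 2^k$ with no estimates at all; the exact value of $\mathcal M'_k(\mathbb Z^k)=2^k$ is a simple direct computation (Proposition~I.3.29). You should either adopt this argument or supply the missing sharp-constant analysis before your proposal closes the measure claim.
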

Notice that if $\Psi_0\subseteq \mathbb Z^k$, {then the claim is directly implied by Theorem~I.3.34.} \del{\mar{$\Psi_0$ is not stationary!}\fra{is stationary, the same 
property holds. This follows from} Theorem~I.3.34.
}The general case is treated below. 
\begin{proof}
For the first claim, by Proposition~\ref{prop:upperbound-Rd}, it is enough to prove that $\dimMl{\Phi}\geq k$. 
Let $\Psi'$ be a shifted square lattice independent of $\Psi$ (i.e., $\Psi'=\mathbb Z^k+\bs U$, 
where $\bs U\in [0,1)^k$ is chosen uniformly, independently of $\Psi$).
Let $\Psi'':=\Psi\cup \Psi'$. Since $\Psi''$ is a superposition of two independent stationary point processes,
it is a stationary point process itself\del{ (see e.g., \cite{bookDaVe03I})}.
By letting $p:=\rho(\Psi)/(\rho(\Psi)+1)$, the Palm version $\Phi''$ of $\Psi''$ is obtained 
by the superposition of $\Phi$ and an independent stationary lattice with probability $p$ (heads),
and the superposition of $\mathbb Z^k$ and $\Psi$ with probability $1-p$ (tails). {So Lemma~\ref{lem:lowerboundiid} implies that $\dimMl{\Phi''}\geq k$.}
\del{For all $n\in \mathbb N$, consider the disjoint $n$-covering of $\mathbb Z^k$ in Example~I.3.12.
In both cases above (heads or tails), one can consider this covering as a random subset of the shifted lattice.
It is easy to see that it provides an equivariant $(n+1)$-covering of $\Phi''$
(note that by enlarging the balls, all of $\mathbb R^k$ is covered).
Also, the probability of having a ball centered at the \rooot{} is $(1-p) (2n+1)^{-k}$. It follows that $\dimMl{\Phi''}\geq k$. 
}
Note that $\Phi''$ has two natural equivariant subsets which, after conditioning to contain the origin,
have the same distributions as $\Phi$ and $\mathbb Z^k$ respectively.
Therefore, one can use Theorem~I.3.34 to deduce that $\dimMl{\Phi}\geq \dimMl{\Phi''}=k$.
Therefore, Proposition~\ref{prop:upperbound-Rd} implies that $\dimH{\Phi} = \dimM{\Phi}=k$.
	
Also, by using Theorem~I.3.34 twice, one gets $\mathcal M'_k(\Phi)= p \mathcal M'_k(\Phi'')$
and $\mathcal M'_k(\mathbb Z^k) =  (1-p)\mathcal M'_k(\Phi'')$. Therefore, 
	$
		\mathcal M'_k(\Phi) = p/{(1-p)} \mathcal M'_k(\mathbb Z^k). % = 2^k \rho(\Psi),
	$
By the definition of $\mathcal M'_k$, one can directly show that $\mathcal M'_k(\mathbb Z^k)=2^k$ (see also Proposition~I.3.29). This implies the claim.
%\mar{\invisible{
%Note for ourselves: The reason for $\mathcal M'_k(\mathbb Z^k)=2^k$ is that for every $U\subseteq\mathbb Z^k$, $(M\vee \frac 12\diam U)^k/\card{U}\geq \frac{M^k}{(2M+1)^k}$, where the equality is for the cube $[-M,M]^k$. Discuss this?}}
%where the last equality is by Proposition~I.3.29. So the claim is proved.
\end{proof}

The last claim of Proposition~\ref{prop:palm} suggests the following, which is verified when $k=1$ in the next proposition.

\begin{conjecture}
\label{conj:point-stationary}
If $\Phi$ is a point-stationary point process in $\mathbb R^k$ which is not the Palm version of
any stationary point process, then $\measH{k}(\Phi)=0$. 
\end{conjecture}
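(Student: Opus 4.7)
My plan is to attempt the contrapositive: assuming $\measH{k}(\Phi) > 0$, construct a stationary point process $\Psi$ on $\mathbb{R}^k$ whose Palm version is $\Phi$. Proposition~\ref{prop:palm} dictates that if such a $\Psi$ exists, its intensity should be $\rho = \mathcal{M}'_k(\Phi)/2^k$, which serves as the target value. First, I would apply Theorem~\ref{thm:lowerBoundR^d} with $\bs w\equiv 1$ to extract from the hypothesis a quantitative lower bound: along some subsequence $r_n \to \infty$,
\[
r_n^k\, \omid{1/\card{\Phi \cap (C_{r_n} + \bs U_{r_n})}} \geq c > 0.
\]
This bounds below the expected volume assigned to the root representative at scale $r_n$ and provides a finite candidate $\rho = 1/c$.

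Second, I would realize $\Psi$ as a weak subsequential limit. For each $r_n$, let $\bs S_{r_n} \subseteq \Phi$ be the random set of cube representatives from Theorem~\ref{thm:lowerBoundR^d}, and form the random counting measure $\Psi_n := \sum_{v \in \bs S_{r_n}} \delta_v$ on $\mathbb{R}^k$. The uniform cube-shift $\bs U_{r_n}$ should make the law of $\Psi_n$ approximately translation-invariant, while the lower bound above keeps the expected mass per unit volume bounded. If $\{\Psi_n\}$ is tight in the vague topology on locally finite measures, a subsequential limit $\Psi$ will be translation-invariant in law; one would then verify $\Psi$ is a simple point process and establish the Mecke identity
\[
\omid{\sum_{v \in \Psi} g(\Psi - v, v)} = \rho \int_{\mathbb{R}^k} \omid{g(\Phi, x)}\, dx
\]
in the limit, for a sufficiently rich class of test functions $g$, thereby identifying $\Phi$ as the Palm version of $\Psi$.

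The main obstacle is expected to be the tightness and identification of the limit: $\Phi$ is only point-stationary, so the approximate stationarity of $\Psi_n$ is subtle, and mass could concentrate or leak under weak convergence. A fallback route avoiding weak convergence altogether would construct a translation-invariant allocation $\tau: \mathbb{R}^k \to \Phi$ with $\omid{\mathrm{Vol}(\tau^{-1}(\bs o))} < \infty$, via a multiscale fusion of the coverings $\bs R_{r_n}$: transport each $x \in \mathbb{R}^k$ to the representative of the finest-scale cube containing $x$ whose intersection with $\Phi$ is nonempty. By the known characterization of Palm versions as the point-stationary processes admitting a balanced translation-invariant allocation from Lebesgue measure with finite expected cell volume, this would yield the Palm structure, with the hypothesis $\measH{k}(\Phi) > 0$ entering precisely to control $\omid{\mathrm{Vol}(\tau^{-1}(\bs o))}$ across scales.
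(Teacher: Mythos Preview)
The paper does not prove this statement in general: it is stated as an open conjecture, and only the case $k=1$ is established (Proposition~\ref{prop:conj:point-stationary}). So there is no general proof to compare your proposal to; you are attempting something the authors themselves leave open.

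For $k=1$ the paper's argument is far simpler and structurally quite different from yours. It exploits the total order on $\mathbb R$: write $\Phi=\{S_n:n\in\mathbb Z\}$ with $S_0=0$ and $S_n$ increasing, so that the gaps $T_n=S_{n+1}-S_n$ form a shift-stationary sequence. Not being a Palm version is equivalent to $\omid{T_0}=\infty$, and Birkhoff's pointwise ergodic theorem then forces $\lim_r \card{N_r(0)}/r=0$ almost surely. Lemma~\ref{lem:MDPstronger2} with $\bs w\equiv 1$ and $c=0$ gives $\contentH{1}{\infty}(\Phi)=\infty$, i.e.\ $\measH{1}(\Phi)=0$. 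No construction of a stationary process, no weak limits, no allocation.

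Your proposal for general $k$ has a genuine gap at the very first step. Theorem~\ref{thm:lowerBoundR^d} identifies the \emph{Minkowski} dimension with the decay rate of $\omid{1/\card{\Phi\cap(C_r+\bs U_r)}}$; it says nothing about the $k$-dimensional Hausdorff measure. The hypothesis $\measH{k}(\Phi)>0$ means $\contentH{k}{\infty}(\Phi)<\infty$, i.e.\ a uniform bound on the infimum of $\omid{\bs R(\bs o)^k}$ over equivariant coverings with large minimum radius. You have not explained why this forces $r_n^k\,\omid{1/\card{\Phi\cap(C_{r_n}+\bs U_{r_n})}}\geq c>0$ along a subsequence; the cube coverings of Theorem~\ref{thm:lowerBoundR^d} are one particular family and need not be near-optimal for the Hausdorff content. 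Beyond this, you yourself flag the remaining steps (tightness of $\{\Psi_n\}$, passage to the Mecke identity, or the multiscale allocation) as obstacles rather than arguments; as written they are programme sketches, not proofs, and since the general statement is open there is reason to expect real difficulty there.
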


\begin{proposition}
\label{prop:conj:point-stationary}
Conjecture~\ref{conj:point-stationary} is true when $k=1$.
\end{proposition}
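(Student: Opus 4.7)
In dimension one, a point-stationary point process $\Phi$ is the Palm version of some stationary point process if and only if the interpoint distance $X_0$ has finite mean under the point-stationary law: the inverse Palm construction on $\mathbb{R}$ yields a stationary lift of intensity $1/\mathbb{E}[X_0]$ when $\mathbb{E}[X_0] < \infty$, and no proper (probability) lift otherwise. So the hypothesis of the proposition reduces to $\mathbb{E}[X_0] = \infty$, and my plan is to imitate the superposition argument used in the proof of Proposition~\ref{prop:palm}.

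Fix $\rho > 0$ and let $\Pi_\rho$ be a stationary Poisson point process on $\mathbb{R}$ of intensity $\rho$, independent of $\Phi$. I consider the unimodular discrete space $\Psi_\rho^{*}$ obtained by re-rooting the union $\Phi \cup \Pi_\rho$ at a typical point of $\Pi_\rho$; this is well defined because $\Pi_\rho$ has positive intensity while $\Phi$ has ``zero intensity'' in the sense $\mathbb{E}[X_0] = \infty$. Under this re-rooting the origin $\bs o$ of $\Psi_\rho^{*}$ lies in $\Pi_\rho$ almost surely and in $\Phi$ with probability $0$. Moreover, since $|\Phi \cap [-r,r]|/r \to 0$ a.s.\ under the point-stationary law of $\Phi$, one has $|N_r(\bs o)|/r \to 2\rho$ a.s.\ on $\Psi_\rho^{*}$, and reproducing the computation in the proof of Proposition~\ref{prop:palm} (which is driven entirely by the Poisson component) yields $\measH{1}(\Psi_\rho^{*}) = 2\rho < \infty$.

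Finally, applying Theorem~I.3.34 to $\Phi$ viewed as an equivariant subset of $\Psi_\rho^{*}$, exactly as in the derivation $\mathcal M'_k(\Phi) = p \cdot \mathcal M'_k(\Phi'')$ in the proof of Proposition~\ref{prop:palm}, gives
$\measH{1}(\Phi) = \mathbb{P}(\bs o \in \Phi) \cdot \measH{1}(\Psi_\rho^{*}) = 0 \cdot 2\rho = 0,$
which is the claim. The hard part will be the rigorous construction of $\Psi_\rho^{*}$ as a unimodular random discrete space in the absence of a stationary lift of $\Phi$; this should be handled by working on the product space of $(\Phi,\Pi_\rho)$ and invoking the stationary Palm theory of $\Pi_\rho$ alone (its intensity $\rho$ being positive) to produce a genuine probability law under which $\Phi$ is an equivariant subset with root probability zero, at which point Theorem~I.3.34 applies directly.
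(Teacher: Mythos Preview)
Your approach has a genuine gap that cannot be closed. The obstruction is this: in any unimodular random discrete space, an equivariant subset $\bs S$ with $\myprob{\bs o\in\bs S}=0$ is almost surely empty (Lemma~I.2.14, or a one-line consequence of the mass transport principle). Hence no unimodular probability law $\Psi_\rho^*$ can carry $\Phi$ as a \emph{nonempty} equivariant subset with root probability zero; your proposed workaround via the product space and the Palm theory of $\Pi_\rho$ cannot escape this, because whatever unimodular probability law you produce, the subset marked ``$\Phi$'' will be forced to be a.s.\ empty. Correspondingly, Theorem~I.3.34 recovers the subset law by \emph{conditioning} on $\{\bs o\in\bs S\}$ and is meaningless on a null event: the identity you write, $\measH{1}(\Phi)=\myprob{\bs o\in\Phi}\cdot\measH{1}(\Psi_\rho^*)$, has no content because its left side refers to the specific given law of $\Phi$, which cannot be obtained by conditioning on a null event.

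The paper's argument is far more direct, and you already state its key ingredient: $\card{\Phi\cap[-r,r]}/r\to 0$ a.s.\ under the point-stationary law (Birkhoff's pointwise ergodic theorem applied to the stationary sequence of interpoint gaps, which has infinite mean by hypothesis). Feeding this into Lemma~\ref{lem:MDPstronger2} with $\bs w\equiv 1$, $\alpha=1$ and $c=0$ immediately gives $\contentH{1}{\infty}(\Phi)=\infty$, i.e., $\measH{1}(\Phi)=0$. No superposition and no re-rooting are needed.
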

\begin{proof}
Denote $\Phi$ as $\Phi=\{S_n: n\in\mathbb Z\}$ such that $S_0=0$ and $S_n<S_{n+1}$ for each $n$.
Then, the sequence {$T_n:=S_{n+1}-S_n$} is stationary under shifting the indices (see Example~I.2.6). 
The assumption that $\Phi$ is not the Palm version of a stationary point process is equivalent
to $\omid{S_1}=\infty$ (see~\cite{bookDaVe03II} or Proposition~6 of~\cite{shift-coupling}).
Indeed, if $\omid{S_1}<\infty$, then one could bias the probability measure by $S_1$ 
(Definition~I.C.1) 
and then shift the whole process by $-\bs U$, where $\bs U\in [0,S_1]$
is chosen uniformly and independently.
	
Since $\omid{S_1}=\infty$, Birkhoff's pointwise ergodic theorem~\cite{bookPe89} implies
that {$\lim_n (T_1+\cdots+T_n)/n=\infty$}. This in turn implies that $\lim_r \card{N_r(0)}/r = 0$.
Therefore, Lemma~\ref{lem:MDPstronger2} gives that $\contentH{1}{\infty}(\Phi)=\infty$; i.e., $\measH{1}(\Phi)=0$.
\end{proof}

\iffalse
\invisible{\todel{{The rest of this subsection is incomplete.}
	
\begin{definition}
Let $b\geq 2$ be an integer. The \textbf{stationary $b$-adic covering} of $\mathbb R$ is defined as follows.
Let $a_0,a_1,\ldots$ be i.i.d. random numbers uniformly chosen in $[0,b)$ and $c_n:=\sum_{i=0}^n a_i b^i$.
The covering is defined by
\[
\{(c_n+k b^{n+1}, c_n + (k+1)b^{n+1}]: n\geq 0, k\in\mathbb Z \}.
\]
By choosing $a_i$'s in $\{0,\ldots,b-1\}$ instead, one can define the stationary $b$-adic
covering of $\mathbb Z$ similarly. The $k$-dimensional versions are also defined similarly.
\end{definition}
	
A good property of this covering, apart from stationarity under shifts,
is that the intervals are nested. Moreover, the following guess is the key point.
	
\begin{guess}
For the Hausdorff dimension, it is enough to consider only the coverings by $b$-adic intervals.
Moreover, coverings by disjoint $b$-adic intervals (analogue of the last paragraph of Page~11 of~\cite{bookBiPe17}). If true, then can we mimic the proof of Billingsley's lemma in~\cite{bookBiPe17} to obtain a stronger result than Lemma~\ref{lem:lowerboundiid}?
	\end{guess}
}}
\fi

\subsection{Connections to Birkhoff's Pointwise Ergodic Theorem}
\del{This subsection discusses a corollary of the unimodular Billingsley lemma. The reader may skip it at first reading.}

The following corollary of the unimodular Billingsley lemma is of independent interest.
Note that the statement does not involve dimension. 

\begin{theorem}
\label{thm:birkhoff}
Let $[\bs D, \bs o]$ be a unimodular discrete space. 
For any two equivariant weight functions $\bs w_1$ and $\bs w_2$, 
if $\omid{\bs w_1(\bs o)}<\infty$ and  $\bs w_2(\cdot)$ is non-degenerate a.s., then
	\[
	\growthl{\bs w_1(N_r(\bs o))}\leq \growthu{\bs w_2(N_r(\bs o))}, \quad a.s.
	\]
In particular, if $\bs w_1(N_r(\bs o))$ and $\bs w_2(N_r(\bs o))$ have well defined growth rates,
then their growth rates are equal.
\end{theorem}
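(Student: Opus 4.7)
The plan is to prove the first inequality by contradiction, combining the stronger mass distribution principle with Lemma~\ref{lem:lowerboundiid} after conditioning on an invariant event. Both $\growthu{\bs w(N_r(\bs o))}$ and $\growthl{\bs w(N_r(\bs o))}$ are independent of the choice of root, by the same inclusions $N_r(o)\subseteq N_{r+d(o,v)}(v)$ noted at the start of Section~\ref{sec:volumeGrowth}; hence any event defined in terms of them is re-rooting invariant. Assuming the theorem fails, there is positive probability that $\growthl{\bs w_1(N_r(\bs o))}>\growthu{\bs w_2(N_r(\bs o))}$, so I would pick rationals $\alpha<\beta$ for which the invariant event
\[
A:=\bigl\{\growthu{\bs w_2(N_r(\bs o))}<\alpha,\ \growthl{\bs w_1(N_r(\bs o))}>\beta\bigr\}
\]
still has positive probability.

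Next, I would condition the marked unimodular space $[\bs D,\bs o;(\bs w_1,\bs w_2)]$ on $A$. Since $\identity{A}$ depends only on the space and not on the chosen root, the transport $g(u,v)\identity{A}$ is equivariant whenever $g$ is; applying the mass transport principle to it and dividing by $\myprob{A}$ shows that the conditional distribution is again unimodular. On this conditioned space, $\bs w_2$ is still non-degenerate almost surely and satisfies $\bs w_2(N_r(\bs o))\leq r^{\alpha}$ for all sufficiently large $r$ almost surely, so Lemma~\ref{lem:MDPstronger2}\eqref{lem:MDPstronger2:2} yields $\dimH{\bs D}\leq\alpha$. Simultaneously, $\omidCond{\bs w_1(\bs o)}{A}\leq \omid{\bs w_1(\bs o)}/\myprob{A}<\infty$ and $\growthl{\bs w_1(N_r(\bs o))}\geq\beta$ almost surely on the conditioned space, so Lemma~\ref{lem:lowerboundiid}\eqref{part:lem:lowerbound:liminf} gives $\dimH{\bs D}\geq\beta>\alpha$, contradicting the previous bound.

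The step I expect to be the main obstacle is the conditioning, i.e., verifying that conditioning on the invariant event $A$ preserves unimodularity and carries the weight functions along as marks; this hinges crucially on the root-independence of the growth rates, which makes $\identity{A}$ a legitimate equivariant factor. Once this is granted, the remainder is a direct combination of the two lemmas. The ``in particular'' assertion about well defined growth rates then follows by applying the first inequality in both directions, with the roles of $\bs w_1$ and $\bs w_2$ interchanged, whenever both weight functions are integrable and non-degenerate.
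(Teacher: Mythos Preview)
Your proposal is correct and follows essentially the same route as the paper: argue by contradiction, condition on a root-invariant event of positive probability to obtain a new unimodular space, and then derive incompatible upper and lower bounds on its unimodular Hausdorff dimension. The paper packages the two bounds by invoking the unimodular Billingsley lemma (Theorem~\ref{thm:billingsley}) twice, whereas you cite its ingredients Lemma~\ref{lem:MDPstronger2} and Lemma~\ref{lem:lowerboundiid} directly, and you parameterize the bad event with a pair of rationals $\alpha<\beta$ rather than a single $\epsilon>0$; these are cosmetic differences. Your remark that the ``in particular'' clause needs the hypotheses on both $\bs w_1$ and $\bs w_2$ so that the inequality can be applied in both directions is also accurate.
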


Note that the condition $\omid{\bs w_1(\bs o)}<\infty$ is necessary as shown in Remark~\ref{rem:bil-nec}.

\begin{proof}
Let $\epsilon>0$ be arbitrary and
$$A:=\{[D,o]\in\dstar: \growthl{\bs w_1(N_r(o))}> \growthu{\bs w_2(N_r(o))}+\epsilon\}.$$ 
It can be seen that $A$ is a measurable subset of $\dstar$. Assume $\myprob{[\bs D,\bs o]\in A}>0$. 
Denote by $[\bs D',\bs o']$ the random \rooted{} discrete space obtained by conditioning $[\bs D,\bs o]$ on $A$.
Since $A$ does not depend on the root (i.e., if $[D,o]\in A$, then $\forall v\in D: [D,v]\in A$),
by a direct verification of the mass transport principle~(I.2.1),
one can show that $[\bs D',\bs o']$ is unimodular.
So by using the unimodular Billingsley
lemma (Theorem~\ref{thm:billingsley}) twice, one gets
\[
\essinf \left(\growthl{\bs w_1(N_r(\bs o'))}\right)\leq \dimH{\bs D'}\leq \essinf \left(\growthu{\bs w_2(N_r(\bs o'))}\right).
\]
By the definition of $A$, this contradicts the fact that $[\bs D',\bs o']\in A$ a.s.
So $\myprob{[\bs D,\bs o]\in A}=0$ and the claim is proved.
\end{proof}

\begin{remark}
Theorem~\ref{thm:birkhoff} is a generalization of a weaker form of Birkhoff's pointwise ergodic
theorem as explained below. In the cases where $\bs D$ is either $\mathbb Z$, the Palm version
of a stationary point process in $\mathbb R^k$ or a point-stationary point process in $\mathbb R$,
Birkhoff's pointwise ergodic theorem (or its generalizations) implies that
$\lim {\bs w_1(N_r(\bs o))}/{\bs w_2(N_r(\bs o))} = \omid{\bs w_1(0)}/\omid{\bs w_2(0)}$ a.s.
This is stronger than the claim of Theorem~\ref{thm:birkhoff}.
Note that Theorem~\ref{thm:birkhoff} implies nothing about 
$\lim {\bs w_1(N_r(\bs o))}/{\bs w_2(N_r(\bs o))}.$ 
On the other side, note that \textit{amenability} is not assumed in this Theorem,
which is a general requirement in the study of ergodic theorems.
However, it will be proved in~\cite{I} that, roughly speaking, non-amenability implies 
$\growthu{\bs w_2(N_r(\bs o))}=\infty$, which makes the claim of Theorem~\ref{thm:birkhoff} trivial
in this case. 
%\invisible{What about $\growthl{\bs w}$?}
In this case, using exponential gauge functions seems more interesting.
	
\todel{
\begin{enumerate}[(i)]
\item \label{rem:birkhoff:Z} If $\bs D=\mathbb Z$, then $\bs w_1$ and $\bs w_2$ are stationary
under the shift $n\mapsto n-1$ (see Example~I.2.6).
Therefore, Birkhoff's pointwise ergodic theorem (see e.g., \cite{bookPe89})
implies that $\lim {\bs w_1(N_r(\bs o))}/{\bs w_2(N_r(\bs o))} = \omid{\bs w_1(0)}/\omid{\bs w_2(0)}$ a.s.
\fra{In this case, the last relation is stronger than the claim of
Theorem~\ref{thm:birkhoff} in that this theorem only gives that if 
$w_1$ and $w_2$ have well defined growth rates, then these rates coincide.}

\item If $[\bs D, \bs o]$ is a point-stationary point process in $\mathbb R$,
then the above argument still holds by using stationarity under shifting 
the origin to its next point (see Example~I.2.6).
			
\item If $[\bs D, \bs o]$ is the Palm version of a stationary point process, 
the cross-ergodic theorem (see e.g., \cite{bookBaBr94}) also gives the property stated in~\eqref{rem:birkhoff:Z}.

\invisible{\marginpar{\fra{What conclusion exactly}}
\item If $[\bs D, \bs o]$ is an \textit{equivariantly amenable} unimodular graph; e.g., the Cayley graph of an amenable group, then the conclusion of the first part holds. See Section~8 of~\cite{processes} (amenability of unimodular discrete spaces is discussed in~\cite{I}). 
\mar{later: modify after finalizing contents of~\cite{I}.} {\textbf{Update:} There are amenable spaces where the balls are not Folner sets. So I think the argument is wrong in these cases.}
\item (Later: Write actions of amenable groups? It might need new discussions on how to create a unimodular discrete space).
			}

\end{enumerate}

\invisible{{\textbf{Update:} We conjecture that in the non-amenable case, $\dimH{\bs D}=\infty$. If the conjecture is true and the amenable case is proved (see the above update), then all cases are implied by Birkhoff's pointwise ergodic theorems.}}
}
%\mar{\invisible{I think we should say here that we conjecture that $\growthl{\bs w}$ and $\growthu{\bs w}$ depend only on $\bs D$ and not on $\bs w$.}}
\end{remark}

{
\begin{problem}
	\label{prob:growth}
	Is it true that for every unimodular discrete space $[\bs D, \bs o]$, the growth rates 
	%In the setting of Theorem~\ref{thm:birkhoff}, is it always 
	%true that 
	$\growthu{\bs w(N_r(\bs o))}$ and $\growthl{\bs w(N_r(\bs o))}$ do not depend on  $\bs w$ as long as $0<\omid{\bs w(\bs o)}<\infty$?
\end{problem}
}

\subsection{Notes and Bibliographical Comments}

As already mentioned, the unimodular mass distribution principle and the unimodular Billingsley lemma have analogues in the continuum setting (see e.g., \cite{bookBiPe17}) and are named accordingly.  
Note however that there is no direct or systematic reduction to these continuum results. 
For instance, in the continuum setting, one should assume that the space under study is a subset of the Euclidean space, or more generally, satisfies the \textit{bounded subcover property} ({see e.g.,~\cite{bookBiPe17}}). Theorem~\ref{thm:billingsley} does not require such assumptions.
%For instance, the upper bound in Theorem~\ref{thm:billingsley}
%is analogous to Billingsley's lemma in the continuum setting (see e.g., Lemma~3.1 in~\cite{bookBiPe17}).
%In Theorem~\ref{thm:billingsley}, there is no need to assume $[\bs D, \bs o]$ is embedded in $\mathbb R^k$
%or that the \textit{bounded subcover property} holds (see Remark~3.2 in~\cite{bookBiPe17}).
Note also that the term $\growthu{\bs w(N_r(\bs o))}$ in Theorem~\ref{thm:billingsley}
does not depend on the \rooot{} in contrast to the analogous term in the continuum version.
Similar observations can be made on Theorem \ref{thm:mdp-simple}.

\section{{Examples}}
\label{sec:examples}

%\mar{{Add the conjectured examples which are mentioned in Part III (percolation, SARW, spanning forests, planar maps such as UIPT)?}}
{This section presents some examples for illustrating the results of the previous section. It also provides further results on the examples introduced in Section~I.4.}

%{This section provides further results on the examples introduced in Section~I.4 and also presents some new examples.}
\del{The structure of this section is analogous to that of Section~I.4.}
%It provides further results on the examples introduced there. Some new examples are also presented.

\subsection{General Unimodular Trees}
%\subsection{Unimodular Trees with Infinitely Many Ends}
\label{subsec:trees}
%{Unimodular trees with finitely many ends (i.e., zero, one or two ends) are discussed in Subsection~I.\ref{I-subsec:trees}. %This subsection discusses the remaining case.}
{The following is a direct corollary of Theorem~I.4.2 and the unimodular Billingsley lemma. Since the statement does not involve dimension, it is of independent interest and believed to be new.

\begin{corollary}
	For every unimodular one-ended tree $[\bs T, \bs o]$ and every equivariant weight function $\bs w$, almost surely,
	\[
		\decayu{\myprob{h(\bs o)=n}} \leq \growthu{\bs w(N_r(\bs o))} \leq \growthu{\omid{\bs w(N_r(\bs o))}}.
	\]
\end{corollary}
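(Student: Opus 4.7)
My plan is to derive the corollary by chaining Theorem~I.4.2 with the unimodular Billingsley lemma (Theorem~\ref{thm:billingsley}), converting essential-infimum bounds into almost-sure bounds via the fact that $\dimH{\bs T}$ is deterministic.

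First, I would invoke Theorem~I.4.2, which (for a unimodular one-ended tree) identifies $\dimH{\bs T}$ with $\decayu{\myprob{h(\bs o)=n}}$. This is a deterministic quantity. Next, applying Theorem~\ref{thm:billingsley} to $[\bs T, \bs o]$ with the given equivariant weight function $\bs w$ yields
\[
\dimH{\bs T} \;\leq\; \essinf\!\bigl(\growthu{\bs w(N_r(\bs o))}\bigr) \;\leq\; \growthu{\omid{\bs w(N_r(\bs o))}}.
\]
Because $\dimH{\bs T}$ is a constant, the first of these bounds is equivalent to $\growthu{\bs w(N_r(\bs o))} \geq \dimH{\bs T}$ almost surely. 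Substituting the identification from Theorem~I.4.2 gives
\[
\decayu{\myprob{h(\bs o)=n}} \;=\; \dimH{\bs T} \;\leq\; \growthu{\bs w(N_r(\bs o))} \quad \text{a.s.},
\]
which is the left-most inequality of the corollary.

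For the right-most inequality $\growthu{\bs w(N_r(\bs o))} \leq \growthu{\omid{\bs w(N_r(\bs o))}}$ a.s., I would appeal to Lemma~\ref{lem:logbound}, which is the same tool used in the proof of Theorem~\ref{thm:billingsley} for that last link of the chain. If that lemma is only stated in the essinf form, a standard Markov plus Borel--Cantelli argument along the dyadic subsequence $r_n = 2^n$ closes the gap: setting $\alpha := \growthu{\omid{\bs w(N_r(\bs o))}}$ and fixing $\varepsilon>0$, Markov's inequality yields $\myprob{\bs w(N_{2^n}(\bs o)) > 2^{n(\alpha+2\varepsilon)}} \leq C\cdot 2^{-n\varepsilon}$ for all large $n$; summability and Borel--Cantelli give the bound along the dyadic scale a.s., and monotonicity of $r \mapsto \bs w(N_r(\bs o))$ promotes it to all $r$, so $\growthu{\bs w(N_r(\bs o))} \leq \alpha + 2\varepsilon$ a.s., and $\varepsilon \downarrow 0$ finishes.

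I do not foresee any substantial obstacle: the only conceptually delicate point is the passage from an essinf bound to an almost-sure bound, which is immediate here because $\dimH{\bs T}$ is deterministic. Everything else is a direct concatenation of previously stated results.
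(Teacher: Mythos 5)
Your proof is correct and follows the same route the paper indicates (the paper explicitly states this is ``a direct corollary of Theorem~I.4.2 and the unimodular Billingsley lemma'' and gives no further detail). The essential-infimum-to-almost-sure conversion you perform is exactly right: since $\dimH{\bs T}$ is a deterministic scalar, the Billingsley bound $\dimH{\bs T}\leq\essinf(\growthu{\bs w(N_r(\bs o))})$ is equivalent to $\dimH{\bs T}\leq\growthu{\bs w(N_r(\bs o))}$ a.s., and Theorem~I.4.2 supplies $\decayu{\myprob{h(\bs o)=n}}\leq\dimH{\bs T}$.

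Two small comments. First, your contingency plan for the right-hand inequality is unnecessary: Lemma~\ref{lem:logbound} as stated in the appendix already gives the almost-sure bound $\growthu{X_n}\leq\growthu{\omid{X_n}}$ for monotone nonnegative $(X_n)$, and its proof is precisely the dyadic Markov bound you describe (with a direct tail estimate in place of Borel--Cantelli). So the right inequality follows directly from that lemma applied to $X_r=\bs w(N_r(\bs o))$, without having to pass through the Billingsley chain at all. Second, for the left inequality you are implicitly invoking the hypothesis $\omid{\bs w(\bs o)}>0$ of Theorem~\ref{thm:billingsley} (see Remark~\ref{rem:bil-nec}); the corollary's statement omits this non-degeneracy condition, and for $\bs w\equiv 0$ the left inequality is vacuous or false. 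This is an imprecision in the stated corollary rather than a gap in your argument, but it is worth flagging that your proof, like the paper's, requires $\bs w$ to be non-degenerate.
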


The rest of this subsection is focused on unimodular trees with infinitely many ends.}

\begin{proposition}
	\label{prop:tree-expGrowth}
	%\mar{1. later: Acknowledge Lyons.\\ 2. define exponential growth?}
	Let $[\bs T, \bs o]$ be a unimodular tree with infinitely many ends\del{ a.s.}
	such that $\omid{\mathrm{deg}(\bs o)}<\infty$. Then $\bs T$ has exponential growth a.s. {and $\dimH{\bs T}=\infty$}.
\end{proposition}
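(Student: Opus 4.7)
The plan is to first establish exponential growth of $\card{N_r(\bs o)}$ almost surely, and then deduce $\dimH{\bs T}=\infty$ by an immediate application of the unimodular Billingsley lemma (Theorem~\ref{thm:billingsley}) with $\bs w\equiv 1$: exponential growth forces $\growthl{\card{N_r(\bs o)}}=\infty$ a.s., and the lower bound of Billingsley then gives $\dimH{\bs T}=\infty$.

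The substantive content is producing the exponential growth. I would extract from $\bs T$ an equivariant subtree that branches. Define the \emph{core}
\[
\bs T^\ast := \{v\in\bs T : \bs T\setminus\{v\}\text{ has at least two infinite components}\},
\]
and the set of \emph{trifurcations}
\[
\bs V_3 := \{v\in\bs T : \bs T\setminus\{v\}\text{ has at least three infinite components}\}.
\]
Both are equivariant subsets. A short graph-theoretic check shows that $\bs T^\ast$ is a connected subtree of $\bs T$ and that the $\bs T^\ast$-degree of any $v\in\bs T^\ast$ equals the number of infinite components of $\bs T\setminus\{v\}$; in particular, every vertex of $\bs T^\ast$ has $\bs T^\ast$-degree at least $2$, and $\bs V_3$ consists precisely of the vertices of $\bs T^\ast$-degree at least $3$. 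Since $\bs T$ has infinitely many ends a.s., $\bs T^\ast$ cannot reduce to a single bi-infinite path, hence $\bs V_3$ is non-empty a.s. By the non-degeneracy equivalence (Lemma~I.2.14, as recalled before Definition~\ref{def:weight}), this yields
\[
p := \myprob{\bs o\in\bs V_3} > 0.
\]

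Next I would upgrade this positive density into exponential growth of balls. After biasing by $\identity{\bs o\in\bs T^\ast}$ to place the root in the core, I would argue via a unimodular ergodic-type statement (in the spirit of Theorem~\ref{thm:birkhoff} of the present paper) that along almost every geodesic ray in $\bs T^\ast$ issuing from $\bs o$, trifurcations are encountered with positive asymptotic density. The hypothesis $\omid{\deg(\bs o)}<\infty$ enters here to ensure that inter-trifurcation gaps along geodesics in $\bs T^\ast$ have finite mean (via a mass transport on $\bs T^\ast$), so that the positive density $p$ of $\bs V_3$ translates to positive density along rays rather than being concentrated in sparse regions. Each trifurcation spawns an extra infinite branch of $\bs T^\ast$; cataloguing distinct rays by the binary record of their choices at trifurcations yields
\[
\card{N_r(\bs o)\cap\bs T^\ast} \geq \exp(cr)
\]
for some $c>0$ and all large $r$ a.s., whence exponential growth of $\card{N_r(\bs o)}$ itself.

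The main obstacle is the stationarity claim in the last step: converting a positive \emph{density} of trifurcations into a positive exponential \emph{growth rate} of balls. A priori, the trifurcations could be arranged so that, while globally dense, they cluster in thin regions that do not intersect typical geodesic rays in a way that produces multiplicative branching at every scale. Ruling this out rigorously requires both the finite-expected-degree assumption (to control inter-trifurcation gaps) and a version of the ergodic theorem adapted to geodesic rays in unimodular trees; once these are in place, the ray-counting argument is essentially bookkeeping.
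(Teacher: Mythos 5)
Your approach diverges substantially from the paper's: the paper deduces the result in three lines from heavy external machinery — Corollary~8.10 of~\cite{processes} gives non-amenability, Theorem~8.9 of~\cite{processes} gives $p_c<1$, and then $p_c^{-1}=$ branching number $>1$ forces exponential growth — whereas you attempt a self-contained argument via the core $\bs T^\ast$ and the trifurcation set $\bs V_3$. Your opening moves are sound: $\bs T^\ast$ is a connected subtree, every vertex of $\bs T^\ast$ has $\bs T^\ast$-degree at least $2$, $\bs V_3$ is non-empty a.s.\ because $\bs T^\ast$ has infinitely many ends, and Lemma~I.2.14 then gives $\myprob{\bs o\in\bs V_3}>0$. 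The final reduction from exponential growth to $\dimH{\bs T}=\infty$ via Theorem~\ref{thm:billingsley} with $\bs w\equiv 1$ is also correct and matches the paper.

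The gap is the step you yourself flag as the main obstacle, and it is a genuine one, not mere bookkeeping. You need to go from the global statement ``$\myprob{\bs o\in\bs V_3}=p>0$'' to the local statement ``along typical geodesic rays, trifurcations have positive density, uniformly enough to give $\card{N_r(\bs o)}\geq e^{cr}$.'' There is no pointwise ergodic theorem along geodesic rays of a unimodular tree with infinitely many ends available to you: unlike the one-ended case, there is no canonical equivariant ray to shift along, and the set of rays from $\bs o$ carries no natural invariant measure under which ``almost every ray'' would make sense. Theorem~\ref{thm:birkhoff} of this paper is of no help here — it compares growth rates of two weight functions over balls, it says nothing about densities along one-dimensional rays. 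Nor does $\omid{\deg(\bs o)}<\infty$ straightforwardly bound inter-trifurcation gaps along rays: a mass-transport argument can control the \emph{average} gap length around a trifurcation, but translating that into a uniform or even typical lower bound on the number of trifurcations in $N_r(\bs o)$ along \emph{every} ray, at \emph{every} scale $r$, is precisely the kind of quantitative control that the branching-number/percolation machinery in~\cite{processes} is designed to deliver. As it stands, your sketch re-derives exactly the need for that machinery rather than replacing it, so the proof is not complete.
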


{In fact, the assumption $\omid{\mathrm{deg}(\bs o)}<\infty$ is not necessary. Also, the graph-distance metric on $\bs T$ can be replaced by an arbitrary equivariant metric. These will be proved in~\cite{I}.
}

The following proof uses the definitions and results of~\cite{processes}, but {they} \del{the definitions }are not recalled for brevity.
\begin{proof}[Proof of Proposition~\ref{prop:tree-expGrowth}]
	By Corollary~8.10 of~\cite{processes}, $[\bs T, \bs o]$ is \textit{non-amenable} (this will be discussed further in~\cite{I}). 
	So Theorem~8.9 of~\cite{processes} implies that the critical probability $p_c$ of percolation on $\bs T$ is
	less than one {with positive probability. In fact, it can be shown that $p_c<1$ a.s. (if not, condition on the event $p_c=1$ to get a contradiction).}
	%\mar{In fact, $p_c<1$ with positive probability. Should we explain why it holds a.s.?} 
	For any tree, $p_c$ is equal to the inverse of the \textit{branching number}.
	So the branching number is more than one, which implies that the tree has exponential growth. {Finally, the unimodular Billingsley lemma (Theorem~\ref{thm:billingsley}) implies that $\dimH{\bs T}=\infty$.}
\end{proof}

{
	The following example shows that the Minkowski dimension can be finite.
	\begin{example}
		%\mar{\invisible{Another example (with edge lengths): $T_3$ equipped with heavy tailed edge lengths. A similar proof shows that $udim_M<\infty$.}}
		Let $T$ be the 3-regular tree. Split each edge $e$ by adding a random number $\bs l_e$ of new vertices and let $\bs T_0$ be the resulting tree. Let $v_e$ be the middle vertex in this edge (assuming $\bs l_e$ is always odd) and assign marks by $\bs m_0(v_e):=\bs l_e$. 
		%Assign marks to the vertices as follows. Let the mark of the original vertices be $0$. For every edge $e$ of $T$, let the mark of the middle vertex 
		%Color the original vertices by blue and the middle vertex of each edge (assuming $\bs l_e$ is always odd) by red. 
		Assume that the random variables $\bs l_e$ are i.i.d. If $\omid{\bs l_e}<\infty$, then one can bias the probability measure and choose a new root to obtain a unimodular marked tree, namely $[\bs T,\bs o;\bs m]$ (see Example~9.8 of~\cite{processes} or~\cite{shift-coupling}). 
		It will be shown below that $\dimMu{\bs T}$ may be finite. 
		\\
		Let $\bs R$ be an arbitrary equivariant $r$-covering of $\bs T$. Consider the set of middle vertices $\bs A_r:=\{v\in\bs T: \bs m(v)\geq r\}$. Since these vertices have pairwise distance at least $r$, they belong to different balls in the covering. So, by the mass transport principle, one can show that $\rho(\bs R)\geq \rho(\bs A_r)$, where $\rho(\cdot)=\myprob{\bs o\in \cdot}$ denotes the intensity. On the other hand, let $\bs S$ be the equivariant subset of vertices with degree 3. Send unit mass from every point of $\bs A_r$ to its two closest points in $\bs S$. Then the mass transport principle implies that $2\rho(\bs A_r)=3\rho(\bs S)\myprob{\bs l_e\geq r}$. Hence, $\rho(\bs R)\geq \frac 3 2 \rho(\bs S)\myprob{\bs l_e\geq r}$. This gives that $\dimMu{\bs T}\leq \decayu{\myprob{\bs l_e\geq r}}$, which can be finite. In fact, if $\decay{\myprob{\bs l_e\geq r}}$ exists, Proposition~\ref{prop:infEndslowerbound}	 below implies that $\dimM{\bs T}=\decay{\myprob{\bs l_e\geq r}}$.
	\end{example}

The following proposition gives a lower bound on the Minkowski dimension. % of every unimodular tree $[\bs T,\bs o]$ with infinitely many ends. 

\begin{proposition}
	\label{prop:infEndslowerbound}
	Let $[\bs T, \bs o]$ be a unimodular tree with infinitely many ends and without leaves. Let $\bs S$ be the equivariant subset of vertices of degree at least 3. For every $v\in\bs S$, let $\bs w(v)$ be the sum of the distances of $v$ to its neighbors in $\bs S$. If $\omid{\bs w(\bs o)^{\alpha}}<\infty$, then $\dimMl{\bs T}\geq \alpha$.
\end{proposition}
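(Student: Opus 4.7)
The plan is to apply Lemma~\ref{lem:lowerboundiid}\eqref{part:lem:lowerbound:all} with the equivariant weight function $\bs w^*(v):=\mathbb{1}_{v\in\bs S}$, for which $\bs w^*(N_r(\bs o))=|\bs S\cap N_r(\bs o)|$. It then suffices to show that for every $\beta<\alpha$ there exists $c>0$ such that, almost surely, $|\bs S\cap N_r(\bs o)|\geq c\,r^{\beta}$ for all sufficiently large $r$. Letting $\beta\uparrow\alpha$ then yields $\dimMl{\bs T}\geq\alpha$.

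The key geometric structure is the reduced tree $\bar{\bs T}$, whose vertex set is $\bs S$ and whose edges are the maximal degree-$2$ paths of $\bs T$ joining consecutive branching vertices (weighted by their $\bs T$-length). Since $\bs T$ has no leaves, every vertex of $\bar{\bs T}$ has degree at least $3$, so the graph-distance $k$-ball $B_k\subset\bar{\bs T}$ around any vertex contains at least $3\cdot 2^k-2$ points. Moreover, for adjacent $u,u'\in\bs S$, the reduced edge length satisfies $d_{\bs T}(u,u')\leq\min(\bs w(u),\bs w(u'))$.

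Fix $\beta<\alpha$ and set $k=k(r):=\lceil\beta\log_2 r\rceil$. On the event $E_r^c:=\{\forall u\in B_k(\bs o):\ \bs w(u)\leq r/k\}$, every reduced edge inside $B_k$ has $\bs T$-length at most $r/k$, so any path of $k$ such edges from $\bs o$ has total $\bs T$-length at most $r$; hence $B_k(\bs o)\subseteq N_r(\bs o)$ and $|\bs S\cap N_r(\bs o)|\geq 3\cdot 2^k-2\gtrsim r^{\beta}$. The mass transport principle applied in $\bar{\bs T}$ (whose graph distance is symmetric) to the function $f(u,v):=\mathbb{1}_{u,v\in\bs S}\mathbb{1}_{d_{\bar{\bs T}}(u,v)\leq k}\mathbb{1}_{\bs w(v)>r/k}$ yields
\[
\omid{|\{u\in B_k(\bs o):\bs w(u)>r/k\}|}=\omid{|B_k(\bs o)|\,\mathbb{1}_{\bs o\in\bs S,\,\bs w(\bs o)>r/k}},
\]
which together with the Markov bound $\myprob{\bs w(\bs o)>t}\leq\omid{\bs w(\bs o)^{\alpha}}/t^{\alpha}$ produces a polynomial decay bound on $\myprob{E_r}$. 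Summing $\myprob{E_{r_j}}$ along the dyadic sequence $r_j:=2^j$ yields a convergent series for $\beta$ sufficiently small, so Borel--Cantelli together with the monotonicity of $r\mapsto|\bs S\cap N_r(\bs o)|$ delivers the required almost-sure polynomial growth.

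The main obstacle is controlling the combinatorial factor $|B_k|$ in the above mass-transport estimate: a direct bound $|B_k|\leq D^k$ based on a degree bound $D$ of $\bar{\bs T}$ introduces an extra factor $r^{\beta\log_2 D}$ and yields only $\dimMl{\bs T}\geq\alpha/\log_2 D$. To close the gap up to every $\beta<\alpha$ --- and to handle the case of unbounded degrees --- one performs the exploration inside an equivariantly constructed subgraph of $\bar{\bs T}$ of bounded degree (say $3$), obtained for instance by endowing each $v\in\bs S$ with an i.i.d.\ uniform mark and retaining at each vertex only the three neighbors in $\bs S$ carrying the smallest marks. In this subgraph $|B_k|=\Theta(2^k)$, matching the $3\cdot 2^k$ lower bound provided by the minimum-degree condition, and the estimate then closes for every $\beta<\alpha$.
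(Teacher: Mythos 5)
Your proposal takes a genuinely different route from the paper's, but it has two substantive gaps.

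First, the bookkeeping between Borel--Cantelli and Lemma~\ref{lem:lowerboundiid}\eqref{part:lem:lowerbound:all} does not close. In the proof of part~\eqref{part:lem:lowerbound:all}, the random variable $M:=\sup\{r:\bs w(N_r(\bs o))<r^{\kappa}\}$ must be bounded by a \emph{deterministic} constant $r_0$ so that the term $\myprob{M\geq n}$ can be dropped from the covering estimate. Borel--Cantelli delivers only an almost-sure \emph{eventual} lower bound on $|\bs S\cap N_r(\bs o)|$, i.e.\ a realization-dependent threshold; that is exactly the hypothesis of part~\eqref{part:lem:lowerbound:liminf}, which gives $\dimH{\bs T}\geq\alpha$, not $\dimMl{\bs T}\geq\alpha$. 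If you instead feed your quantitative estimate back into the covering construction, you need $\myprob{M\geq n}=O(n^{-\beta})$ with $\beta<\kappa<\alpha$; your bound $\myprob{M\geq n}\lesssim(\log n)^{\alpha}n^{\kappa-\alpha}$ forces $\beta<\alpha-\kappa$, so the exponent is capped at $\alpha/2$. Second, the proposed equivariant bounded-degree subgraph of $\bar{\bs T}$ is not well defined: ``keep the three smallest-marked neighbors'' is a directed, asymmetric rule. The symmetric intersection can give a high-degree vertex fewer than three (possibly zero) retained edges, ruining the $3\cdot 2^k$ lower bound on $|B_k|$; the union has unbounded degree, ruining the $O(2^k)$ upper bound needed in the mass-transport/Markov step. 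Producing an equivariant subtree of $\bar{\bs T}$ with both min degree $\geq 3$ and bounded max degree is exactly where the difficulty lies, and the sketch does not resolve it.

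The paper avoids both problems by never taking a probabilistic coverage route. It defines $\bs w'(v)=\sum_u d(u,v)^{\alpha}$ for $v\in\bs S$ (sum over neighbors of $v$ in $\bs S$) and extends it to the degree-$2$ vertices by $\bs w'(v)=d(v,u_1)^{\alpha-1}+d(v,u_2)^{\alpha-1}$. The deterministic inductive Lemma~\ref{lem:regtree-weight}, applied to the reduced tree (minimum degree $3$, edge lengths $\geq 1$), yields a \emph{pointwise, uniform} bound $\bs w'(N_r(v))\geq c\,r^{\alpha}$ valid for every vertex and every $r$; no tail estimate, no random $r_0$, and no degree bound on $\bar{\bs T}$ are ever needed. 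The hypothesis $\omid{\bs w(\bs o)^{\alpha}}<\infty$ is converted to $\omid{\bs w'(\bs o)}<\infty$ by a mass-transport computation along the degree-$2$ segments, after which Lemma~\ref{lem:lowerboundiid}\eqref{part:lem:lowerbound:all} applies directly. The crucial idea your sketch is missing is to put $\alpha$-th powers of edge lengths \emph{into the weight itself} so that the ball-weight lower bound becomes a deterministic fact about trees with minimum degree $3$, rather than an event whose failure probability must be controlled.
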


The proof is based on the following simpler result. This will be used in Subsection~\ref{subsec:pwit} as well.

\begin{proposition}
	\label{prop:regree-distorted}
	Let $[\bs T,\bs o]$ be a unimodular tree such that the degree of every vertex is at least 3. Let $\bs d'$ be an equivariant metric on $\bs T$. Let $\bs w(v):=\sum_u \bs d'(v,u)$, where the sum is over the 3 neighbors of $v$ which are closest to $v$ under the metric $\bs d'$. If $\omid{\bs w(\bs o)^{\alpha}}<\infty$, then $\dimMl{\bs T,\bs d'}\geq\alpha$.
\end{proposition}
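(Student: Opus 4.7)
The strategy is to verify the hypothesis of part~(\ref{part:lem:lowerbound:exp}) of Lemma~\ref{lem:lowerboundiid} with the counting weight $\bs w'\equiv 1$, i.e., to show that $\omid{\exp(-\card{N_n(\bs o)}/n^{\alpha})}$ decays at rate at least $\alpha$ as $n\to\infty$. The main tool is the branching structure of $\bs T$: since every vertex has degree at least $3$, for each $v\in\bs T$ one may build an equivariant sub-tree $\mathcal B(v)\subseteq\bs T$ by first attaching the three $\bs d'$-closest neighbors of $v$ as the children of $v$, and subsequently attaching to each added vertex $u$ its three $\bs d'$-closest neighbors minus the one that is already the parent of $u$ in $\mathcal B(v)$ (if any), leaving at least two new descendants. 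Because $\bs T$ is a tree, distinct branches of $\mathcal B(v)$ do not meet, so $\mathcal B(v)$ has at least $3\cdot 2^{k-1}$ vertices at combinatorial depth $k$; moreover, the triangle inequality together with $\bs d'(v_i,v_{i+1})\leq \bs w(v_i)$ (valid since $v_{i+1}$ is among the three $\bs d'$-closest neighbors of $v_i$) shows that every depth-$k$ vertex $v_k$ with ancestor path $v=v_0,\ldots,v_k$ in $\mathcal B(v)$ satisfies $\bs d'(v,v_k)\leq \sum_{i=0}^{k-1}\bs w(v_i)$.

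Given this, I would pick a random depth-$k$ descendant $V_k$ of $\bs o$ in $\mathcal B(\bs o)$ by making independent uniform choices among the children at each branching, and use unimodularity of $[\bs T,\bs o]$ together with the mass transport principle applied to the weighted space $[\bs T,\bs o;\bs w]$ to see that each ancestor $v_i$ of $V_k$ has, up to a bounded reweighting, the law of the typical vertex $\bs o$. Combining this with $(x+y)^{\alpha}\leq x^{\alpha}+y^{\alpha}$ when $\alpha\leq 1$ and $(x+y)^{\alpha}\leq 2^{\alpha-1}(x^{\alpha}+y^{\alpha})$ when $\alpha\geq 1$ yields a moment bound of the form $\omid{\bs d'(\bs o,V_k)^{\alpha}}\leq Ck^{\max(1,\alpha)}$, with $C$ depending only on $\alpha$ and $\omid{\bs w(\bs o)^{\alpha}}$. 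By Markov's inequality, the expected fraction of depth-$k$ vertices of $\mathcal B(\bs o)$ lying outside $N_n(\bs o)$ is at most $Ck^{\max(1,\alpha)}/n^{\alpha}$.

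Taking $k=(\alpha+1)\log_2 n$ and applying reverse Markov to the bounded non-negative random variable $1-Y_k/(3\cdot 2^{k-1})$, where $Y_k$ counts depth-$k$ vertices of $\mathcal B(\bs o)$ inside $N_n(\bs o)$, one obtains $Y_k\geq (3/4)n^{\alpha+1}$ with probability at least $1-O((\log n)^{\max(1,\alpha)}/n^{\alpha})$. Since $\card{N_n(\bs o)}\geq Y_k$, it follows that
\[
\omid{\exp\bigl(-\card{N_n(\bs o)}/n^{\alpha}\bigr)}\leq \exp(-3n/4)+O((\log n)^{\max(1,\alpha)}/n^{\alpha}),
\]
which decays at rate exactly $\alpha$, so Lemma~\ref{lem:lowerboundiid}(\ref{part:lem:lowerbound:exp}) gives $\dimMl{\bs T,\bs d'}\geq \alpha$.

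\textbf{Main obstacle.} The delicate point is that the distances $\bs d'(\bs o,u)$ for distinct depth-$k$ vertices $u\in\mathcal B(\bs o)$ are strongly correlated, since such $u$ share ancestors in $\mathcal B(\bs o)$, so no standard concentration argument applies to $Y_k$. The plan sidesteps this by only using the first moment of $Y_k$ together with its deterministic upper bound $Y_k\leq 3\cdot 2^{k-1}$, which via reverse Markov already forces $Y_k$ to lie close to its maximum with high probability; this is the structural fact that makes the exponential-gauge criterion of Lemma~\ref{lem:lowerboundiid}(\ref{part:lem:lowerbound:exp}) tractable without any true concentration estimate.
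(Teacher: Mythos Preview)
Your argument has a genuine gap at the mass-transport step. You assert that unimodularity shows each $V_i$ has, ``up to a bounded reweighting,'' the law of $\bs o$, and hence $\omid{\bs w(V_i)^\alpha}\le C$ uniformly in $i$. But already for $i=1$ the mass-transport computation with $g(u,v)=\tfrac{1}{3}\bs w(v)^\alpha\identity{\{v\text{ among 3 closest neighbors of }u\}}$ gives
\[
\omid{\bs w(V_1)^\alpha}=\tfrac{1}{3}\,\omid{\bs w(\bs o)^\alpha\, N(\bs o)},
\]
where $N(\bs o)$ counts the neighbors $u$ of $\bs o$ for which $\bs o$ is among the three $\bs d'$-closest neighbors of $u$. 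Unimodularity forces $\omid{N(\bs o)}=3$, but $N(\bs o)$ is not bounded (a high-degree vertex can be a ``close neighbor'' to arbitrarily many of its neighbors), and nothing rules out positive correlation between $N(\bs o)$ and $\bs w(\bs o)^\alpha$. So finiteness of $\omid{\bs w(\bs o)^\alpha}$ does not yield a uniform bound on $\omid{\bs w(V_i)^\alpha}$, and your moment bound $\omid{\bs d'(\bs o,V_k)^\alpha}\le Ck^{\max(1,\alpha)}$ is unsupported. There is a secondary wrinkle as well: the random walk on $\mathcal B(\bs o)$ has branching factor $2$ or $3$ at each step, so $V_k$ is not uniform over depth-$k$ vertices and the identification of $\myprob{V_k\notin N_n(\bs o)}$ with the expected fraction of depth-$k$ vertices outside $N_n(\bs o)$ is not exact.

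The paper sidesteps all of this by a purely deterministic argument. It sets $\bs w'(v):=\sum_u \bs d'(v,u)^\alpha$ over the three closest neighbors, reduces (via Theorem~I.3.31) to edge-length metrics with $\bs d'\ge 1$, and proves by induction on $\lfloor r\rfloor$ (Lemma~\ref{lem:regtree-weight}) that $\bs w'(N_r(v))\ge c\,r^\alpha$ for every $v$ and every $r$, deterministically. Since $\bs w'(\bs o)\le 3\bs w(\bs o)^\alpha$ is integrable by hypothesis, part~(\ref{part:lem:lowerbound:all}) of Lemma~\ref{lem:lowerboundiid} applies directly. The point is that the branching structure already forces deterministic polynomial growth of the right weight, so no probabilistic propagation of moments along the tree is needed.
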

\begin{proof}
	Define $\bs w'(v):=\sum_{u} \bs d'(u,v)^{\alpha}$, where the sum is over the three closest neighbors of $v$.
	It is enough to assume that $\bs d'$ is generated by equivariant edge lengths since increasing the edge lengths does not increase the dimension (by Theorem~I.3.31). By the same argument, it is enough to assume $\bs d'(u,v)\geq 1$ for all $u\sim v$. Then, %similarly to the proof of Proposition~\ref{prop:infEndslowerbound}, one can show tht $\bs w'(\nei{r}{v})\geq cr^{\alpha}$ and deduce the claim.
	it can be seen that there exists a constant $c$, that depends only on $\alpha$, such that $\bs w'(\nei{r}{v})\geq c r^{\alpha}$ for all $v\in\bs T$ and $r\geq 0$ (this is implied by Lemma~\ref{lem:regtree-weight}). 
	Also, the assumption implies that $\omid{\bs w'(\bs o)}<\infty$.
	So Lemma~\ref{lem:lowerboundiid} implies that $\dimMl{T_3,\bs d'}\geq \alpha$ and the claim is proved.
\end{proof}

\begin{proof}[Proof of Proposition~\ref{prop:infEndslowerbound}]
	For $v\in \bs S$, let $\bs w'(v):=\sum_u d(u,v)^{\alpha}$, where the sum is over the neighbors of $v$ in $\bs S$. For $v\in\bs T\setminus\bs S$, if $u_1$ and $u_2$ are the two closest points of $\bs S$ to $v$, let $g(v,u_i):=d(u_i,v)^{\alpha-1}$ and $\bs w'(v):=g(v,u_1)+g(v,u_2)$.
	%define $\bs w'(v)$ similarly by summing over the two closest points of $\bs S$ to $v$.
	The assumption implies that $\omid{\bs w'(\bs o)}<\infty$ (use the mass transport principle for $g$ defined above).
	Similarly to Proposition~\ref{prop:regree-distorted}, there exists $c=c(\alpha)$, such that $\bs w'(\nei{r}{v})\geq c r^{\alpha}$ for all $v\in\bs T$ and $r\geq 0$ (this is implied by Lemma~\ref{lem:regtree-weight}) and the claim is proved.
%	
%	For $v\in \bs S$, let $\bs w'(v):=\sum_u d(u,v)^{\alpha}$, where the sum is over the neighbors of $v$ in $\bs S$. For $v\in\bs T\setminus\bs S$, if $u_1$ and $u_2$ are the two closest points of $\bs S$ to $v$, let $g(v,u_i):=d(u_i,v)^{\alpha-1}$ and $\bs w'(v):=g(v,u_1)+g(v,u_2)$.
%	%define $\bs w'(v)$ similarly by summing over the two closest points of $\bs S$ to $v$.
%	It can be seen that there exists a constant $c$, that depends only on $\alpha$, such that $\bs w'(\nei{r}{v})\geq c r^{\alpha}$ for all $v\in\bs T$ and $r\geq 0$ (this is implied by Lemma~\ref{lem:regtree-weight}). 
%	Also, the assumption implies that $\omid{\bs w'(\bs o)}<\infty$ (use the mass transport principle for $g$ defined above).
%	So Lemma~\ref{lem:lowerboundiid} implies that $\dimMl{T_3,\bs d'}\geq \alpha$ and the claim is proved.
\end{proof}
}

\del{
========================================

{\textbf{Update:} The proof of Thm \ref{thm:tree-expGrowth} is wrong. We should use ideas pertaining to amenability, which are discussed in Part III. So I suggest to move it to Part III and only mention it here. I suggest to keep here only these: Lemma~\ref{lem:tree-expGrowth}, Problem~\ref{prob:tree-infEndMinkowski}, and a shorter version of the claims (without proof) of Prop\ref{prop:regree-distorted} and Lemma~\ref{lem:regtree-generalization} (and with reference to the arXiv version). The latter will be used in PWIT, but I suggest to leave it to the reader as well. Do you agree? We can discuss more.}
\del{
\subsubsection{Unimodular Two-Ended Trees}
\label{subsec:two-ended}
%\mar{{I moved this to Subsection~\ref{sec:bounds} and deleted the proof.}}
\fra{The unimodular Billingsley lemma can be used to give
another proof of the property that the Minkowski and Hausdorff 
dimensions of any unimodular
two-ended tree are equal to one (Theorem~I.4.1).}

Let $[\bs T, \bs o]$ be a unimodular two-ended tree.
For all two-ended trees $T$ and $v\in T$, let $\bs w_T(v)$ be 1 if $v$
belongs to the trunk of $T$ and 0 otherwise.
It can be seen that $w$ is an equivariant process
(Definition~I.2.8). 
Let $c(v)$ be the distance of $v$ to the trunk of $T$.
For $n\in\mathbb N$ larger than $c(v)$, one has \fra{$\bs w(N_n(v))=2(n-c(v))+1$.}
Therefore, the unimodular Billingsley lemma (Theorem~\ref{thm:billingsley}) implies that $\dimH{\bs T}\leq 1$. 
On the other hand, Corollary~\ref{cor:graphs-lowerbound} implies that  $\dimMl{\bs T}\geq 1$.
Therefore, $\dimM{\bs T}=\dimH{\bs T}=1$.

\subsubsection{Unimodular Trees with Infinitely Many Ends}
}
\label{sec:trees-infEnds}

\begin{lemma}
\label{lem:tree-expGrowth}
%\mar{1. later: Acknowledge Lyons.\\ 2. define exponential growth?}
Let $[\bs T, \bs o]$ be a unimodular tree with infinitely many ends a.s.
such that $\omid{\mathrm{deg}(\bs o)}<\infty$. Then $\bs T$ has exponential growth a.s. 
\end{lemma}
The proof uses the definitions and results of~\cite{processes}, but the definitions are not recalled for brevity.
\begin{proof}
By Corollary~8.10 of~\cite{processes}, $[\bs T, \bs o]$ is \textit{non-amenable} (this will be discussed further in~\cite{I}). 
So Theorem~8.9 of~\cite{processes} implies that the critical probability $p_c$ of percolation on $\bs T$ is
less than one \forlater{with positive probability. In fact, it can be shown that $p_c<1$ a.s. (if not, condition on the event $p_c=1$ to get a contradiction).}
%\mar{In fact, $p_c<1$ with positive probability. Should we explain why it holds a.s.?} 
For any tree, $p_c$ is equal to the inverse of the \textit{branching number}.
So the branching number is more than one, which implies that the tree has exponential growth.
\end{proof}

The following theorem removes the assumption $\omid{\mathrm{deg}(\bs o)}<\infty$.

\begin{theorem}
\label{thm:tree-expGrowth}
If $[\bs T, \bs o]$ is a unimodular tree with infinitely many ends a.s.,
then $\bs T$ has exponential growth a.s. and  $\dimH{\bs T}=\infty$.
\end{theorem}
\begin{proof}
By the unimodular Billingsley lemma (Theorem~\ref{thm:billingsley}), it is enough to prove the first claim.
	
Split all edges in $\bs T$ into 3 equidistant parts by adding two new vertices on each edge. 
Let $\bs T_1$ be the resulting tree. Then, $[\bs T_1, \bs o]$ is not necessary unimodular, but,
by a suitable biasing 
(Definition~I.\ref{I-def:bias}) 
and changing the \rooot{}, one can obtain a unimodular tree
(see Example~9.8 of~\cite{processes} for the precise formula).
It can be seen that it is enough to prove the claim for the new unimodular tree.%\mar{say why?}. 
	
Hence, one can assume that every two vertices of $\bs T$ with degree at least
3 have distance at least 3 without loss of generality. 
For every vertex $v$ with degree at least 3, do the following: Let $v_1,\ldots, v_k$ be the neighbors of $v$. 
Let $\pi$ be a random permutation of $\{1,\cdots,k\}$ chosen uniformly.
Then delete the edge $vv_{\pi_i}$ and add an edge between $v_{\pi_i}$ and $v_{\pi_{i+1}}$ 
for every $1\leq i\leq k-1$ ($v$ is still adjacent with $v_{\pi_k}$). 
Do this for all vertices $v$ independently. One can see that this construction fits
in the context of equivariant processes (see Remark~I.2.12).
It follows that if $\bs T'$ denotes the resulting tree, then $[\bs T', \bs o]$ is unimodular.
Also, $\bs T'$ has infinitely many ends and the degree of each vertex is at most 3
(which is implied by the above assumption). So, Lemma~\ref{lem:tree-expGrowth} implies
that $\bs T'$ has exponential growth a.s. Since the graph-distance metric of $\bs T'$ is
at least half of the graph-distance metric of $\bs T$, it follows that $\bs T$
has also exponential growth. So the claim is proved.
\end{proof}

%The following conjecture is of independent interest beyond its connections to the dimension. The authors believe it is new.
%
%\begin{conjecture}
%Every equivariant metric (Definition~I.\ref{I-def:equivMetric}) on the 3-regular tree $T_3$ has
%exponential growth a.s. \textbf{Update:} This conjecture is implied by Theorem~III.\ref{III-thm:nonamenable}. What to do with it?
%\end{conjecture}
%
%Similarly to the proof of Theorem~\ref{thm:tree-expGrowth}, this implies that every every equivariant metric on every unimodular tree with infinitely many ends has exponential growth. 
%
%In~\cite{I}, a weaker result than the above conjecture will be proved (later: ref). It will be shown that $T_3$ with the new metric satisfies $\dimH{T_3}=\infty$. So, the unimodular Billingsley lemma implies that the new metric cannot have polynomial growth.

\forlater{
\begin{remark}
	\label{rem:tree-nonamenable}
	%{\mar{I added this}}
	The claim of Theorem~\ref{thm:tree-expGrowth} still holds if the graph-distance metric on $\bs T$ is replaced by any equivariant metric. This follows from the results of~\cite{I} on \textit{non-amenable} unimodular discrete spaces.
\end{remark}
}

%The Minkowski dimension seems more difficult to study in the setting of this subsection. 
%
%\begin{problem}
%	\label{prob:tree-infEndMinkowski}
%	Does there exist a unimodular tree with infinitely many ends, or an equivariant metric on $T_3$, that has finite unimodular Minkowski dimension?
%\end{problem}

%The following is a candidate solution for this problem. (Note for the intuition: I think Lemma~\ref{lem:lowerboundiid} does not work in this example. Note that the first term in Equation~(20.1) of~\cite{bookHa63} has polynomial tail. Add this intuition to the example? Another intuition is that the next proposition does not apply to it. Swap the example and the proposition?)

The following results study lower bounds on the Minkowski dimension in the setting of Problem~\ref{prob:tree-infEndMinkowski}.
Recall from Subsection~I.\ref{I-subsec:inf-ended} that an equivariant metric $\bs d'$ is said to be
\defstyle{generated by equivariant edge lengths} (or a \textit{geodesic metric}) if for every path
$v_1v_2\ldots v_k$, one has $\bs d'(v_1,v_k) = \sum_{i=1}^{k-1}\bs d'(v_i,v_{i+1})$.
In the following, the ball of radius $r$ under the metric $\bs d'$ and centered at $v$ is denoted by $N'_r(v)$.

\begin{proposition}
	\label{prop:regree-distorted}
	Let $\bs d'$ be an equivariant metric on the 3-regular tree $T_3$ which is generated by {equivariant edge lengths}. If the random variable $\sum_{v\sim \bs o} \bs d'(\bs o,v)$ has finite moment of order $\alpha$, then $\dimMl{T_3,\bs d'}\geq \alpha$. In particular, if it has finite moments of any order, then $\dimM{T_3,\bs d'} =\dimH{T_3,\bs d'}=\infty$.
\end{proposition}

\begin{proof}
It is enough to assume $\bs d'(x,y)\geq 1$ for all $x\sim y$ since increasing the edge lengths does
not increase the dimension (Theorem~I.3.31).
Consider the following equivariant weight function on $T_3$:
%\mar{\invisible{\mir{It seems $C=1$ works. Change it?}}}
\begin{equation}
\label{eq:thm:regtree-dimension:w}
\bs w(u):=C\sum_{v \sim u} \bs d'(u,v)^{\alpha},
\end{equation}
where $C$ is a constant such that
\begin{equation}
\label{eq:thm:regtree-dimension:C}
\forall x\in [0,1]: Cx^{\alpha} + (1-x)^{\alpha}\geq \frac 12.
\end{equation}
It is easy to see that such a $C$ exists. Now, Lemma~\ref{lem:regtree-weight} below,
which is a deterministic result, implies that $\bs w(N'_r(\bs o))\geq r^{\alpha}$ a.s. for every $r\geq 0$.
Also, the assumption on $\bs d'$ implies that $\bs w(\bs o)$ has finite mean.
So Lemma~\ref{lem:lowerboundiid} implies that $\dimMl{T_3,\bs d'}\geq \alpha$ and the claim is proved.
\end{proof}

The following lemma is used in the last proof.

\begin{lemma}
\label{lem:regtree-weight}
Let $\alpha<\infty$ and $(T,o)$ be a deterministic rooted tree such that
$\mathrm{deg}(o)\geq 2$ and $\mathrm{deg}(v)\geq 3$ for all $v\neq o$.
Let $d'$ be a metric on $T$ which is generated by a function on the edges such that $d'(\cdot)\geq 1$.
Define the weight function $w$ on $T$ and the constant $C$
by~\eqref{eq:thm:regtree-dimension:w} and~\eqref{eq:thm:regtree-dimension:C} respectively.
Then, for all $r\geq 0$, one has $w(N'_r(o))\geq r^{\alpha}$.
\end{lemma}

\begin{proof}
For $r\geq 0$, let $f(r)$ be the infimum value of $w(N'_r(o))$ for all trees
with the stated conditions. So one should prove $f(r)\geq r^{\alpha}$.
The claim is true for $r=0$. Also, if $0<r<1$, one has $N'_r(o)=\{o\}$ and the claim is trivial. The proof uses
induction on $\lfloor r\rfloor$. Assume that $r\geq 1$ and for all $s<\lfloor r\rfloor$, one has $f(s)\geq s^{\alpha}$.
For $y\sim o$, let $T_y$ be the connected component containing $y$ when the edge $(o,y)$ is removed.
It can be seen that $[T_y,y]$ satisfies the conditions of the lemma. Therefore, one obtains 
\begin{eqnarray*}
w(N'_r(o)) &=&  w(o) + \sum_{y: y\sim o}  w(N'_{r- d'(o,y)}(T_y, y))\\
&\geq &	 w(o) + \sum_{y: y\sim o} f(r- d'(o,y))\\
&\geq & \sum_{y: y\sim o} \left[C  d'(o,y)^{\alpha} + (r- d'(o,y))^{\alpha}\right]\\
&\geq & \mathrm{deg}(o)\cdot \min_{0\leq x \leq r}\{Cx^{\alpha} +  (r-x)^{\alpha}\}\\
&\geq & \mathrm{deg}(o) r^{\alpha}/2\\
&\geq & r^{\alpha},
\end{eqnarray*}
where the third line is by the definition of $ w(o)$ in~\eqref{eq:thm:regtree-dimension:w}
and the induction hypothesis, the fifth line is due to~\eqref{eq:thm:regtree-dimension:C},
and the last line is by the assumption $\mathrm{deg}(o)\geq 2$.
This implies that $f(r)\geq r^{\alpha}$ and the induction claim is proved.
\end{proof}

The following is a slight generalization of Proposition~\ref{prop:regree-distorted}, which will be used in Subsection~\ref{subsec:pwit}.

\begin{lemma}
\label{lem:regtree-generalization}
Let $[\bs T, \bs o]$ be a unimodular tree  in which the degree of every vertex is at least 3 a.s. 
Let $\bs d'$ be an equivariant metric on $[\bs T, \bs o]$ which is generated by equivariant edge lengths. 
For $u\in \bs T$, let $\bs w(u)$ be the third minimum number in the multi-set $\{\bs d'(u,v)^{\alpha}:v\sim u \}$.
If $\omid{\bs w(\bs o)}<\infty$, then $\dimMl{{\bs T, \bs d'}}\geq \alpha$.
\end{lemma}

\begin{proof}
By Lemma~\ref{lem:lowerboundiid}, it is enough to show that $\forall r\geq 0: \bs w(N'_r(\bs o))\geq \frac 1 C r^{\alpha}$,
where $C$ is defined by~\eqref{eq:thm:regtree-dimension:C}. Notice that this is a deterministic claim.
So consider a realization $(T,o)$ of $[\bs T, \bs o]$. Define the subtree $(T',o)$ of $T$ by adding vertices
recursively as follows. First, add 3 neighbors of $o$ which are closest to $o$ (under the metric $\bs d'$) to $T'$.
Then, recursively, for every newly added vertex $u$, add 2 neighbors of $u$ which are closest to $u$ and not already added.
It is straightforward that $T'$ is a 3-regular tree and $\bs w(u)\geq \sum_v \bs d'(u,v)^{\alpha}$,
where the sum is over the neighbors of $u$ in $T'$. So Lemma~\ref{lem:regtree-weight} implies that
$\bs w(N'_r(T',o))\geq \frac 1 C r^{\alpha}$. Hence, $\bs w(N'_r(T,o))\geq \frac 1 C r^{\alpha}$. So the claim is proved.
\end{proof}
}

%\invisible{{Question: Can the Minkowski dimension be finite? What about the case when i.i.d. edge lengths have infinite $n$-th moments for some $n$. (If we prove the conjecture of existence of a disjoint covering, we might use it here). \textbf{update:} another idea is to use generating functions of continuous time branching processes.}}

\subsection{Instances of Unimodular Trees}

\subsubsection{A Unimodular Tree With No Growth Rate}
\label{subsec:canopy-generalized}
%\mar{\invisible{The weighted version also holds, if the weights depend only on the foil number. Say this?}}
Recall the generalized canopy tree $[\bs T, \bs o]$ from Subsection~I.4.2.2. Here, it is shown that $\growthl{\bs T}\neq \growthu{\bs T}$ if the parameters are suitably chosen. Similarly, it provides an example where the exponential growth rate does not exist. In the latter, the existence of such trees is proved in~\cite{Ti14}, but with a more difficult construction.

Choose the sequence $(p_n)_n$ in the definition of $[\bs T, \bs o]$ such that $p_n=c2^{-q_n}$ and $\sum_n p_n=1$, where $c$ is constant and $q_0\leq q_1\leq \cdots$ is a sequence of integers. In this case, $\bs T$ is obtained by splitting the edges of the canopy tree by adding new vertices or concatenating them, depending only on the \textit{level} of the edges. It can be seen that if $v$ is a vertex in the $n$-th level of $\bs T$, then the number of descendants of $v$ is $(p_0+\cdots+p_n)/p_n$. It follows that $\growthl{\bs T}=\decayl{p_n}$ and $\growthu{\bs T}=\decayu{p_n}$. So, by choosing $(p_n)_n$ appropriately, $\bs T$ can have no polynomial (or exponential) growth rate. This proves the claim. %Similarly, $\bs T$ can have no exponential growth rate
Note also that the unimodular Billingsley lemma and  Theorem~I.4.2 imply that $\dimH{\bs T}=\growthu{\bs T}$ here.

\subsubsection{Unimodular Galton-Watson Trees}
\label{subsec:ugw}

Here, it is shown that the unimodular Galton-Watson tree~\cite{processes} is infinite dimensional. 
%The Galton-Watson tree and the unimodular Galton-Watson tree~\cite{processes} are recalled in what follows. 
(note that this tree differs from the Eternal Galton-Watson tree of Subsection~I.4.2.3 which is a directed tree). 
Consider an ordinary Galton-Watson tree with offspring distribution $\mu=(p_0,p_1,\ldots)$, where $\mu$ is a probability measure on $\mathbb Z^{\geq 0}$.
The {unimodular Galton-Watson tree} $[\bs T, \bs o]$ has a similar construction with the
difference that the offspring distribution of the \rooot{} is different from that of the other vertices:
It has for distribution {the size-biased version} $\hat{\mu}=(\frac n m p_n)_n$, where $m$ is the mean of $\mu$ (assumed to be finite).
%Let $\mu=(p_0,p_1,\ldots)$
%be a probability measure on $\mathbb Z^{\geq 0}$. Start from a single vertex $\bs o$. For each newly added vertex $v$,
%add an independent random number of new vertices (called  \textit{offsprings} of $v$)
%with distribution $\mu$ and connect them to $v$. By repeating this process, a random tree is
%constructed which is the \defstyle{Galton-Watson tree} with offspring distribution $\mu$.
%The \defstyle{unimodular Galton-Watson tree} $[\bs T, \bs o]$ is constructed similarly with the
%difference that the offspring distribution of the \rooot{} is different from that of the other vertices:
%It has for distribution {the size-biased version} $\hat{\mu}=(\frac n m p_n)_n$, where $m$ is the mean of $\mu$ (assumed to be finite).
%It is shown in~\cite{processes} that $[\bs T, \bs o]$ is a unimodular random tree.

In what follows, the trivial case $p_1=1$ is excluded.
If $m\leq 1$, then $\bs T$ is finite a.s.; i.e., there is extinction a.s.
Therefore, \del{Proposition~I.\ref{I-prop:finite-HausDim} implies that }$\dimH{\bs T}=0$.
So assume the \textit{supercritical case}, namely $m>1$.
If $p_0>0$, then $\bs T$ is finite with positive probability.
So $\dimH{\bs T}=0$ for the same reason. Nevertheless, one can condition on non-extinction as follows.

\begin{proposition}
Let $[\bs T, \bs o]$ be a supercritical unimodular Galton-Watson tree conditioned on non-extinction. Then,
$
\dimM{\bs T}=\dimH{\bs T}=\infty.
$
\end{proposition}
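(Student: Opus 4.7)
The plan is to verify the hypotheses of Proposition~\ref{prop:tree-expGrowth} for the Hausdorff dimension and then derive the matching Minkowski lower bound from Lemma~\ref{lem:lowerboundiid}. First, I would observe that the unconditioned UGW tree is unimodular by construction, and that non-extinction is an equivariant event (it depends only on the shape of $\bs T$, not on the root), so conditioning preserves unimodularity by the same mass-transport verification used in the proof of Theorem~\ref{thm:birkhoff}. Second, I would show that $\bs T$ has infinitely many ends a.s. Let $\bs T^{\infty}$ be the subtree on vertices with infinite line of descent. On survival, $\bs T^{\infty}$ is a.s.\ a Galton--Watson tree with offspring distribution $\mu^{\infty}$ of mean $m$ and $\mu^{\infty}(0)=0$ (an elementary computation: conditioning $Y\sim\mathrm{Bin}(X,1-q)$ on $Y\ge 1$ gives $\omid{Y\mid Y\ge 1}=m$). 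Since $\mu$ is non-degenerate with $m>1$, one has $\mu(\{2,3,\ldots\})>0$, hence $\mu^{\infty}(\{2,3,\ldots\})>0$; a Borel--Cantelli argument along any ray of $\bs T^{\infty}$ then produces infinitely many vertices with $\ge 2$ immortal children, giving infinitely many ends.

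These two facts place $[\bs T,\bs o]$ in the setting of Proposition~\ref{prop:tree-expGrowth}, in the strengthened form that dispenses with the moment condition on the root degree (noted in the remark immediately after that proposition), which directly yields exponential growth of $\card{N_n(\bs o)}$ a.s.\ and $\dimH{\bs T}=\infty$.

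For the Minkowski bound, I would apply Lemma~\ref{lem:lowerboundiid}~\eqref{part:lem:lowerbound:exp} with $\bs w\equiv 1$, so that $\bs w(N_n(\bs o))=\card{N_n(\bs o)}$. For any fixed $\alpha$, splitting on the event $\{\card{N_n(\bs o)}\ge (\alpha+1)n^{\alpha}\log n\}$ gives
\[
\omid{\exp\left(-\card{N_n(\bs o)}/n^{\alpha}\right)} \;\le\; n^{-(\alpha+1)} + \myprob{\card{N_n(\bs o)} < (\alpha+1)n^{\alpha}\log n},
\]
and the exponential growth of the generation sizes of the immortal subtree, combined with standard large-deviation estimates, should control the second term at the required polynomial rate. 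This would give $\dimMl{\bs T}\ge\alpha$ for every $\alpha<\infty$, hence $\dimM{\bs T}=\infty$. The main obstacle is precisely this large-deviation estimate under the bare assumption $m>1$: it is immediate under an $L\log L$ hypothesis on $\mu$ (where the Kesten--Stigum limit $W$ is strictly positive with a well-behaved tail near zero), while without this hypothesis one must invoke Seneta--Heyde norming and a careful analysis of the martingale limit near zero.
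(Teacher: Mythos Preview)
Your Hausdorff argument is correct but takes a different route from the paper. The paper invokes the Kesten--Stigum theorem to get $\liminf_n \card{N_n(\bs o)}m^{-n}>0$ a.s., then applies the unimodular Billingsley lemma directly. Your path through infinitely many ends and Proposition~\ref{prop:tree-expGrowth} also works, though note you are leaning on the strengthened form of that proposition (no moment on the root degree) which is only announced here and deferred to~\cite{I}; the size-biased root distribution $\hat\mu$ need not have finite mean without a second-moment hypothesis on $\mu$.

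For the Minkowski dimension you correctly identify Lemma~\ref{lem:lowerboundiid}~\eqref{part:lem:lowerbound:exp} as the tool, but your estimate is left incomplete, and the obstacle you name is real: without $L\log L$ the Kesten--Stigum limit vanishes, and controlling $\myprob{\card{N_n(\bs o)}<(\alpha+1)n^\alpha\log n}$ at polynomial rate via Seneta--Heyde norming is not routine. The paper sidesteps this entirely with an analytic trick: it reduces (via stochastic domination) to bounding $\omid{(1-n^{-\alpha})^{d_n}}=f^{(n)}(1-n^{-\alpha})$ for the ordinary Galton--Watson generation size $d_n$, where $f$ is the offspring generating function. Since $f$ is convex with $f(0)=0$, $f(1)=1$, $f'(1)=m>1$, one can dominate $f\le g$ on $[0,1]$ by a M\"obius transformation $g(s)=as/(a+1-s)$ for large enough $a$. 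The iterates of $g$ are explicit,
\[
g^{(n)}(s)=\frac{a^n s}{a^n+(a+1)^n(1-s)},
\]
so $f^{(n)}(1-n^{-\alpha})\le a^n/\bigl(a^n+n^{-\alpha}(a+1)^n\bigr)$, which has infinite polynomial decay rate. This requires nothing beyond $m>1$ and $p_0=0$ (the general case is handled via the decomposition into the immortal subtree, which is again supercritical with no extinction). This generating-function comparison is the missing idea in your Minkowski argument.
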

\begin{proof}
The result for the Hausdorff dimension is followed from the unimodular Billingsley lemma
(Theorem~\ref{thm:billingsley}) and the Kesten-Stigum theorem~\cite{KeSt66},
which implies that $\lim_n \card{N_n(\bs o)}m^{-n}$ exists and is positive a.s. 
Computing the Minkowski dimension is more difficult. By part~\eqref{part:lem:lowerbound:exp} of Lemma~\ref{lem:lowerboundiid},
it is enough to prove that $\omid{(1-n^{-\alpha})^{\card{\nei{n}{\bs o}}}}$ has infinite decay rate for every $\alpha\geq 0$.
Denote by $[\widetilde{\bs T},\widetilde{\bs o}]$ the Galton-Watson tree with the same parameters.
Using the fact that $\card{N_n(\bs o)}$ is stochastically larger than $\card{N_{n-1}(\widetilde{\bs o})}$,
one gets that it is enough to prove the last claim for $[\widetilde{\bs T},\widetilde{\bs o}]$. 
	
For simplicity, the proof is given for the case $p_0=0$ only. By this assumption, the probability of extinction is zero. The general case can be proved with 
similar arguments and by using the decomposition theorem of supercritical Galton-Watson trees
(see e.g.,  Theorem~5.28 of~\cite{bookLyPe16}). In fact, the following proof implies the general claim by the fact that the \textit{trunk}, conditioned on non-extinction, is another supercritical unimodular Galton-Watson tree. The latter can be proved similarly to the decomposition theorem.
	
Let $f(s):=\sum_n p_n s^n$ be the generating function of $\mu$.
By classical results of the theory of branching processes, for all $s\leq 1$, 
$\omid{s^{d_n(\widetilde{\bs o})}}=f^{(n)}(s),$
%	\begin{equation*}
%		\label{eq:GW:1}
%		\omid{s^{d_n(\widetilde{\bs o})}}=f^{(n)}(s), 
%	\end{equation*}
where $d_n(\tilde{\bs o}):=\card{N_n(\widetilde{\bs o})}-\card{N_{n-1}(\widetilde{\bs o})}$
and $f^{(n)}$ is the $n$-fold composition of $f$ with itself. 
Let $a>0$ be fixed and $g(s):=\frac {as}{-s+a+1}$ (such functions are frequently used in
the literature on branching processes; see, e.g., \cite{bookAsHe83}).
One has $f(0)=g(0)=0$, $f(1)=g(1)=1$, $f'(1)=m>1$, $g'(1)=(1+a)/a$, and $f$ is convex.
Therefore, $a$ can be chosen large enough such that $f(s)\leq g(s)$ for all $s\in[0,1]$. So
\begin{eqnarray*}
f^{(n)}(s) \leq g^{(n)}(s) = \frac {a^n s}{a^n + (a+1)^n(1-s)},
\end{eqnarray*}
where the last equality can be checked by induction. Therefore, 
\[
f^{(n)}(1-n^{-\alpha}) \leq \frac {a^n}{a^n + n^{-\alpha}(a+1)^n }.
\]
It follows that $\decay{f^{(n)}(1-n^{-\alpha})}=\infty$. So the above discussion gives
that $\omid{(1-n^{-\alpha})^{\card{\nei{n}{\bs o}}}}$ has infinite decay rate and the claim is proved.
\end{proof}

\invisible{
Note: This may be of independent interest:
It is {well known} that in a supercritical Galton-Watson tree conditioned on non-extinction, the set of non-extincting vertices form another supercritical Galton-Watson tree (later: ref: Branching processes, Athreya, 1972, pp 49, 52). Similarly, one can show that the \textit{trunk} of $\bs T$ (i.e. the set of vertices in which their deletion produces at least two infinite connected components), conditioned on containing the \rooot{}, is by itself another supercritical unimodular Galton-Watson tree. Therefore, by Theorem~I.3.34, it is enough to assume $p_0=0$. \textbf{Update:} Theorem~I.3.34 does not imply this. But the fact that in the trunk, $\omid{(1-n^{-\alpha})^{\card{\nei{n}{\bs o}}}}$ has infinite decay implies it.
}

\invisible{{Idea: What can we say about age-dependent branching processes? See Subsection~\ref{subsec:regtree}}}

\subsubsection{Unimodular Eternal Galton-Watson Trees}
\label{subsec:egw}
Unimodular eternal Galton-Watson (\egw{}) trees were introduced in Subsection~I.4.2.3.
The following theorem complements Theorem~I.4.9.

\begin{proposition}
\label{thm:EGWdimension}
Let $[\bs T, \bs o]$ be a unimodular eternal Galton-Watson tree.
If the offspring distribution has finite variance, then 
$
\dimM{\bs T}= \dimH{\bs T} = 2.
$
\end{proposition}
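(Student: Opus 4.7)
The plan is to combine the unimodular Billingsley lemma (Theorem~\ref{thm:billingsley}) for the Hausdorff upper bound, part~\eqref{part:lem:lowerbound:E} of Lemma~\ref{lem:lowerboundiid} for the Hausdorff lower bound, and---possibly in combination with Theorem~I.4.9---a direct argument for the Minkowski upper bound. The finite-variance hypothesis enters through two classical facts about critical Galton--Watson processes with variance $\sigma^2$: Kolmogorov's asymptotic $\myprob{Z_n>0}\sim 2/(\sigma^2 n)$, and the conditional scaling property that $|T\cap N_n|$ is of order $n^2$ with positive probability given $Z_n>0$.

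First I would compute $\omid{\card{N_r(\bs o)}}$. Decompose $\bs T$ along the ancestor trunk $\bs v_0=\bs o,\bs v_1,\bs v_2,\ldots$ of $\bs o$ and the independent hanging subtrees $T_k$ rooted at each $\bs v_k$: $T_0$ is an ordinary critical Galton--Watson tree with offspring law $\mu$, while for $k\geq 1$ the root of $T_k$ has size-biased offspring law $\hat\mu$ minus the one child continuing the trunk. Since $\hat\mu$ has mean $1+\sigma^2$, each $\bs v_k$ for $k\geq 1$ has on average $\sigma^2$ extra children, each starting an independent standard critical GW tree, so $\omid{|T_k\cap N_j(\bs v_k)|}=1+\sigma^2 j$ for $k\geq 1$ and $\omid{|T_0\cap N_j(\bs o)|}=j+1$. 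Using the partition $\card{N_r(\bs o)}=\sum_{k=0}^{r}|T_k\cap N_{r-k}(\bs v_k)|$ into independent summands, one finds $\omid{\card{N_r(\bs o)}}=\Theta(r^2)$, and Theorem~\ref{thm:billingsley} with $\bs w\equiv 1$ delivers $\dimH{\bs T}\leq 2$.

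For the lower bound $\dimH{\bs T}\geq 2$, I would apply part~\eqref{part:lem:lowerbound:E} of Lemma~\ref{lem:lowerboundiid} with $\alpha=2$ and $\bs w\equiv 1$, reducing the task to tightness of the family $r^2/\card{N_r(\bs o)}$. For each $k\leq (1-\epsilon)r$, the subtree $T_k$ survives to depth $r-k$ with probability of order $1/(r-k)$ by Kolmogorov, and conditionally on survival $|T_k\cap N_{r-k}(\bs v_k)|\geq c(r-k)^2$ with probability bounded below, by the standard scaling for critical branching. Independence of the $T_k$ then gives
\begin{equation*}
\myprob{\card{N_r(\bs o)}<c\epsilon^2 r^2}\;\leq\;\prod_{k=0}^{\lfloor(1-\epsilon)r\rfloor}\!\left(1-\frac{c'}{r-k}\right)\;\leq\;\epsilon^{c''}
\end{equation*}
for positive constants $c,c',c''$, which tends to $0$ as $\epsilon\downarrow 0$ uniformly in $r$. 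Combined with the Billingsley upper bound, this yields $\dimH{\bs T}=2$, and the general inequality $\dimH\leq\dimMl$ forces $\dimMl{\bs T}\geq 2$.

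The remaining step $\dimMu{\bs T}\leq 2$ is the main obstacle, since Billingsley controls only $\dimH$. I would construct an explicit equivariant $r$-covering of intensity $O(r^{-2})$ by exploiting the following heuristic: by Kolmogorov's bound the spine ancestors whose hanging subtree survives to depth $\sim r$ have density $\sim 1/r$ along the spine, and each such survival event contributes a cluster of $\sim r^2$ vertices; selecting a canonical "representative" per cluster and combining with a spine-based covering should give an intensity of order $(1/r)\cdot(1/r)=r^{-2}$. The delicate points are ensuring equivariance of the representative choice (no canonical height function is available in the unimodular framework) and covering the non-surviving portions with at most $O(r^{-2})$ additional centers. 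If Theorem~I.4.9 already provides $\dimMu{\bs T}\leq 2$ without the variance hypothesis, then the previous steps immediately complete the proof.
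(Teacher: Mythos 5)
Your Hausdorff upper bound is essentially the paper's argument: the same spine decomposition of $N_r(\bs o)$ into a forward critical Galton--Watson piece plus hanging pieces whose first generation is size-biased-minus-one, giving $\omid{\card{N_r(\bs o)}}=\Theta(r^2)$, followed by the unimodular Billingsley lemma. Your direct proof of $\dimH{\bs T}\geq 2$ via Kolmogorov's estimate, Yaglom-type conditional scaling, independence of the hanging subtrees, and part~\eqref{part:lem:lowerbound:E} of Lemma~\ref{lem:lowerboundiid} is a genuinely different (and more self-contained) route than the paper's; the paper obtains $\dimH{\bs T}\geq 2$ for free from Theorem~I.4.9, which already proves $\dimM{\bs T}=2$.

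The genuine gap is in the Minkowski part, and it stems from a reversed inequality. You write ``the general inequality $\dimH\leq\dimMl$ forces $\dimMl{\bs T}\geq 2$'', but in this framework the inequality goes the other way: $\dimMl\leq\dimH$. (One can see this directly: if $\alpha<\dimMl{\bs D}$ then $\lambda_r(\bs D)<r^{-\alpha-\epsilon}$ eventually, so uniform $r$-coverings give $\contentH{\alpha}{M}(\bs D)=0$ for every $M$, which forces $\alpha\leq\dimH{\bs D}$. It is also visible in Proposition~\ref{prop:digits}, where $\dimMl{\Psi}=\densityL{}(J)$ can be strictly smaller than $\dimH{\Psi}=\densityU{}(J)$, and in the unimodular Galton--Watson proposition, where after establishing $\dimH=\infty$ via Billingsley the authors still say ``Computing the Minkowski dimension is more difficult''.) With the correct direction $\dimMl\leq\dimH$, knowing $\dimH{\bs T}=2$ only yields $\dimMl{\bs T}\leq 2$, the opposite of what you need. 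Consequently your proof establishes $\dimH{\bs T}=2$ but neither $\dimMl{\bs T}\geq 2$ nor $\dimMu{\bs T}\leq 2$; your covering sketch, even if completed, would give only the upper Minkowski bound. The fix is what the paper does: quote Theorem~I.4.9 for $\dimM{\bs T}=2$, which via $\dimMl\leq\dimH$ yields $\dimH{\bs T}\geq 2$ as well, so that the Billingsley step $\dimH{\bs T}\leq 2$ is the only new ingredient and your tightness argument becomes redundant (though still a nice alternative derivation of the lower Hausdorff bound).
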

\begin{proof}
Theorem~I.4.9 proves that $\dimM{\bs T}=2$. So it remains to prove $\dimH{\bs T}\leq 2$.
By the unimodular Billingsley lemma (Theorem~\ref{thm:billingsley}), it is enough to show that 
$\omid{\card{N_n(\bs o)}}\leq cn^2$ for a constant $c$.
Recall from Subsection~I.4.1.2 that $F(v)$ represents the parent of vertex $v$ and $D(v)$ 
denotes the subtree of descendants of $v$. Write $N_n(\bs o)=Y_0\cup Y_1\cup \cdots \cup Y_n$,
where $Y_n:={N_n(\bs o) \cap D(\bs o)}$ and 
$Y_i:={N_n(\bs o) \cap D(F^{n-i}(\bs o))\setminus D(F^{n-i-1}(\bs o))}$ for $0\leq i<n$. 
By the explicit construction of \egw{} trees in~\cite{eft}, 
$Y_n$ is a critical Galton-Watson tree up to generation $n$. Also, for $0\leq i<n$, 
$Y_i$ has the same structure up to generation $i$, except that the distribution of the
first generation is \textit{size-biased minus one} (i.e., $(np_{n+1})_n$ with the notation of Subsection~\ref{subsec:ugw}).
So the assumption of finite variance implies that the first generation in each $Y_i$ has finite mean, namely $m'$.
Now, one can inductively show that $\omid{\card{Y_n}}= n$ and $\omid{\card{Y_i}}=i m'$,
for $0\leq i<n$. It follows that $\omid{\card{N_n(\bs o)}}\leq (1+m')n^2$ and the claim is proved.
\end{proof}

\invisible{\mar{Please see which one of these are suitable to be included.}
{\textbf{Guess:} The exact gauge function for EGW is $r^2\log\log r$, similar to the continuum case in [The	Hausdor measure of stable trees].}

{\textbf{Update:} The idea for proving the stable case (Conjecture~\ref{I-conj:EGW-stable}) is in the picture in the next page. Please see if it can be completed. \textbf{Update:} Maybe a weaker assumption is enough.}
	\textbf{Update:} Lemma 2 in [A branching process with mean one and possibly infinite variance] says that $\myprob{h(\bs o)>n} \sim n^{-\beta}$, but I don't know what is the value of $\beta$ in our language.

\begin{figure}
	\begin{center}
		\includegraphics[width=\textwidth]{../../Attachments/EGWheavytail.jpg}
	\end{center}
\end{figure}
}

\subsubsection{The Poisson Weighted Infinite Tree}
\label{subsec:pwit}
The Poisson Weighted Infinite Tree (\texttt{PWIT}) is defined 
as follows (see e.g., \cite{objective}). It is a rooted tree $[\bs T, \bs o]$ such that the degree
of every vertex is infinite. Regarding $\bs T$ as a family tree with progenitor $\bs o$,
the edge lengths are as follows. For every $u\in \bs T$, the set $\{d(u,v): v\text{ is an offspring of } u\}$
is a Poisson point process on $\mathbb R^{\geq 0}$ with intensity function $x^k$, where $k>0$ is a given integer.
Moreover, for different vertices $u$, the corresponding Poisson point processes are jointly independent.
It is known that the \texttt{PWIT} is unimodular (notice that although each vertex has infinite degree, the \texttt{PWIT} is boundedly-finite as a metric space). See for example~\cite{objective} for more details. 

%\invisible{Later: Ref to graphings in Part I.} 

\begin{proposition}
	The \texttt{PWIT} satisfies
	$
		\dimM{\texttt{PWIT}} = \dimH{\texttt{PWIT}} = \infty.
	$
\end{proposition}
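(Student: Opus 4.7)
The plan is to apply Proposition~\ref{prop:regree-distorted} directly to the \texttt{PWIT} equipped with its natural edge-length metric $\bs d$. The hypothesis that every vertex has degree at least $3$ is satisfied trivially, since every vertex of the \texttt{PWIT} has infinite degree, and the metric $\bs d$ is obviously equivariant.

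The key estimate is that $\omid{\bs w(\bs o)^{\alpha}}<\infty$ for every $\alpha>0$, where $\bs w(\bs o)$ is the sum of distances from $\bs o$ to its three closest neighbors. Since $\bs o$ is the progenitor, its neighbors are exactly its offspring, and the three smallest such distances are the first three order statistics $e_{(1)}\leq e_{(2)}\leq e_{(3)}$ of a Poisson point process on $[0,\infty)$ of intensity $x^{k}$. The cumulative intensity on $[0,r]$ equals $r^{k+1}/(k+1)$, so for each $j\in\{1,2,3\}$,
\[
\myprob{e_{(j)}>r}=\sum_{i=0}^{j-1}\frac{(r^{k+1}/(k+1))^{i}}{i!}\,e^{-r^{k+1}/(k+1)},
\]
which is stretched-exponential (Weibull-type) in $r$ and hence has finite moments of every order. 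Consequently $\omid{\bs w(\bs o)^{\alpha}}<\infty$ for every $\alpha>0$, and Proposition~\ref{prop:regree-distorted} then yields $\dimMl{\texttt{PWIT}}\geq \alpha$ for every $\alpha$. This gives $\dimM{\texttt{PWIT}}=\infty$.

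For the Hausdorff dimension, one inspects the proof of Proposition~\ref{prop:regree-distorted}: it produces an equivariant weight function $\bs w'$ with $\omid{\bs w'(\bs o)}<\infty$ together with a \emph{deterministic} pointwise bound $\bs w'(\nei{r}{v})\geq c r^{\alpha}$ valid for every vertex $v$ and every $r\geq 0$. This pointwise bound forces $\growthl{\bs w'(N_r(\bs o))}\geq \alpha$ almost surely, so part~(ii) of Lemma~\ref{lem:lowerboundiid} yields $\dimH{\texttt{PWIT}}\geq \alpha$ for every $\alpha$, and hence $\dimH{\texttt{PWIT}}=\infty$. The only substantive new input is the tail estimate above on the order statistics of the PPP; everything else reduces to invoking results already established in Section~\ref{sec:volumeGrowth} and earlier in this subsection, so no significant obstacle arises.
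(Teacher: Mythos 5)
Your proof is correct and takes essentially the same route as the paper's: both invoke Proposition~\ref{prop:regree-distorted} after observing that $d(\bs o, v_3)$ has finite moments of all orders, your explicit Weibull-type tail computation merely filling in what the paper compresses to "it is straightforward". The extra paragraph that re-derives $\dimH{\texttt{PWIT}}=\infty$ by unwinding Proposition~\ref{prop:regree-distorted} and invoking Lemma~\ref{lem:lowerboundiid}(ii) is correct but not needed, since $\dimMl \leq \dimH$ holds in general (from Part~I), which is what the paper's one-line conclusion implicitly uses.
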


\begin{proof}
	%\mar{{I suggest to leave some of the details to the reader, as discussed in Subection~\ref{sec:trees-infEnds}}}
Denote the neighbors of $\bs o$ by $v_1,v_2,\ldots$ such that {$d(\bs o,v_i)$ is increasing in $i$.}
It is straightforward that all moments of $d(\bs o,v_3)$ are finite. Therefore,
Proposition~\ref{prop:regree-distorted} implies that $\dimMl{\bs T}=\infty$ (see also Lemma~\ref{lem:regtree-weight}). This proves the claim.
\end{proof}

\subsection{Examples associated with Random Walks}
\label{sstsrw}

As in Subsection~I.4.3, consider the simple random walk $(S_n)_{n\in \mathbb Z}$
in $\mathbb R^k$, where $S_0=0$ and the increments $S_n-S_{n-1}$ are i.i.d. 

\subsubsection{The Image and The Zeros of the Simple Random Walk}
\label{subsec:image}

{Recall that Theorem~I.4.11 studies the unimodular Minkowski dimension of the image of a simple random walk. The following is a complement to this result.}
%The following complements Theorem~I.4.11.

\begin{theorem}
\label{thm:image}
Let $\Phi:=\{S_n\}_{n\in\mathbb Z}$ be the image of a simple random walk $S$ in $\mathbb R$, where $S_0:=0$.
Assume the jumps $S_n-S_{n-1}$ are positive a.s. 
	
\begin{enumerate}[(i)]
\item \label{part:thm:image:0} $\dimH{\Phi}\leq 1\wedge \decayu{\myprob{S_1>r}}$.

\item \label{part:thm:image:1} If $\beta:=\decay{\myprob{S_1>r}}$ exists, then
$\dimM{\Phi} = \dimH{\Phi}=1\wedge \beta.$
		
\del{%\item \label{part:thm:image:2} $\dimMl{\Phi}\geq 1\wedge \decayl{\myprob{S_1>r}}$.
		
\item \label{part:thm:image:3} $\dimH{\Phi}\leq 1\wedge \decayu{\myprob{S_1>r}}$.}
\end{enumerate}
\end{theorem}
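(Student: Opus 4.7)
The plan is to split the argument into an upper bound for (i) and a matching lower bound for (ii), both routed through the unimodular Billingsley lemma (Theorem~\ref{thm:billingsley}) applied to $\bs w\equiv 1$ on the point-stationary process $\Phi$ (Example~I.2.6).

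For (i), the bound $\dimH{\Phi}\leq 1$ is free from Proposition~\ref{prop:upperbound-Rd}, so the remaining task is to show $\dimH{\Phi}\leq \alpha$ for every $\alpha>\decayu{\myprob{S_1>r}}$. I will establish the stronger $\growthu{\omid{\card{N_r(0)}}}\leq \alpha$, which suffices by Theorem~\ref{thm:billingsley}. Writing $T_n:=S_n-S_{n-1}$, the key observation is the monotonicity of $(S_n)_{n\geq 0}$: for the geometric hitting time $\tau_r^+:=\min\{n\geq 1: T_n>r\}$, one has $\#\{n\geq 0: S_n\leq r\}\leq \tau_r^+$ because $S_{\tau_r^+}\geq T_{\tau_r^+}>r$. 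Since $\myprob{T_1>r}\geq r^{-\alpha}$ for large $r$, we get $\omid{\tau_r^+}=1/\myprob{T_1>r}\leq r^{\alpha}$; applying the analogous estimate to the time-reversed walk (which has i.i.d.\ positive increments of the same law) for indices $n\leq 0$ gives $\omid{\card{N_r(0)}}\leq 1+2r^{\alpha}$. Sending $\alpha\downarrow\decayu{\myprob{S_1>r}}$ completes (i).

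For (ii), Theorem~I.4.11 supplies $\dimM{\Phi}=1\wedge\beta$, while (i) applied with $\decayu{\myprob{S_1>r}}=\beta$ gives $\dimH{\Phi}\leq 1\wedge\beta$. The matching lower bound will come from Theorem~\ref{thm:billingsley} in the form $\dimH{\Phi}\geq\essinf\growthl{\card{N_r(0)}}$; ergodicity of the i.i.d.\ increments forces $\growthl{\card{N_r(0)}}$ to be almost surely constant, so it suffices to prove $\growthl{\card{N_r(0)}}\geq 1\wedge\beta$ a.s. Fix $\beta'<\beta'_0<\min(\beta,1)$. Since $\decayl{\myprob{S_1>r}}=\beta$, the tail upper bound $\myprob{T_1>r}\leq r^{-\beta'_0}$ holds for large $r$; truncating at level $r$ via $T_i^{\ast}:=T_i\wedge r$ gives $\omid{T_1^{\ast}}=\int_0^r\myprob{T_1>s}\,ds\leq C\,r^{1-\beta'_0}$ (using $\beta'_0<1$). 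With $m:=\lceil r^{\beta'}\rceil$, Markov for the truncated sum and a union bound for untruncated increments give
\[
\myprob{S_m>r}\leq \frac{m\,\omid{T_1^{\ast}}}{r}+m\,\myprob{T_1>r}\leq C''\, r^{\beta'-\beta'_0}.
\]

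Evaluating along the dyadic subsequence $r=2^k$ yields a summable series, so Borel-Cantelli produces $S_{\lceil 2^{k\beta'}\rceil}\leq 2^k$ for all large $k$ a.s. Interpolating over $r\in[2^k,2^{k+1})$ and using monotonicity of $S$ forces $\card{N_r(0)}\geq (r/2)^{\beta'}$ for all large $r$, hence $\growthl{\card{N_r(0)}}\geq\beta'$ a.s., and letting $\beta'\uparrow 1\wedge\beta$ closes the lower bound. The main obstacle I anticipate is the truncation bookkeeping in the heavy-tail regime $\beta<1$ (the regime $\beta\geq 1$ is then a by-product obtained by letting $\beta'\uparrow 1$ with the constraint $\beta'_0<1$ still in force), together with the care needed to handle both sides of the two-sided walk symmetrically.
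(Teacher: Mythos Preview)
Your proof is correct, but it diverges from the paper's in both parts.

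For part~(i), the paper establishes an \emph{almost-sure} upper bound on $\card{N_r(0)}$ of order $r^{\beta}\log\log r$ via Lemma~\ref{lem:BaumKatz1} (a law-of-iterated-logarithm type estimate) and then invokes the pointwise form of the unimodular Billingsley lemma. Your route via the geometric stopping time $\tau_r^+$ and the expectation bound $\omid{\card{N_r(0)}}\leq 2/\myprob{T_1>r}$ is strictly more elementary: it bypasses Lemma~\ref{lem:BaumKatz1} entirely and uses only the last inequality in Theorem~\ref{thm:billingsley}. This is a genuine simplification; the paper's almost-sure bound gives finer information (and is the natural stepping stone toward the exact gauge function discussed after Proposition~\ref{prop:zeros}), but for the dimension statement alone your argument is cleaner.

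For part~(ii), the paper does essentially nothing beyond~(i): it observes that Theorem~I.4.11 already gives $\dimMl{\Phi}\geq 1\wedge\beta$, and the general chain $\dimMl{\Phi}\leq\dimH{\Phi}$ (from Part~I) combined with~(i) closes the argument immediately. Your truncation-plus-Borel--Cantelli computation of $\growthl{\card{N_r(0)}}\geq 1\wedge\beta$ is correct and self-contained, but redundant once you have cited Theorem~I.4.11 for the Minkowski dimension: the lower bound on $\dimH{\Phi}$ then comes for free from the standard inequality. Your direct argument does have independent value in that it re-derives, from scratch, a fact that would otherwise be imported from Part~I.
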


\begin{proof}
{Theorem~I.4.11 proves that $\dimMl{\Phi} \geq 1\wedge \decayl{\myprob{S_1>r}}$.}
%The claims concerning the Minkowski dimension are proved in Theorem~I.4.11.
So it is enough to prove part~\eqref{part:thm:image:0}.
Since $\Phi$ is a point-stationary point process in $\mathbb R$ (see Subsection~I.4.3.1),
Proposition~\ref{prop:upperbound-Rd} implies that $\dimH{\Phi}\leq 1$. Now, assume $\decayu{\myprob{S_1>r}}< \beta$.
Then, there exists $c>0$ such that $\myprob{S_1>r}> c r^{-\beta}$ for all $r\geq 1$.
By using Lemma~\ref{lem:BaumKatz1} twice, for the positive and negative parts of the random walk,
one can prove that there exists $C<\infty$ and a random number $r_0>0$ such that for all $r\geq r_0$,
one has $\card{N_r(\bs o)}\leq Cr^{\beta} \log \log r$ a.s.
Therefore, the unimodular Billingsley lemma (Theorem~\ref{thm:billingsley}) implies that
$\dimH{\Phi}\leq \beta+\epsilon$ for every $\epsilon>0$, which in turn implies that $\dimH{\Phi}\leq \beta$.
So the claim is proved.
\end{proof}

\invisible{I think that the exact value of $\dimH{\bs T}$ can be found by Theorem~4 of [Lower functions for increasing random walks and subordinators]. See Lemma~\ref{lem:BaumKatz1}.}

The following {proposition} complements Theorem~I.4.12. It is readily implied by Theorem~\ref{thm:image} above.
\begin{proposition}
\label{prop:zeros}
Let $\Psi$ be the zero set of the {symmetric} simple random walk on $\mathbb Z$ with uniform jumps in $\{\pm 1\}$. Then,
$ \dimM{\Psi} = \dimH{\Psi}= \frac 12.$
\end{proposition}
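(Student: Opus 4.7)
The plan is to realize $\Psi$ as the image of a one-dimensional random walk with positive i.i.d.\ jumps and then invoke Theorem~\ref{thm:image}. Enumerate the zeros in increasing order as $\ldots < Z_{-1} < Z_0 = 0 < Z_1 < Z_2 < \ldots$ By the strong Markov property applied at each hitting time of $0$, the inter-arrival times $\tau_n := Z_n - Z_{n-1}$ are i.i.d.\ copies of the first return time $\tau := \inf\{n > 0 : S_n = 0\}$; time reversal at $0$ (legitimate by the symmetry of the walk) extends this to negative indices as well. Thus $(Z_n)_{n \in \mathbb Z}$ is a two-sided random walk with positive i.i.d.\ increments, and $\Psi = \{Z_n : n \in \mathbb Z\}$ is a point-stationary point process in the exact setting of Theorem~\ref{thm:image}.

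The remaining input is the tail asymptotics of the return time. From the classical identity $\myprob{\tau = 2n} = \frac{1}{2n-1}\binom{2n}{n}2^{-2n}$ together with Stirling's formula (equivalently, the local central limit theorem), one obtains $\myprob{\tau > r} \sim c\, r^{-1/2}$ for some constant $c > 0$. In particular $\beta := \decay{\myprob{\tau > r}}$ exists and equals $\tfrac 1 2$.

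Substituting $S_1 := \tau$ into part~\eqref{part:thm:image:1} of Theorem~\ref{thm:image} immediately yields
\[
\dimM{\Psi} \;=\; \dimH{\Psi} \;=\; 1 \wedge \tfrac 1 2 \;=\; \tfrac 1 2,
\]
which is the desired conclusion. I do not foresee a real obstacle here: the strong Markov property provides the i.i.d.\ structure of gaps, the return-time tail is a standard exact asymptotic, and Theorem~\ref{thm:image} packages the dimension bounds. The only point requiring a word of justification is the two-sided point-stationarity, but this is already implicit in the treatment of random-walk images in Subsection~I.4.3.1.
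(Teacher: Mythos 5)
Your argument is correct and is precisely the paper's intended route: the text states that Proposition~\ref{prop:zeros} "is readily implied by Theorem~\ref{thm:image}," and you have supplied the details — the i.i.d.\ structure of the return times via the strong Markov property, the standard tail asymptotic $\myprob{\tau>r}\sim c\,r^{-1/2}$ giving $\beta=\tfrac12$, and part~\eqref{part:thm:image:1} of Theorem~\ref{thm:image}. The only cosmetic remark is that the two-sided i.i.d.\ gap structure is perhaps most cleanly justified by noting that $(S_n)_{n\ge 0}$ and $(S_{-n})_{n\ge 0}$ are independent and identically distributed one-sided walks (rather than by a literal time reversal), but this does not affect the correctness of the conclusion.
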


%\begin{remark}
	%\mar{\invisible{1. Is infinity right? or zero?}}
	In this proposition, the guess is that $\measH{1/2}(\Psi)=\infty$.
	%For the zeros of the simple random walk, the guess is that the $1/2$-dimensional Hausdorff measure is infinity. 
	Additionally, by analogy with the zero set of Brownian motion~\cite{TaWe66}, it is natural to guess that $\sqrt{r\log\log r}$ is a dimension function for $\Psi$ (see Subsection~I.3.8.2). To prove this, one should strengthen Lemma~\ref{lem:BaumKatz1} and also construct a covering of the set which is better than that of Proposition~I.3.14. {For the former, one may use Theorem~4 of~\cite{FrPr71} (it seems that the assumption of~\cite{FrPr71} on the tail of the jumps is not necessary for having an inequality similarly to Lemma~\ref{lem:BaumKatz1}).} For the latter, one might try to get ideas from~\cite{TaWe66} (it is necessary to use intervals with different lengths).
%\end{remark}

\begin{example}[Infinite Hausdorff Measure]
\label{ex:infiniteMeasure}
In Theorem~\ref{thm:image}, assume  $\myprob{S_1>r}= 1/\log r$ for large enough $r$.
Then, part~\eqref{part:thm:image:1} of the theorem implies that $\dimH{\Phi}=0$.
However, since $\Phi$ is infinite a.s., it has infinite 0-dimensional Hausdorff measure (Proposition~I.3.28).
\end{example}

\begin{example}[Zero Hausdorff Measure]
\label{ex:zeroMeasure}
In Theorem~\ref{thm:image}, assume $\myprob{S_1>r}= 1/ r$ for large enough $r$.
Then, part~\eqref{part:thm:image:1} of the theorem implies that $\dimH{\Phi}=1$.
Since $\omid{S_1}=\infty$, $\Phi$ is not the Palm version of any stationary point process
(see Proposition~\ref{prop:conj:point-stationary}). Therefore, 
Proposition~\ref{prop:conj:point-stationary} implies that $\measH{1}(\Phi)=0$.
\end{example}

\invisible{{
	%	\textbf{Example.} If $\myprob{X>r}$ tends to zero slower than any $r^{-\beta}$; e.g., like $1/\log r$, then $\dimH{\Phi}=0$. A proof by metric comparison. In this case, $\Phi$ has infinite Hausdorff measure. 
	
	%	{\textbf{Question.} Zeros of Brownian motion has dimension $\frac 12$ but has zero $\frac 12$-dimensional Hausdorff measure (note: another gauge function make the measure positive. I guess $r^{\beta}(\log \log r)^{1-\beta}$ just like zeros of stable processes). Is it true that zeros of SRW and also in Theorem~\ref{thm:iid-image-positive}, the Hausdorff measure is zero? Idea: If $\card{\Phi\cap [0,r]}/r^{\beta} \rightarrow 0$ a.s., then Lemma~\ref{lem:MDPstronger2} implies the claim. \textbf{Guess:} The exact gauge function is $\sqrt{2r \log \log r}$ just like zeros of Brownian motion\footnote{[The exact Hausdorff measure of the level sets of Brownian motion]}.}
	%Should we modify $\myprob{X>r}$? or use another gauge function? 
	
	\textbf{Guess.} If $\myprob{S_1>r} \geq c r^{-\beta}$ in the theorem, I think $\measH{\beta}(\Phi)=\infty$ (more discussion in the next subsection).
	
	\textbf{New Result:} If $S_1$ is in the domain of attraction of the $\alpha$-stable distribution and satisfies some other conditions, then discrete dimension of $\Phi$ is $\alpha$. So our comparison (Subsection~III.\ref{III-subsec:discreteDim}) implies $\dimH{\Phi}\geq \alpha$. Also, I guess the scaling limit of $\Phi$ is an $\alpha$-stable process (is this true?) (or maybe the $\alpha$-stable regenerative set) whose Hausdorff dimension is $\alpha$ (is this correct?). So our scaling limit result implies $\dimH{\Phi}\leq\alpha$. 
}}

\invisible{
{ 
	%\textbf{Question:} Is the same true for an arbitrary jump distribution in $\mathbb Z$?
	
	{\textbf{Question.} Zeros of Brownian motion has dimension $\frac 12$ but has zero $\frac 12$-dimensional Hausdorff measure (note: another gauge function make the measure positive. I guess $r^{\beta}(\log \log r)^{1-\beta}$ just like zeros of stable processes). Is it true that zeros of SRW and also in Theorem~\ref{thm:image}, the Hausdorff measure is zero? Idea: If $\card{\Psi\cap [0,r]}/r^{\beta} \rightarrow 0$ a.s., then Lemma~\ref{lem:MDPstronger2} implies the claim. \textbf{Guess:} The exact gauge function is $\sqrt{2r \log \log r}$ just like zeros of Brownian motion\footnote{\invisible{[The exact Hausdorff measure of the level sets of Brownian motion]}}. \textbf{Update:} I think the paper that Jean Bertoin said can be used to obtain the exact dimension function.}
	
	%	ater: \textbf{Question.} Zeros of Brownian motion has dimension $\frac 12$ but has zero $\frac 12$-dimensional Hausdorff measure (note: another gauge function make the measure positive. I guess $r^{\beta}(\log \log r)^{1-\beta}$ just like zeros of stable processes). Is it true that zeros of SRW and also in Theorem~\ref{thm:image}, the Hausdorff measure is zero? Idea: If $\card{\Psi\cap [0,r]}/r^{\beta} \rightarrow 0$ a.s., then Lemma~\ref{lem:MDPstronger2} implies the claim. \textbf{Guess:} The exact gauge function is $\sqrt{2r \log \log r}$ just like zeros of Brownian motion\footnote{[The exact Hausdorff measure of the level sets of Brownian motion]}.
	
	\textbf{Later:} Zeros of two-dimensional SRW is also an interesting set. Guess: $dim=\frac 14$ (or maybe 0?). For larger spaces, the zeros are finite.}
}

\subsubsection{The Graph of the Simple Random Walk}
\label{subsec:srw-graph}

The graph of the random walk $(S_n)_{n\in\mathbb Z}$ is 
$
	\Psi:=\{(n,S_n):n\in\mathbb Z\}\subseteq\mathbb R^{k+1}.
$
{It can be seen that $\Psi$ is a point-stationary point process, and hence, $[\Psi,0]$ is unimodular (see Subsection~I.4.3.1).
}
Since $\card{\Psi\cap [-n,n]^{k+1}}\leq 2n+1$, 
the mass distribution principle (Theorem~\ref{thm:mdp-simple}) implies that $\dimH{\Psi}\leq 1$.
In addition, if $S_1$ has finite first moment, then 
%\mar{\invisible{I don't know if $\liminf$ can be replaced by $\lim$ or not}}
the strong law of large numbers implies that $\lim_n \frac 1 n S_n=\omid{S_1}$.
This implies that $\liminf_n \frac 1 n \card{\left(\Psi\cap [-n,n]^{k+1}\right)} >0$.
Therefore, the unimodular Billingsley lemma (Theorem~\ref{thm:billingsley})
implies that $\dimH{\Psi}\geq 1$. Hence, $\dimH{\Psi}=1$.
\invisible{(Later: Question: What about the Minkowski dimension? What about the heavy tailed case?)} 

%{I suggest to delete the rest and only say that the metric~\eqref{eq:anothermetric} is more interesting and the dimensions are 2 and 3. And leave the proof to the reader.}

Below, the focus is on the case $k=1$ and on the following metric:
\begin{equation}
\label{eq:anothermetric}
d((x,y),(x',y')):=\max \{ \sqrt{\norm{x-x'}}, {\norm{y-y'}}\}.
\end{equation}

Theorem~I.3.31 implies that, by considering this metric,
unimodularity is preserved and dimension is not decreased. 
Under this metric, the ball $N_n(0)$ is $\Psi\cap [-n^2,n^2]\times [-n, n]$.
{It is straightforward that \del{Note that the whole $\mathbb Z^2$ has an equivariant disjoint $n$-covering similar to the one in
Subsection~\ref{subsec:euclidean}. By using {Theorem~\ref{thm:mdp-simple}},
one readily obtains that }$\mathbb Z^2$ has growth rate 3 and also} Minkowski and Hausdorff dimension 3 under this metric.
	
\begin{proposition}
If the jumps are $\pm 1$ uniformly, under the metric~\eqref{eq:anothermetric},
the graph $\Psi$ of the simple random walk satisfies $\dimM{\Psi}= \dimH{\Psi}=2.$
\end{proposition}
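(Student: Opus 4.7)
The plan is to prove the upper bound $\dimMu{\Psi}\leq 2$ and the matching lower bound $\dimH{\Psi}\geq 2$ separately; combined with the general inequalities $\dimH{\Psi}\leq\dimMu{\Psi}$ and $\dimMl{\Psi}\leq\dimMu{\Psi}$, they force $\dimH{\Psi}=\dimMl{\Psi}=\dimMu{\Psi}=2$ and yield the stated equalities (the bound $\dimMl{\Psi}\geq 2$ can alternatively be extracted by sharpening the decay-rate analysis of the specific covering used below).

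For the upper bound $\dimMu{\Psi}\leq 2$, I would mimic the equivariant covering preceding Theorem~\ref{thm:lowerBoundR^d}, adapted to the parabolic ball shape. Fix $r\geq 1$, set $C_r:=[0,r^2)\times[0,r)$ (a translate of the open ball of radius $r$ at the origin under~\eqref{eq:anothermetric}), and let $\bs U_r$ be chosen uniformly on $-C_r$ independently of $\Psi$. The translates $\{C_r+\bs U_r+z:z\in r^2\mathbb{Z}\times r\mathbb{Z}\}$ tile $\mathbb{R}^2$; in each cell meeting $\Psi$, select one point of $\Psi$ uniformly at random, and set $\bs R(v):=r$ on each selected $v$ and $0$ otherwise. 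Then $\bs R$ is an equivariant $r$-covering, and since any ball of radius $r$ is itself a box of dimensions $2r^2\times 2r$ and meets at most $9$ cells, $\bs R$ is $9$-bounded in the sense of Definition~I.3.9. By the uniform choice inside each cell, Jensen's inequality gives
\[
\myprob{0\in\bs R}=\omid{1/|\Psi\cap(C_r+\bs U_r)|}\,\geq\,1/\omid{|\Psi\cap(C_r+\bs U_r)|}.
\]
Using independence of $\bs U_r$ and $(S_k)$ together with the uniform local CLT bound $\myprob{S_k\in[y,y+r)}\leq Cr/\sqrt{|k|+1}$, a direct summation over $|k|\leq r^2$ gives $\omid{|\Psi\cap(C_r+\bs U_r)|}=O(r^2)$. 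Hence $\myprob{0\in\bs R}\geq c r^{-2}$, and Lemma~I.3.10 yields $\dimMu{\Psi}\leq 2$.

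For the lower bound $\dimH{\Psi}\geq 2$, I would invoke part~\eqref{part:lem:lowerbound:E} of Lemma~\ref{lem:lowerboundiid} with $\bs w\equiv 1$ and $\alpha=2$. Under~\eqref{eq:anothermetric}, $|N_r(0)|=|\{k\in\mathbb{Z}\cap[-r^2,r^2]:|S_k|\leq r\}|$. Donsker's invariance principle says that the rescaled two-sided walk $t\mapsto S_{\lfloor r^2 t\rfloor}/r$ converges in distribution on $C([-1,1])$ to a two-sided standard Brownian motion $(B_t)_{t\in[-1,1]}$; combined with Fubini and the fact that $\myprob{B_t=\pm 1}=0$ for each $t$, this gives
\[
\frac{|N_r(0)|}{r^2}\;\xrightarrow{d}\;L\,:=\,\int_{-1}^{1}\indic{|B_t|\leq 1}\,dt.
\]
Since $B$ is continuous with $B_0=0$, one has $L>0$ almost surely, hence $\myprob{L\leq\delta}\downarrow 0$ as $\delta\downarrow 0$. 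For a continuity point $\delta$ of the law of $L$, $\lim_r\myprob{|N_r(0)|\leq\delta r^2}=\myprob{L\leq\delta}$, so $\lim_{\delta\downarrow 0}\liminf_r\myprob{|N_r(0)|\leq\delta r^2}=0$; Lemma~\ref{lem:lowerboundiid}\eqref{part:lem:lowerbound:E} then gives $\dimH{\Psi}\geq 2$.

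The main obstacle is the lower-bound step: justifying the weak convergence $|N_r(0)|/r^2\xrightarrow{d}L$ despite the discontinuous integrand (handled via $\myprob{B_t=\pm 1}=0$ and Fubini), and, if one wants $\dimMl{\Psi}\geq 2$ directly from the covering, controlling the small-$\delta$ tail of $L$ tightly enough to push the decay of $\omid{1/|\Psi\cap(C_r+\bs U_r)|}$ to exactly $r^{-2}$. The remaining verifications (equivariance and $9$-boundedness of $\bs R$, and the uniform local CLT estimate) are routine adaptations of the construction in Subsection~\ref{subsec:euclidean}.
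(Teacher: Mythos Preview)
Your argument is essentially correct but takes a considerably heavier route than the paper's, and there are two points to flag.

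First, the upper bound is overcomplicated. You invoke Jensen and a local CLT estimate to get $\omid{|\Psi\cap(C_r+\bs U_r)|}=O(r^2)$, but this is deterministic: the first coordinate of any point of $\Psi$ in $C_r+\bs U_r$ lies in an integer interval of length $r^2$, so $|\Psi\cap(C_r+\bs U_r)|\leq r^2+1$ always, and hence $1/|\Psi\cap(C_r+\bs U_r)|\geq (r^2+1)^{-1}$ pointwise. The paper is even more direct: it simply observes $|N_n(0)|\leq 2n^2+1$ and applies the mass distribution principle (Theorem~\ref{thm:mdp-simple}) with $\bs w\equiv 1$ to obtain $\dimH{\Psi}\leq 2$.

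Second, for the lower bound the paper works directly on the Minkowski side: it takes the stationary tiling by $(2n^2+1)\times(2n+1)$ rectangles, selects the \emph{rightmost} point of $\Psi$ in each rectangle, and observes that $0$ is selected iff it is on the right edge (probability $(2n^2+1)^{-1}$) or on a horizontal edge with the walk subsequently exiting the rectangle through that edge; a standard hitting-time bound shows the latter probability is of order $n^{-2}$. Since this covering is uniformly bounded, Lemma~I.3.10 yields $\dimMl{\Psi}\geq 2$ immediately. Your Donsker argument is valid and gives $\dimH{\Psi}\geq 2$ via Lemma~\ref{lem:lowerboundiid}\eqref{part:lem:lowerbound:E}, but to then conclude $\dimMl{\Psi}=2$ you need the general inequality $\dimH{\Psi}\leq\dimMl{\Psi}$ (Part~I), not the two inequalities you wrote, which by themselves only yield $\dimMl{\Psi}\leq 2$. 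With that correction the logic closes; the paper's approach simply avoids the detour through invariance principles and the continuous-mapping issue altogether.
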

	
\begin{proof}
Let $n\in \mathbb N$. The ball $N_n(0)$ has at most $2n^2+1$ elements. So the mass distribution principle
(Theorem~\ref{thm:mdp-simple}) implies that $\dimH{\Psi}\leq 2$. For the other side, let $\mathcal C$ be
the equivariant disjoint covering of $\mathbb Z^2$ by translations of the rectangle $[-n^2,n^2]\times [-n,n]$
(similar to Example~I.3.12). For each rectangle $\sigma\in \mathcal C$,
select the right-most point in $\sigma\cap \Psi$ and let $\bs S=\bs S_{\Psi}$ be the set of selected points.
By construction, $\bs S$ gives an $n$-covering of $\Psi$ and it can be seen that it is an equivariant covering.
Let $\sigma_0$ be the rectangle containing the origin. By construction, $0\in \bs S$ if and only if it is
either on a right-edge of $\sigma_0$ or on a horizontal edge of $\sigma_0$ and the random walk stays outside $\sigma_0$. 
The first case happens with probability $1/(2n^2+1)$. By classical results concerning the hitting time of random walks,
one can obtain that the probability of the second case lies between two constant multiples of $n^{-2}$.
It follows that $\myprob{0\in \bs S}$ lies between two constant multiples of $n^{-2}$. 
Therefore, $\dimMl{\Psi}\geq 2$. This proves the claim.
\end{proof}

\del{\begin{remark}
One can generalize the above proposition by allowing $S_n\in\mathbb R^k$ and assuming that the jumps have
zero mean and finite second moments.
\end{remark}}

\subsection{A Drainage Network Model}

Let $[\bs T, \bs o]$ be the one-ended tree in Subsection~I.4.5 {equipped with the graph-distance metric}.
\todel{Recall that the set of vertices is the even lattice $\{(x,y)\in \mathbb Z^2: x+y \bmod 2 = 0\}$,
and the parent $F(x,y)$ of vertex $(x,y)$ is $(x\pm 1, y-1)$, where the sign is chosen uniformly and independently. 
The following theorem complements Theorem~I.4.14.}

\begin{proposition}
\label{thm:drainage}
\del{Under the graph-distance metric, }One has
$ \dimM{\bs T} = \dimH{\bs T} = \frac 3 2. $
\end{proposition}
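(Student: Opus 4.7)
The plan is to apply the unimodular Billingsley lemma (Theorem~\ref{thm:billingsley}) with the trivial weight $\bs w \equiv 1$, and to deduce $\dimH{\bs T} = 3/2$ by establishing that the growth rate $\growth{|N_r(\bs o)|} = 3/2$ almost surely. The matching Minkowski equality follows by a parallel covering argument in the spirit of Theorem~\ref{thm:lowerBoundR^d}.

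The main structural observation is that the graph distance from $\bs o$ to any vertex $v = (x, y) \in \bs T$ equals $|y| + 2 k^*(v)$, where $k^*(v)$ is the level below $\bs o$ at which the ancestor random walks from $\bs o$ and from $v$ first coalesce. These two walks are independent simple random walks on $\mathbb Z$ until they meet, and their difference $\Delta(k) := W_1(k) - W_2(|y|+k) - x$ is a lazy random walk on $2\mathbb Z$ with variance $2$ per step, starting from $\Delta(0) = -x - W_2(|y|)$, with $W_2(|y|)$ asymptotically $\mathcal N(0, |y|)$. Setting $T := (r - |y|)/2$, the event $\{v \in N_r(\bs o)\}$ is precisely $\{\tau_\Delta \le T\}$, and by the reflection principle
\[
\myprob{v \in N_r(\bs o)} \;\text{is of order}\; 2\Phi\!\left(-|\Delta(0)|/\sqrt{2T}\right).
\]
Averaging over $W_2(|y|)$ and summing over $x$ on the even lattice at fixed $y$, translation invariance of the lattice sum gives $\sum_x \myprob{(x,y) \in N_r(\bs o)}$ of order $\sqrt{r - |y|}$, and summing over $|y| \le r$ yields $\omid{|N_r(\bs o)|}$ of order $r^{3/2}$. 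Theorem~\ref{thm:billingsley} then gives $\dimH{\bs T} \le 3/2$.

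For the almost sure lower bound, a parallel second-moment computation based on the joint coalescence of three ancestor walks (from $\bs o$ and from two generic vertices $v, v'$) yields $\omid{|N_r(\bs o)|^2} = O(r^3)$, of the same order as $\omid{|N_r(\bs o)|}^2$. By Paley--Zygmund, $\myprob{|N_r(\bs o)| \ge c r^{3/2}} \ge c' > 0$ uniformly in $r$; a zero-one law combined with the ergodicity of $[\bs T, \bs o]$ upgrades this and the matching upper tail into $|N_r(\bs o)| = \Theta(r^{3/2})$ almost surely, so that $\growth{|N_r(\bs o)|} = 3/2$ a.s. Corollary~\ref{cor:billingsley-ergodic} now gives $\dimH{\bs T} = 3/2$, and using the random coverings at each level (selecting one vertex per horizontal slab of width $\sqrt r$, analogous to the construction before Theorem~\ref{thm:lowerBoundR^d}) yields $\dimM{\bs T} = 3/2$. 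The main technical obstacle is the second-moment estimate: controlling $\omid{|N_r(\bs o)|^2}$ requires careful bookkeeping of how pairs of ancestor walks from generic vertices jointly interact with $\bs o$'s ancestor chain, which reduces to a three-walk coalescence computation.
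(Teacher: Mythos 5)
Your first‐moment computation is the right idea for the upper bound, and it is close in spirit to the paper's: the paper decomposes $\card{N_n(\bs o)}$ as $\sum_{k,l}e_{k,l}\identity{\{k+l\le n\}}$ where $e_{k,l}$ counts level-$k$ descendants of $F^l(\bs o)$ not descending from $F^{l-1}(\bs o)$, observes that $\omid{e_{k,l}}$ is the non-coalescence probability of two independent random walks of lengths $k$ and $l$ (of order $(k\wedge l)^{-1/2}$), and sums to get $\omid{\card{N_n(\bs o)}}\le cn^{3/2}$. Your reflection-principle derivation reaches the same estimate by a slightly different bookkeeping of the same coalescing-walk structure, and then Theorem~\ref{thm:billingsley} gives $\dimH{\bs T}\le 3/2$.

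However, there is a genuine gap in the rest of the argument, and it is also unnecessary effort. The paper does not prove the lower bound from scratch: it simply cites Theorem~I.4.14, which already established $\dimM{\bs T}=3/2$, and uses the general inequality between the unimodular Minkowski and Hausdorff dimensions to conclude $\dimH{\bs T}\ge 3/2$. Your plan instead tries to establish $\card{N_r(\bs o)}=\Theta(r^{3/2})$ almost surely via a second moment and Paley--Zygmund. Two problems: first, the required bound $\omid{\card{N_r(\bs o)}^2}=O(r^3)$ is stated but not carried out, and you yourself flag the three-walk coalescence bookkeeping as the main obstacle. Second, even granting it, Paley--Zygmund only gives that, for each fixed $r$, $\myprob{\card{N_r(\bs o)}\ge cr^{3/2}}\ge c'>0$; this controls quantiles at individual scales and does not imply $\growthl{\card{N_r(\bs o)}}\ge 3/2$ almost surely (which is what Corollary~\ref{cor:billingsley-ergodic} needs). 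Ergodicity shows $\growthl{\card{N_r(\bs o)}}$ is a.s.\ constant, but it does not upgrade a positive-probability one-scale bound to a statement about the $\liminf$; the event $\{\card{N_r(\bs o)}\ge cr^{3/2}\}$ holding for a positive fraction of $r$ is perfectly compatible with $\growthl<3/2$. To make this route work you would need a genuine a.s.\ lower bound on $\card{N_r(\bs o)}$ along all large $r$, or to argue via part~\eqref{part:lem:lowerbound:E} of Lemma~\ref{lem:lowerboundiid} with a uniform-in-$r$ lower-tail estimate (which Paley--Zygmund does not provide). The simpler and complete route is the paper's: reuse the Minkowski dimension from Theorem~I.4.14.
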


\begin{proof}
Theorem~I.4.14 proves that $\dimM{\bs T}=\frac 3 2$. So it is enough to prove $\dimH{\bs T}\leq \frac 3 2$.
To use the unimodular Billingsley lemma, an upper bound on $\omid{\card{N_n(\bs o)}}$ is derived. Let 
$e_{k,l}:=\card{\left(F^{-k}(F^l(\bs o))\setminus F^{-(k-1)}(F^{l-1}(\bs o))\right)}$
be the number of descendants of order $k$ of $F^l(\bs o)$ which are not a descendant
of $F^{l-1}(\bs o)$ (for $l=0$, let it be just $\card{F^{-k}(\bs o)}$).
One has $\card{N_n(\bs o)} = \sum_{k,l} e_{k,l}\identity{\{k+l\leq n\}}$. 
\invisible{Later: Say why this holds?}
It can be seen that $\omid{e_{k,l}}$ is equal to the probability that two independent
paths of length $k$ and $l$ starting both at $\bs o$ do not collide at another point. 
Therefore, $\omid{e_{k,l}}\leq c (k\wedge l)^{-\frac 12}$ for some $c$ and all $k,l$.
This implies that (in the following, $c$ is updated at each step to a new constant without changing the notation)
\begin{eqnarray*}
\omid{\sum_{k,l\geq 0} e_{k,l} \identity{\{k+l\leq n\}}} &\leq & \sum_{k=0}^{\floor{\frac n 2}} ck^{-\frac 12}(n-k)
%\\ &\leq & 
\le  cn\sum_{k=0}^{\floor{\frac n 2}} k^{-\frac 12}
%\\ & \leq &
\le cn^{\frac 3 2}.
\end{eqnarray*}
The above inequalities imply that $\omid{\card{N_n(\bs o)}}\leq cn^{\frac 3 2}$ for some $c$ and all $n$.
Therefore, the unimodular Billingsley lemma (Theorem~\ref{thm:billingsley}) implies that $\dimH{\bs T} \leq \frac 32$.
So the claim is proved.
\end{proof}

\subsection{{Self Similar Unimodular Spaces}}
\label{subsec:selfsimilar}

In this subsection, two examples are presented which have some kind of self-similarity heuristically,
but do not fit into the framework of Subsection~I.4.6.

\subsubsection{Unimodular Discrete Spaces Defined by Digit Restriction}
\label{subsec:digits}
Let $J\subseteq\mathbb Z^{\geq 0}$. For $n\geq 0$, consider the set of natural numbers with expansion
$(a_na_{n-1}\ldots a_0)$ in base 2 such that $a_i=0$ for every $i\not \in J$. \del{As in the definition
of the unimodular discrete Cantor set (Subsection~I.\ref{I-subsec:cantor}), }{Similarly to the examples in Subsection~I.4.6,} one can shift this set 
randomly and take a limit to obtain a unimodular discrete space.
This can be constructed in the following way as well: Let $\bs T_0:=\{0\}$.
If $n\in J$, let $\bs T_{n+1}:=\bs T_n\cup (\bs T_n\pm 2\times 2^n)$,
where the sign is chosen i.i.d., each sign with probability $1/2$.
If $n\not\in J$, let $\bs T_{n+1}:=\bs T_n$. Finally, let $\Psi:=\cup_n \bs T_n$.

The upper and lower asymptotic densities of $J$ in $\mathbb Z^{\geq 0}$ are
defined by $\densityU{}(J):=\limsup_n \frac 1 nJ_n$ and
$\densityL{}(J):=\liminf_n \frac 1nJ_n$, where $J_n:= \card{J\cap \{0,\ldots,n\}}$.

\begin{proposition}
\label{prop:digits}
Almost surely,
\begin{eqnarray*}
\dimH{\Psi} = \dimMu{\Psi} = \growthu{\card{N_n(\bs o)}} &=& \densityU{}(J),\\
\dimMl{\Psi} = \growthl{\card{N_n(\bs o)}} &=& \densityL{}(J).
\end{eqnarray*}
\end{proposition}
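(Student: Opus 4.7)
The plan is to exploit the block structure of $\Psi$. Since for every finite $S \subseteq J$ the value $x(S) := \sum_{k \in S} s_k 2^{k+1}$ is uniquely determined (by uniqueness of dyadic sign-sums), the map $S \mapsto x(S)$ is a bijection onto $\Psi$. For every $n \geq 0$, let $\Psi_n := \{x(S) : S \subseteq J \cap [0,n]\}$: a set of $2^{J_n}$ distinct integers contained in $(-2^{n+2}, 2^{n+2})$. Splitting each $S$ into its parts in $[0,n]$ and $[n+1, \infty)$ yields an equivariant partition of $\Psi$ into \emph{level-$n$ blocks}, each of the form $\Psi_n + t(S')$ with $S' \subseteq J \cap [n+1, \infty)$ finite and $t(S') := \sum_{k \in S'} s_k 2^{k+1}$; equivalently, $u$ and $v$ lie in the same block iff $u - v \in \Psi_n - \Psi_n$, a deterministic subset of $\mathbb{Z}$ independent of the signs $\{s_k\}$. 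Since the translates $t(S')$ are distinct multiples of $2^{n+2}$ and the diameter of $\Psi_n$ is strictly less than $2^{n+2}$, the blocks are pairwise disjoint, and at most three of them intersect any ball of radius $2^n$.

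This immediately yields the growth rates. Any ball $N_r(\bs o)$ with $r \leq 2^n$ intersects at most three level-$n$ blocks of size $2^{J_n}$ each, so $\card{N_r(\bs o)} \leq 3 \cdot 2^{J_n}$ almost surely. Conversely, the block of level $n$ containing $\bs o$ has diameter less than $2^{n+2}$ and is contained in $N_{2^{n+2}}(\bs o)$, so $\card{N_{2^{n+2}}(\bs o)} \geq 2^{J_n}$. Passing to $\limsup/\liminf$ yields $\growthu{\card{N_r(\bs o)}} = \densityU{}(J)$ and $\growthl{\card{N_r(\bs o)}} = \densityL{}(J)$ almost surely.

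The bound $\dimH{\Psi} \leq \densityU{}(J)$ then follows from the unimodular Billingsley lemma (Theorem~\ref{thm:billingsley}) applied with $\bs w \equiv 1$. For the reverse inequality, fix $\beta < \densityU{}(J)$; by definition of $\limsup$ there exist infinitely many $n$ with $J_n \geq \beta n$, and for such $n$ and $r_n := 2^{n+2}$ the lower bound above gives $\card{N_{r_n}(\bs o)} \geq 2^{\beta n} = 4^{-\beta} r_n^{\beta}$ almost surely. Hence $\liminf_{r \to \infty} \myprob{\card{N_r(\bs o)} \leq \delta r^{\beta}} = 0$ for every $\delta < 4^{-\beta}$, and part~\eqref{part:lem:lowerbound:E} of Lemma~\ref{lem:lowerboundiid} yields $\dimH{\Psi} \geq \beta$. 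Letting $\beta \uparrow \densityU{}(J)$ completes the Hausdorff computation.

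Finally, for the Minkowski dimensions, at each scale $r := 2^{n+2}$ I construct the equivariant $r$-covering $\bs R_r$ obtained by selecting, conditionally on $\Psi$ and using independent auxiliary randomness (e.g., i.i.d.\ uniform labels), one uniform representative in each level-$n$ block. Every point is then covered exactly once, so $\bs R_r$ is a uniformly bounded equivariant covering, and a short mass transport argument with $g(u,v) := \identity{u = \text{rep}(\bs B_n(v))}$ gives $\myprob{\bs R_r(\bs o) > 0} = 2^{-J_n}$. Lemma~I.3.10 then yields $\dimMu{\Psi} = \decayu{2^{-J_n}} = \densityU{}(J)$ and $\dimMl{\Psi} = \decayl{2^{-J_n}} = \densityL{}(J)$. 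The main technical care is needed in verifying that the block partition—though defined a priori through the random signs—is genuinely equivariant; this is ensured by the characterization $u \sim_n v \iff u - v \in \Psi_n - \Psi_n$ via a deterministic subset of $\mathbb{Z}$.
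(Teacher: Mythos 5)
Your proof is correct and follows essentially the same strategy as the paper's: a uniformly bounded equivariant $2^{n+2}$-covering built from the nested block structure $\Psi_n=\bs T_{n+1}$, with $\myprob{\bs R_{2^{n+2}}(\bs o)>0}=2^{-J_n}$, yields both Minkowski dimensions via Lemma~I.3.10, and the unimodular Billingsley lemma gives $\dimH{\Psi}\leq\densityU{}(J)$; your added contribution is making the lower bound $\dimH{\Psi}\geq\densityU{}(J)$ explicit via part~\eqref{part:lem:lowerbound:E} of Lemma~\ref{lem:lowerboundiid}, a step the paper leaves implicit. One small inaccuracy: the balls of radius $2^{n+2}$ around the block representatives are not pairwise disjoint --- a point near the edge of one block can lie within distance $2^{n+2}$ of the representative of an adjacent block --- so the claim that ``every point is then covered exactly once'' is false; what does hold, and what Lemma~I.3.10 actually needs, is that the covering is uniformly bounded (at most $3$-bounded, by the same block-separation estimate you already use for the growth-rate upper bound).
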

{In particular, this provides another example of a unimodular discrete space where the (polynomial) growth rate does not exist.}
\begin{proof}
Let $n\geq 0$ be given. Cover $\bs T_n$ by a ball of radius $2^n$ centered at the minimal element of $\bs T_n$.
By the same recursive definition, one can cover $\bs T_{n+1}$ by either 1 or 2 balls of the same radius. 
Continuing the recursion, an equivariant $2^n$-covering $\bs R_n$ is obtained.
It is straightforward to see that $\myprob{\bs R_n(\bs o)>0} = 2^{-J_n}$.
Since these coverings are uniformly bounded (Definition~I.3.9),
Lemma~I.3.10 implies that $\dimMl{\Psi} = \densityL{}(J)$ and $\dimMu{\Psi} = \densityU{}(J)$. 
One has 
\begin{equation}
\label{eq:prop:digits}
\card{\bs T_m} = 2^{J_m	}.
\end{equation}
This implies that $\card{N_{2^n}(\bs o)}\leq 2^{J_n+1}$. One can deduce that $\growthu{\card{N_n(\bs o)}} \leq \densityU{}(J)$.
So the unimodular Billingsley lemma (Theorem~\ref{thm:billingsley}) gives $\dimH{\Psi}\leq \densityU{}(J)$. This proves the claim.
\end{proof}

\subsubsection{Randomized Discrete Cantor set}
\label{subsec:cantor-randomized}
This  subsection proposes a unimodular discrete analogue of the \textit{random Cantor set}, recalled below. Let $0\leq p\leq 1$ and $b>1$.  The {random Cantor set} in $\mathbb R^k$~\cite{Ha81} (see also~\cite{bookBiPe17})
is defined by $\Lambda_k(b,p):=\cap_n E_n$, where $E_n$ is defined by the following
random algorithm: Let $E_0:=[0,1]^k$. For each $n\geq 0$ and each \textit{$b$-adic}
cube of edge length $b^{-n}$ in $E_n$, divide it into $b^k$ smaller $b$-adic cubes
of edge length $b^{-n-1}$. Keep each smaller $b$-adic cube with probability $p$
and delete it otherwise independently from the other cubes. Let $E_{n+1}$ be
the union of the kept cubes. It is shown in Section~3.7 of~\cite{bookBiPe17}
that $\Lambda_k(b,p)$ is empty for $p\leq b^{-k}$ and otherwise,
has dimension $k+\log_b p$ conditioned on being non-empty.

For each $n\geq 0$, let $\bs K_n$ be the set of {{lower left corners} of the $b$-adic cubes}
forming $E_n$. It is easy to show that $\bs K_n$ tends to $\Lambda_k(b,p)$ a.s. under the Hausdorff metric. 

\begin{proposition}
\label{prop:randomCantor}
%\mar{\invisible{{Note: the claim is also true for the sample dimension (Even I think it is ergodic if $p\geq 1$).}}}
		Let $\bs K'_n$ denote the random set obtained by biasing the distribution
		of $\bs K_n$ by $\card{\bs K_n}$ {(Definition~I.C.1)}.
		Let $\bs o'_n$ be {a point chosen uniformly at random in $\bs K'_n$.}
\begin{enumerate}[(i)]
\item \del{The distribution of }$[b^n \bs K'_n, \bs o'_n]$ converges {weakly} to some unimodular discrete space $[\hat{\bs K}, \hat{\bs o}]$. 
\item If $p<b^{-k}$, then $\hat{\bs K}$ is finite a.s., hence, $\dimH{\hat{\bs K}}=0$ a.s.
\item If $p\geq b^{-k}$, then $\hat{\bs K}$ is infinite a.s. and 
\[
\dimH{\hat{\bs K}} = \dimM{\hat{\bs K}} =k+\log_b p, \quad a.s.
\]
\end{enumerate}
\end{proposition}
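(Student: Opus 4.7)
The plan is to identify $\hat{\bs K}$ explicitly as a spine-and-branches object, verify unimodularity by passing to the limit of the finite uniform pairs, and then read off the dimensions from standard moment estimates for the Galton--Watson process with offspring distribution Binomial$(b^k,p)$ and mean $\mu := pb^k$ underlying the Cantor construction.

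For part \xeq{i}, each pre-limit $[b^n\bs K'_n,\bs o'_n]$ is unimodular by a direct verification of the mass transport principle: its joint distribution yields $\omid{\sum_y f(\bs K'_n,\bs o'_n,y)} = \omid{\card{\bs K_n}}^{-1}\omid{\sum_{x,y\in\bs K_n}f(\bs K_n,x,y)}$, which is manifestly symmetric in $(x,y)$. For weak convergence, I would observe that the ball $N_{b^m}(\bs o'_n)$ in $b^n\bs K'_n$ is contained in at most $3^k$ level-$(n-m)$ lattice cubes around the level-$(n-m)$ ancestor of $\bs o'_n$, and that its content is determined by: the ancestor chain of $\bs o'_n$ up through level $n-m$; the status (surviving or not) of the $b^k-1$ siblings at each of the $m$ topmost splits of this chain; and, for each surviving sibling, its descendants, which form an independent $b$-adic Cantor process of depth at most $m$. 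These data involve only finitely many layers from an infinite sequence of independent experiments, so their joint distribution stabilizes as $n\to\infty$. The limit $[\hat{\bs K},\hat{\bs o}]$ thus admits an explicit spine-and-branches description: an infinite backwards spine of ancestor cubes, and at each level $m\geq 1$ above $\hat{\bs o}$, an independent family of $b^k-1$ Bernoulli$(p)$ cousin cubes, each surviving cousin carrying an independent Cantor descendant set distributed as $\bs K_{m-1}$. Unimodularity of the limit follows from the preservation of unimodularity under weak limits.

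For part \xeq{ii}, the spine description gives $\omid{\card{\hat{\bs K}}}=1+\sum_{m\geq 1}(b^k-1)p\mu^{m-1}<\infty$ when $\mu<1$, so $\hat{\bs K}$ is finite a.s., which yields $\dimH{\hat{\bs K}}=0$ by Example~I.3.17. For part \xeq{iii}, infiniteness in the critical ($\mu=1$) and supercritical ($\mu>1$) regimes follows from Borel--Cantelli applied to the independent events $\{Z_m>0\}$, where $Z_m$ counts the points of $\hat{\bs K}$ descending from the level-$m$ cousins of $\hat{\bs o}$; in the critical case the Kolmogorov estimate $\myprob{\card{\bs K_m}\geq 1}\asymp 1/m$ makes $\sum_m\myprob{Z_m>0}$ diverge. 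For the upper bound on $\dimH{\hat{\bs K}}$, I would apply the unimodular Billingsley lemma (Theorem~\ref{thm:billingsley}) with $\bs w\equiv 1$: a Palm-type second-moment computation in the pre-limit yields $\omid{\card{N_{b^m}(\bs o'_n)}}\leq 3^k\mu^{-m}\omid{\card{\bs K_m}^2}$, and the classical bounds $\omid{\card{\bs K_m}^2}=O(\mu^{2m})$ (supercritical) and $O(m)$ (critical) give $\growthu{\omid{\card{N_r(\hat{\bs o})}}}\leq k+\log_b p$. For the Minkowski dimension, I would use the natural equivariant $b^m$-covering that selects one representative per level-$m$ ancestor cube containing a surviving point; since these cubes are disjoint, the covering is uniformly bounded, and a Palm computation shows that the probability that $\hat{\bs o}$ is a representative equals $\myprob{\card{\bs K_m}\geq 1}/\mu^m$. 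In the supercritical case its decay rate in $r=b^m$ is $\log_b\mu = k+\log_b p$ (using that $\myprob{\card{\bs K_m}\geq 1}$ converges to the positive survival probability), so Lemma~I.3.10 gives $\dimM{\hat{\bs K}}=k+\log_b p$. The critical case gives dimension $0$ from both directions.

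The main obstacle is making the weak-convergence argument rigorous and verifying that the explicit spine-and-branches description genuinely produces a unimodular limit: one has to pin down the joint distribution of the ancestor chain together with its sibling-and-descendant decorations, invoke a weak-limit preservation theorem for unimodularity (cf.\ Section~I.2), and reconcile the $l_\infty$-based covering and moment computations with the Euclidean metric on $\mathbb R^k$ via Theorem~I.3.31.
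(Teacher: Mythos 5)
Your approach matches the paper's in its overall structure — both identify $\hat{\bs K}$ via a spine-and-branches decomposition and then apply the unimodular Billingsley lemma together with a uniformly bounded cube covering — but the technical ingredients differ at several points, and the comparison is instructive. For part \xeq{i}, the paper does not argue weak convergence by hand; it proves Lemma~\ref{lem:randomCantor}, realizing $\hat{\bs K}$ as the set of unmarked points of $\mathbb Z^k$ under a stationary nested-partition deletion scheme, from which the limit identification and unimodularity are immediate. This replaces your sketch of finite-range weak convergence plus preservation of unimodularity under weak limits — which you correctly flag as the main unfinished step. For the Hausdorff upper bound, the paper works directly in the limit object and computes the \emph{first} moment from the spine, $\omid{\card{\bs C_n\cap\hat{\bs K}}}=1+p(b^k-1)(1+m+\cdots+m^{n-1})$ with $m:=pb^k$, whereas you do a size-biased \emph{second}-moment computation in the pre-limit, $\omid{\card{N_{b^m}(\bs o'_n)}}\leq 3^k m^{-m'}\omid{\card{\bs K_{m'}}^2}$; both give the decay $m^{m'}$ (respectively $O(m')$ at criticality), but your route implicitly needs the offspring law to have finite variance, which the paper's does not. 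For the Minkowski lower bound and for a.s.\ infiniteness at $p=b^{-k}$, the paper invokes the eternal Galton--Watson tree framework, citing Lemma~5.7 and Proposition~6.8 of~\cite{eft}, while you derive the covering probability $\myprob{\card{\bs K_{m'}}\geq 1}/m^{m'}$ by a direct Palm computation and establish infiniteness via Borel--Cantelli with Kolmogorov's critical estimate $\myprob{\card{\bs K_{m'}}\geq 1}\asymp c/m'$. These are equivalent in substance: your version is more self-contained and arguably more transparent; the paper's is shorter given the machinery of~\cite{eft}. The one genuine gap in your writeup is the rigorous limit identification, and Lemma~\ref{lem:randomCantor} is exactly what fills it.
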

	
Note that in contrast to the continuum analogue~\cite{Ha81}, for $p=b^{-k}$, the set is non-empty and even infinite,
though still zero dimensional. Also, for $p<b^{-k}$ the set is non-empty as well.
\invisible{\textbf{Later:} In the continuum case, if we bias by the volume of $E_n$ and then take limit, a nonempty set is obtained.}
	
To prove the above proposition, the following construction of $\hat{\bs K}$ will be used.
First, consider the usual nested sequence of partitions $\Pi_n$ of $\mathbb Z^k$ 
by translations of the cube $\{0,\ldots,b^n-1\}^k$, where $n\geq 0$. To make it \textit{stationary}, 
shift each $\Pi_n$ randomly as follows. Let $a_0,a_1,\ldots\in \{0,1,\ldots, b-1\}^k$
be i.i.d. uniform numbers and let $\bs U_n=\sum_{i=0}^n a_ib^i \in \mathbb Z^k$.
Shift the partition $\Pi_n$ by the vector $\bs U_n$ to form a partition denoted by $\Pi'_n$.
It is easy to see that $\Pi'_n$ is a nested sequence of partitions.
	
\begin{lemma}
\label{lem:randomCantor}
Let $(\Pi'_n)_n$ be the stationary nested sequence of partitions of $\mathbb Z^k$ defined above.
For each $n\geq 0$ and each cube $C\in \Pi'_n$ that does not contain the origin, with probability $1-p$
(independently for different choices of $C$), mark {all points in} $C\cap \mathbb Z^k$ for deletion. Then,
the set of the unmarked points of $\mathbb Z^k$, \rooted{} at the origin,
has the same distribution as $[\hat{\bs K}, \hat{\bs o}]$ defined in Proposition~\ref{prop:randomCantor}.
\end{lemma}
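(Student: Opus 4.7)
My plan is to identify $[\hat{\bs K},\hat{\bs o}]$ by first giving an explicit finite-$n$ description of $[b^n\bs K'_n,\bs o'_n]$, matching it to the lemma's construction truncated at level $n$, and then passing to the limit via the local weak convergence established in part~(i) of Proposition~\ref{prop:randomCantor}. First I would unfold the size-biasing: by Definition~I.C.1, biasing the law of $\bs K_n$ by $\card{\bs K_n}$ and then drawing $\bs o'_n$ uniformly in $\bs K'_n$ is equivalent to picking $v$ uniformly in $\{0,\ldots,b^n-1\}^k$, conditioning on $v\in b^n\bs K_n$, and rooting at $v$. This uses the translation invariance of the Cantor construction under $b$-adic shifts, which gives $\myprob{v\in b^n\bs K_n}=p^n$ independently of $v$. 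I would then realize the Cantor construction by attaching to each $b$-adic cube $C$ of $\{0,\ldots,b^n-1\}^k$ at each level $m\in\{1,\ldots,n\}$ an independent Bernoulli$(p)$ variable $X_C$, and declaring $w\in b^n\bs K_n$ iff $X_{C_m(w)}=1$ for every $m$, where $C_m(w)$ denotes the level-$m$ cube containing $w$. Conditioning on $v\in b^n\bs K_n$ forces $X_{C_m(v)}=1$ for each $m$ while leaving the remaining $X_C$ independent Bernoulli$(p)$; equivalently, one marks each cube $C$ not containing $v$ independently with probability $1-p$ and retains precisely those points $w$ whose enclosing cubes at every level are all unmarked.

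Next I would match this to the lemma's construction after translating by $-v$. Writing $v=\sum_{i=0}^{n-1}a_i b^i$ in base $b$, the digits $a_0,\ldots,a_{n-1}$ are i.i.d.\ uniform on $\{0,\ldots,b-1\}^k$ because $v$ is uniform on $\{0,\ldots,b^n-1\}^k$; up to the harmless digit $a_n$ and the sign convention in $\bs U_n$, these are exactly the digits driving the shifts in the lemma. In both constructions, the position of the origin within its cube of side $b^j$ is then uniform on $\{0,\ldots,b^j-1\}^k$ with the canonical nested consistency across $j$, so the joint law of $\{\Pi'_j\}_{j<n}$ in the lemma coincides with the joint law of the translated level-$(n-j)$ partitions of $\{0,\ldots,b^n-1\}^k-v$. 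The independent $(1-p)$-marking of cubes not containing $v$ becomes, after translation, the independent $(1-p)$-marking of cubes of $\Pi'_j$ not containing the origin. Hence, on any bounded window around the origin, the finite-$n$ construction and the lemma's construction restricted to levels $j<n$ yield the same rooted marked structure.

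Passing to the limit $n\to\infty$, for any fixed $r>0$ the radius-$r$ ball around the root lies inside the level-$(n-j)$ cube containing $v$ once $b^j>2r$ and $v$ avoids the $r$-neighborhood of that cube's boundary, an event whose probability tends to $1$ (its complement has measure $\le 1-(1-2r/b^j)^k$). Hence the restriction of $b^n\bs K_n-v$ to the radius-$r$ ball is determined by the marks of finitely many cubes at levels $\le j$, and these marks agree in distribution with those of the lemma's construction. Combining this with the local weak convergence of part~(i) identifies $[\hat{\bs K},\hat{\bs o}]$ with the set described in the lemma. The main obstacle I expect lies in the middle step: verifying rigorously that conditioning on $v\in b^n\bs K_n$ preserves the independence of the remaining $X_C$'s, and then carefully matching the shifted finite-level partitions with $\Pi'_j$, without conflating the unshifted and shifted frames.
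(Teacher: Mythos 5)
Your proof is correct and follows essentially the same route as the paper's: both arguments condition on the location of the root (equivalently, on the cube containing the origin), observe that the conditional laws of the marked cube structure coincide after translating by $-v$, note that the two events being conditioned on have the same probability $b^{-kn}$, and then pass to the limit via the weak convergence of part~(i). Your write-up is actually more explicit than the paper's, which leaves the Bernoulli bookkeeping and the probability identity $\myprob{\bs o'_n=v}=\myprob{\bs C_n=b^n([0,1)^k-v)}$ to the reader; the two concerns you flag at the end are both resolvable exactly as you anticipate (conditioning i.i.d.\ Bernoullis on a prescribed subset being $1$ leaves the rest i.i.d., and the uniform choice of $v$ produces a uniform, nested random shift of the $b$-adic partitions, which is the defining property of $\Pi'_j$).
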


\begin{proof}[Proof of Lemma~\ref{lem:randomCantor}]
Let $\Phi$ be the set of unmarked points in the algorithm.
For $n\geq 0$, let $\bs C_n$ be the cube in $\Pi'_n$ that contains the origin.
It is proved below that $\bs C_n\cap \Phi$ has the same distribution as $b^n(\bs{K}'_n -\bs o_n)$. This \del{easily }implies the claim.
		
Let $A_n\subseteq [0,1]^k$ be the set of possible outcomes of $\bs o'_n$. One has $\card{A_n} = b^{kn}$.
For $v\in A_n$, it is easy to see that the distribution of $b^n(\bs{K}'_n -\bs o_n)$, conditioned
on $\bs o'_n=v$, coincides with the distribution of $\bs C_n\cap \Phi$ conditioned on $\bs C_n = b^n([0,1)^k-v)$.
So it remains to prove that $\myprob{\bs o'_n=v}= \myprob{\bs C_n = b^n([0,1)^k-v)}$, which is left to the reader.
\end{proof}

Here is another description of $\hat{\bs K}$. The nested structure of $\bigcup_n \Pi'_n$ defines a tree as follows.
The set of vertices is $\bigcup_n \Pi'_n$. For each $n\geq 0$, connect (the vertex corresponding to) every
cube in  $\Pi'_n$ to the unique cube in $\Pi'_{n+1}$ that contains it. This tree is the canopy tree
(Subsection~I.4.2.1) with offspring cardinality $N:=b^k$, except that the root
(the cube $\{0\}$) is always a leaf. Now, keep each vertex with probability $p$ and remove it
with probability $1-p$ in an i.i.d. manner. Let $\bs T$ be the connected component of the
remaining graph that contains the root. Conditioned on the event that $\bs T$ is infinite,
$\hat{\bs K}$ corresponds to the set of leaves in the connected component of the root.
	
\begin{proof}[Proof of Proposition~\ref{prop:randomCantor}]
%\mar{\invisible{Later: Maybe another proof is by using Capacity theory (as in~\cite{bookBiPe17}).}}
The unimodular Billingsley lemma is used to get an upper bound on the Hausdorff dimension.
For this $\omid{\card{N_{b^n}(\bs o)}}$ is studied. Consider the tree $[\bs T, \bs o]$ defined above and
obtained by the percolation process on the canopy tree with offspring cardinality $N:=b^k$. Let $C$ be any cube
in $\Pi'_i$ that does not contain the origin. Note that the subtree of descendants of $C$ in the percolation cluster
(conditioned on keeping $C$) is a Galton-Watson tree with binomial offspring distribution with parameters $(N,p)$.
Classical results on branching processes say $\omidCond{\card{C\cap \hat{\bs K}}}{\Pi'_i} = p m^i$, where $m:=p b^k$.
So the construction implies that
\[
\omid{\card{\bs C_n\cap \hat{\bs K}}} = 1+ p(N-1)\left(m^{n-1}+m^{n-2}+\cdots + 1 \right).
\]
For $m>1$, the latter is bounded by $l m^n$
for some constant $l$ not depending on $n$. Note that $N_{b^n}(\bs o)$ is contained 
in the union of $\bs C_n$ and $3^k-1$ other cubes in $\Pi'_n$. It follows that 
$\omid{\card{N_{b^n}(\bs o)}}\leq l' m^n$, where $l'=l+(3^k-1)p$. So the unimodular Billingsley lemma
(Theorem~\ref{thm:billingsley}) implies that $\dimH{\hat{\bs K}}\leq k+\log_b p$. The claim for $m=1$ and $m<1$ are similar.
		
Consider now the Minkowski dimension. \del{By the above part of the proof, it is enough to assume $m>1$. }{As above, we assume $m>1$ and the proofs for the other cases are similar.}
Let $n\geq 0$ be given. By considering the partition $\Pi'_n$ by cubes, one can construct a $b^n$-covering
$\bs R_n$ as in Theorem~\ref{thm:lowerBoundR^d}. This covering satisfies
$
\myprob{\bs R_n(\bs o)\geq 0} = \omid{ 1/{\card{(\bs C_n \cap \hat{\bs K})}}}.
$
Let $[\bs T', \bs o']$ be the eternal Galton-Watson tree of Subsection~I.4.2.3 
with binomial offspring distribution with parameters $(N,p)$. By regarding $\bs T'$ as a family tree,
it is straightforward that $[\bs T, \bs o]$ has the same distribution as the part of $[\bs T', \bs o']$,
up to the generation of the root (see~\cite{eft} for more details on eternal family trees).
Therefore, Lemma~5.7 of~\cite{eft} implies that 
$
\omid{1/{\card{(\bs C_n \cap \hat{\bs K})}}} = m^{-n} \myprob{h(\bs o')\geq n}.
$
Since $m>1$, $\myprob{h(\bs o')\geq n}$ tends to the non-extinction probability of the descendants of the root, which is positive.
By noticing the fact that the radii of the balls are $b^n$ and the covering is uniformly bounded,
one gets that $\dimM{\hat{\bs K}}= \log_b m = k+\log_b p$.
		
Finally, it remains to prove that $\hat{\bs K}$ is infinite a.s. when $p=b^{-k}$.
In this case, consider the eternal Galton-Watson tree $[\bs T',\bs o']$ as above.
Proposition~6.8 of~\cite{eft} implies that the generation of the root is infinite a.s. This proves the claim.
\end{proof}

	\invisible{\begin{guess}
			If $b=2$ and $p$ is large enough, then the subgraph of $\mathbb Z^k$ induced by $\hat{\bs K}$ contains infinite connected components a.s. (see line -5 of page 114 of~\cite{bookBiPe17}).
		\end{guess}
		\textbf{Guess:} Intersection of $\hat{\bs K}$ with an independent point-stationary point process has a known dimension. See Theorem~5.1 and Corollary 5.3 of Section~7 of~\cite{bookBiPe17} (in the old book).
		
		NOTE: I thinks Randomized self similar sets, which are very more general, are similarly treated (analogous to [2002, A dimensional result for random self-similar sets]). I think this is good to mention somewhere. What do you think?}

\subsection{Cayley Graphs}
\label{subsec:cayley}

As mentioned in Subsection~I.3.6,  the dimension of a Cayley graph depends only on the group and not on the generating set. The following result connects it to the growth rate of the group. Note that 
%\mar{\invisible{2. Cite Pansu as well.\\ 3. I couldn't prove $\measH{\alpha}>0$. This should be proved by strengthening Lemma~\ref{lem:lowerboundiid}.}}
Gromov's theorem~\cite{Gr81} implies that the polynomial growth degree exists and is either an integer or infinity.
%Recall the discussion on Cayley graphs from Subsection~I.\ref{I-subsec:cayley}.
%The following theorem connects the dimension of Cayley graphs to growth rate of groups. 

\begin{theorem}
	\label{thm:Cayley}
For every finitely generated group $H$ with polynomial growth {degree} $\alpha\in [0,\infty]$, one has 
$
\dimM{H} = \dimH{H} = \alpha
$
{and $\measH{\alpha}(H)<\infty$}.
%Moreover, $\alpha$ is either an integer or infinity.
\end{theorem}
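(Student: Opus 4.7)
The plan is to combine Gromov's structure theorem for groups of polynomial growth with the unimodular Billingsley lemma, together with an equivariant packing argument for the upper bounds. First, I would view the Cayley graph of $H$ equipped with its word metric as a deterministic unimodular random rooted graph $[\bs G, \bs o]$ rooted at the identity $e$. Vertex-transitivity makes this object trivially unimodular and ergodic. If $\alpha<\infty$, Gromov's theorem asserts that $H$ is virtually nilpotent, and Pansu's refinement gives the asymptotic $\card{N_r(e)}\sim c r^\alpha$ for some $c>0$; in particular the polynomial growth rate $\growth{\card{N_r(e)}}$ exists and equals $\alpha$. For $\alpha=\infty$, the growth is superpolynomial and $\growth{\card{N_r(e)}}=\infty$.

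For the Hausdorff dimension, I would apply Corollary~\ref{cor:billingsley-ergodic} with $\bs w\equiv 1$: the common growth rate of $\card{N_r(e)}$ being deterministically equal to $\alpha$ yields $\dimH{H}=\alpha$ directly. When $\alpha=\infty$, this already forces $\dimM{H}=\dimH{H}=\infty$ since the Minkowski dimension dominates the Hausdorff one in general, and the measure claim becomes vacuous.

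For the bound $\dimMu{H}\leq \alpha$ and for $\measH{\alpha}(H)<\infty$ (assuming now $\alpha<\infty$), the idea is to exhibit, for each $r\geq 1$, an equivariant $r$-covering $\bs R_r$ with values in $\{0,r\}$ whose $\alpha$-moment at $\bs o$ stays bounded as $r\to\infty$. Let $(\bs X_v)_{v\in H}$ be i.i.d. Uniform$[0,1]$ marks, which form a valid equivariant randomization. Using a Poisson-greedy (Moran-type) procedure on these marks, I would form an equivariant maximal $r$-separated subset $\bs S_r\subseteq H$. By maximality, every vertex of $H$ lies within distance $r-1$ of $\bs S_r$, so $\bs R_r(v):=r\cdot \identity{v\in \bs S_r}$ defines an equivariant $r$-covering. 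By $r$-separation, the balls $N_{\lfloor r/2\rfloor}(v)$ for $v\in \bs S_r$ are pairwise disjoint, and applying the mass transport principle~(I.2.2) to $g(u,v):=\identity{v\in \bs S_r}\identity{u\in N_{\lfloor r/2\rfloor}(v)}$ yields $\myprob{e\in \bs S_r}\cdot \card{N_{\lfloor r/2\rfloor}(e)}\leq 1$. Pansu's lower bound then gives $\myprob{e\in \bs S_r}\leq C r^{-\alpha}$ for some $C>0$ independent of $r$. Consequently, $\omid{\bs R_r(\bs o)^\alpha}=r^\alpha\,\myprob{e\in\bs S_r}\leq C$ uniformly in $r$, which simultaneously establishes $\contentH{\alpha}{r}(H)\leq C$ for every $r$ (so $\measH{\alpha}(H)\leq C$ on letting $r\to\infty$) and $\dimMu{H}\leq \alpha$ via the characterization of the Minkowski dimension through intensity decay rates (Lemma~I.3.10). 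Combining with $\dimMl{H}\geq\dimH{H}=\alpha$, one gets $\dimM{H}=\alpha$.

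The main obstacle is constructing the equivariant maximal $r$-separated set $\bs S_r$ within the framework of Part~I. The Poisson-greedy procedure is a standard factor of i.i.d., but one must argue that the recursive selection rule terminates for each vertex almost surely (so that $\bs S_r$ is well-defined as an equivariant process), that the resulting subset is indeed maximally $r$-separated, and that the mass transport intensity bound applies to it. For virtually nilpotent groups, an attractive alternative that sidesteps the randomization is to exploit Mal'cev coordinates of a finite-index nilpotent subgroup to produce deterministic nested tilings of $H$ by cells of diameter $\asymp r$ and cardinality $\asymp r^\alpha$, which serve as an $r$-covering directly.
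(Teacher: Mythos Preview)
Your route to $\dimH{H}=\alpha$ via Corollary~\ref{cor:billingsley-ergodic} is correct and close to the paper's (which uses Theorem~\ref{thm:mdp-simple} and Lemma~\ref{lem:lowerboundiid}\,\eqref{part:lem:lowerbound:all} directly, citing Bass's two-sided bound $cr^\alpha\leq\card{N_r(e)}\leq Cr^\alpha$ rather than Pansu's asymptotic). However, you have the orientation of the unimodular inequalities reversed in two places, and one of these creates a genuine gap.

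First, the general inequality from Part~I runs $\dimMl{}\leq\dimMu{}\leq\dimH{}$, opposite to the classical $\dim_H\leq\dim_M$; so the line ``$\dimMl{H}\geq\dimH{H}=\alpha$'' is false as written. Likewise, exhibiting an equivariant $r$-covering of intensity $\leq Cr^{-\alpha}$ shows that $\lambda_r$ decays \emph{at least} at rate $\alpha$, which yields $\dimMl{H}\geq\alpha$, not $\dimMu{H}\leq\alpha$. These confusions are harmless for the dimension statement itself: once $\dimH{H}=\alpha$ is established, $\dimMu{H}\leq\alpha$ follows from the general inequality, and your packing (or simply Lemma~\ref{lem:lowerboundiid}\,\eqref{part:lem:lowerbound:all} with $\bs w\equiv 1$, as in the paper) supplies $\dimMl{H}\geq\alpha$.

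The real gap is the claim $\measH{\alpha}(H)<\infty$. Your packing produces $\contentH{\alpha}{r}(H)\leq C$ for all $r$, an \emph{upper} bound on the Hausdorff content. In Part~I's conventions the $\alpha$-dimensional Hausdorff measure is inversely related to $\contentH{\alpha}{\infty}$, so your bound gives $\measH{\alpha}(H)\geq 1/C>0$ --- precisely the expectation the paper states just after the theorem but leaves open --- and says nothing about finiteness. Finiteness requires a uniform \emph{lower} bound on the content, and this is exactly what the mass distribution principle (Theorem~\ref{thm:mdp-simple}\,(i)) delivers from the \emph{upper} growth estimate $\card{N_r(e)}\leq Cr^\alpha$ with $\bs w\equiv 1$: one gets $\contentH{\alpha}{M}(H)\geq 1/C$ for every $M\geq 1$, whence $\measH{\alpha}(H)\leq C$.
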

\begin{proof}
	%\mar{{Another result: $\measH{\alpha}(\bs D)<\infty$. I guess that $\measH{\alpha}(\bs D)>0$, but I couldn't prove it. A weaker result is $\xi^{\alpha}_{\infty}(\bs D)<\infty$, which can be proved interestingly by Frostman's lemma and the fact that every $w$ is constant! Write it?}}
First, assume $\alpha<\infty$. The result of Bass~\cite{Ba72} implies that there are constants $c,C>0$
such that $\forall r\geq 1: cr^{\alpha}<\card{N_r(o)} \leq Cr^{\alpha}$, where $o$ is an arbitrary element of $H$.
So the mass distribution principle (Theorem~\ref{thm:mdp-simple}) and part~\eqref{part:lem:lowerbound:all}
of Lemma~\ref{lem:lowerboundiid} imply that $\dimM{H} = \dimH{H}=\alpha$. %y{In addition, ??? and Lemma~\ref{lem:MDPstronger2} imply that $0<\measH{\alpha}(H)<\infty$.}
{In addition, \eqref{eq:thm:mdp-simple} in the proof of Theorem~\ref{thm:mdp-simple} implies that $\contentH{\alpha}{M}(H)\geq 1/C$ for all $M\geq 1$, which implies that $\measH{\alpha}(H)\leq C<\infty$.}
	
Second, assume $\alpha=\infty$. The result of \cite{VaWi84} shows that for any $\beta<\infty$,
$\card{N_r(o)}>r^{\beta}$ for sufficiently large $r$\invisible{ (see~(1.10) in~\cite{VaWi84})}.
Therefore, part~\eqref{part:lem:lowerbound:all} of Lemma~\ref{lem:lowerboundiid} implies
that $\dimMl{H}\geq \beta$. Hence, $\dimM{H}=\dimH{H}=\infty$ and the claim is proved.
\end{proof}

{
It is natural to expect that $\measH{\alpha}(H)>0$ as well, but only a weaker inequality will be proved in Proposition~\ref{prop:Cayley}. 
}

%{Later: Later: Trofimov extends Gromov's result to all transitive graphs. See Tom's email.}

\invisible{\mar{Which one of these to include?\\ Mention them briefly?}\textbf{Question:} Can we calculate the Hausdorff measure?
	
	\textbf{Question:} What about deterministic transitive unimodular graphs? \textbf{Update:} Tom Hutchcroft said that there is a result similar to Gromov's result for transitive unimodular graphs. So we might consider devoting a few paragraphs on transitive graphs.
}

\invisible{

{
%Gromov's theorem says that if a Cayley graph has polynomial growth (ater: define growth), then the growth rate is integer.
%	
%	
%	\url{https://en.wikipedia.org/wiki/Gromov's_theorem_on_groups_of_polynomial_growth}
%	
%	\textbf{Proposition:} The dimension does not depend on the generating set (by Theorem~I.3.31).
%	
%	%\textbf{Guess:} The dimension of a Cayley graph is either an integer of infinity.
%	%
%	%(I don't know if the rate in Gromov's theorem is only for limsup of the growth or also says something about liminf)
%	
%	\textbf{Proposition:} The dimension is either an integer or infinity.
%	
%	Proof: If the group has polynomial growth, Gromov's theorem implies that the growth rate is an integer. If not, an extension of Gromov's theorem (Dries-Wilkie) implies that liminf of growth rate is also infinite (see 12.1 of Tao [???]). 
%	\marginpar{See chat with Mir-Omid.}
%	
%	\textbf{Proposition:} $\dimM{G}= \dimH{G}$. Proof by (iv) of Lemma~\ref{lem:lowerboundiid} (or part (i)?).
	
	{	
	\textbf{Idea:} More interesting than polynomial growth, are groups of intermediate growth. The growth is usually of the form $e^{n^{\alpha}}$. We can try to use another \textit{gauge function}: In the definition of Hausdorff dimension, to each ball of radius $r$, assign $e^{r^{\alpha}}$. Try the lamplighter group. Or more interestingly, Grigorchuk's group or groups of \textit{oscillating growth} [???].
	
	\textbf{Question:} Is there a natural metric on the lamplighter group that makes it finite dimensional (note that Folner sets are not balls here).
	}
}}

\subsection{Notes and Bibliographical Comments}

{The proof of {Proposition~\ref{prop:tree-expGrowth}} was suggested by R. Lyons.} 
Bibliographical comments on {some of} the examples discussed in this
section can be found at the end of Section I.4. {The example defined by digit restriction (Subsection~\ref{subsec:digits}) is inspired by an example in the continuum setting (see e.g., Examples~1.3.2 of~\cite{bookBiPe17}). The randomized discrete Cantor set (Subsection~\ref{subsec:cantor-randomized}) is inspired by the \textit{random cantor set} (see e.g., Section~3.7 of~\cite{bookBiPe17}).}

\invisible{
{\mar{I suggest to delete this since it is a duplicate of the proof of Lemma~III.\ref{III-lem:nonamenable-graph}. Instead, we can say that Theorem~\ref{thm:tree-expGrowth} is implied by Part III}} \sout{%The results of Subsection \ref{sec:trees-infEnds} and their proof were suggested by R. Lyons.  
Here is another proof of Lemma \ref{lem:tree-expGrowth},
which is similar to that of Lemma~III.\ref{III-lem:nonamenable-graph}.
Part~(iii) of Theorem~8.13 of~\cite{processes} implies that there exists an equivariant subtree $\bs T'$ of $\bs T$ 
with positive \textit{isoperimetric constant} a.s.
% (later: note: the theorem is a.s., but I think it is wrong in the nonergodic case. I have asked Lyons recently. \textbf{Update:} Lyons confirmed that I am right.).
In particular, this gives that $\card{N_{n+1}(\bs T',\bs o)}/\card{N_n(\bs T',\bs o)}$
is bounded from below by a positive constant (that might depend on the realization of $\bs T'$).
This proves that $\bs T'$ has exponential growth. So $\bs T$ has exponential growth as well.
%Finally, the unimodular Billingsley lemma (Theorem~\ref{thm:billingsley}) implies that $\dimH{\bs T}=\infty$.
}
}

\section{Frostman's Theory}
\label{sec:frostman}

This section provides a unimodular version of Frostman's lemma and some of its applications.
In a sense to be made precise later, this lemma gives converses to the mass distribution principle\del{ and
the unimodular Billingsley lemma (see in particular, Corollary~\ref{cor:Hwei} below)}.
It is a powerful tool \del{to prove theoretical results regarding }{in the theoretical analysis of} the unimodular Hausdorff dimension.
For example, it is used in this section to derive inequalities for the dimension of product spaces 
and \textit{embedded spaces} (Subsections~\ref{subsec:product} and~\ref{subsec:embedded}).
It is also the basis of many of the results in~\cite{I}.

%The \mar{{Delete this par? There is no need for an overview here.}} general case is presented in Subsection~\ref{subsec:frostman-general}.
%A slightly improved bound is given for point-stationary point processes with a different proof
%(Subsection~\ref{subsec:frostman-euclidean}). The latter relies on a max-flow min-cut theorem 
%for unimodular one-ended trees, which is presented in Subsection~\ref{subsec:maxflow} and is of independent interest. 

\subsection{Unimodular Frostman Lemma}
\label{subsec:frostman-general}

The statement of the unimodular Frostman lemma requires the definition
of \textit{weighted Hausdorff} content. {The latter is based on the notion of \textit{equivariant weighted collections of balls} as follows.} For this, the following mark space is needed.
Let $\Xi$ be the set of functions $c:\mathbb R^{\geq 0}\to \mathbb R^{\geq 0}$ which are positive in {only} finitely many points;
i.e., $c^{-1}((0,\infty))$ is a finite set.  
{Remark~\ref{rem:frostman-markspace} below defines a metric on $\Xi$ such that the notion of $\Xi$-valued equivariant processes (Definition~I.2.8) are well defined.}
%Let $\Xi'$ be the set of compact subsets of $\mathbb R^2$.
%By identifying $c\in \Xi$ with the finite set $\{(x,c(x)): x\in\mathbb R^{\geq 0}, c(x)>0\}$,
%one can identify $\Xi$ with a subset of $\Xi'$. It is well known that $\Xi'$ is a complete
%separable metric space under the Hausdorff metric (see e.g., \cite{bookDaVe03I}).
%So one can define the notion of $\Xi'$-valued equivariant processes as in Definition~I.2.8.
%Therefore, $\Xi$-valued equivariant processes also make sense.
%
%
%
{Such a process $\bs c$ is called}
an \textbf{equivariant weighted collection of balls}% $\bs c$ is a $\Xi$-valued equivariant process 
\footnote{The term `weighted' refers to the weighted sums in Definition~\ref{def:xi^alpha} and should not be confused
with equivariant weight functions of Definition~\ref{def:weight}}.
Consider a unimodular discrete space $[\bs D, \bs o]$ with distribution $\mu$.
For $v\in \bs D$, the reader can think of the value $\bs c_r(v):=\bs c(v)(r)$, if positive, to indicate that
there is a ball in the collection, with radius $r$, centered at $v$, and with {\textbf{cost} (or weight)} $\bs c_r(v)$.
Note that extra randomness is allowed in the definition.
A ball-covering $\bs R$ can be regarded a special case of this construction by letting
$\bs c_r(v)$ be 1 when $r=\bs R(v)$ and 0 otherwise.

\begin{definition}
\label{def:xi^alpha}
Let $f:\mathcal D_*\rightarrow\mathbb R$ be a measurable function and $M\geq 1$. 
An equivariant weighted collection of balls $\bs c$ is called a \defstyle{$(f,M)$-covering} if 
\begin{equation}
\label{eq:weightedCovering}
\forall v\in \bs D: f(v) \leq \sum_{u\in\bs D}\sum_{r\geq M} \bs c_r(u)\identity{\{v\in N_r(u)\}}, \quad a.s.,
\end{equation}
%\begin{equation}
%\label{eq:weightedCovering}
%\forall v\in \bs D: f(v) \leq \sum_{u\in\bs D, r\geq M: v\in N_r(u)} \bs c_r(u), \quad a.s.,
%\end{equation}
where $f(v):=f[\bs D,v]$ for $v\in\bs D$. For $\alpha\geq 0$, define 
\begin{eqnarray*}
	\xi^{\alpha}_M(f)&:=&\inf\left\{ \omid{\sum_{r} \bs c_r(\bs o)r^{\alpha}}: \bs c \text{ is a } (f,M)\text{-covering}\right\},\\
	\xi^{\alpha}_{\infty}(f)&:=& \lim_{M\rightarrow\infty} \xi^{\alpha}_M(f).
\end{eqnarray*}
%
%Also, let $$\xi^{\alpha}_{\infty}(f):= \lim_{M\rightarrow\infty} \xi^{\alpha}_M(f).$$
\end{definition}

It is straightforward that every equivariant ball-covering of Definition~I.3.15
gives a $(1,1)$-covering, where $1$ is regarded as the constant function $f\equiv 1$ on $\dstar$. This gives {(see also Conjecture~\ref{conj:frostman} below)}
\begin{equation}
\label{eq:frostman-xi<H}
\xi^{\alpha}_M(1) \leq \contentH{\alpha}{M}(\bs D).
\end{equation}

Also, by considering the case $\bs c_M(v) := f(v)\vee 0$, one can see that if $f\in L_1(\mathcal D_*,\mu)$, then
\[
\xi^{\alpha}_M(f)\leq M^{\alpha} \omid{f(\bs o)\vee 0} <\infty.
\]
In the following theorem, to be consistent with the setting of the paper,
the following notation is used: $w(u):=w([D,u])$ for $u\in D$, and $w(N_r(v))=\sum_{u \in N_r(v)} w(u)$.
Also, recall that a deterministic equivariant weight function is given by a measurable function
$w:\dstar\to\mathbb R^{\geq 0}$ (see Example~I.2.10).  

\begin{theorem}[Unimodular Frostman Lemma]
\label{thm:frostmanGeneral}
Let $[\bs D, \bs o]$ be a unimodular discrete space, $\alpha\geq 0$ and $M\geq 1$.
\begin{enumerate}[(i)]
\item \label{thm:frostmanGeneral:1}
There exists a bounded measurable weight function $w:\mathcal D_*\rightarrow\mathbb R^{\geq 0}$ such that {$\omid{w(\bs o)} = \xi^{\alpha}_M(1)$ and} almost surely,
\begin{equation}
\label{eq:frostman:1}
\forall v\in \bs D,\  \forall r\geq M:  w(N_r(v))\leq r^{\alpha}.
\end{equation}
%and
%\[
%\omid{w(\bs o)} = \xi^{\alpha}_M(1). % \contentH{\alpha}{M}{\bs D}.
%\]
\item \label{thm:frostmanGeneral:2}
Given a non-negative function $h\in L_1(\mathcal D_*,\mu)$, the {first} condition can be replaced by 
$
\omid{w(\bs o)h(\bs o)} = \xi^{\alpha}_M(h). 
$
\item \label{thm:frostmanGeneral:3}
In the setting of~\eqref{thm:frostmanGeneral:2}, if $\bs D$ has finite $\alpha$-dimensional Hausdorff measure {(e.g., when $\dimH{\bs D}>\alpha$)}
and $h\not\equiv 0$, then $w[\bs D,\bs o]\neq 0$ with positive probability.
\end{enumerate}
\end{theorem}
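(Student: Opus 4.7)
The plan is to establish Theorem~\ref{thm:frostmanGeneral} via an infinite-dimensional linear-programming duality argument. Part~(i) is the special case $h\equiv 1$ of part~(ii), so I focus on (ii); part~(iii) follows from (ii) once one observes that the hypotheses force $\xi^{\alpha}_M(h)>0$ for sufficiently large $M$ (since any $(h,M)$-covering with small cost would contradict the assumed positivity of the $\alpha$-dimensional content), and then $\omid{w(\bs o)h(\bs o)}=\xi^{\alpha}_M(h)>0$ immediately forces $w(\bs o)\neq 0$ on a positive-probability event.

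The key is to view $\xi^{\alpha}_M(h)$ as the value of a primal covering LP, namely the minimization of the cost density $\omid{\sum_{r\geq M}\bs c_r(\bs o)\,r^{\alpha}}$ over equivariant weighted collections of balls $\bs c$ satisfying the covering constraint~\eqref{eq:weightedCovering}. Attaching a non-negative multiplier $w(v)$ to each pointwise constraint and swapping the summations by the mass transport principle (as in the proof of Theorem~\ref{thm:mdp-simple}), the inner expression $\sum_v w(v)\sum_{u,r}\bs c_r(u)\indic{v\in N_r(u)}$ becomes $\sum_u\sum_r\bs c_r(u)\,w(N_r(u))$, which identifies the formal dual as
\[
\sup_{w\geq 0}\ \omid{w(\bs o)\,h(\bs o)}\quad\text{subject to}\quad w(N_r(v))\leq r^{\alpha}\ \text{a.s.}\ \forall v,\ \forall r\geq M,
\]
where the supremum ranges over equivariant weight functions. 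The theorem asserts strong duality together with attainment of the dual supremum.

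To make this rigorous I would proceed in three steps. First, I would show that the primal infimum is attained: a minimizing sequence $\bs c^n$ has $L^{1}(\dstar,\mu)$-bounded cost density, and after truncating to bounded radii and extracting a weak accumulation point, one verifies by convexity and a Fatou-type argument that the limit is still a valid $(h,M)$-covering achieving $\xi^{\alpha}_M(h)$. Second, I would produce the dual optimizer $w$ by a Hahn--Banach separation applied to the convex set $\{(f,t)\in L^{1}(\dstar,\mu)\times\mathbb R : t\geq\xi^{\alpha}_M(f)\}$ at the boundary point $(h,\xi^{\alpha}_M(h))$; the supporting hyperplane yields a linear functional of the form $f\mapsto\omid{w(\bs o)f(\bs o)}$, and the dual feasibility condition built into the separation delivers the ball inequality~\eqref{eq:frostman:1}. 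Third, equivariance and measurability of $w$ on $\dstar$ are automatic because the entire construction lives on $\dstar$ from the outset.

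The main obstacle I anticipate is ensuring that the Hahn--Banach step actually delivers a bounded, \emph{measurable} $w\colon\dstar\to\mathbb R^{\geq 0}$ rather than merely a finitely additive functional, and that the ball constraint~\eqref{eq:frostman:1} can be upgraded from an expectation-level statement to the almost-sure pointwise statement holding simultaneously for \emph{all} $v\in\bs D$ and \emph{all} $r\geq M$. I would address the latter by restricting to a countable dense set of radii $r\in[M,\infty)\cap\mathbb Q$, establishing the inequality for each via an independent mass-transport argument applied to a well-chosen test function, and then extending to arbitrary $r\geq M$ by monotonicity of $r\mapsto w(N_r(v))$ in $r$. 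The measurability issue may require approximating the problem by finite-rank LPs (restricting $\bs c$ to supports in a bounded ball under suitable equivariant decompositions of $\bs D$), applying classical finite-dimensional duality, and passing to the limit with weak-$\ast$ compactness.
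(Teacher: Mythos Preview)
Your approach is essentially the paper's: both use Hahn--Banach applied to the sublinear functional $\xi^{\alpha}_M$ on $L^1(\dstar,\mu)$ to produce the dual weight $w$, then verify the ball constraint via mass transport and extend from rational radii by monotonicity. A few differences are worth noting. First, your ``primal attainment'' step is unnecessary: the paper never needs an optimal covering, only the sublinearity and continuity of $f\mapsto\xi^{\alpha}_M(f)$. Second, the obstacle you flag---that Hahn--Banach might produce only a finitely additive functional---is resolved in the paper not by finite-dimensional approximation but by a short preliminary lemma showing $\xi^{\alpha}_M$ is $M^{\alpha}$-Lipschitz on $L^1$; the dominated linear functional is then automatically continuous, and the Riesz representation $L^1(\dstar,\mu)^*=L^{\infty}(\dstar,\mu)$ delivers a bounded measurable $w$ directly. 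Third, the ball constraint is checked in the paper by a concrete contradiction: for fixed $r\geq M$, set $\bs S=\{v:w(N_r(v))>r^{\alpha}\}$, compute $\omid{w(N_r(\bs o))\identity{\{\bs o\in\bs S\}}}$ two ways (once via MTP, once by testing $l$ against $f_r(v)=\card{N_r(v)\cap\bs S}$ and using a trivial $(f_r,M)$-covering), and conclude $\myprob{\bs o\in\bs S}=0$. Finally, part~(iii) requires the auxiliary Lemma~\ref{lem:frostmanAuxiliary} relating $\xi^{\alpha}_M(h)$ and $\contentH{\alpha}{M}(\bs D)$, which is a bit more than ``any $(h,M)$-covering with small cost would contradict positivity of the content''---one must build an actual equivariant covering from a weighted one, which the paper does by independent Bernoulli selection of balls.
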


The proof is given later in this subsection.

\todel{\begin{corollary}
\label{cor:Hwei}
%\mar{{This is wrong in the non-ergodic case. Also, I think that the theorem is strong enough and there is no need for this corollary.}}
For all unimodular discrete spaces $[\bs D, \bs o]$ and all $\epsilon>0$,
there exists a deterministic equivariant weight function $w$ such that
\[
\growthu{w(N_r(\bs o))}-\epsilon \leq {\dimH{\bs D}} \leq \growthu{w(N_r(\bs o))}.
\]
In addition, if $\bs D$ has finite $\dimH{\bs D}$-dimensional Hausdorff measure, then $w$ can be chosen such that
\[
\dimH{\bs D} = \growthu{w(N_r(\bs o))}.
\]
\end{corollary}
\begin{proof}
If $\dimH{\bs D}=\infty$, then let $w(\cdot)\equiv 1$. In this case, the claim follows from 
the unimodular Billingsley lemma (Theorem~\ref{thm:billingsley}). So assume $\dimH{\bs D}<\infty$ and let
$\alpha:=\dimH{\bs D}+\epsilon$. One has $\measH{\alpha}(\bs D)=0$ (Lemma~I.\ref{I-lem:Hmeas}).
So, by part~\eqref{thm:frostmanGeneral:3} of the unimodular Frostman lemma,
the function $w$ in the lemma is not identical to zero. Therefore, the unimodular Billingsley lemma
implies that $\dimH{\bs D}\leq \growthu{w(N_r(\bs o))}$. On the other hand,
\eqref{eq:frostman:1} implies that $\growthu{w(N_r(\bs o))}\leq \alpha = \dimH{\bs D}+\epsilon$. This proves the claim.
\end{proof}
}

\begin{remark}
\label{rem:frostman-ineq}
One can show that \eqref{eq:frostman:1} implies that $\omid{\bs w(\bs o)}  \leq  \xi^{\alpha}_M(1)$ and $\omid{\bs w(\bs o)h(\bs o)} \leq  \xi^{\alpha}_M(h)$.
%\begin{eqnarray*}
%\omid{\bs w(\bs o)}  &\leq & \xi^{\alpha}_M(1),\\
%\omid{\bs w(\bs o)h(\bs o)} &\leq &  \xi^{\alpha}_M(h).
%\end{eqnarray*}
Therefore, the (deterministic) weight function $w$ given in the unimodular Frostman lemma
is a maximal equivariant weight function satisfying~\eqref{eq:frostman:1}
(it should be noted that \del{the converse of this claim is not true }{the maximal is not unique}). 
The proof is similar to that of the mass distribution principle (Theorem~\ref{thm:mdp-simple}) and is left to the reader.
\end{remark}

\begin{conjecture}
\label{conj:frostman}
One has $\contentH{\alpha}{M}(\bs D)=\xi^{\alpha}_M(1)$.
\invisible{{\textbf{Note}: This holds for compact continuum spaces. For point-stationary point processes in $\mathbb R^k$, we have shown that $\contentH{\alpha}{1}(\bs D)\leq C_k \xi^{\alpha}_1$, where $C_k$ only depends on $k$.}}
\end{conjecture}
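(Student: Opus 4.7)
The plan is to establish the reverse of the inequality $\xi^{\alpha}_M(1)\leq \contentH{\alpha}{M}(\bs D)$ noted in~\eqref{eq:frostman-xi<H}. I would proceed in two stages: first reducing from general weighted collections to $\{0,1\}$-valued ones, then from the latter to equivariant ball-coverings.

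\textbf{Stage 1 (easy direction).} If $\bs c$ is a $(1,M)$-covering taking values in $\{0,1\}$, define $\bs R(v):=\max\{r\geq M: \bs c_r(v)=1\}$ when this set is nonempty and $\bs R(v):=0$ otherwise; the maximum exists since each $\bs c(v)\in\Xi$ has finite support. Every $v\in\bs D$ remains covered: if $(u,r)$ satisfies $\bs c_r(u)=1$ and $v\in N_r(u)$, then $v\in N_{\bs R(u)}(u)$ because $\bs R(u)\geq r$. Pointwise one has $\bs R(\bs o)^{\alpha}\leq \sum_{r}\bs c_r(\bs o)r^{\alpha}$, so after taking expectations the infimum defining $\contentH{\alpha}{M}(\bs D)$ is bounded above by the infimum over $\{0,1\}$-valued $(1,M)$-coverings.

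\textbf{Stage 2 (the bulk of the work).} Given a general $(1,M)$-covering $\bs c$, one needs a $\{0,1\}$-valued $(1,M)$-covering $\bs c'$ whose cost $\omid{\sum_r \bs c'_r(\bs o)r^{\alpha}}$ approaches that of $\bs c$. My first attempt would be probabilistic: for each pair $(u,r)$ with $\bs c_r(u)>0$, include it independently with probability $\bs c_r(u)\wedge 1$, invoking the independent coupling construction of Subsection~I.2.5 to preserve equivariance. The expected cost contribution at $\bs o$ is then at most $\omid{\sum_r \bs c_r(\bs o)r^{\alpha}}$, and a.s.-coverage would follow provided every $v$ satisfies $\sum_{u,r\,:\,v\in N_r(u)}(\bs c_r(u)\wedge 1)=\infty$. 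Since that condition is not automatic, one would first inflate $\bs c$ by replacing it with its $K$-fold independent superposition for large $K$ and then sample with probabilities $(\bs c_r(u)\wedge 1)/K$. The difficulty is that this dilution inflates the cost whenever $\bs c_r(u)\geq 1$.

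The main obstacle is avoiding a multiplicative blow-up of the cost in Stage~2; this is precisely the source of the dimension-dependent factor $C_k$ in the authors' present bound for point-stationary processes in $\mathbb{R}^k$, and the reason the equality is only conjectured. A more promising avenue is to develop a Vitali-type selection theorem in the unimodular setting: from $\bs c$ one would greedily extract a maximal equivariant sub-collection of pairwise-disjoint balls of comparable radii, using the mass transport principle to verify equivariance, and then cover the remaining points by a small enlargement of the selected balls. Such an argument would most naturally yield $\contentH{\alpha}{\infty}(\bs D)\leq \xi^{\alpha}_{\infty}(1)$, i.e., the conjecture in the $M=\infty$ form; reaching fixed finite $M$ would likely require an additional rescaling step to keep all radii at least $M$.
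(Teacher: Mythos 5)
The statement you are attempting to prove is labeled a \emph{Conjecture} in the paper; it is an open problem, and the paper does not prove it. There is therefore no paper proof to compare against. The paper only establishes the one-sided inequality $\xi^{\alpha}_M(1)\leq\contentH{\alpha}{M}(\bs D)$ (equation~\eqref{eq:frostman-xi<H}) together with partial converses: Lemma~\ref{lem:frostmanAuxiliary} gives $\contentH{\alpha}{1}(\bs D)\leq b+b\,\norm{\log b}$ with $b:=\xi^{\alpha}_1(1)$, and Corollary~\ref{cor:frostman-Euclidean} gives $\contentH{\alpha}{1}(\Phi)\leq 3^k\,\xi^{\alpha}_1(1)$ in the Euclidean setting. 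Neither yields equality.

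Your Stage~1 reduction (from $\{0,1\}$-valued weighted collections to ball coverings) is correct and unproblematic. Your first idea for Stage~2 --- including each $(u,r)$ independently with probability $\bs c_r(u)\wedge 1$ and patching the uncovered set --- is essentially what the paper already does in the proof of part~\eqref{lem:frostmanAuxiliary:2} of Lemma~\ref{lem:frostmanAuxiliary}: one samples with probability $a\bs c_r(v)\wedge 1$, fills the uncovered points with radius-$M$ balls, bounds $\myprob{\bs o\not\in\bs S}\leq\omid{e^{-a\,h(\bs o)}}$, and optimizes over $a$. This gives only the multiplicative-plus-additive bound $b+b\norm{\log b}$ (for $h\equiv 1$ and $M=1$), not equality, because the patching term $M^{\alpha}\myprob{\bs o\notin\bs S}$ cannot be driven to zero without making the sampling rate $a$ large. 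The $K$-fold superposition variant you mention suffers the same defect: cost scales with the superposition parameter. Your Vitali-type alternative has an analogous structural obstacle: in the Euclidean case a Vitali-style selection is exactly what the tree-based max-flow argument of Theorem~\ref{thm:frostman-euclidean} implements, and it produces the factor $3^{-k}$ (from the $3^k$ enclosing dyadic cubes), not $1$; in a general unimodular discrete space, without doubling or a dyadic structure, there is no reason the enlargement factor would be controlled. So the proposal does not close the gap, and you correctly identify why --- but you should be aware that the paper makes no claim to close it either: the statement is genuinely conjectural.
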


{Lemma~\ref{lem:frostmanAuxiliary} below proves a weaker inequality.}
{For instance, it can be seen that the conjecture holds for $\mathbb Z^k$ (with the $l_{\infty}$ metric) and for the non-ergodic example of  Example~I.3.20. In the former, this is obtained by considering the constant weight function $w(\cdot)\equiv \left(\frac{M}{2M+1}\right)^k$, which satisfies the claim of the unimodular Frostman lemma. The latter is similar by letting $w[\mathbb Z,0]:=\frac M{2M+1}$ and $w[\mathbb Z^2,0]:=0$.}

\del{
\begin{example}
Assume $[\bs D, \bs o]:=[\mathbb Z^k, 0]$ is equipped with the $l_{\infty}$ metric and let $M\in\mathbb N$.
By the proof of Proposition~I.3.29, one can see that 
\[\xi^k_M(1)\leq \contentH{k}{M}(\mathbb Z^k)\leq \left(\frac{M}{2M+1}\right)^k.\]
Let $w(\cdot)\equiv \left(\frac{M}{2M+1}\right)^k$. This weight function satisfies~\eqref{eq:frostman:1}
for $\alpha=k$ and also $\omid{w(\bs o)}\geq \xi^k_M(1)$. Therefore, by Remark~\ref{rem:frostman-ineq} above,
$\omid{w(\bs o)}=\xi^k_M(1)$. So $w$ satisfies the claim of the unimodular Frostman lemma.
Note that Conjecture~\ref{conj:frostman} holds in this case.
\end{example}

\begin{example}
\label{ex:frostman-nonergodic}
Assume $\bs D$ is $\mathbb Z$ with probability $\frac 12$ and $\mathbb Z^2$ with probability $\frac 12$
(see Example~I.3.20). Given $M\in\mathbb N$, let $w[\mathbb Z,0]:=\frac M{2M+1}$ and
$w[\mathbb Z^2,0]:=0$. As in the previous example, one can show that $w$ satisfies the claim of the
unimodular Frostman lemma and Conjecture~\ref{conj:frostman} holds in this case.
\end{example}
}

\begin{remark}
	\label{rem:frostman-mdp-billingsley}
	{The unimodular Frostman lemma implies that, in theory, the mass transport principle (Theorem~\ref{thm:mdp-simple}) is enough for bounding the Hausdorff dimension from above. However, there are very few examples in which the function $w$ given by the unimodular Frostman lemma can be explicitly computed (in some of the examples, a function $w$ satisfying {only}~\eqref{eq:frostman:1} can be found; e.g., for two-ended trees). Therefore, in practice, the unimodular Billingsley lemma is more useful than the mass transport principle.}
	\invisible{1. In fact, for very special two-ended trees, one can construct the $w$ in Frostman.\\ 2. Another example, replace each point with a random number of points.} 
\end{remark}

\iffalse
There are very few examples in which the function $w$ given by the unimodular Frostman lemma can be explicitly computed.
However, in some of the examples, a function $w$ satisfying {only}~\eqref{eq:frostman:1} can be found. 
For example, this is the case in two-ended trees (see Subsection~I.\ref{I-subsec:two-ended}).
\invisible{1. In fact, for very special two-ended trees, one can construct the $w$ in Frostman.\\ 2. Another example, replace each point with a random number of points.} 
\fi 

\invisible{\textbf{Update:} (is this worth to write?) If we have a weight function $w$ such that $\growthu{w(N_n(\bs o))}<\alpha$ a.s., then the following works for all balls (but is not exactly the outcome of the unimodular Frostman lemma): $w'(v):=w(v) / (\sup_n w(N_n(v))/n^{\alpha})$. Does this works for zeros of SRW? }

The following lemma is needed to prove Theorem~\ref{thm:frostmanGeneral}.

\begin{lemma}
The function $\xi^{\alpha}_M: L_1(\mathcal D_*,\mu)\rightarrow\mathbb R$ is continuous.
In fact, it is $M^{\alpha}$-Lipschitz; i.e., 
$
\norm{\xi^{\alpha}_M(f_1) - \xi^{\alpha}_M(f_2)} \leq M^{\alpha} \omid{\norm{f_1(\bs o)-f_2(\bs o)}}.
$
\end{lemma}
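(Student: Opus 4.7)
The plan is to prove the Lipschitz inequality directly by showing that any $(f_2,M)$-covering can be cheaply modified into an $(f_1,M)$-covering at an extra cost controlled by the $L_1$ difference of $f_1$ and $f_2$; continuity is then an immediate consequence.

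First I would fix an arbitrary $(f_2,M)$-covering $\bs c^{(2)}$ and construct a candidate $(f_1,M)$-covering $\bs c^{(1)}$ by ``topping up'' a single ball of radius exactly $M$ at every vertex. Concretely, for $v\in\bs D$ set
\begin{equation*}
\bs c^{(1)}_r(v) \;:=\; \bs c^{(2)}_r(v) \;+\; (f_1(v)-f_2(v))^{+}\,\identity{\{r=M\}}.
\end{equation*}
Three routine verifications are needed: (a) for each $v$, the support of $r\mapsto \bs c^{(1)}_r(v)$ remains finite (it adds at most one atom at $r=M$), so $\bs c^{(1)}$ takes values in $\Xi$; (b) the modification depends only on $[\bs D,v]$ through $f_1,f_2$ and through $\bs c^{(2)}$, hence $\bs c^{(1)}$ is still an equivariant process in the sense used to define $\Xi$-valued equivariant weighted collections of balls; (c) the covering condition~\eqref{eq:weightedCovering} is satisfied for $f_1$: for every $v\in\bs D$ the ball of radius $M$ centered at $v$ already covers $v$, contributing $(f_1(v)-f_2(v))^{+}$ to the right-hand side, while the original $\bs c^{(2)}$ contributes at least $f_2(v)$; summing gives at least $f_2(v)+(f_1(v)-f_2(v))^{+}\geq f_1(v)$.

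Next I would compute the cost. By linearity,
\begin{equation*}
\omid{\sum_{r}\bs c^{(1)}_r(\bs o)\,r^{\alpha}} \;=\; \omid{\sum_{r}\bs c^{(2)}_r(\bs o)\,r^{\alpha}} \;+\; M^{\alpha}\,\omid{(f_1(\bs o)-f_2(\bs o))^{+}}.
\end{equation*}
Taking the infimum over all $(f_2,M)$-coverings $\bs c^{(2)}$ yields
\begin{equation*}
\xi^{\alpha}_M(f_1) \;\leq\; \xi^{\alpha}_M(f_2) + M^{\alpha}\,\omid{(f_1(\bs o)-f_2(\bs o))^{+}}.
\end{equation*}
Swapping the roles of $f_1$ and $f_2$ and combining the two inequalities gives the desired
\begin{equation*}
\bigl|\xi^{\alpha}_M(f_1)-\xi^{\alpha}_M(f_2)\bigr| \;\leq\; M^{\alpha}\,\omid{\bigl|f_1(\bs o)-f_2(\bs o)\bigr|},
\end{equation*}
which is the Lipschitz bound; continuity of $\xi^{\alpha}_M$ on $L_1(\dstar,\mu)$ follows at once.

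The only step requiring a little care is the measurability/equivariance of $\bs c^{(1)}$, because the construction adds mass based on the pointwise difference $f_1(v)-f_2(v)$ which is defined on the quotient $\dstar$; but since $f_1,f_2$ are given as measurable functions on $\dstar$ and $\bs c^{(2)}$ is an equivariant process, the sum $\bs c^{(1)}$ is again an equivariant process. Nothing deeper is needed; there is no real obstacle, and the lemma is essentially a direct verification.
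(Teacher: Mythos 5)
Your proof is correct and is essentially the same as the paper's: both modify a covering for one function into a covering for the other by adding a ball of radius $M$ at each vertex, and then take infima. The only cosmetic difference is that you top up by the positive part $(f_1-f_2)^+$ while the paper uses $\left|f_1-f_2\right|$; both yield the same $M^{\alpha}$-Lipschitz bound.
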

\begin{proof}
Let $\bs c$ be an equivariant weighted collection of balls satisfying~\eqref{eq:weightedCovering} for $f_1$.
Intuitively, add a ball of radius $M$ at each point $v$ with cost $\norm{f_2(v)-f_1(v)}$.
More precisely, let $\bs c'_r(v):=\bs c_r(v)$ for $r\neq M$ and $\bs c'_M(v):=\bs c_M(v)+\norm{f_2(v)-f_1(v)}$.
This definition implies that $\bs c'$ satisfies~\eqref{eq:weightedCovering} for $f_2$. Also,
\[
\xi^{\alpha}_M(f_2)\leq \omid{\sum_{i} \bs c'_i(\bs o)i^{\alpha}} = \omid{\sum_{r} \bs c_r(\bs o)i^{\alpha}} + M^{\alpha}\omid{\norm{f_2(\bs o)-f_1(\bs o)}}.
\]
Since $\bs c$ was arbitrary, one obtains 
$
\xi^{\alpha}_M(f_2)\leq \xi^{\alpha}_M(f_1) + M^{\alpha}\omid{\norm{f_2(\bs o)-f_1(\bs o)}},
$
which implies the claim.
\end{proof}

\del{The following proof uses the ideas of Thm~8.17 of~\cite{bookMa95}. \mar{{This is already said in the bib notes.}}}
\begin{proof}[Proof of Theorem~\ref{thm:frostmanGeneral}]
	Part~\eqref{thm:frostmanGeneral:1} is implied by Part~\eqref{thm:frostmanGeneral:2} which is proved below.
	It is easy to see that $\xi^{\alpha}_M(tf)=t\xi^{\alpha}_M(f)$ for all $f$ and $t\geq 0$ and also 
	$
	\xi^{\alpha}_M(f_1+f_2) \leq \xi^{\alpha}_M(f_1)+\xi^{\alpha}_M(f_2)
	$
	for all $f_1,f_2$. Let $h\in L_1(\mathcal D_*,\mu)$ be given. By the Hahn-Banach theorem
	(see Theorem~3.2 of~\cite{bookRu73}), there is a linear functional $l:L_1(\mathcal D_*,\mu)\rightarrow\mathbb R$ such that 
	$
	l(h)=\xi^{\alpha}_M(h)
	$
	and
	$
	-\xi^{\alpha}_M(-f) \leq l(f) \leq \xi^{\alpha}_M(f), \del{\quad \forall f\in L_1.}
	$
	{for all $f\in L_1$.}
	Since $l$ is sandwiched between two functions which are continuous  at $0$ and are equal at $0$,
	one gets that $l$ is continuous at 0. Since $l$ is linear, 
	this implies that $l$ is continuous. Since the dual of $L_1(\mathcal D_*, \mu)$ is $L_{\infty}(\mathcal D_*, \mu)$,
	one obtains that there is a function $w\in L_{\infty}(\mathcal D_*, \mu)$ such that  
	$
	l(f) = \omid{f(\bs o)w(\bs o)}, \del{\quad \forall f\in L_1.}
	$
	{for all $f\in L_1$.}
	Note that if $f\geq 0$, then $\xi^{\alpha}_M(-f) = 0$ and so $l(f)\geq 0$.
	This implies that $w(\bs o)\geq 0$ a.s. (otherwise, let $f(\bs o):=\identity{\{w(\bs o)<0\}}$ to get a contradiction). 
        Consider a version of $w$ which is nonnegative everywhere.
	The claim is that $w$ satisfies the requirements\del{ of~\eqref{thm:frostmanGeneral:2}}. 
	
	Let $r\geq M$ be fixed.
	For all discrete spaces $D$, let $\bs S:=\bs S_D:=\{v\in D: w(N_r(v))>r^{\alpha}\}$.
	Define $f_r(v):= \card{N_r(v)\cap \bs S}$. By the definition of $\bs S_D$, one has
	\begin{equation}
	\label{eq:thm:frostmanGeneral:1}
	\omid{w(N_r(\bs o))\identity{\{\bs o\in \bs S\}}} \geq r^{\alpha}\myprob{\bs o \in \bs S}.
	\end{equation}	
	Moreover, if $\myprob{\bs o \in \bs S}>0$, then the inequality is strict.
	On the other hand, by the mass transport principle for the function
	$(v,u)\mapsto w(u)\identity{\{v\in \bs S\}} \identity{\{u\in N_r(v) \}}$, one gets
	\begin{eqnarray*}
		\omid{w(N_r(\bs o))\identity{\{\bs o\in \bs S\}}} &=& \omid{w(\bs o) \card{N_r(\bs o)\cap \bs S}}\\
		&=& \omid{w(\bs o)f_r(\bs o)}\\
		&=& l(f_r)\\
		&\leq & \xi^{\alpha}_M(f_r)\\
		&\leq & r^{\alpha} \myprob{\bs o \in \bs S},
	\end{eqnarray*}
	where the last inequality is implied by considering the following weighted collection of balls 
	for $f_r$: put balls of radius $r$ with cost 1 centered at the points in $\bs S$. 
	More precisely, let $\bs c_r(v):=\identity{\{v\in \bs S\}}$ and $\bs c_s(v):=0$ for $s\neq r$.
	It is easy to see that this satisfies~\eqref{eq:weightedCovering} for $f_r$,
	which implies the last inequality by the definition of $\xi^{\alpha}_M(\cdot)$.
	Thus, equality holds in~\eqref{eq:thm:frostmanGeneral:1}. Hence, $\myprob{\bs o\in \bs S}=0$;
        i.e., $w(N_r(\bs o))\leq r^{\alpha}$ a.s. Lemma~I.2.14 implies that almost surely,
        $\forall v\in \bs D: w(N_r(v))\leq r^{\alpha}$. So the same holds for all rational $r\geq M$ simultaneously.
	By monotonicity {of $w(N_r(v))$ w.r.t. $r$}, one gets that the latter almost surely holds for all $r\geq M$ as desired.
	
	Also, one has 
	$
	\omid{w(\bs o)h(\bs o)} = l(h) = \xi^{\alpha}_M(h).
	$
	Thus, $w$ satisfies the desired requirements.
	
	To prove~\eqref{thm:frostmanGeneral:3}, assume $h\not\equiv 0$ and $\measH{\alpha}(\bs D)<\infty$.
        By Lemma~I.3.25, one has $\contentH{\alpha}{M}(\bs D)>0$.
	So Lemma~\ref{lem:frostmanAuxiliary} below implies that $\xi^{\alpha}_M(h)>0$.
        Now, the above equation implies that $w$ is not identical to zero.
\end{proof}

The above proof uses the following lemma.

\begin{lemma}
\label{lem:frostmanAuxiliary}
	
Let $[\bs D, \bs o]$ be a unimodular discrete metric space.
\begin{enumerate}[(i)]
\item \label{lem:frostmanAuxiliary:1}
By letting $b:=\xi^{\alpha}_1(1)$, one has 
$
b\leq \contentH{\alpha}{1}(\bs D)\leq b + {b}{\norm{\log b}}.
$
\item \label{lem:frostmanAuxiliary:2}
Let $0\neq h\in L_1(\mathcal D_*,\mu)$ be a non-negative function. 
For $M\geq 1$, one has
\[
\contentH{\alpha}{M}(\bs D)\leq \inf_{a\geq 0}\left\{M^{\alpha}\omid{e^{-a h(\bs o)}}  + a \xi^{\alpha}_M(h)\right\}.
\]	
\item \label{lem:frostmanAuxiliary:3}
In addition, $\xi^{\alpha}_M(h)=0$ if and only if $\contentH{\alpha}{M}(\bs D)=0$.	
\end{enumerate}
\end{lemma}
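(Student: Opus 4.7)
The plan is to derive all three parts from a single probabilistic construction: given any equivariant weighted collection of balls $\bs c$, build a genuine equivariant ball-covering by independently retaining each ball with a suitable probability and patching the few uncovered points with balls of minimal radius.

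For part~\eqref{lem:frostmanAuxiliary:1}, the bound $b\leq\contentH{\alpha}{1}(\bs D)$ is immediate from~\eqref{eq:frostman-xi<H}. For the upper bound, fix a $(1,1)$-covering $\bs c$ whose cost $\omid{\sum_r \bs c_r(\bs o)r^{\alpha}}$ is close to $b$. For a parameter $\lambda>0$, independently given $\bs c$, retain each ball $(u,r)$ with probability $\min(\lambda\bs c_r(u),1)$ and define $\bs R_0(v)$ to be the largest $r$ with $(v,r)$ retained, or $0$ if none is retained. Then $\bs R_0$ is equivariant and
$
\omid{\bs R_0(\bs o)^{\alpha}}\leq \lambda\,\omid{\sum_r\bs c_r(\bs o)r^{\alpha}}.
$
Using $1-\min(x,1)\leq e^{-x}$ together with $\sum_{u,r}\bs c_r(u)\identity{\{v\in N_r(u)\}}\geq 1$, the probability that $v$ is not covered by $\bs R_0$ is at most $e^{-\lambda}$. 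Placing a ball of radius~$1$ at each uncovered point produces an equivariant $1$-covering $\bs R$ with $\omid{\bs R(\bs o)^{\alpha}}\leq \lambda b+e^{-\lambda}$. Optimising at $\lambda=\norm{\log b}$ (for $b<1$) yields $\contentH{\alpha}{1}(\bs D)\leq b+b\norm{\log b}$.

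Part~\eqref{lem:frostmanAuxiliary:2} is the same construction applied to an $(h,M)$-covering $\bs c$, with retention probability $\min(a\bs c_r(u),1)$ and patching balls of radius~$M$. The coverage inequality $\sum_{u,r}\bs c_r(u)\identity{\{v\in N_r(u)\}}\geq h(v)$ now yields a non-coverage probability at most $e^{-ah(v)}$, so the patched covering $\bs R$ satisfies
\[
\omid{\bs R(\bs o)^{\alpha}}\leq a\,\omid{\textstyle\sum_r\bs c_r(\bs o)r^{\alpha}}+M^{\alpha}\omid{e^{-ah(\bs o)}}.
\]
Taking the infimum over $(h,M)$-coverings $\bs c$ and then over $a\geq 0$ gives the claim.

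For part~\eqref{lem:frostmanAuxiliary:3}, the implication $\contentH{\alpha}{M}(\bs D)=0\Rightarrow\xi^{\alpha}_M(h)=0$ is handled by truncation: given an equivariant $M$-covering $\bs R$ and a level $N\geq 1$, set
\[
\bs c_r(v):=N\,\identity{\{\bs R(v)=r\}}+(h(v)-N)^{+}\identity{\{r=M\}}.
\]
The first summand contributes $N$ to the coverage of any $v$ (since $\bs R$ is a covering) and the second contributes $(h(v)-N)^{+}$ at $v$ itself, so $\bs c$ is an $(h,M)$-covering of cost $N\,\omid{\bs R(\bs o)^{\alpha}}+M^{\alpha}\omid{(h(\bs o)-N)^{+}}$. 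Choosing $N$ large (so the second term is small by $h\in L_1$) and then $\bs R$ with $\omid{\bs R(\bs o)^{\alpha}}$ sufficiently small forces $\xi^{\alpha}_M(h)=0$. Conversely, if $\xi^{\alpha}_M(h)=0$, the inequality from part~\eqref{lem:frostmanAuxiliary:2} reduces to $\contentH{\alpha}{M}(\bs D)\leq M^{\alpha}\omid{e^{-ah(\bs o)}}$ for every $a\geq 0$; letting $a\to\infty$ by dominated convergence yields $\contentH{\alpha}{M}(\bs D)=0$ provided $h>0$ almost surely.

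The main technical obstacle arises in this last step when $\myprob{h(\bs o)=0}>0$: the argument above only gives $\contentH{\alpha}{M}(\bs D)\leq M^{\alpha}\myprob{h(\bs o)=0}$. To close the gap one uses $h\not\equiv 0$ to pick $\eta>0$ with $\myprob{h(\bs o)\geq\eta}>0$; the equivariant subset $A_{\eta}:=\{h\geq\eta\}$ has positive intensity, and since every $(h,M)$-covering is also an $(\eta\,\identity{\{\cdot\in A_{\eta}\}},M)$-covering, one obtains $\xi^{\alpha}_M(\identity{\{\cdot\in A_{\eta}\}})=0$. A separate equivariant-subset argument, specific to indicator weights (for example by biasing onto $A_{\eta}$ and comparing Hausdorff contents of equivariant subsets of $\bs D$), is then needed to propagate the vanishing of $\xi$ on $A_{\eta}$ to the vanishing of $\contentH{\alpha}{M}(\bs D)$.
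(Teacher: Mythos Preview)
Your construction for parts~\eqref{lem:frostmanAuxiliary:1} and~\eqref{lem:frostmanAuxiliary:2} is the same as the paper's: retain each ball $(u,r)$ independently with probability $a\bs c_r(u)\wedge 1$, patch uncovered points with radius-$M$ balls, and bound the non-coverage probability via $1-(x\wedge 1)\leq e^{-x}$ together with the coverage constraint~\eqref{eq:weightedCovering}. For the implication $\contentH{\alpha}{M}(\bs D)=0\Rightarrow\xi^{\alpha}_M(h)=0$ in part~\eqref{lem:frostmanAuxiliary:3}, your explicit truncated cover is a concrete unpacking of the paper's one-line subadditivity argument
\[
\xi^{\alpha}_M(h)\leq\xi^{\alpha}_M(N)+\xi^{\alpha}_M((h-N)\vee 0)\leq N\,\xi^{\alpha}_M(1)+M^{\alpha}\omid{(h(\bs o)-N)\vee 0},
\]
so the two agree.

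The gap is in the reverse implication $\xi^{\alpha}_M(h)=0\Rightarrow\contentH{\alpha}{M}(\bs D)=0$ when $\myprob{h(\bs o)=0}>0$. The paper simply writes ``by letting $a\to\infty$'' in part~\eqref{lem:frostmanAuxiliary:2} and does not isolate this case; you correctly observe that this alone only yields $\contentH{\alpha}{M}(\bs D)\leq M^{\alpha}\myprob{h(\bs o)=0}$. However, your proposed fix is not a proof: after reducing to $h=\identity{\{\cdot\in A_\eta\}}$ you merely invoke ``a separate equivariant-subset argument (for example by biasing onto $A_\eta$\ldots)'' without carrying it out, and it is not clear that this route works --- conditioning on $\bs o\in A_\eta$ does not keep $[\bs D,\bs o]$ unimodular, and the comparison of Hausdorff contents between $\bs D$ and the subspace $A_\eta$ does not obviously run in the direction you need. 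So on this point your write-up is more scrupulous than the paper's in flagging the issue, but the loose end remains in both.
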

\begin{proof}
\eqref{lem:frostmanAuxiliary:1}. 
By considering the cases where $\bs c(\cdot)\in \{0,1\}$, the first inequality is easily
obtained from the definition of $\xi^{\alpha}_1(1)$. In particular, this implies that $b\leq 1$.
The second inequality is implied by part~\eqref{lem:frostmanAuxiliary:2} by letting $h(\cdot):=1$ and $a:=-\log b\geq 0$.

\eqref{lem:frostmanAuxiliary:2}. 
Let $b'>\xi^{\alpha}_M(h)$ be arbitrary. So there exists an equivariant weighted collection
of balls $\bs c$ %defined by marks $\bs c(v)=(\bs c_r(v))_{r}$ 
that satisfies~\eqref{eq:weightedCovering} for $h$ and 
$
\omid{\sum_{r\geq M} \bs c_r(\bs o)r^{\alpha}} \leq b'.
$
Next, given $a\geq 0$, define an equivariant covering $\bs R$ as follows.
For each $v\in \bs D$ and $r\geq M$ such that $\bs c_r(v)>0$, put a ball of radius $r$ at $v$ with probability
$a \bs c_r(v) \wedge 1$. Do this independently for all $v$ and $r$
(one should condition on $\bs D$ first). If more than one ball is put at $v$,
keep only the one with maximum radius. Let $\bs S$ be the union of the chosen balls.
For $u\in\bs D\setminus \bs S$, put a ball of radius $M$ at $u$. This gives an equivariant covering,
namely $\bs R$, by balls of radii at least $M$. Then, one can easily get
\begin{equation}
\label{eq:lem:frostmanGeneral:2}
\omid{\bs R(\bs o)}^{\alpha} \leq M^{\alpha}\myprob{\bs o\not\in \bs S} + \omid{\sum_{r\geq } (a\bs c_r(\bs o)\wedge 1)r^{\alpha}} \leq M^{\alpha} \myprob{\bs o\not\in \bs S} + ab'.
\end{equation}
	
To bound $\myprob{\bs o\not\in \bs S}$, consider a realization of $[\bs D, \bs o]$.
First, if for some $v\in \bs D$ and $r\geq M$, one has $a \bs c_r(v)>1$ and $\bs o\in N_r(v)$,
then $\bs o$ is definitely in $\bs S$. Second, assume this is not the case.
By~\eqref{eq:weightedCovering}, one has $\sum_{u\in\bs D}\sum_{r\geq M} \bs c_r(u)\identity{\{\bs o\in N_r(u)\}}\geq h(\bs o)$.
This implies that the probability that $\bs o\not\in \bs S$ in this realization is
\begin{eqnarray*}
\prod_{(v,r): \bs o\in N_r(v)} (1-a \bs c_r(v))
\leq  \mathrm{exp}\left( -\sum_{(v,r): \bs o\in N_r(v)} a\bs c_r(v) \right)
\leq e^{-a h(\bs o)}.
\end{eqnarray*}
In both cases, one gets $\myprob{\bs o\not\in \bs S} \leq \omid{e^{-a h(\bs o)}}$. 
Thus, \eqref{eq:lem:frostmanGeneral:2} implies 
that $\omid{R(\bs o)}^{\alpha} \leq M^{\alpha} \omid{e^{-a h(\bs o)}} + ab'$.
Since $a\geq 0$ and $b'>b$ are arbitrary, the claim follows.
	
\eqref{lem:frostmanAuxiliary:3}. 
Assume $\xi^{\alpha}_M(h)=0$. By letting $a\rightarrow\infty$ and using the first claim,
one obtains that $\contentH{\alpha}{M}(\bs D)=0$. Conversely, assume $\contentH{\alpha}{M}(\bs D)=0$. 
The first inequality in part~\eqref{lem:frostmanAuxiliary:1} gives that $\xi^{\alpha}_M(a)=0$ for any constant $a$.
Therefore, $\xi^{\alpha}_M(h)\leq \xi^{\alpha}_M(a)+\xi^{\alpha}_M((h-a)\vee 0))\leq M^{\alpha}\omid{(h-a)\vee 0}$.
By letting $a$ tend to infinity, one gets $\xi^{\alpha}_M(h)=0$.
\end{proof}

\begin{remark}
	\label{rem:frostman-markspace}
	In this subsection, the following metric is used on the mark space $\Xi$. Let $\Xi'$ be the set of {finite measures on} $\mathbb R^2$.
	By identifying $c\in \Xi$ with the {counting measure on the} finite set $\{(x,c(x)): x\in\mathbb R^{\geq 0}, c(x)>0\}$,
	one can identify $\Xi$ with a {Borel} subset of $\Xi'$. It is well known that $\Xi'$ is a complete separable metric space under {the {Prokhorov} metric} (see e.g., \cite{bookDaVe03I}).
	So one can define the notion of $\Xi'$-valued equivariant processes as in Definition~I.2.8.
	Therefore, $\Xi$-valued equivariant processes also make sense.
\end{remark}

\invisible{\textbf{Remark:} If sample dimension is not constant, then $w$ is zero on samples with higher dimension than $\dimH{\bs D}$.
	
	\textbf{Corollary:} If $\alpha:=\dimH{\bs D}$ is not an atom for the distribution of sample dimension, then $\measH{\alpha}(\bs D)=\infty$ (see Lemma~III.\ref{III-lem:essentialInf}).}

\subsection{Max-Flow Min-Cut Theorem for Unimodular One-Ended Trees}
\label{subsec:maxflow}

%\mar{{Delete the first sentence? If so, how to say that it is used in Frostman?}}
The result of this subsection is used in the next subsection for a Euclidean version of the unimodular Frostman lemma, but is of independent interest as well.
\del{This subsection provides a {result for unimodular one-ended trees which is analogous to} the max-flow min-cut theorem.
This result is used in the next subsection for a Euclidean version of the unimodular Frostman lemma, but is of independent interest.}

The max-flow min-cut theorem is a celebrated result in the field of graph theory (see e.g., \cite{FoFu62}).
In its simple version, it studies the minimum number of edges in a \textit{cut-set} in a finite graph;
i.e., a set of edges the deletion of which disconnects two given subsets of the graph.
%\mar{\invisible{What about infinite graphs?}}
A generalization of the theorem in the case of trees is obtained by considering cut-sets separating a given 
finite subset from the set of ends of the tree. This generalization is used to prove a version of
Frostman's lemma for compact sets in the Euclidean space (see e.g., \cite{bookBiPe17}).

%\mar{\invisible{\mir{Is there a connection to the MFMC of Lyons-Peres book?}}}

This subsection presents an analogous result for unimodular one-ended trees.
It discusses cut-sets separating the set of leaves from the end of the tree. 
Since the tree has infinitely many leaves a.s. (see e.g., \cite{eft}), infinitely many edges
are needed in any such cut-set. Therefore, cardinality cannot be used to study minimum cut-sets.
The idea is to use unimodularity for a quantification of the size of a cut-set. 

Let {$[\bs T, \bs o; \bs c]$} be a unimodular {marked} one-ended tree with mark space $\mathbb R^{\geq 0}$.  
Assume the mark $\bs c(e)$ of each edge $e$ is well defined and call it
the \textbf{conductance} of $e$. Let $\bs L$ be the set of leaves of $\bs T$. As in Subsection~I.4.1.2,
let $F(v)$ be the parent of vertex $v$ and $D(v)$ be the descendants subtree of $v$.
\del{The new vocabulary used in the following definitions is that of the max-flow min-cut theorem.}

\begin{definition}
A \textbf{legal equivariant flow} on $[\bs T;\bs c]$ is an equivariant way of 
assigning extra marks $\bs f(\cdot)\in\mathbb R$ to the edges (see Definition~I.2.8
and Remark~I.2.12), such that almost surely,
\begin{enumerate}[(i)]
	\item for every edge $e$, one has $0\leq \bs f(e)\leq \bs c(e)$,
	\item for every vertex $v\in \bs T\setminus\bs L$, one has 
	\begin{equation}
	\label{eq:flowcons}
	\bs f(v,F(v))=\sum_{w\in F^{-1}(v)}\bs f(w,v).
	\end{equation}
	\end{enumerate}
	Also, an \textbf{equivariant cut-set} is an equivariant subset $\Pi$ of the edges of $[\bs T; \bs c]$
	that separates the set of leaves $\bs L$ from the end in $\bs T$. 
\end{definition}

Note that extra randomness is allowed in the above definition. 
The reader can think of the value $\bs f(v,F(v))$ as the \textit{flow} from $v$ to $F(v)$.
So~\eqref{eq:flowcons} can be interpreted as \textit{conservation of flow} at the vertices except the leaves.
Also, the leaves are regarded as the \textit{sources} of the flow.

Since the number of leaves is infinite a.s., the sum of the flows exiting the
leaves might be infinite. In fact, it can be seen that unimodularity implies
that the sum is always infinite a.s. The idea is to use unimodularity to quantify how \textit{large} is the flow.
Similarly, in any equivariant cut-set, the sum of the conductances of the edges is infinite a.s.
Unimodularity is also used to quantify the \textit{conductance} of an equivariant cut-set.
These are done in Definition~\ref{def:flowNorm} below.

Below, since each edge of $\bs T$ can be uniquely represented as $(v,F(v))$, the following convention is helpful.

\begin{convention}
For the vertices $v$ of $\bs T$, the symbols $\bs f(v)$ and $\bs c(v)$ are used as
abbreviations for $\bs f(v,F(v))$ and $\bs c(v,F(v))$, respectively.
Also, by $v\in \Pi$, one means that the edge $(v,F(v))$ is in $\Pi$.
\end{convention}

\begin{definition}
\label{def:flowNorm}
The \textbf{norm} of the legal equivariant flow $\bs f$ is defined as
\[
\norm{\bs f}:=\omid{\bs f(\bs o) \identity{\{\bs o\in \bs L \}}}.
\]
Also, for the equivariant cut-set $\Pi$, define
\[
\bs c(\Pi):=\omid{\bs c(\bs o)\identity{\{\bs o\in \Pi \}}} = \omid{\sum_{w\in F^{-1}(\bs o)} \bs c(w) \identity{\{w\in \Pi \}} },
\]
where the last equality follows from the mass transport principle~(I.2.2).
\end{definition}

An equivariant cut-set $\Pi$ is called \defstyle{equivariantly minimal} if there is no other
equivariant cut-set which is a subset of $\Pi$ a.s. 
If so, it can be seen that it is \defstyle{almost surely minimal} as well; i.e., in almost every realization, it is a minimal cut set (see Lemma~\ref{lem:minimalCut}).
\del{Also, it is \defstyle{almost surely minimal} if in almost every realization,
there is no subset of $\Pi$ that separates the leaves from the end. The following lemma shows that these definitions are equivalent.}

%\begin{lemma}
%\label{lem:minimalCut}
%An equivariant cut-set is equivariantly minimal if and only if it is almost surely minimal.
%\end{lemma}
%\begin{proof}
%Let $\Pi$ be an equivariant cut-set. If $\Pi$ is almost surely minimal,
%then it is also equivariantly minimal by definition. Conversely, assume $\Pi$ is equivariantly minimal but not almost surely minimal.
%Call an edge $e'$ \textit{above} an edge $e$ if $e'$ separates $e$ from the end. Call an edge $e\in \Pi$ \textit{bad} if there is
%an edge of $\Pi$ above $e$. Let $\Pi'$ be the set of bad edges 
%of $\Pi$. Let $\Pi''$ be the set of lowest edges in $\Pi'$; i.e., the edges $e\in\Pi'$
%such that there is no other edge of $\Pi'$ below $e$. It can be seen that the assumption implies that $\Pi''$
%is nonempty with positive probability. Now, it can be seen that $\Pi\setminus\Pi''$ is
%an equivariant cut-set, which contradicts the minimality of $\Pi$.
%\end{proof}

\begin{lemma}
\label{lem:maxFlowMinCut}
If $\bs f$ is a legal equivariant flow and $\Pi$ is an equivariant cut-set, then $\norm{\bs f}\leq \bs c(\Pi)$. Moreover, if
the pair $(\bs f,\Pi)$ is equivariant, then 
\[
\norm{\bs f} \leq \omid{\bs f(\bs o)\identity{\{\bs o\in \Pi \}}} \leq \bs c(\Pi).
\] 
In addition, if $\Pi$ is minimal, then equality holds in the left inequality.
\end{lemma}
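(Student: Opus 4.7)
The plan is to apply the mass transport principle to a transport that routes the flow from each source (leaf) to the first cut-edge it meets on its way to the end. Precisely, for each leaf $u$, let $\pi(u)$ be the first vertex on the sequence $u, F(u), F^2(u),\ldots$ with $\pi(u)\in\Pi$; this exists a.s.\ because $\Pi$ separates $\bs L$ from the end. When $\bs f$ and $\Pi$ are not a priori jointly equivariant, I would first take the independent coupling (as in the proof of Theorem~\ref{thm:mdp-simple}) so that $[\bs T,\bs o;\bs c,\bs f,\Pi]$ becomes unimodular. Define
\[
g(u,w):= \bs f(u)\, \identity{\{u\in \bs L\}}\, \identity{\{w=\pi(u)\}}.
\]
Then $g^+(\bs o) = \bs f(\bs o)\identity{\{\bs o\in\bs L\}}$, whose expectation is exactly $\norm{\bs f}$.

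The key deterministic fact is that one-endedness forces $D(v)$ to be finite for every $v$: any infinite locally finite tree has an end (König's lemma), and an end inside $D(v)$ would give a second end of $\bs T$. Iterating the conservation law~\eqref{eq:flowcons} over the finite tree $D(v)$ therefore yields $\bs f(v)=\sum_{u\in \bs L\cap D(v)}\bs f(u)$. Now $\{u\in\bs L : \pi(u)=\bs o\}\subseteq \bs L\cap D(\bs o)$, and this set is empty unless $\bs o\in\Pi$, so
\[
g^-(\bs o) \;=\; \sum_{u\in \bs L,\,\pi(u)=\bs o}\bs f(u)\,\identity{\{\bs o\in\Pi\}} \;\leq\; \bs f(\bs o)\, \identity{\{\bs o\in\Pi\}} \;\leq\; \bs c(\bs o)\,\identity{\{\bs o\in\Pi\}},
\]
using $\bs f\leq \bs c$ for the last step. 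The mass transport principle~(I.2.2) then delivers
\[
\norm{\bs f}=\omid{g^+(\bs o)}=\omid{g^-(\bs o)}\;\leq\; \omid{\bs f(\bs o)\identity{\{\bs o\in\Pi\}}}\;\leq\; \bs c(\Pi),
\]
which simultaneously proves the first inequality in the statement and, in the jointly equivariant case, the two-step chain.

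For the final equality under minimality, I would invoke Lemma~\ref{lem:minimalCut} to work with almost sure minimality. Minimality forbids \textbf{nesting} inside $\Pi$: if $w'\in\Pi$ were a strict descendant of some $w\in\Pi$, then $\Pi\setminus\{(w',F(w'))\}$ would still separate $\bs L$ from the end (leaves in $D(w')$ remain cut off by $(w,F(w))$, other leaves are untouched), contradicting minimality. Absence of nesting means that for every $w\in\Pi$, \emph{every} leaf $u\in \bs L\cap D(w)$ satisfies $\pi(u)=w$, so the first inequality above becomes the identity $g^-(\bs o)=\bs f(\bs o)\identity{\{\bs o\in\Pi\}}$ and equality propagates to $\norm{\bs f}=\omid{\bs f(\bs o)\identity{\{\bs o\in\Pi\}}}$. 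The main subtleties to check are measurability and equivariance of $\pi(\cdot)$ in the coupled space, plus the passage from equivariant to almost sure minimality through Lemma~\ref{lem:minimalCut}; the rest is a direct application of mass transport combined with the König-type finiteness of $D(v)$.
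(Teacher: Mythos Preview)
Your proof is correct and follows essentially the same approach as the paper: both route mass from each leaf to the first ancestor lying in $\Pi$ and apply the mass transport principle, obtaining $\norm{\bs f}\leq \omid{\bs f(\bs o)\identity{\{\bs o\in\Pi\}}}\leq \bs c(\Pi)$, with equality in the first step under minimality. You supply a bit more detail than the paper does---the K\"onig-type argument that $D(v)$ is finite and the explicit no-nesting consequence of minimality---but the structure and the transport function are the same.
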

\begin{proof}
One can always consider an \textit{independent coupling} of $\bs f$ and $\Pi$
(as in the proof of Theorem~\ref{thm:mdp-simple}). So assume $(\bs f, \Pi)$ is equivariant from the beginning.
Note that the whole construction (with conductances, the flow and the cut-set) is unimodular (Lemma~I.2.11).
For every leaf $v\in \bs L$, let $\bs \tau(v)$ be the first ancestor of $v$ such that $(v,F(v))\in \Pi$.
Then, send mass $\bs f(v)$ from each leaf $v$ to $\bs \tau(v)$.
By the mass transport principle~(I.2.2), one gets 
\begin{eqnarray*}
\omid{\bs f(\bs o) \identity{\{\bs o\in \bs L \}}} &=& \omid{ \identity{\{\bs o\in \Pi \}} \sum_{v\in \bs \tau^{-1}(\bs o)}\bs f(v) }\\
&\leq & \omid{ \identity{\{\bs o\in \Pi \}} \sum_{v\in D(\bs o)\cap \bs L}\bs f(v) }
= \omid{\bs f(\bs o)\identity{\{\bs o\in \Pi \}}},
\end{eqnarray*}
where the last equality holds because $\bs f$ is a flow. Moreover, if $\Pi$ is minimal,
then\del{, by Lemma~\ref{lem:minimalCut},} the above inequality becomes an equality {(see Lemma~\ref{lem:minimalCut})} and the claim follows.
\end{proof}

The main result is the following converse to the above lemma.
\begin{theorem}[Max-Flow Min-Cut for Unimodular One-Ended Trees]
\label{thm:maxFlowMinCut}
For every unimodular {marked} one-ended tree $[\bs T, \bs o;\bs c]$ equipped with conductances $\bs c$ as above, 
if $\bs c$ is bounded on the set of leaves, then 
%\mar{\invisible{Is being bounded necessary?}}
\[
\max_f \norm{\bs f} = \inf_{\Pi} \bs c(\Pi),
\]
where the maximum is over all legal equivariant flows $\bs f$ and the infimum is over all equivariant cut-sets $\Pi$.
\end{theorem}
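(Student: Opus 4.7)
The inequality $\max_{\bs f} \norm{\bs f} \leq \inf_\Pi \bs c(\Pi)$ is contained in Lemma~\ref{lem:maxFlowMinCut}, so the plan is to construct a legal equivariant flow $\bs f^*$ attaining this infimum. The overall strategy is a mass-transport adaptation of the classical max-flow/min-cut argument for trees.

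First, for each realization of $[\bs T, \bs c]$, recursively define an equivariant ``pointwise max-flow'' function $\psi: \bs T \to [0, \infty)$ by $\psi(v) := \bs c(v)$ when $v \in \bs L$ and $\psi(v) := \min\bigl(\bs c(v),\ \sum_{w \in F^{-1}(v)} \psi(w)\bigr)$ otherwise. Monotone approximation from the leaves upward (using boundedness of $\bs c$ on $\bs L$) makes this well-defined, and the classical finite max-flow/min-cut theorem identifies $\psi(v)$ with the min-cut value for separating the leaves of $D(v)$ from the end within $D(v) \cup \{(v, F(v))\}$. Next, build $\bs f^*$ by top-down proportional redistribution along the expanding subtrees $D(F^n(\bs o))$: set the flow on the top edge to $\psi(F^n(\bs o))$ and at each non-leaf $v$ divide the incoming flow among children by $\bs f(w) := \psi(w)\,\bs f(v)/\sum_{w'} \psi(w')$ (with $0/0 := 0$). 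One-endedness makes the construction consistent across different choices of root in the limit $n \to \infty$, producing a deterministic, hence equivariant, legal flow $\bs f^*$.

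Next, consider the equivariant saturation set $\bs S := \{v \in \bs T : \bs f^*(v) = \bs c(v)\}$, and argue that almost surely every leaf has at least one $\bs S$-ancestor. Otherwise the equivariant set of leaves whose upward rays avoid $\bs S$ would have positive Palm intensity, and one could equivariantly augment $\bs f^*$ along those rays---by pushing a small positive quantity determined by a long initial segment's minimum slack and aggregating over roots via the mass transport principle---strictly increasing $\norm{\bs f^*}$ and contradicting its maximality. For each leaf $v$ let $\bs \tau(v)$ denote its first $\bs S$-ancestor. Flow conservation combined with saturation yields $\sum_{v : \bs \tau(v) = u} \bs f^*(v) = \bs c(u)$ for every $u \in \bs S$, and applying the mass transport principle~(I.2.2) to $g(u, v) := \bs f^*(v)\identity{\{v \in \bs L,\ \bs \tau(v) = u\}}$ gives
\[
\norm{\bs f^*} = \omid{\bs f^*(\bs o)\identity{\{\bs o \in \bs L\}}} = \omid{\bs c(\bs o)\identity{\{\bs o \in \bs S\}}} = \bs c(\bs S) \geq \inf_\Pi \bs c(\Pi),
\]
which combined with Lemma~\ref{lem:maxFlowMinCut} concludes the proof.

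The main obstacle is the augmentation argument showing that $\bs S$ is a cut-set. In an infinite one-ended tree one cannot in general push a strictly positive flow along a single unsaturated ray, since the slacks may decay to zero along the ray, so the contradiction must come from exploiting slacks across many rays simultaneously in an equivariant fashion. This requires a careful MTP-based averaging argument that balances the intensity of the bad leaves against the per-ray flow gain obtainable from finite initial segments of rays.
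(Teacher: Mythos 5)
Your approach is genuinely different from the paper's and has a real gap that you yourself flag but do not close. The paper truncates $\bs T$ at height $n$, applies the finite Ford--Fulkerson theorem to each component of $\bs T_n$ to obtain an equivariant pair $(\bs f_n, \Pi'_n)$ with $\omid{\bs f_n(\bs o)\identity{\{\bs o\in\bs L\}}} = \bs c(\Pi'_n)$ (via the MTP), and then extracts a weak subsequential limit $\bs f'$ using tightness. Because each $\bs c(\Pi'_n) \geq \inf_\Pi \bs c(\Pi)$ and the indicator at the leaves gives a bounded, continuous functional (this is where the boundedness of $\bs c$ on $\bs L$ is used), the norm of the limit flow inherits the lower bound directly. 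No explicit optimal cut-set is ever exhibited; the bound on $\norm{\bs f'}$ comes from the cut values of the approximants. In contrast, your plan requires (a) showing that the specific flow $\bs f^*$ built by proportional top-down allocation is actually norm-maximal (the monotone limit you take always exists, but it could easily undershoot the max flow since proportional allocation is just one choice of distribution rule), and (b) an MTP-based augmentation argument proving that the saturation set $\bs S$ is a cut-set. Step (b) is the crux, and you explicitly describe it as the ``main obstacle'' requiring a ``careful averaging argument'' without giving it. This is exactly the issue that makes the infinite max-flow/min-cut problem delicate: distinct unsaturated leaf-to-end rays merge along the trunk, so simultaneously augmenting many rays can over-saturate shared edges, and the slacks can decay to $0$ along every ray; it is not clear that a uniform positive augmentation exists even in aggregate. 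The paper's tightness route sidesteps both (a) and (b) entirely, and that is essentially why it works. As written, your proposal is an outline of a plausible alternative strategy, but it is not a complete proof; the augmentation lemma and the optimality of $\bs f^*$ would both need to be established, and it is not clear that either can be done without essentially reverting to a compactness argument like the paper's.
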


\begin{remark}
\label{rem:maxflow-infinite}
The claim of Theorem~\ref{thm:maxFlowMinCut} is still valid if the probability
measure (the distribution of $[\bs T, \bs o; \bs c]$) is replaced by any (possibly infinite)
measure $\mathcal P$ on $\mathcal D'_*$ supported on one-ended trees,
such that $\mathcal P(\bs o\in \bs L)<\infty$ and the mass transport principle~\xeq{I.2.2} holds.
The same proof works for this case as well. This will be used in Subsection~\ref{subsec:frostman-euclidean}.
\end{remark}

\begin{proof}[Proof of Theorem~\ref{thm:maxFlowMinCut}]
For $n\geq 1$, let $\bs T_n$ be the sub-forest of 
$\bs T$ {obtained by keeping only vertices of height at most $n$ in $\bs T$}.
Each connected component of $\bs T_n$ is a finite tree which contains some leaves of $\bs T$.
For each such component, namely $T'$, do the following: if $T'$ has more than one vertex,
consider the maximum flow on $T'$ between the
leaves and the \textit{top} vertex (i.e., the vertex with maximum height in $T'$).
If there is more than one maximum flow, choose one of them randomly and uniformly. Also, choose a
minimum cut-set in $T'$ randomly and uniformly. Similarly, if $T'$ has a single vertex $v$,
do the same for the subgraph with vertex set $\{v,F(v)\}$ and the single edge adjacent to $v$.
By doing this for all components of $\bs T_n$, a (random) function $\bs f_n$ on the edges and
a cut-set $\Pi'_n$ are obtained (by letting $\bs f_n$ be zero on the other edges).
$\Pi'_n$ is always a cut-set, but $\bs f_n$ is not a flow. However, $\bs f_n$ 
satisfies (\ref{eq:flowcons}) for vertices of $\bs T_n\setminus\bs L$ except the top vertices of the connected components of $\bs T_n$.
Also, it can be seen that $\bs f_n$ and $\Pi'_n$ are equivariant. 
 
For each component $T'$ of $\bs T_n$, the set of leaves of $T'$, excluding the top vertex,
is $\bs L\cap T'$. So the max-flow min-cut theorem of Ford-Fulkerson \cite{FoFu62} 
(see e.g., Theorem~3.1.5 of~\cite{bookBiPe17}) gives
that, for each component $T'$ of $\bs T_n$, one has 
\[
\sum_{v\in \bs L\cap T'} \bs f_n(v) = \sum_{e\in\Pi'_n\cap T'} \bs c(e).
\] 
If $u$ is the top vertex of $T'$, let $h(u)$ be the common value in the above equation. 
By using the mass transport principle~\xeq{I.2.2}
for each of the two representations of $\omid{h(\bs o)}$, one can obtain
\[
\omid{\bs f_n(\bs o) \identity{\{\bs o\in \bs L \}}}= {\omid{h(\bs o)}=} \omid{\bs c(\bs o)\identity{\{\bs o\in \Pi'_n \}}} {=\bs c(\Pi'_n)}.
\]
Since $0\leq \bs f_n(\cdot)\leq \bs c_n(\cdot)$, one can see that the distributions of $\bs f_n$ 
are tight (the claim is similar to Lemma~I.B.3 and is left to the reader).
Therefore, there is a sequence $n_1,n_2,\ldots$ and an equivariant process $\bs f'$
such that $\bs f_{n_i}\rightarrow \bs f'$ (weakly).
It is not hard to deduce that $\bs f'$ is a legal equivariant flow.
Also, since {$\bs f'(\bs o)$} and $\identity{\{\bs o\in \bs L \}}$ are
continuous functions of {$[\bs T, \bs o; \bs f']$} and their product is bounded (by the assumption on $\bs c$),
%\mar{\invisible{Idea: Can we use dominated convergence to remove the bounded assumption? \textbf{Update:} I think it is enough that $\omid{\bs c(\bs o)\identity{\{\bs o\in \bs L \}}}<\infty$.}}
one gets that 
\[
\norm{\bs f'} {=\omid{\bs f'(\bs o)\identity{\{\bs o\in \bs L \}}} = \lim_i \omid{\bs f_{n_i}(\bs o)\identity{\{\bs o\in \bs L \}}} } = \lim_i \bs c(\Pi'_{n_i}).
\]
Therefore,
$
\max_f \norm{\bs f} \geq \inf_{\Pi} \bs c(\Pi).
$
Note that the maximum of $\norm{\bs f}$ is attained by the same tightness argument as above.
So Lemma~\ref{lem:maxFlowMinCut} implies that equality holds and the claim is proved.
\end{proof}

\subsection{A Unimodular Frostman Lemma for Point Processes}
\label{subsec:frostman-euclidean}

In the Euclidean case, another form of the unimodular Frostman lemma is given below. 
Its proof is based on the max-flow min-cut theorem of Subsection~\ref{subsec:maxflow}.
As will be seen, the claim implies that in this case, Conjecture~\ref{conj:frostman} holds up to
a constant factor (Corollary~\ref{cor:frostman-Euclidean}). However, the weight function obtained
in the theorem needs extra randomness.

\begin{theorem}
\label{thm:frostman-euclidean}
Let $\Phi$ be a point-stationary point process in $\mathbb R^k$ endowed with the Euclidean metric,
and let $\alpha\geq 0$. Then, there exists an equivariant weight function $\bs w$ on $\Phi$ such that, almost surely,
\begin{equation}
\label{eq:thm:frostman-euclidean:1}
\forall v\in \Phi,\ \forall r\geq 1: \bs w(N_r(v))\leq r^{\alpha}
\end{equation}	
and
\begin{equation}
\label{eq:thm:frostman-euclidean:2}
{\omid{\bs w(0)} \geq 3^{-k} \contentH{\alpha}{1}(\Phi).}
\end{equation}
%\marginpar{\invisible{Suggestion: replace $\contentH{\alpha}{1}(\Phi)$ by $\contentH{\alpha}{\infty}(\Phi)$ here.}}
In particular, if $\contentH{\alpha}{1}(\Phi)>0$, then $\bs w(0)$ is not identical to zero.
\end{theorem}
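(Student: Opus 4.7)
The strategy is to apply the max-flow min-cut theorem for unimodular one-ended trees (Theorem~\ref{thm:maxFlowMinCut}, extended to infinite base measures via Remark~\ref{rem:maxflow-infinite}) to a tree built from a stationary nested sequence of dyadic partitions of $\mathbb{R}^k$. The maximum flow will supply the weight function $\bs{w}$ satisfying~\eqref{eq:thm:frostman-euclidean:1}, while the minimum cut will be comparable with the Hausdorff content, yielding~\eqref{eq:thm:frostman-euclidean:2}.

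First, by Theorem~I.3.31 I may assume that the metric on $\Phi$ is the $l_\infty$ metric (the general case differs by constants depending only on $k$, which can be absorbed at the end). Let $(\Pi'_n)_{n \geq 0}$ be a stationary nested sequence of dyadic partitions of $\mathbb{R}^k$ by cubes of side $2^n$, built from i.i.d. random shifts as in Lemma~\ref{lem:randomCantor}, independently of $\Phi$. I form a marked one-ended tree $\bs{T}$ whose leaves are the points of $\Phi$ and whose interior vertices are the cubes $C \in \bigcup_{n \geq 0} \Pi'_n$ with $C \cap \Phi \neq \emptyset$; the parent of a leaf is the level-$0$ cube containing it and the parent of a cube is the enclosing cube at the next level. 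The tree is one-ended, with its end at infinity. Conductances are set to $\bs{c}(v) := 1$ on leaves and $\bs{c}(C) := 3^{-k} \cdot 2^{-\alpha} \cdot s(C)^\alpha$ on a cube $C$ of side $s(C) = 2^n$. Using the joint translation-invariant Campbell measure of $[\Phi, (\Pi'_n)_n]$, I equip the space of rooted marked trees with a $\sigma$-finite measure $\mathcal{P}$ satisfying the mass transport principle such that $\mathcal{P}(\bs{o} \in \bs{L}) < \infty$ and the restriction of $\mathcal{P}$ to $\bs{L}$ is the Palm distribution of $\Phi$. Theorem~\ref{thm:maxFlowMinCut} then yields a legal equivariant flow $\bs{f}$ with $\norm{\bs{f}} = \inf_\Pi \bs{c}(\Pi)$, and I set $\bs{w}(v) := \bs{f}(v)$ for $v \in \Phi$.

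To verify~\eqref{eq:thm:frostman-euclidean:1}, fix $v \in \Phi$ and $r \geq 1$, and pick the smallest $n$ with $2^n \geq r$ (so $2^n < 2r$). Then $N_r(v)$, an $l_\infty$-cube of side $2r$, is contained in the union of at most $3^k$ level-$n$ cubes, and by flow conservation $\bs{w}(C \cap \Phi) = \bs{f}(C) \leq \bs{c}(C)$ for each such cube. Summing gives
$\bs{w}(N_r(v)) \leq 3^k \cdot 3^{-k} \cdot 2^{-\alpha} \cdot 2^{n\alpha} \leq 2^{-\alpha}(2r)^\alpha = r^\alpha$, as required.

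For~\eqref{eq:thm:frostman-euclidean:2}, by max-flow min-cut combined with the Campbell normalization, $\omid{\bs{w}(0)}$ (expectation under the Palm distribution of $\Phi$) equals $\inf_\Pi \bs{c}(\Pi)$ up to the intensity factor. Every equivariant cut-set $\Pi$ corresponds to an equivariant covering of $\Phi$ by dyadic cubes; choosing an equivariant distinguished point $v^*(C) \in C \cap \Phi$ for each $C \in \Pi$ (e.g., the lexicographically smallest) and placing a closed $l_\infty$-ball of radius $s(C)$ centered at $v^*(C)$ produces a valid equivariant ball covering of $\Phi$ (since such a ball contains $C$), whose $\alpha$-cost equals the dyadic cost of $\Pi$. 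Hence $\bs{c}(\Pi) \geq 3^{-k} \contentH{\alpha}{1}(\Phi)$ after absorbing the $2^{-\alpha}$ factor into the choice of conductances (or by a parallel argument using cubes of non-dyadic side, which avoids this factor). The main technical obstacle will be the construction of the $\sigma$-finite measure $\mathcal{P}$ satisfying the mass transport principle with $\mathcal{P}(\bs{o} \in \bs{L}) < \infty$ for a general point-stationary $\Phi$ (not necessarily the Palm version of a stationary process), together with the careful tracking of the combinatorial constants in the dyadic-to-ball conversion needed to recover the precise factor $3^{-k}$.
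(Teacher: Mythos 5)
Your proposal follows the same route as the paper's proof: a one-ended tree is built from a stationary nested sequence of $b$-adic partitions of $\mathbb{R}^k$, the max-flow min-cut theorem (extended to $\sigma$-finite base measures via Remark~\ref{rem:maxflow-infinite}) is applied, the maximum flow supplies the weight function, and equivariant cut-sets are converted into ball coverings to relate $\inf_\Pi \bs c(\Pi)$ to the Hausdorff content. The minor bookkeeping differences (your leaves are the points of $\Phi$ themselves rather than the level-$0$ cubes, and you push the constant factors into the conductances rather than into $\bs w$) are cosmetic; the paper instead sets $\bs c(\tau)=b^{n\alpha}$, uses level-$0$ cubes as leaves, and defines $\bs w(v)=\delta\,\bs f_0(\bs q_0(v))/\Phi(\bs q_0(v))$ with $\delta=(b+1)^{-k}$.

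Two places in your argument need more care. First, the construction of the $\sigma$-finite measure $\mathcal P$. You invoke ``the joint translation-invariant Campbell measure of $[\Phi,(\Pi'_n)_n]$'' and assert that the restriction of $\mathcal P$ to $\bs L$ is ``the Palm distribution of $\Phi$'', but $\Phi$ is merely point-stationary and need not be the Palm version of any stationary process, so there is no translation-invariant Campbell measure to use. This is precisely the delicate point. The paper handles it by first showing that the distribution of $[\bs L,\bs\sigma]$ (with $\bs\sigma:=\bs q_0(0)$) biased by $1/\Phi(\bs\sigma)$ satisfies the mass transport principle, and then applying the shift-coupling theorem (Theorem~5 of the cited reference) to produce the unimodular $\sigma$-finite measure $\mathcal P$ supported on the whole tree with $\mathcal P(\bs o\in\bs L)<\infty$.

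Second, your constant accounting. You insert $3^{-k}2^{-\alpha}$ into the conductances to make \eqref{eq:thm:frostman-euclidean:1} come out as $r^\alpha$, but then the $\alpha$-cost of the ball covering you build from a cut-set is $3^k 2^\alpha$ times $\bs c(\Pi)$, not equal to it, so this route only yields $\omid{\bs w(0)}\geq 3^{-k}2^{-\alpha}\contentH{\alpha}{1}(\Phi)$. You acknowledge this and suggest ``absorbing the $2^{-\alpha}$ factor into the choice of conductances'' or ``using cubes of non-dyadic side,'' but moving the constant between the conductance and the weight does not eliminate it, and the non-dyadic suggestion is not carried out. In the paper the extra factor is placed as $\delta=(b+1)^{-k}$ on the weight, the conductances are kept clean at $b^{n\alpha}$, and the cut-set cost then matches the ball-covering cost exactly, giving the stated constant $(b+1)^{-k}=3^{-k}$ for $b=2$.
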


{In the following proof, $\Phi$ is regarded as a counting measures; i.e., for all $A\subseteq \mathbb R^d$, $\Phi(A):=\card{(\Phi\cap A)}$.}

\begin{proof}
Let $b>1$ be an arbitrary integer (e.g., $b=2$). 
For every integer {$n\geq 0$,} let $\bs Q_n$ be the stationary partition of $\mathbb R^k$
by translations of the cube $[0,b^n)^k$ as in Subsection~\ref{subsec:euclidean}.
Consider the nested coupling of these partitions for $n\geq 0$
(i.e., every cube of $\bs Q_n$ is contained in some cube of $\bs Q_{n+1}$ for every $n\geq 0$) independent of $\Phi$.
Let $\bs T_0$ be the tree whose vertices are the cubes in $\cup_n \bs Q_n$ and the edges are between
all pairs of nested cubes in $\bs Q_n$ and $\bs Q_{n+1}$ for all $n$.
Let $\bs T\subseteq \bs T_0$ be the subtree consisting of the cubes $\bs q_n(v)$ 
for all $v\in\Phi$ and $n\geq 0$. The set $\bs L$ of the leaves of $\bs T$ consists of
the cubes $\bs q_{0}(v)$ for all $v\in \Phi$. Let $\bs \sigma:=\bs q_{0}(0)\in\bs L$. 
Note that in the correspondence $v\mapsto \bs q_0(v)$, each cube $\sigma\in\bs L$ corresponds to $\Phi(\sigma)\geq 1$ points of $\Phi$. Therefore, by verifying the mass transport principle, it can be seen that the distribution of $[\bs L,\bs \sigma]$, biased by $1/\Phi(\bs \sigma)$, is unimodular; i.e., 
%\mar{The proof is included at the end of this proof (for ourselves)}
\[
	\omid{\frac 1{\Phi(\bs \sigma)}\sum_{\sigma'\in\bs L} g(\bs L,\bs \sigma, \sigma')} = 
	\omid{\frac 1{\Phi(\bs \sigma)}\sum_{\sigma'\in\bs L} g(\bs L,\sigma', \bs \sigma)},
\]
for every measurable $g\geq 0$.
In addition, $g$ can be allowed to depend on $\bs T$ in this equation (but the sum is still on $\sigma'\in\bs L$). Therefore, one can assume the metric on $\bs L$ is the graph-distance metric induced from $\bs T$
(see Theorem~I.3.31). Moreover, Theorem~5 of~\cite{shift-coupling} implies that by a further biasing and choosing a new root for $\bs T$, one can make $\bs T$ unimodular. More precisely, the following (possibly infinite) 
measure on $\mathcal D_*$ is unimodular:
\begin{equation}
\label{eq:thm:frostman-euclidean:P}
\mathcal P[ A]:=  \omid{\sum_{{n\geq 0}} \frac 1{e_n} \identity{A}[\bs T, {\bs q_n(0)}]},
\end{equation}
where $e_n:=\Phi(\bs q_n(0))$. 
\del{\mar{I deleted this since a bias is done as well. Keep it by explaining more?}In words, choose the root among $\bs q_{m(0)}(0),\bs q_{m(0)+1}(0),\ldots$ with the measure $\bs q_n(0) \mapsto \frac 1{e_n}$
	(which is not necessarily a probability measure).}%
Let $\mathcal E$ denote the integral operator w.r.t. the measure $\mathcal P$.
For any equivariant flow $\bs f$ on $\bs T$, the norm of $\bs f$ w.r.t. the measure $\mathcal P$
(see Remark~\ref{rem:maxflow-infinite}) satisfies
\begin{eqnarray*}
	\norm{\bs f} = \mathcal E[\bs f \cdot \identity{\bs L}]
	= \omid{\sum_{{n\geq 0}} \frac 1{e_n} \bs f(\bs q_n(0)) \identity{\{\bs q_n(0)\in \bs L \}} }
	= \omid{\frac 1{\Phi(\bs \sigma)}\bs f(\bs \sigma)},
\end{eqnarray*} 
where the second equality is by~\eqref{eq:thm:frostman-euclidean:P}.
Consider the conductance function $\bs c(\tau):=b^{n\alpha}$ for all cubes $\tau$
of edge length $b^n$ in $\bs T$ and all $n$. Therefore, Theorem~\ref{thm:maxFlowMinCut}
and Remark~\ref{rem:maxflow-infinite} imply that 
the maximum of $\omid{\bs f(\bs{\sigma})}$ over all equivariant legal flows $\bs f$ on $[\bs T, \bs{\sigma}]$
is attained (note that $[\bs T, \bs \sigma]$ is not unimodular, but the theorem can be used for $\mathcal P$).
Denote by $\bs f_0$ the maximum flow.
Let $\bs w$ be the weight function on $\Phi$ defined by $\bs w(v)=\delta \bs f_0(\bs q_{0}(v)){/\Phi(\bs q_0(v))}$,
for all $v\in \Phi$, where $\delta:=(b+1)^{-k}$.
The claim is that $\bs w$ satisfies the requirements~\eqref{eq:thm:frostman-euclidean:1} and~\eqref{eq:thm:frostman-euclidean:2}.
Since $\bs f_0$ is a legal flow, it follows that for every cube $\sigma\in\bs T$, one has
\[
\bs w(\sigma) = \delta \bs f_0(\sigma) \leq \delta \bs c(\sigma) = {\delta b^{n\alpha}}.
\]

Each cube $\sigma$ of edge length $r\in [b^n,b^{n+1})$ in $\mathbb R^k$ can be covered with at most
$(b+1)^k$ cubes of edge length $b^n$ in $\bs T_0$. If $n\geq 0$, the latter are either in $\bs T$ or do not intersect $\Phi$.
So the above inequality implies	that $\bs w(\sigma)\leq r^{\alpha}$. So \eqref{eq:thm:frostman-euclidean:1} is proved for $\bs w$.

To prove \eqref{eq:thm:frostman-euclidean:2}, given any equivariant cut-set $\Pi$ of $\bs T$,
a covering of $\Phi$ can be constructed as follows:
For each cube $\sigma\in \Pi$ of edge length say $n$, let $\bs \tau(\sigma)$ be one of the points
in $\sigma\cap\Phi$ chosen uniformly at random and put a ball of radius $b^n$ centered at $\bs \tau(\sigma)$. 
Note that this ball contains $\sigma$. Do this independently for all cubes in $\bs T$.
If a point in $\Phi$ is chosen more than once, consider only the largest radius assigned to it\todel{
	(which might be infinite)}. It can be seen that this gives an equivariant covering of $\Phi$, namely $\bs R$. One has
%{\sout{Since $\bs \sigma_n$ has edge length $b^{n+m(0)}$, one has}}
\begin{eqnarray*}
	\omid{\bs R(0)^{\alpha}} &\leq& \omid{\sum_{n\geq 0} b^{n\alpha} \identity{\{\bs q_n(0) \in\Pi \}} \identity{\{0 = \bs \tau(\bs q_n(0)) \}} }\\
	&=&  \omid{\sum_{n\geq 0} \frac{b^{n\alpha}}{e_n}  \identity{\{\bs q_n(0) \in\Pi \}}}.
\end{eqnarray*}

On the other hand, by~\eqref{eq:thm:frostman-euclidean:P}, one can see that %the cut $\bs c(\Pi)$ is equal to
\[
\bs c(\Pi) = \omid{\sum_{n\geq 0} \frac 1{e_n} \bs c(\bs q_n(0)) \identity{\{\bs q_n(0) \in \Pi\}}} = \omid{\sum_{n\geq 0} \frac {b^{n\alpha}}{e_n} \identity{\{\bs q_n(0) \in \Pi\}}}.
\]
Therefore, $\omid{\bs R(0)^{\alpha}} \leq \bs c(\Pi)$.
So $\contentH{\alpha}{1}(\Phi) \leq \bs c(\Pi)$. Since $\Pi$ is an arbitrary equivariant cut-set,
by the unimodular max-flow min-cut theorem established above (Theorem~\ref{thm:maxFlowMinCut})
and the maximality of the flow $\bs f_0$, one gets that 
$
\contentH{\alpha}{1}(\Phi) \leq \norm{\bs f_0} = \omid{\bs f_0(\bs \sigma)/\Phi(\bs \sigma)} = \delta^{-1} \omid{\bs w(0)}.
$
So the claim is proved.
\end{proof}

The following corollary shows that in the setting of Theorem~\ref{thm:frostman-euclidean},
the claim of Conjecture~\ref{conj:frostman} holds up to a constant factor (compare this with Lemma~\ref{lem:frostmanAuxiliary}).

\begin{corollary}
\label{cor:frostman-Euclidean}
For all point-stationary point processes $\Phi$ in $\mathbb R^k$ endowed with the Euclidean metric and all $\alpha\geq 0$, 
$
3^{-k} \contentH{\alpha}{1}(\Phi)\leq \xi^{\alpha}_1(\Phi)\leq \contentH{\alpha}{1}(\Phi).
$
\end{corollary}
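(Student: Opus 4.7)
The plan is to prove the two inequalities separately, both being short consequences of already established results.

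For the upper bound $\xi^{\alpha}_1(\Phi)\leq \contentH{\alpha}{1}(\Phi)$, I would simply invoke the general inequality~\eqref{eq:frostman-xi<H}. Recall that any equivariant ball covering $\bs R$ can be encoded as a weighted collection of balls by setting $\bs c_r(v):=\identity{\{\bs R(v)=r\}}$, and that this encoding is a $(1,1)$-covering whose expected cost equals $\omid{\bs R(\bs o)^{\alpha}}$. Taking the infimum over equivariant coverings gives the inequality. No new work is required here.

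For the lower bound $3^{-k}\contentH{\alpha}{1}(\Phi)\leq \xi^{\alpha}_1(\Phi)$, the plan is to combine Theorem~\ref{thm:frostman-euclidean} with Remark~\ref{rem:frostman-ineq}. Theorem~\ref{thm:frostman-euclidean} produces an equivariant weight function $\bs w$ on $\Phi$ satisfying $\bs w(N_r(v))\leq r^{\alpha}$ for all $v\in\Phi$ and $r\geq 1$ almost surely, together with $\omid{\bs w(0)}\geq 3^{-k}\contentH{\alpha}{1}(\Phi)$. On the other hand, Remark~\ref{rem:frostman-ineq} shows that any such $\bs w$ must satisfy $\omid{\bs w(0)}\leq \xi^{\alpha}_1(1)$; this is a mass-transport style argument against an arbitrary $(1,1)$-covering $\bs c$, showing that the total weight at $\bs o$ is dominated by $\sum_r \bs c_r(\bs o) r^{\alpha}$ after averaging, and is proved in the same spirit as Theorem~\ref{thm:mdp-simple}. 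Chaining these two inequalities gives
\[
3^{-k}\contentH{\alpha}{1}(\Phi) \;\leq\; \omid{\bs w(0)} \;\leq\; \xi^{\alpha}_1(1) \;=\; \xi^{\alpha}_1(\Phi),
\]
which is the desired lower bound.

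There is no real obstacle here, since the corollary is essentially a direct packaging of Theorem~\ref{thm:frostman-euclidean} and Remark~\ref{rem:frostman-ineq}. The only point requiring a sentence of care is that the $\bs w$ constructed in Theorem~\ref{thm:frostman-euclidean} involves extra randomness (the stationary nested cube partitions used to define $\bs T$), so one should note that the inequality $\omid{\bs w(\bs o)}\leq \xi^{\alpha}_1(1)$ of Remark~\ref{rem:frostman-ineq} applies equally to randomized equivariant weight functions, because in its proof one can couple the random $\bs w$ independently with the weighted collection of balls $\bs c$ and apply the mass transport principle to the resulting unimodular marked space, exactly as in the independent coupling argument used in the proof of Theorem~\ref{thm:mdp-simple}.
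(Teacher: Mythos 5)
Your proposal is correct and follows essentially the same route as the paper: the paper's own proof is a single line citing exactly \eqref{eq:frostman-xi<H} for the upper bound and Theorem~\ref{thm:frostman-euclidean} together with Remark~\ref{rem:frostman-ineq} for the lower bound. Your extra sentence noting that Remark~\ref{rem:frostman-ineq} applies verbatim to randomized equivariant weight functions via the independent-coupling argument is a valid point of care that the paper leaves implicit.
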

\begin{proof}
The claim is directly implied by \eqref{eq:frostman-xi<H}, Theorem~\ref{thm:frostman-euclidean} and Remark~\ref{rem:frostman-ineq}.
\end{proof}

\subsection{Applications}
{The following are some {basic} applications of the unimodular Frostman lemma. This lemma is also the basis of many results of~\cite{III}.} %\mar{\ali{Write the statements of some results such as scaling limits and discrete dimension? (the reviewer said that connection to other notions is missing)}}

\subsubsection{Cayley Graphs}
 
\begin{proposition}
	\label{prop:Cayley}
	For every finitely generated group $H$ with polynomial growth {degree} $\alpha\in [0,\infty]$, one has 
	%Under the assumptions of Theorem~\ref{thm:Cayley},  
	$\xi^{\alpha}_{\infty}(H)<\infty$.
\end{proposition}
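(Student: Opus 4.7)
The plan is to exhibit an explicit deterministic equivariant weighted collection of balls that certifies $\xi^{\alpha}_{\infty}(H)<\infty$. The construction exploits the vertex-transitivity of a Cayley graph, which makes $\card{N_r(v)}$ independent of $v$, so a uniform choice of weights is natural.

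Assume first $\alpha<\infty$. By Bass's theorem~\cite{Ba72} (already invoked in the proof of Theorem~\ref{thm:Cayley}) there exist constants $c,C>0$ and $M_0\geq 1$ such that $cM^{\alpha}\leq \card{N_M(o)}\leq CM^{\alpha}$ for every $M\geq M_0$, where $o$ is any fixed element of $H$. For each $M\geq M_0$ I would define the (deterministic) equivariant weighted collection $\bs c$ on $H$ by
\eqns{\bs c_M(v):=\frac{1}{\card{N_M(v)}},\qquad \bs c_r(v):=0 \text{ for } r\neq M.}
Because $H$ is a Cayley graph, $\card{N_M(v)}=\card{N_M(o)}$ for every $v$, and the covering inequality~\eqref{eq:weightedCovering} with $f\equiv 1$ reduces to the deterministic identity
\eqns{\sum_{u\in H}\sum_{r\geq M}\bs c_r(u)\identity{\{v\in N_r(u)\}}=\frac{\#\{u:d(u,v)\leq M\}}{\card{N_M(o)}}=\frac{\card{N_M(v)}}{\card{N_M(o)}}=1,}
so $\bs c$ is a genuine $(1,M)$-covering. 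The cost is
\eqns{\omid{\sum_{r}\bs c_r(\bs o)\,r^{\alpha}}=\frac{M^{\alpha}}{\card{N_M(o)}}\leq \frac{1}{c},}
using Bass's lower bound. Taking the infimum over $(1,M)$-coverings gives $\xi^{\alpha}_M(1)\leq 1/c$, and letting $M\to\infty$ yields $\xi^{\alpha}_{\infty}(H)\leq 1/c<\infty$.

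For $\alpha=\infty$, the lower bound of~\cite{VaWi84} used in the proof of Theorem~\ref{thm:Cayley} shows that $\card{N_M(o)}$ grows faster than any polynomial. Applying the same construction with any fixed finite $\beta$ in place of $\alpha$ gives $M^{\beta}/\card{N_M(o)}\to 0$, so $\xi^{\beta}_{\infty}(1)=0$ for every finite $\beta$; under the natural convention $\xi^{\infty}_{\infty}:=\lim_{\beta\to\infty}\xi^{\beta}_{\infty}$ (or by interpreting $\alpha=\infty$ as the statement that $\xi^{\beta}_{\infty}<\infty$ for some $\beta$ matching the Hausdorff dimension), the conclusion is immediate.

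The argument is essentially elementary once Bass's theorem is in hand; there is no real obstacle beyond verifying the covering inequality, which is straightforward because the construction is uniform over the group. The only genuinely delicate point is conceptual, namely agreeing on the right interpretation of the quantity $\xi^{\infty}_{\infty}$ in the intermediate-growth case; in the polynomial-growth case that the proposition is really about, the argument above is complete.
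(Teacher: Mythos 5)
Your proof is correct and takes a genuinely different, more elementary route than the paper. The paper applies the unimodular Frostman lemma (Theorem~\ref{thm:frostmanGeneral}): since $\measH{\alpha}(H)<\infty$ by Theorem~\ref{thm:Cayley}, the Frostman lemma produces a deterministic weight function $w:\dstar\to\mathbb R^{\geq 0}$ with $\omid{w(e)}=\xi^{\alpha}_M(H)$ and $w(N_M(e))\leq M^{\alpha}$; transitivity forces $w(H,\cdot)$ to be constant, hence equal to $\xi^{\alpha}_M(H)$, and the constraint at a single ball gives $\xi^{\alpha}_M(H)\,\card{N_M(e)}\leq M^{\alpha}$, then Bass's lower bound delivers $\xi^{\alpha}_M(H)\leq 1/c$. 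You instead bypass the Frostman machinery entirely by exhibiting the $(1,M)$-covering $\bs c_M(v)=1/\card{N_M(v)}$ directly; transitivity makes the covering identity $\sum_{u\in N_M(v)}1/\card{N_M(u)}=1$ immediate, and the same Bass bound closes the argument. Both proofs yield the identical constant $1/c$, but yours is self-contained and shows the extremal $w$ from the Frostman lemma explicitly in this case ($w\equiv 1/\card{N_M(o)}$), which is a small payoff in transparency. The paper's route, by contrast, shows how transitivity interacts with the general Frostman weight and would adapt to situations where no natural candidate covering suggests itself. Your remark on the $\alpha=\infty$ case is fair: the quantity $\xi^{\infty}_{\infty}$ is not formally defined by Definition~\ref{def:xi^alpha}, and the paper's own proof of this proposition does not treat that case separately either; in the polynomial-growth regime $\alpha<\infty$, which is the substantive content, your argument is complete.
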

Note that if Conjecture~\ref{conj:frostman} holds, then this result implies $\measH{\alpha}(H)>0$, as conjectured in Subsection~\ref{subsec:cayley}.
\begin{proof}
	By Theorem~\ref{thm:Cayley}, $\measH{\alpha}(H)<\infty$. So the unimodular Frostman lemma (Theorem~\ref{thm:frostmanGeneral}) implies that for every $M\geq 1$, there exists $w:\dstar\to\mathbb R^{\geq 0}$ such that $w(N_M(e))\leq M^{\alpha}$ and $\omid{w(e)}=\xi^{\alpha}_M(H)$, where $e$ is the neutral element of $H$. Since the Cayley graph of $H$ is transitive and $w$ is defined up to rooted isomorphisms, $w(H,\cdot)$ is constant. Hence, $w(H,v)=\xi^{\alpha}_M(H)$ for all $v\in H$. Therefore, $\xi^{\alpha}_M(H)\card{N_M(e)} \leq M^{\alpha}$. Thus, $\xi^{\alpha}_M(H)\leq 1/c$, where $c$ is as in the proof of Theorem~\ref{thm:Cayley}. By letting $M\to \infty$, one gets $\xi^{\alpha}_{\infty}(H)\leq 1/c<\infty$.
\end{proof}

\subsubsection{Dimension of Product Spaces}
\label{subsec:product}

Let $[\bs D_1,\bs o_1]$ and $[\bs D_2,\bs o_2]$ be independent unimodular discrete metric spaces.
By considering any of the usual product metrics; e.g., the sup metric or the $p$ product metric,
the \defstyle{independent product} $[\bs D_1\times \bs D_2, (\bs o_1,\bs o_2)]$ makes sense as
a random \rooted{} discrete space. It is not hard to see that the latter is also unimodular
(see also Proposition~4.11 of~\cite{processes}). 

\begin{proposition}
\label{prop:product}
Let $[\bs D_1\times \bs D_2, (\bs o_1,\bs o_2)]$ represent the independent product of
$[\bs D_1,\bs o_1]$ and $[\bs D_2,\bs o_2]$ defined above.
Then,
\begin{equation}
\label{eq:product}
\dimH{\bs D_1}+\dimMl{\bs D_2} \leq \dimH{\bs D_1\times \bs D_2} \leq \dimH{\bs D_1}+\dimH{\bs D_2}.
\end{equation}
\end{proposition}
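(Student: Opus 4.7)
The plan is to prove the two inequalities separately. For both, it suffices (by Theorem~I.3.31) to work with the sup metric on the product, under which $N_r((v_1,v_2))=N_r(v_1)\times N_r(v_2)$ for every $r$.

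For the upper bound $\dimH{\bs D_1\times \bs D_2}\le \dimH{\bs D_1}+\dimH{\bs D_2}$, the plan is to tensor two Frostman weights and apply the product mass distribution principle. Fix $\alpha_i$ slightly larger than $\dimH{\bs D_i}$ so that $\bs D_i$ has finite $\alpha_i$-dimensional Hausdorff measure. Theorem~\ref{thm:frostmanGeneral}(iii), applied with $h\equiv 1$, then produces for each $i$ an equivariant weight function $w_i$ on $\bs D_i$ that is non-degenerate with positive probability and satisfies $w_i(N_r(v))\le r^{\alpha_i}$ a.s.\ for every $v$ and every $r\ge M$. Using the independent coupling of $w_1$ and $w_2$, the tensor $w(v_1,v_2):=w_1(v_1)w_2(v_2)$ is an equivariant weight function on $\bs D_1\times\bs D_2$ by Lemma~I.2.11 and obeys $w(N_r((\bs o_1,\bs o_2)))=w_1(N_r(\bs o_1))\,w_2(N_r(\bs o_2))\le r^{\alpha_1+\alpha_2}$ a.s., with $\omid{w(\bs o_1,\bs o_2)}=\omid{w_1(\bs o_1)}\omid{w_2(\bs o_2)}>0$ by independence. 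Theorem~\ref{thm:mdp-simple} yields $\dimH{\bs D_1\times\bs D_2}\le\alpha_1+\alpha_2$; letting $\alpha_i\downarrow\dimH{\bs D_i}$ gives the right-hand inequality.

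For the lower bound $\dimH{\bs D_1\times\bs D_2}\ge \dimH{\bs D_1}+\dimMl{\bs D_2}$, fix $\alpha_1<\dimH{\bs D_1}$ and $\beta_2<\dimMl{\bs D_2}$. Frostman yields a non-degenerate equivariant weight $w_1$ on $\bs D_1$ with $w_1(N_r(v))\le r^{\alpha_1}$ a.s., while the definition of $\dimMl{\bs D_2}$ provides, for each large $r$, a uniformly bounded equivariant $r$-covering $\bs R_2^{(r)}$ of $\bs D_2$ with intensity at most $r^{-\beta_2+o(1)}$. For an arbitrary equivariant covering $\bs R$ of the product with radii in $\{0\}\cup[M,\infty)$, apply the mass transport principle on $\bs D_1\times\bs D_2$ to $g((v_1,v_2),(u_1,u_2)):=w_1(v_1)\,\identity{\bs R(u_1,u_2)>0}\,\identity{(v_1,v_2)\in N_{\bs R(u_1,u_2)}((u_1,u_2))}$; the covering property gives $g^+(\bs o)\ge w_1(\bs o_1)$ and the Frostman bound on $w_1$ gives $g^-(\bs o)\le \identity{\bs R(\bs o)>0}\,\bs R(\bs o)^{\alpha_1}\,\card{N_{\bs R(\bs o)}(\bs o_2)}$, whence $\omid{w_1(\bs o_1)}\le\omid{\identity{\bs R(\bs o)>0}\,\bs R(\bs o)^{\alpha_1}\,\card{N_{\bs R(\bs o)}(\bs o_2)}}$. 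Discretising the radii of $\bs R$ dyadically and coupling, at each dyadic scale $r$, with the sparse covering $\bs R_2^{(r)}$ via a second mass-transport computation on $\bs D_2$ allows the factor $\card{N_{\bs R}(\bs o_2)}$ to be replaced by a lower bound of the form $c\,\bs R(\bs o)^{\beta_2}$, up to a summable error across the dyadic scales. This gives $\omid{\bs R(\bs o)^{\alpha_1+\beta_2}}\gtrsim\omid{w_1(\bs o_1)}>0$ uniformly in $\bs R$, so $\contentH{\alpha_1+\beta_2}{M}(\bs D_1\times\bs D_2)$ stays uniformly positive as $M\to\infty$, forcing $\dimH{\bs D_1\times\bs D_2}\ge\alpha_1+\beta_2$. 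Letting $\alpha_1\uparrow\dimH{\bs D_1}$ and $\beta_2\uparrow\dimMl{\bs D_2}$ completes the proof.

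The main obstacle is the last step of the lower bound: the random ball size $\card{N_{\bs R(\bs o)}(\bs o_2)}$ is not pointwise comparable to $\bs R(\bs o)^{\beta_2}$, and $\dimMl{\bs D_2}$ controls only the minimum intensity of equivariant $r$-coverings in a per-scale averaged sense. The dyadic discretisation of radii, together with the coupling of two mass-transport computations (one on the product, one on $\bs D_2$), is where most of the work lives, and a careful choice of these couplings is needed to keep the error summable across scales.
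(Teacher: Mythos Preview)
Your upper bound is correct and matches the paper's proof: tensor two Frostman weights and apply the mass distribution principle.

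Your lower bound, however, fails at the first step. You invoke a non-degenerate Frostman weight $w_1$ on $\bs D_1$ with $w_1(N_r(v))\le r^{\alpha_1}$ for $\alpha_1<\dimH{\bs D_1}$. No such weight can exist: by the mass distribution principle (Theorem~\ref{thm:mdp-simple}), any non-degenerate weight with this growth bound forces $\dimH{\bs D_1}\le\alpha_1$, contradicting your choice of $\alpha_1$. Equivalently, $\alpha_1<\dimH{\bs D_1}$ gives $\contentH{\alpha_1}{M}(\bs D_1)=0$ (Lemma~I.3.25), so by Lemma~\ref{lem:frostmanAuxiliary} one has $\xi^{\alpha_1}_M(1)=0$, and the Frostman weight of Theorem~\ref{thm:frostmanGeneral} is identically zero. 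The underlying confusion is directional: in this framework the radii are $\ge 1$, so $\contentH{\alpha}{M}$ is \emph{increasing} in $\alpha$ and $\dimH{\bs D}=\sup\{\alpha:\contentH{\alpha}{M}(\bs D)=0\}$. Your conclusion ``$\contentH{\alpha_1+\beta_2}{M}$ stays uniformly positive, forcing $\dimH\ge\alpha_1+\beta_2$'' therefore has the implication reversed; positive content gives an \emph{upper} bound on $\dimH$.

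The paper's lower bound goes the other way entirely: it \emph{constructs} cheap coverings of the product to show $\contentH{\alpha+\beta}{M}(\bs D_1\times\bs D_2)=0$ for any $\alpha<\dimH{\bs D_1}$ and $\beta<\dimMl{\bs D_2}$. Since $\contentH{\alpha}{M}(\bs D_1)=0$, there is an equivariant covering $\bs R_1$ of $\bs D_1$ with radii in $\{0\}\cup[M,\infty)$ and $\omid{\bs R_1(\bs o_1)^{\alpha}}<\epsilon$. Since $\beta<\dimMl{\bs D_2}$, for every $r\ge M$ there is an equivariant $r$-covering $\bs S^{(r)}$ of $\bs D_2$ with intensity $<r^{-\beta}$. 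For each $v_1$ with $\bs R_1(v_1)=r\ge M$, place product balls of radius $r$ at the points of $\{v_1\}\times\bs S^{(r)}$ (sampling the $\bs S^{(r)}$'s independently across $v_1$). This yields an equivariant covering $\bs R$ of the product, and conditioning on $[\bs D_1,\bs o_1]$ and $\bs R_1$ gives
\[
\omid{\bs R(\bs o_1,\bs o_2)^{\alpha+\beta}}\le \omid{\bs R_1(\bs o_1)^{\alpha+\beta}\cdot\bs R_1(\bs o_1)^{-\beta}}=\omid{\bs R_1(\bs o_1)^{\alpha}}<\epsilon.
\]
No Frostman weight, no dyadic scales, and no control on $\card{N_r(\bs o_2)}$ is needed.
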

\begin{proof}
By Theorem~I.3.31, one can assume the metric on $\bs D_1\times \bs D_2$ is
the sup metric without loss of generality. So $N_r(v_1,v_2)=N_r(v_1)\times N_r(v_2)$.
		
The upper bound is proved first. For $i=1,2$, let $\alpha_i>\dimH{\bs D_i}$ be arbitrary.
By the unimodular Frostman lemma (Theorem~\ref{thm:frostmanGeneral}), there is a nonnegative measurable
functions $w_i$ on $\dstar$ such that $\forall v \in \bs D_i: \forall r\geq 1: w_i(N_r(v))\leq r^{\alpha} \text{, a.s.}$
In addition, $w_i(\bs o_i)\neq 0$ with positive probability. Consider the equivariant weight function
$\bs w$ on $\bs D_1\times \bs D_2$ defined by
$
\bs w(v_1,v_2):=w_1[\bs D_1,v_1]\times w_2[\bs D_2,v_2].
$
It is left to the reader to show that $\bs w$ is  an equivariant weight function.
One has $\bs w(N_r(v_1,v_2)) = w_1(N_r(v_1))w_2(N_r(v_2))\leq r^{\alpha_1+\alpha_2}$.
Also, by the independence assumption, $\bs w(\bs o_1,\bs o_2)\neq 0$ with positive probability.
Therefore, the mass distribution principle (Theorem~\ref{thm:mdp-simple})
implies that $\dimH{\bs D_1\times\bs D_2}\leq \alpha_1+\alpha_2$. This proves the upper bound.
		
For the lower bound in the claim, let $\alpha<\dimH{\bs D_1}$, $\beta<\dimMl{\bs D_2}$
and $\epsilon>0$ be arbitrary. It is enough to find an equivariant covering $\bs R$ of
$\bs D_1\times\bs D_2$ such that $\omid{\bs R(\bs o_1,\bs o_2)^{\alpha+\beta}}<\epsilon$. 
One has $\decayl{\lambda_r(\bs D_2)}>\beta$, where $\lambda_r$ is defined in~(I.3.1).
So there is $M>0$ such that $\forall r\geq M: \lambda_r(\bs D_2)<r^{-\beta}$. So for every $r\geq M$,
there is an equivariant $r$-covering of $\bs D_2$ with intensity less than $r^{-\beta}$.
On the other hand, since $\alpha <\dimH{\bs D_1}$, one has $\contentH{\alpha}{M}(\bs D_1)=0$
(by Lemma~I.3.25). Therefore there is an equivariant covering $\bs R_1$
of $\bs D_1$ such that $\omid{\bs R_1(\bs o_1)^{\beta}}<\epsilon$ and $\forall v\in \bs D_1: \bs R_1(v)\in \{0\}\cup [M,\infty)$ a.s. 
Choose the extra randomness in $\bs R_1$ independently from $[\bs D_2,\bs o_2]$. 
Given a realization of $[\bs D_1,\bs o_1]$ and $\bs R_1$, do the following: 
Let $v_1\in \bs D_1$ such that $\bs R_1(v_1)\neq 0$ (and hence, $\bs R_1(v_1)\geq M$).
One can find an equivariant subset $\bs S_{v_1}$ of $\bs D_2$ that gives a covering of
$\bs D_2$ by balls of radius $\bs R_1(v_1)$ and has intensity less than $\bs R_1(v_1)^{-\beta}$.
Do this independently for all $v_1\in \bs D_1$. Now, for all $(v_1,v_2)\in \bs D_1\times \bs D_2$, define
\[
\bs R(v_1,v_2):=\begin{cases}
\bs R_1(v_1)& \text{if } \bs R_1(v_1)\neq 0 \text{ and } v_2\in \bs S_{v_1},\\
0& \text{otherwise}.
\end{cases}
\]
Now, $\bs R$ is a covering of $\bs D_1\times \bs D_2$ and it can be seen that 
it is an equivariant covering. Also, given $[\bs D_1,\bs o_1]$ and $\bs R_1$,
the probability that $\bs o_2\in \bs S_{\bs o_1}$ is less than $\bs R_1(\bs o_1)^{-\beta}$. So one gets 
\begin{eqnarray*}
\omid{\bs R(\bs o_1,\bs o_2)^{\alpha+\beta}} 
&=& \mathbb E\Bigg[\mathbb E\Big[{\bs R(\bs o_1,\bs o_2)^{\alpha+\beta}}|{[\bs D_1,\bs o_1],\bs R_1}\Big]\Bigg]\\ 
&< & \omid{\bs R_1(\bs o_1)^{\alpha+\beta} \bs R_1(\bs o_1)^{-\beta}}
= \omid{\bs R_1(\bs o_1)^{\alpha}}
<\epsilon.
\end{eqnarray*}
So the claim is proved.
\end{proof}

The following examples provide instances where the inequalities in~\eqref{eq:product} are strict. %\mar{{These examples are analogous to continuum examples with similar arguments. Cite for them?}}
\begin{example}
Assume $[\bs D_1,\bs o_1]$ and $[\bs D_2,\bs o_2]$ are unimodular discrete spaces such that
$\dimMl{\bs G_1}<\dimH{\bs G_1}$ and $\dimMl{\bs G_2}=\dimH{\bs G_2}$. By Proposition~\ref{prop:product}, one gets
\[
\dimH{\bs G_1\times \bs G_2} \geq \dimH{\bs G_1}+\dimMl{\bs G_2} > \dimH{\bs G_2}+ \dimMl{\bs G_1}.
\]
So by swapping the roles of the two spaces, an example of strict inequality in the left hand side of~\eqref{eq:product} is obtained.
\end{example}

\begin{example}
Let $J$ be a subset of $\mathbb Z^{\geq 0}$ such that $\densityU{}(J)=1$ and $\densityL{}(J)=0$ simultaneously
(see Subsection~\ref{subsec:digits} for the definitions). Let $\Psi_1$ and $\Psi_2$ be defined
as in Subsection~\ref{subsec:digits} corresponding to $J$ and $\mathbb Z^{\geq 0} \setminus J$ respectively.
Proposition~\ref{prop:digits} implies that $\dimH{\Psi_1}=\dimH{\Psi_2}=1$. On the other hand, \eqref{eq:prop:digits} implies that
\[
\card{N_{2^n}(\bs o_1\times \bs o_2)} \leq 2^{J_n+1} \times 2^{(n+1-J_n)+1} = 2^{n+3}.
\]
This implies that $\growthu{N_r(\bs o)}\leq 1$. So the unimodular Billingsley lemma
(Theorem~\ref{thm:billingsley}) implies that $\dimH{\Psi_1\times \Psi_2}\leq 1$
(in fact, equality holds by Proposition~\ref{prop:embedded} below).
So the rightmost inequality in~\eqref{eq:product} is strict here.
\end{example}

\invisible{ \textbf{Question:} What about the Minkowski dimension? We can prove that $\dimM{\bs D_1\times \bs D_2}\geq \dimM{\bs D_1}+ \dimM{\bs D_2}$ (both for lower and upper dimensions) by direct construction of coverings. \textbf{Update:} $\dimMl{\bs D_1\times \bs D_2}\geq \dimMl{\bs D_1}+ \dimMl{\bs D_2}$ but $\dimMu{\bs D_1\times \bs D_2}\geq \dimMu{\bs D_1}+ \dimMl{\bs D_2}$ (note that the latter is not $\dimMu{\bs D_2}$).
}

\invisible{\textbf{Question:} Can we relax the independence assumption? 		\textbf{Question}: Can we consider e.g., the DL-graph?  }

\subsubsection{Dimension of Embedded Spaces}
\label{subsec:embedded}

It is natural to think of $\mathbb Z$ as a subset of $\mathbb Z^2$. However, $[\mathbb Z,0]$ is
not an equivariant subspace of $[\mathbb Z^2,0]$ as defined in Definition~I.2.13.
By the following definition, $[\mathbb Z,0]$ is called \textit{embeddable in $[\mathbb Z^2,0]$}.
The dimension of embedded subspaces is studied in this subsection, which happens to be non-trivial {and requires the unimodular Frostman lemma}.

\begin{definition}
\label{def:embedded}
Let $[\bs D_0, \bs o_0]$ and $[\bs D, \bs o]$ be random \rooted{} discrete spaces. 
An \defstyle{embedding} of $[\bs D_0,\bs o_0]$ in $[\bs D, \bs o]$ is 
a (not necessarily unimodular) random \rooted{} marked discrete space $[\bs D', \bs o'; \bs m]$
with mark space $\{0,1\}$ such that
		\begin{enumerate}[(i)]
			\item $[\bs D', \bs o']$ has the same distribution as $[\bs D, \bs o]$.
			%\item $\bs m(\bs o')=1$ a.s.
			
			\item $\bs m(\bs o')=1$ a.s. and by letting $\bs S:=\{v\in \bs D': \bs m(v)=1 \}$ equipped with the
                        induced metric from $\bs D'$, $[\bs S, \bs o']$ has the same distribution
                        as $[\bs D_0,\bs o_0]$.
		\end{enumerate}
If in addition, $[\bs D_0, \bs o_0]$ is unimodular, then
$[\bs D',\bs o'; \bs m]$ is called an \defstyle{equivariant embedding} if
\begin{enumerate}[(i)]
	\setcounter{enumi}{2}
	\item \label{def:embedding:4} The mass transport principle holds on $\bs S$; i.e.,
\xeq{I.2.2} holds for functions $g(u,v):=g(\bs D',u,v; \bs m)$
such that $g(u,v)$ is zero when $\bs m(u)=0$ or $\bs m(v)=0$.
\end{enumerate}
If an embedding (resp. an equivariant embedding) exists as above, $[\bs D_0, \bs o_0]$ 
is called \defstyle{embeddable} (resp. \defstyle{equivariantly embeddable}) in $[\bs D, \bs o]$.
\end{definition}
	
It should be noted that $[\bs D', \bs o'; \bs m]$ is not an equivariant process on $\bs D$ {except in the trivial case where $\bs m(\cdot)=1$ a.s.}
	
\begin{example}
The following are instances of Definition~\ref{def:embedded}.
\begin{enumerate}[(i)]
\todel{\item \mar{I deleted this since it is already said in the beginning of the subsection.} $[\mathbb Z^n,0]$ is equivariantly embeddable in $[\mathbb Z^m,0]$ for $m\geq n$.}
			
\item Let $[\bs D_0,\bs o_0]:=[\mathbb Z,0]$ and $[\bs D, \bs o]:=[\mathbb Z^2,0]$ equipped with the sup metric.
Consider $m:\mathbb Z^2\rightarrow\{0,1\}$ which is equal to one on the boundary of the positive cone. 
Then, $[\mathbb Z^2, 0; m]$ is an embedding of $[\mathbb Z,0]$ in $[\mathbb Z^2,0]$,
but is not an equivariant embedding since it does not satisfy~\eqref{def:embedding:4}.

\item A point-stationary point process in $\mathbb Z^k$ (\rooted{} at 0) is equivariantly embeddable in $[\mathbb Z^k,0]$.

\item Let $H$ be a finitely generated group equipped with the graph-distance metric of an arbitrary
Cayley graph over $H$\del{ (see subsection~I.\ref{I-subsec:cayley})}. Then, any subgroup of $H$ {(equipped with the induced metric)}
is equivariantly embeddable in $H$.
\end{enumerate}
\end{example}
	
\todel{\mar{\ali{I propose to either delete these or move them to the end of the subsection.}}It should be noted that there are examples where $[\bs D_0, \bs o_0]$ is embeddable in $[\bs D, \bs o]$
and both are unimodular, but the former is not equivariantly embeddable in the latter. \forlater{An example can be constructed similar to Remark~\ref{rem:embedding-counter} below.}
	
\begin{remark}
$[\bs D_0, \bs o_0]$ is embeddable in $[\bs D, \bs o]$ if and only if there exists a coupling of them
such that in almost every realization, the former is a pointed subspace of the latter
(to show this, one should be cautious about the automorphisms).
In this case, it is also feasible to call $[\bs D_0, \bs o_0]$ \textit{stochastically dominated by}
$[\bs D, \bs o]$ by considering the inclusion relation between pointed metric spaces.
Being equivariantly embeddable seems to be difficult to state in this way.
\end{remark}}
	
\todel{Here is the main result of this subsection followed by some conjectures and problems.}
	
\begin{proposition}
\label{prop:embedded}
If $[\bs D_0, \bs o_0]$ and $[\bs D, \bs o]$ are unimodular discrete spaces and the
former is equivariantly embeddable in the latter, then
\begin{eqnarray}
\label{eq:thm:embedded:1}
\dimH{\bs D}&\geq& \dimH{\bs D_0},\\
\label{eq:thm:embedded:2}
\xi^{\alpha}_M(\bs D, 1) &\leq & \contentH{\alpha}{M}(\bs D_0), 
\end{eqnarray}
for all $\alpha\geq 0$ and $M\geq 1$, 
where $\xi^{\alpha}_M$ is defined in Definition~\ref{def:xi^alpha}.
\end{proposition}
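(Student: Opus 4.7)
The plan is to prove the quantitative inequality \eqref{eq:thm:embedded:2} as the main technical step and then to deduce \eqref{eq:thm:embedded:1} from it via Lemma~\ref{lem:frostmanAuxiliary}. For \eqref{eq:thm:embedded:2}, I would take any equivariant ball-covering $\bs R_0$ of $\bs D_0$ with $\bs R_0 \in \{0\}\cup[M,\infty)$ almost surely and lift it, through the equivariant embedding $[\bs D',\bs o';\bs m]$, to a $(1,M)$-covering of $\bs D$ at no extra expected cost. Transport $\bs R_0$ onto $\bs S$ using the identification $[\bs S,\bs o']$ has the same distribution as $[\bs D_0,\bs o_0]$, and define, on $\bs D'$ (viewed as $\bs D$ with $\bs m$ as extra randomness),
\[
\bs c_r(v) := \identity{\{v\in\bs S,\ \bs R_0(v)=r\}} + \identity{\{v\notin \bs S,\ r=M\}}.
\]
Since $\bs S$ inherits its metric from $\bs D'$, one has $N_r^{\bs D'}(u)\cap\bs S = N_r^{\bs S}(u)$ for every $u\in\bs S$, so the first family of balls covers $\bs S$; the second family trivially covers each $v\notin\bs S$ by the ball of radius $M$ at $v$ itself. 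Hence $\bs c$ is a $(1,M)$-covering of $\bs D'$. Its expected cost decomposes as
\[
\omid{\sum_{r\geq M}\bs c_r(\bs o)\,r^\alpha} = \omid{\bs R_0(\bs o)^\alpha\identity{\{\bs o\in\bs S\}}} + M^\alpha\myprob{\bs o\notin\bs S} = \omid{\bs R_0(\bs o_0)^\alpha},
\]
the crucial cancellation $\myprob{\bs o\notin\bs S}=0$ coming from $\bs m(\bs o')=1$ a.s. Taking the infimum over $\bs R_0$ yields \eqref{eq:thm:embedded:2}.

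For \eqref{eq:thm:embedded:1}, fix $\alpha<\dimH{\bs D_0}$. Then $\measH{\alpha}(\bs D_0)=0$; since $\contentH{\alpha}{M}$ is non-decreasing in $M$ with limit $\measH{\alpha}$, this forces $\contentH{\alpha}{M}(\bs D_0)=0$ for every $M\geq 1$. Applying \eqref{eq:thm:embedded:2} gives $\xi^\alpha_M(\bs D,1)=0$ for every $M$, and Lemma~\ref{lem:frostmanAuxiliary}\eqref{lem:frostmanAuxiliary:3} then forces $\contentH{\alpha}{M}(\bs D)=0$ for all $M$, whence $\measH{\alpha}(\bs D)=0$ and $\dimH{\bs D}\geq \alpha$. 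Sending $\alpha\uparrow\dimH{\bs D_0}$ concludes.

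The main subtlety will be verifying that the lifted $\bs c$ is an admissible equivariant weighted collection of balls on $\bs D$. Because $[\bs D',\bs o';\bs m]$ is not unimodular as a marked space (the marking is biased so that $\bs m(\bs o')=1$ almost surely), $\bs m$ is not standard extra randomness in the sense of Definition~I.2.8. Nonetheless, this should be handled using Definition~\ref{def:embedded}\eqref{def:embedding:4} --- the mass transport principle restricted to $\bs S$ --- together with the observation that points outside $\bs S$ contribute only $M^\alpha\myprob{\bs o\notin\bs S}=0$ at the root, so the marking off $\bs S$ never enters the expectation.
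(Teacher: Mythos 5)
Your strategy for \eqref{eq:thm:embedded:2} runs into a genuine obstacle, and it is exactly the one flagged in Remark~\ref{rem:embedding-counter} of the paper. The quantity $\xi^{\alpha}_M(\bs D,1)$ is the infimum of $\omid{\sum_r \bs c_r(\bs o)r^\alpha}$ over \emph{equivariant} weighted collections of balls $\bs c$ on $\bs D$, i.e.\ over $\Xi$-valued equivariant processes in the sense of Definition~I.2.8. The object you construct,
\[
\bs c_r(v) := \identity{\{v\in\bs S,\ \bs R_0(v)=r\}} + \identity{\{v\notin \bs S,\ r=M\}},
\]
depends explicitly on $\bs m$, the indicator of $\bs S$. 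But $\bs m$ is not an equivariant process on $\bs D$: by construction the embedding is root-biased so that $\bs m(\bs o')=1$ almost surely, whereas an equivariant mark must satisfy the mass transport principle jointly with $\bs D$ and cannot single out the root in this way. (The lattice example $\bs D_0=\mathbb Z$ inside $\bs D=\mathbb Z^2$ illustrates this: the indicator of the $x$-axis is not translation-equivariant on $\mathbb Z^2$.) Because $\bs c$ is not admissible as an $(1,M)$-covering, the inequality $\xi^\alpha_M(\bs D,1)\leq \omid{\sum_r\bs c_r(\bs o)r^\alpha}$ does not follow, and the ``free'' cancellation $M^\alpha\myprob{\bs o\notin\bs S}=0$ that you use is precisely the tell-tale sign that the expectation you are computing is not the intensity-style quantity that appears in the definition of $\xi^\alpha_M$. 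Your final remark—that the mass transport principle restricted to $\bs S$ together with the vanishing root contribution should handle the equivariance—does not close this gap: the MTP-on-$\bs S$ assumption controls functions supported on $\bs S\times\bs S$, but says nothing about making $\bs m$ (or a $\bs c$ built from it) an equivariant process on all of $\bs D$. The paper avoids the issue entirely by transferring in the opposite direction: it takes the deterministic Frostman weight $w$ on $\bs D$ (whose definedness on all of $\dstar$ is unproblematic), restricts it to $\bs S$, uses Definition~\ref{def:embedded}\eqref{def:embedding:4} to get a unimodular weight on $\bs D_0$, and applies the mass distribution principle on $\bs D_0$. Restricting a deterministic function never creates an equivariance problem, whereas extending an equivariant process generally does—this is the asymmetry that your proposal overlooks.

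On the deduction of \eqref{eq:thm:embedded:1} from \eqref{eq:thm:embedded:2}: your chain of implications is correct in substance and is simply the contrapositive of the paper's, but the intermediary claims about $\measH{\alpha}$ are off. In this discrete theory $\contentH{\alpha}{M}$ is non-decreasing in $\alpha$, and for $\alpha<\dimH{\bs D_0}$ one has $\contentH{\alpha}{M}(\bs D_0)=0$ directly by Lemma~I.3.25 (this is the form used in the proof of Proposition~\ref{prop:product}); the corresponding statement for the measure is $\measH{\alpha}(\bs D_0)=\infty$, not $0$, and $\lim_{M\to\infty}\contentH{\alpha}{M}$ is $\contentH{\alpha}{\infty}$, not $\measH{\alpha}$. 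This is a minor slip—citing Lemma~I.3.25 directly fixes it—but it should be corrected.
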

	
\begin{proof}
First, assume~\eqref{eq:thm:embedded:2} holds.
For $\alpha>\dimH{\bs D}$, one has $\contentH{\alpha}{M}(\bs D)>0$ (Lemma~I.3.25).
Therefore, Lemma~\ref{lem:frostmanAuxiliary} implies that $\xi^{\alpha}_M(\bs D, 1)>0$.
Hence, \eqref{eq:thm:embedded:2} implies that $\contentH{\alpha}{M}(\bs D_0)>0$,
which implies that $\dimH{\bs D_0}\leq\alpha$. So it is enough to prove~\eqref{eq:thm:embedded:2}.
		
By the unimodular Frostman lemma (Theorem~\ref{thm:frostmanGeneral}), there is a bounded function
$w:\mathcal D_*\rightarrow\mathbb R^{\geq 0}$ such that $\omid{w(\bs o)} = \xi^{\alpha}_M(\bs D, 1)$,
and almost surely, $w(N_r(\bs o))\leq r^{\alpha}$ for all $r\geq M$. 
Assume $[\bs D', \bs o'; \bs m]$ is an equivariant embedding as in Definition~\ref{def:embedded}.
For $x\in \bs D'$, let $w'(x):=w'_{\bs D'}(x):=w[\bs D', x]$. 
Consider the random \rooted{} marked discrete space $[\bs S, \bs o'; w']$ obtained by restricting $w'$ to $\bs S$.
{By the definition of equivariant embeddings and by directly verifying the mass transport principle, the reader can obtain that $[\bs S, \bs o'; w']$ is unimodular.}
%Below, it will be proved to be unimodular. Assuming this, 
Since $[\bs S, \bs o']$ has the same distribution as
$[\bs D_0, \bs o_0]$, Proposition~I.B.1 gives an equivariant process
$\bs w_0$ on $\bs D_0$ such that $[\bs S ,\bs o'; w']$ has the same distribution as $[\bs D_0, \bs o_0; \bs w_0]$. 
According to the above discussion, one has 
$$\forall r\geq M: w'(N_r(\bs S, \bs o'))\leq w'(N_r(\bs D',\bs o')) \leq r^{\alpha}, \quad a.s.$$
This implies that $\bs w_0(N_r(\bs o_0))\leq r^{\alpha}$ a.s. Therefore, the mass distribution principle (Theorem~\ref{thm:mdp-simple})
implies that $\omid{\bs w_0(\bs o_0)}\leq \contentH{\alpha}{M}(\bs D_0)$. 
One the other hand, $$\omid{\bs w_0(\bs o_0)}= \omid{w'(\bs o')}=\omid{w(\bs o)} = \xi^{\alpha}_M(\bs D,1),$$ 
where the last equality is by the assumption on $w$.
This implies that $\contentH{\alpha}{M}(\bs D_0)\geq \xi^{\alpha}_M(\bs D,1)$ and the claim is proved.
\del{\mar{\ali{I propose to delete this paragraph and only say  `it can be seen that...'. Good?}}
It remains to prove that $[\bs S, \bs o'; w']$ is unimodular.
Since the mass transport principle holds on $\bs S$ (Definition~\ref{def:embedded}), one can show as
in Lemma~I.2.11 that the mass transport principle (I.2.2)
holds for functions $g(u,v):=g(\bs D', u,v; (w',\bs m))$ that are zero except when $\bs m(u)=\bs m(v)=1$. 
This implies the mass transport principle for $[\bs S, \bs o'; w']$. So $[\bs S, \bs o; w']$ is unimodular and the claim is proved.}
\end{proof}
	
It is natural to expect that an embedded space has a smaller Hausdorff measure. This is stated in the following conjecture.
	
\begin{conjecture}
\label{conj:embedding}
Under the setting of Proposition~\ref{prop:embedded}, for all $\alpha>0$, one has $\measH{\alpha}(\bs D)\geq \measH{\alpha}(\bs D_0)$.
\end{conjecture}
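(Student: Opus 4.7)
The natural strategy is to construct, from any equivariant ball-covering of $\bs D$, an equivariant ball-covering of $\bs S\cong\bs D_0$ of comparable cost, making essential use of the mass transport principle available on $\bs S$ by the definition of an equivariant embedding.

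Fix $\epsilon>0$ and $M\geq 1$, and take an equivariant $M$-covering $\bs R$ of $\bs D$ with $\omid{\bs R(\bs o)^{\alpha}}\leq\contentH{\alpha}{M}(\bs D)+\epsilon$. Since $[\bs D',\bs o']$ has the same distribution as $[\bs D,\bs o]$, this induces an equivariant $M$-covering $\bs R'$ of $\bs D'$, which in particular covers $\bs S$. To produce a covering whose balls are centered in $\bs S$, for every $v\in\bs D'$ with $\bs R'(v)>0$ and $N_{\bs R'(v)}(v)\cap\bs S\neq\emptyset$, I would pick (uniformly at random, independently over $v$) a representative $\tau(v)\in N_{\bs R'(v)}(v)\cap\bs S$ and place a ball of radius $2\bs R'(v)$ centered at $\tau(v)$ inside $\bs S$. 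The triangle inequality ensures that the resulting family is an equivariant $2M$-covering $\bar{\bs R}$ of $\bs S\cong\bs D_0$.

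The next step is to estimate the cost, which after integrating out the uniform choice of $\tau$ satisfies
\[
\omid{\bar{\bs R}(\bs o')^{\alpha}}\leq 2^{\alpha}\,\omid{\sum_{v\in\bs D'}\bs R'(v)^{\alpha}\,\frac{\identity{\bs o'\in N_{\bs R'(v)}(v)}}{\left|N_{\bs R'(v)}(v)\cap\bs S\right|}}.
\]
The goal is to bound the right-hand side by a constant times $\omid{\bs R(\bs o)^{\alpha}}$ using mass transport; because the sum runs over $v\in\bs D'$ rather than $v\in\bs S$, the natural route is to equivariantly partition $\bs D'$ into Voronoi-type cells around the points of $\bs S$ and then apply the mass transport principle on $\bs S$ cell by cell. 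Letting $\epsilon\to 0$ and then $M\to\infty$ would then yield an inequality of the form $\measH{\alpha}(\bs D_0)\leq C_{\alpha}\measH{\alpha}(\bs D)$ for a constant $C_{\alpha}$ depending only on $\alpha$.

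The main obstacle is achieving the sharp constant $C_{\alpha}=1$ claimed by the conjecture. The recentering step, which replaces a ball of radius $r$ at some $v\notin\bs S$ by a ball of radius $2r$ at a nearby point of $\bs S$, inevitably introduces the factor $2^{\alpha}$, and removing it seems to require a significantly more delicate argument, perhaps one that attributes each ball of $\bs R'$ to essentially one point of $\bs S$ in an equivariant way. It is also worth noting that combining Proposition~\ref{prop:embedded} with Conjecture~\ref{conj:frostman} applied to $\bs D$ would give $\measH{\alpha}(\bs D)=\xi^{\alpha}_{\infty}(\bs D,1)\leq\measH{\alpha}(\bs D_0)$, i.e.\ the \emph{reverse} inequality; this suggests that if both conjectures hold then the inequality in Conjecture~\ref{conj:embedding} is in fact an equality, so the two conjectures should plausibly be attacked together.
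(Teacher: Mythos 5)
This statement is stated as a \emph{Conjecture} in the paper and is not proved there; the paper only observes that it would follow from the (proved) inequality~\eqref{eq:thm:embedded:2} together with the (open) Conjecture~\ref{conj:frostman}, and notes that the $\alpha=0$ case is covered by Proposition~I.3.28. So your task was to propose a genuinely new argument, and the one you sketch has a fatal directional error. In the unimodular framework the Hausdorff measure $\measH{\alpha}(\bs D)$ varies \emph{inversely} with the Hausdorff content $\contentH{\alpha}{\infty}(\bs D)$ (compare, e.g., the deduction $\contentH{1}{\infty}(\Phi)=\infty\Rightarrow\measH{1}(\Phi)=0$ in Proposition~\ref{prop:conj:point-stationary}, and $\contentH{\alpha}{M}(H)\geq 1/C\Rightarrow\measH{\alpha}(H)\leq C$ in the proof of Theorem~\ref{thm:Cayley}); thus $\measH{\alpha}(\bs D)\geq\measH{\alpha}(\bs D_0)$ is equivalent to $\contentH{\alpha}{\infty}(\bs D)\leq\contentH{\alpha}{\infty}(\bs D_0)$, i.e.\ the \emph{ambient} space must be at least as easy to cover as the embedded one. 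Your construction starts from a near-optimal covering of $\bs D$ and produces a covering of $\bs D_0$, which (at best, with $C_{\alpha}=1$) would give $\contentH{\alpha}{\infty}(\bs D_0)\leq\contentH{\alpha}{\infty}(\bs D)$, hence $\measH{\alpha}(\bs D)\leq\measH{\alpha}(\bs D_0)$ — the \emph{opposite} of what is conjectured. The same inversion appears in your closing remark: Conjecture~\ref{conj:frostman} identifies $\xi^{\alpha}_{M}(\bs D,1)$ with $\contentH{\alpha}{M}(\bs D)$, not with $\measH{\alpha}(\bs D)$, so combining it with~\eqref{eq:thm:embedded:2} gives $\contentH{\alpha}{\infty}(\bs D)\leq\contentH{\alpha}{\infty}(\bs D_0)$, which is precisely Conjecture~\ref{conj:embedding} — not its reverse, as the paper itself states immediately after the conjecture.

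Beyond the direction issue, the recentering step also needs more care than the sketch suggests: the mass transport principle is available only on $\bs S$, not on all of $\bs D'$, so transporting $\bs R'(v)^{\alpha}$ from $v\notin\bs S$ to the root requires first aggregating the cost over Voronoi-type cells, and the resulting bound involves $\omid{\sum_{v\in\mathrm{cell}(\bs o')}\bs R'(v)^{\alpha}}$ rather than $\omid{\bs R'(\bs o')^{\alpha}}$, which is not controlled by $\contentH{\alpha}{M}(\bs D)$ without further assumptions. The correct direction of attack — extend an arbitrary equivariant covering of $\bs D_0$ to an equivariant covering of $\bs D$ without changing what happens at points of $\bs S$ — is discussed in Remark~\ref{rem:embedding-counter}, where the paper gives a small counterexample showing that equivariant processes on $\bs D_0$ cannot always be extended to $\bs D$ while fixing the mark at the root, which is exactly why this remains a conjecture.
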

	
Note that in the case $\alpha=0$, the conjecture is implied by Proposition~I.3.28.
Also, in the general case, the conjecture is implied by~\eqref{eq:thm:embedded:2} and Conjecture~\ref{conj:frostman}. 
	
%\begin{problem}
%Does the claim of Proposition~\ref{prop:embedded} hold if $[\bs D_0, \bs o_0]$ is non-equivariantly embeddable in $[\bs D, \bs o]$?
%\end{problem}

{Another problem is the validity of Proposition~\ref{prop:embedded} under the weaker assumption of being non-equivariantly embeddable.}
As a partial answer, if $\growth{\card{N_r(\bs o)}}$ exists, then~\eqref{eq:thm:embedded:1} holds. This is proved as follows: 
\begin{eqnarray*}
\dimH{\bs D_0} \leq  \essinf \growthu{\card{N_r(\bs o_0)}}
\leq  \essinf \growthu{\card{N_r(\bs o)}}\:&&\\
= \essinf \growth{\card{N_r(\bs o)}}
= \dimH{\bs D},&&
\end{eqnarray*}
where the first inequality and the last equality are implied by the unimodular Billingsley lemma (Theorem~\ref{thm:billingsley}).
	
\begin{remark}
\label{rem:embedding-counter}
Another possible way to prove Proposition~\ref{prop:embedded} and Conjecture~\ref{conj:embedding}
is to consider an arbitrary equivariant covering of $\bs D_0$ and try to extend it to an equivariant 
covering of $\bs D$ by adding some balls (without adding a ball centered at the root).
More generally, given an equivariant processes $\bs Z_0$ on $\bs D_0$, one might try to extend
it to an equivariant process on $\bs D$ without changing the mark of the root.
But {at least} the latter is not always possible. A counter example is when $[\bs D_0,\bs o_0]$ is $K_2$
(the complete graph with two vertices),  $[\bs D,\bs o]$ is $K_3$, $\bs Z_0(\bs o_0)=\pm 1$
chosen uniformly at random, and the mark of the other vertex of $\bs D_0$ is $-\bs Z_0(\bs o_0)$.
\end{remark}

\subsection{Notes and Bibliographical Comments}

The unimodular Frostman lemma (Theorem~\ref{thm:frostmanGeneral}) is analogous to  Frostman's lemma in the continuum setting (see e.g., Thm~8.17 of~\cite{bookMa95}). The proof of Theorem~\ref{thm:frostmanGeneral} is also inspired by that of~\cite{bookMa95}, but there are substantial differences. For instance, the proof of Lemma~\ref{lem:frostmanAuxiliary} and also the use of the duality of $L_1$ and $L_{\infty}$ in the proof of Theorem~\ref{thm:frostmanGeneral} are new.
%. The proof of Lemma~\ref{lem:frostmanAuxiliary} is significantly easier than that of the continuum version (Lemma 8.16 in \cite{bookMa95}).
The Euclidean version of the unimodular Frostman's lemma (Theorem~\ref{thm:frostman-euclidean}) and its proof are inspired by the continuum analogue (see e.g.,~\cite{bookBiPe17}).

As already explained, the unimodular max-flow min-cut theorem (Theorem~\ref{thm:maxFlowMinCut}) is inspired by the max-flow min-cut theorem for finite trees. 
Also, the results {and examples} of Subsection \ref{subsec:product} on product spaces {are inspired by analogous in the continuum setting; e.g.,} %are analogous to those of
Theorem~3.2.1 of~\cite{bookBiPe17}.

%{\color{blue}{
%The proof of Frostman's lemma uses the ideas of Thm~8.17 of~\cite{bookMa95}.
%The proof of Lemma~\ref{lem:frostmanAuxiliary} is significantly easier than that of Lemma 8.16 in \cite{bookMa95}
%
%The results of Subsection \ref{subsec:product} on product spaces are unimodular versions to those of
%Theorem 2.1 of Chapter 3 in \cite{bookBiPe17}.}}

\appendix
\section{Appendix}

\begin{lemma}
	\label{lem:logbound}
	Let ${(X_n)_{n=1}^{\infty}}\geq 0$ be a monotone sequence of random variables. Then almost surely, $\growthu{X_n} \leq  \growthu{\omid{X_n}}$.
	\del{\begin{eqnarray}
			\label{eq:lem:logbound:1}
			\growthu{X_n} &\leq & \growthu{\omid{X_n}}.
			\label{eq:lem:logbound:2}
			\growthl{X_n} &\leq & \growthl{\omid{X_n}}.
	\end{eqnarray}
	}
	\del{Moreover,
	if $\sum_n var(X_n)/\omid{X_n}^2 < \infty$, then 
	\[
	\growthl{X_n} = \growthl{\omid{X_n}}.
	\]}
\end{lemma}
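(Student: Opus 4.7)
The plan is a standard Markov-plus-Borel-Cantelli argument along a geometric subsequence, combined with monotonicity to fill in the gaps. First, if $\growthu{\omid{X_n}}=\infty$, the inequality is trivial, so assume $\beta:=\growthu{\omid{X_n}}<\infty$. By symmetry between the two types of monotonicity, only the nondecreasing case is interesting: if $X_n$ is nonincreasing, then $X_n\leq X_1$ a.s., so $\growthu{X_n}\leq 0 \leq \beta$.

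Assume therefore that $X_n$ is nondecreasing. Fix $\alpha>\beta$. By definition of $\growthu{\omid{X_n}}$, there exists $C<\infty$ such that $\omid{X_n}\leq C n^{\alpha}$ for all $n\geq 1$. Let $n_k:=2^k$ and let $\epsilon>0$ be arbitrary. Markov's inequality gives
\[
\myprob{X_{n_k}>n_k^{\alpha+\epsilon}}\leq \frac{\omid{X_{n_k}}}{n_k^{\alpha+\epsilon}}\leq C\,2^{-k\epsilon}.
\]
The right hand side is summable in $k$, so by the Borel-Cantelli lemma, almost surely $X_{n_k}\leq n_k^{\alpha+\epsilon}$ for all sufficiently large $k$. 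For an arbitrary $n$, pick $k$ such that $n_k\leq n<n_{k+1}$. By monotonicity of $(X_n)$,
\[
X_n\leq X_{n_{k+1}}\leq n_{k+1}^{\alpha+\epsilon}=(2n_k)^{\alpha+\epsilon}\leq 2^{\alpha+\epsilon}n^{\alpha+\epsilon},
\]
and hence $\log X_n/\log n\leq (\alpha+\epsilon)+\log(2^{\alpha+\epsilon})/\log n$. Letting $n\to\infty$, this shows $\growthu{X_n}\leq \alpha+\epsilon$ almost surely.

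Finally, specializing $\alpha=\beta+1/(2m)$ and $\epsilon=1/(2m)$ for each integer $m\geq 1$ (a countable family) and taking the intersection of the corresponding almost sure events, we conclude $\growthu{X_n}\leq \beta+1/m$ for every $m$ on a set of full measure, whence $\growthu{X_n}\leq \beta=\growthu{\omid{X_n}}$ almost surely. There is no real obstacle here; the only subtlety is to avoid an uncountable intersection of null sets, which the countable choice of $(\alpha,\epsilon)$ pairs handles.
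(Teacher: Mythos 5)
Your argument for the nondecreasing case is correct and is essentially the same as the paper's: both pass to dyadic blocks, apply Markov's inequality at the dyadic scale, and use monotonicity to control the intermediate indices. The paper packages this as a tail bound on $M:=\max\{n: X_n>n^\alpha\}$ and shows $\myprob{M\geq n}\to 0$ by the same union bound over dyadic shells; your version phrases the same estimate as Borel--Cantelli along $n_k=2^k$. These are equivalent, and the countable-intersection cleanup at the end is fine.

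The one genuine gap is your dismissal of the nonincreasing case. You assert that $X_n\leq X_1$ forces $\growthu{X_n}\leq 0\leq\beta$, but the second inequality can fail: $\beta=\growthu{\omid{X_n}}$ can be strictly negative when $X_n$ is nonincreasing (take $X_n=\identity{\{U\leq 1/n\}}$ with $U$ uniform on $[0,1]$, so $\omid{X_n}=1/n$ and $\beta=-1$). So "$0\leq\beta$" is simply false in general, and the shortcut does not establish the claim. The fix is immediate and uses the same machinery you already set up: with $n_k=2^k$, Markov and Borel--Cantelli give $X_{n_k}\leq n_k^{\alpha+\epsilon}$ eventually, and for $n_k\leq n<n_{k+1}$ the nonincreasing property gives $X_n\leq X_{n_k}\leq n_k^{\alpha+\epsilon}\leq 2^{|\alpha+\epsilon|}n^{\alpha+\epsilon}$ (using $n/2<n_k\leq n$, which handles either sign of $\alpha+\epsilon$). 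So the nonincreasing case follows by the same argument, not by the claimed reduction; you should replace the one-line dismissal accordingly.
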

{One can also deduce that $\growthl{X_n} \leq \growthl{\omid{X_n}}$, but this is skipped since it is not needed here.}
\begin{proof}
	The claim will be proved assuming $0\leq X_1\leq X_2\leq\cdots$.
	The non-increasing case can be proved with minor changes.
	Let $\alpha$ and $\beta$ be arbitrary such that
        $\growthu{\omid{X_n}} <\beta<\alpha$.
	So there is a constant $c$ such that  $\omid{X_n}\leq cn^{\beta}$ for all $n\geq 1$.
	Let $M:=\max\{n:X_n>n^{\alpha}\}$, with the convention $\max\emptyset:=0$.
	Below, it will be shown that $M<\infty$ a.s. Assuming this, it follows
	that $\growthu{X_n}\leq \alpha$ a.s.
	By considering this for all $\alpha$ and $\beta$, the claim is implied.
	
	Now, it is proved that $M<\infty$ a.s. With an abuse of notation, the constant $c$ below
        is updated in each step without changing the symbol.
	\begin{eqnarray*}
		\myprob{M\geq n}& =& \myprob{\exists {k\geq n}: X_k> k^{\alpha}}
		\leq  \sum_{j=0}^{\infty} \myprob{\exists k:  {n2^j\leq k\leq n2^{j+1}}, X_k> k^{\alpha}}\\	
		&\leq & \sum_{j=0}^{\infty} \myprob{X_{n2^{j+1}}> (n2^j)^{\alpha}}	
		\leq  \sum_{j=0}^{\infty} \frac{\omid{X_{n2^{j+1}}}}{(n2^j)^{\alpha}} 
		\leq  \sum_{j=0}^{\infty} \frac{c(n2^{j+1})^{\beta}}{(n2^j)^{\alpha}}\\
		&\leq & \sum_{j=0}^{\infty} c(n2^j)^{{\beta}-\alpha}
		\leq  cn^{{\beta}-\alpha}.
	\end{eqnarray*}
	The RHS is arbitrarily small for large $n$. This implies that $M<\infty$ a.s. and the claim is proved.
	\del{To prove~\eqref{eq:lem:logbound:2}, \mar{\ali{Later: double check this.}} assume $\growthl{\omid{X_n}} <\beta$. So
	there is a constant $c$ and a sequence $n_1<n_2<\cdots$ such that  $\omid{X_{n_i}}\leq cn_i^{\beta}$
	for all $i$. Define a sequence $Y_1,Y_2,\ldots$ by $Y_j=X_{n_i}$, where $i=i(j)$ is such that
	$n_i\leq j < n_{i+1}$. % (in the non-increasing case, one should define $Y_j=X_{n_{i+1}}$ for $n_i<j\leq n_{i+1}$). 
	Now, \eqref{eq:lem:logbound:1} gives 
	$
	\limsup_j {\log Y_j}/{\log j}\leq  \limsup_j {\log \omid{Y_j}}/{\log j},  
	$
	a.s.
	Note that for $n_i\leq j<n_{i+1}$, one has $\log \omid{Y_j}/ \log j \leq \log \omid{X_{n_i}}/\log n_i$.
	So the above inequality implies 
	$
	\limsup_j {\log Y_j}/{\log j}\leq  \limsup_i {\log \omid{X_{n_i}}}/{\log n_i} \leq \beta,
	$ a.s.
	where the last inequality holds by the choice of the subsequence $(n_i)_i$.
	On the other hand, since $Y_{n_i}=X_{n_i}$ for all $i$ and $Y_j$ is constant on $j\in [n_i,n_{i+1})$, one has
	\[
	\limsup_j \frac{\log Y_j}{\log j} {=} \limsup_i \frac{\log X_{n_i}}{\log n_i} \geq \liminf_n \frac{\log X_n}{\log n}.
	\]
	The above two inequalities show that $\liminf_n \log X_n / \log n \leq \beta$ a.s., which implies the claim.
	\del{
	For the third claim, assume similarly that $\omid{X_n}\geq c'n^{\beta'}$.
	Similar to above, it is enough to show that $M'<\infty$ a.s.,
	where $M':=\max\{n:X_n<n^{\alpha'} \}$ and $\alpha'<\beta'$ is arbitrary.
	% Let $e_n:=var(X_n)/\omid{X_n}^2$. Given any $c'>0$, for large $n$, one has
	\begin{eqnarray*}
		\myprob{M'\geq n}& =& \myprob{\exists {k\geq n}: X_k< k^{\alpha'}}\\
		&\leq & \sum_{j=0}^{\infty} \myprob{\exists k:  {n2^j\leq k\leq n2^{j+1}}, X_k< k^{\alpha'}}\\	
		&\leq & \sum_{j=0}^{\infty} \myprob{X_{n2^{j}}< (n2^{j+1})^{\alpha'}}\\	
		&\leq & \sum_{j=0}^{\infty} \myprob{\norm{X_{n2^j}-\omid{X_{n2^j}}}> \frac 1 2 \omid{X_{n2^j}}}\\	
		&\leq & \sum_{j=0}^{\infty} \frac {4\ \mathrm{var}(X_{n2^j})}{\omid{X_{n2^j}}^2},
	\end{eqnarray*}
	where the third inequality holds for large $n$ and fixed $\alpha$ and the last inequality
	is by Chebyshev's inequality. The assumptions imply that the last term tends to zero as $n$ tends to infinity.
        So the claim is proved.}
    }
\end{proof}

\begin{lemma}
\label{lem:BaumKatz1}
\invisible{{(Later: This lemma is implied by the LIL in Theorem~4 of [Lower functions for increasing random walks and subordinators] which is stronger (Update: It needs to assume that the tail of $S$ does not have a very slow decay. But I think that this condition is only needed for $c<\infty$ in the theorem). We have to cite. Also, I suggest to delete this lemma since we have to use the stronger result for dimension function of the image of SRW)}}
Let $X,X_1,X_2,\ldots$ be a non-negative i.i.d. sequence and $t>0$ be such that
$\myprob{X>r}\geq cr^{-t}$ for large enough $r$. Let $S_n:=X_1+\cdots+X_n$. 
Then there exists $C<\infty$ such that almost surely,
		\[
		\exists n: \forall k\geq n: S^{-1}(k) \leq Ck^{t} \log \log k. 
		\]
\iffalse		
Let $X,X_1,X_2,\ldots$ be a non-negative i.i.d. sequence and $0<t\leq 1$ be such that
$c_1r^{-t}\leq \myprob{X>r}\geq c_2r^{-t}$ for large enough $r$ (or more generally, $X$ is in the domain of attraction of a stable distribution). Let $S_n:=X_1+\cdots+X_n$. 
Then there exists $0<C<\infty$ such that 
\[
\limsup_{k\to\infty} \frac{S^{-1}(k)}{k^t(\log\log k)^{1-t}} = C, \quad a.s.,
\]
where $S^{-1}(k):=\max\{n:S_n\leq k \}$.
\fi
\end{lemma}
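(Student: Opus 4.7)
The strategy is to recast the statement as a lower bound on the growth of $S_n$ and then apply a Borel--Cantelli argument along a geometric subsequence, exploiting only the cheap bound $S_n \ge \max_{i\le n} X_i$ (which is almost as good as studying $S_n$ itself when $t<1$ and the tail is heavy, and is harmless when $t\ge 1$ since the conclusion becomes trivially slack).

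First I would reformulate. The condition $S^{-1}(k)\le Ck^{t}\log\log k$ for all large $k$ is equivalent to
\eqn{\label{eq:plan-reform}
S_{n}\ \geq\ \bigl(n/(C'\log\log n)\bigr)^{1/t} \qquad \text{for all large }n,}
for a suitable related constant $C'$. Indeed, if~\eqref{eq:plan-reform} holds, then taking $n=\lceil Ck^{t}\log\log k\rceil$ gives $S_n>k$ once $C$ is chosen large enough in terms of $C'$, and conversely. So it suffices to prove~\eqref{eq:plan-reform}.

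Next I would estimate the relevant probability. Since the $X_i$ are non-negative, $S_n\ge \max_{i\le n}X_i$; combined with the hypothesis $\myprob{X>r}\geq cr^{-t}$ for $r\ge r_0$, this gives, for $x\ge r_0$,
\eqn{\label{eq:plan-tail}
\myprob{S_n<x}\ \leq\ \myprob{\max_{i\leq n}X_i < x}\ \leq\ (1-cx^{-t})^{n}\ \leq\ \exp(-cnx^{-t}).}
I would apply this along the subsequence $n_j:=2^{j}$ with the threshold $x_j:=\bigl(n_j/(A\log\log n_j)\bigr)^{1/t}$, so that $cn_j x_j^{-t}=cA\log\log n_j$. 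Then $\myprob{S_{n_j}<x_j}\leq(\log n_j)^{-cA}=(j\log 2)^{-cA}$, which is summable once $A$ is chosen with $cA>1$. By Borel--Cantelli, almost surely $S_{n_j}\geq x_j$ for all sufficiently large $j$.

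Finally I would interpolate between the $n_j$'s using monotonicity of $S_n$. For $n_j\le n<n_{j+1}=2n_j$, the fact that $S_n\ge S_{n_j}$ and that $\log\log$ is non-decreasing gives
\[
S_n\ \geq\ S_{n_j}\ \geq\ \Bigl(\tfrac{n_j}{A\log\log n_j}\Bigr)^{1/t}\ \geq\ \Bigl(\tfrac{n/2}{A\log\log n}\Bigr)^{1/t},
\]
which is~\eqref{eq:plan-reform} with $C'=2A$. Translating back yields the required $C$ in the statement. The main (only) subtlety is that the cheap bound~\eqref{eq:plan-tail} is not directly summable over all $n$, which is why the geometric subsequence together with the monotonicity of $S_n$ is essential; this is the standard Borel--Cantelli-along-a-subsequence device, and here it is harmless because no precise iterated-logarithm constant is being sought, only the existence of some finite $C$.
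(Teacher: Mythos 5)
Your proposal is correct and follows essentially the same route as the paper: the key estimate $\myprob{S_n<x}\le\myprob{\max_{i\le n}X_i<x}\le e^{-cnx^{-t}}$ is identical, and the geometric‑subsequence‑plus‑monotonicity device (your Borel–Cantelli along $n_j=2^j$; the paper's union bound over dyadic blocks in $k$) is the same argument in slightly different dress. The only cosmetic difference is that you reformulate the claim as a lower bound on $S_n$ and interpolate in $n$, whereas the paper works directly with $S^{-1}(k)$ and interpolates in $k$; both yield the same constant‑tracking structure and the same summable tails.
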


\begin{proof}
	First, one has
	\begin{eqnarray}
		\nonumber
		\myprob{ S^{-1}(n)\geq  m} &=& \myprob{S_{m}\leq n} \leq \myprob{\forall i\leq m: X_i\leq n} = \myprob{X\leq n}^m\\
		\label{eq:S_m<n} &\leq& (1-cn^{-t})^m \leq e^{-cmn^{-t}}.
	\end{eqnarray}
	Let $C:=2^{t+1}/c$ and $\psi(x):=Cx^t \log \log x$, 
	Therefore, for large $n$, one has 
	\begin{eqnarray*}
		\myprob{\exists k\geq n: S^{-1}(k) > \psi(k)} &=& \myprob{\max_{k\geq n} \frac{S^{-1}(k)}{\psi(k)}>1}\\
		&\hspace{-9cm}\leq & \hspace{-4.5cm} \sum_{j=0}^{\infty} \myprob{\max_{n2^j\leq k < n2^{j+1}} \frac{S^{-1}(k)}{\psi(k)}>1}
		%&\leq & \sum_{j=0}^{\infty} \myprob{\max_{n2^j\leq k < n2^{j+1}} S^{-1}(k) > \psi(n2^j)}\\	
		\leq  \sum_{j=0}^{\infty} \myprob{S^{-1}(n2^{j+1}) > \psi(n2^j)}\\
		&\hspace{-9cm}\leq &\hspace{-4.5cm} \sum_{j=0}^{\infty} e^{-c\psi(n2^j)(n2^{j+1})^{-t} }
		\leq  \sum_{j=0}^{\infty} e^{-2\log \log (n2^j)}
		%&\leq & \sum_{j=0}^{\infty} \frac 1{(\log n2^j)^{2}}\\
		= \sum_{j=0}^{\infty} \frac 1{(j \log 2 + \log n)^{2}}.
	\end{eqnarray*}
	It is clear that the sum in the last term is convergent. Therefore,
        dominated convergence implies that the right hand side tends to zero as $n\rightarrow 0$. This proves the claim.
\end{proof}

\begin{lemma}
	\label{lem:regtree-weight}
	Let $\alpha<\infty$ and $(T,o)$ be a deterministic rooted tree such that
	$\mathrm{deg}(o)\geq 2$ and $\mathrm{deg}(v)\geq 3$ for all $v\neq o$.
	Let $d'$ be a metric on $T$ which is generated by \del{a function on the edges }{edge lengths} such that $d'(\cdot)\geq 1$.
	Let $w(u):=C\sum_{v \sim u} \bs d'(u,v)^{\alpha}$. Then $C=C(\alpha)$ can be chosen such that 
	%Define the weight function $w$ on $T$ and the constant $C$
%	by~\eqref{eq:thm:regtree-dimension:w} and~\eqref{eq:thm:regtree-dimension:C} respectively.
%	Then, 
	for all $r\geq 0$, one has $w(N'_r(o))\geq r^{\alpha}$, {where $N'_r$ denotes the ball of radius $r$ under the metric $\bs d'$.}
	
\end{lemma}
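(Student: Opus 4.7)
The plan is to fix a constant $C = C(\alpha) \geq 1/2$ such that
\begin{equation*}
Cx^\alpha + (1-x)^\alpha \;\geq\; \tfrac{1}{2} \quad \text{for all } x \in [0,1],
\end{equation*}
and then prove the assertion by induction on $\lfloor r\rfloor$, uniformly over all pairs $(T,o)$ satisfying the hypotheses (each equipped with its own ambient weight $w$). Such a $C$ exists by continuity of $(1-x)^\alpha$ at $x=0$: pick $\delta>0$ with $(1-x)^\alpha\geq 1/2$ on $[0,\delta]$, and then take $C\geq \max\{1/2,\; 1/(2\delta^\alpha)\}$, so that $Cx^\alpha \geq 1/2$ on $[\delta,1]$.

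For the base case $r\in[0,1)$, the hypothesis $d'(u,v)\geq 1$ forces $N'_r(o)=\{o\}$, hence $w(N'_r(o)) = w(o) = C\sum_{v\sim o} d'(o,v)^\alpha \geq C\deg(o)\geq 2C\geq 1\geq r^\alpha$. For the inductive step, fix $r\geq 1$, and for each neighbor $y$ of $o$ let $T_y$ be the connected component of $T\setminus\{(o,y)\}$ containing $y$. Since $\deg_T(y)\geq 3$, one has $\deg_{T_y}(y)\geq 2$, while every other vertex of $T_y$ keeps its degree $\geq 3$; thus $(T_y,y)$ satisfies the hypotheses. The induction hypothesis applies to $s = r - d'(o,y)$ because $d'(o,y)\geq 1$ gives $\lfloor s\rfloor \leq \lfloor r-1\rfloor = \lfloor r\rfloor -1$.

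Decomposing $N'_r(T,o)=\{o\}\cup \bigsqcup_{y\sim o,\,d'(o,y)\leq r} N'_{r-d'(o,y)}(T_y, y)$ and using that $w_T$ pointwise dominates $w_{T_y}$ on $T_y$ (the ambient weight sums over strictly more neighbors), the induction hypothesis yields
\begin{equation*}
w(N'_r(o)) \;\geq\; w(o) + \sum_{\substack{y\sim o\\ d'(o,y)\leq r}} (r-d'(o,y))^\alpha \;=\; \sum_{y\sim o} \bigl[ Cd'(o,y)^\alpha + (r-d'(o,y))_+^\alpha \bigr].
\end{equation*}
If $d'(o,y)\leq r$, writing $t = d'(o,y)/r \in (0,1]$, the bracket equals $r^\alpha[Ct^\alpha + (1-t)^\alpha] \geq r^\alpha/2$ by the defining property of $C$; if $d'(o,y)>r$, then $Cd'(o,y)^\alpha > Cr^\alpha \geq r^\alpha/2$. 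Summing over the $\deg(o)\geq 2$ neighbors gives $w(N'_r(o))\geq r^\alpha$, closing the induction.

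The only genuinely delicate point is the initial choice of $C$, which encodes a universal trade-off between the ``edge-weight'' contribution $Cd'(o,y)^\alpha$ and the inductive gain $(r-d'(o,y))_+^\alpha$ from the subtree; once $C$ is fixed so that $Cx^\alpha+(1-x)^\alpha\geq 1/2$ throughout $[0,1]$, the rest is bookkeeping. In particular, verifying that $(T_y,y)$ inherits the degree hypotheses and that $w_T\geq w_{T_y}$ on $T_y$ is immediate from the definitions, and the well-foundedness of the induction on $\lfloor r\rfloor$ follows from $d'\geq 1$.
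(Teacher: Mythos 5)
Your proof is correct and follows essentially the same route as the paper's: define $C$ so that $Cx^\alpha+(1-x)^\alpha\geq 1/2$ on $[0,1]$, and induct on $\lfloor r\rfloor$ by splitting the ball $N'_r(o)$ into $\{o\}$ together with balls in the subtrees $T_y$ hanging off each neighbor $y$ of $o$. You merely spell out a few details the paper leaves implicit — the explicit construction of $C$, the verification that $(T_y,y)$ inherits the degree hypotheses and that $w_T\geq w_{T_y}$ on $T_y$, and the case $d'(o,y)>r$ — but the decomposition, the induction scheme, and the key inequality are identical.
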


\begin{proof}
	{Let $C$ be a constant such that $\forall x\in [0,1]: Cx^{\alpha} + (1-x)^{\alpha}\geq \frac 12$.
	It is easy to see that such $C$ exists.}
	For $r\geq 0$, let $f(r)$ be the infimum value of $w(N'_r(o))$ for all trees
	with the stated conditions. So one should prove $f(r)\geq r^{\alpha}$.
	The claim is true for $r=0$. Also, if $0<r<1$, one has $N'_r(o)=\{o\}$ and the claim is trivial. The proof uses
	induction on $\lfloor r\rfloor$. Assume that $r\geq 1$ and for all $s<\lfloor r\rfloor$, one has $f(s)\geq s^{\alpha}$.
	For $y\sim o$, let $T_y$ be the connected component containing $y$ when the edge $(o,y)$ is removed.
	It can be seen that $[T_y,y]$ satisfies the conditions of the lemma. Therefore,  
	\begin{eqnarray*}
		w(N'_r(o)) &=&  w(o) + \sum_{y: y\sim o}  w(N'_{r- d'(o,y)}(T_y, y))
		\geq 	 w(o) + \sum_{y: y\sim o} f(r- d'(o,y))\\
		&\geq & \sum_{y: y\sim o} \left[C  d'(o,y)^{\alpha} + (r- d'(o,y))^{\alpha}\right]\\
		&\geq & \mathrm{deg}(o)\cdot \min_{0\leq x \leq r}\{Cx^{\alpha} +  (r-x)^{\alpha}\}
		\geq  \mathrm{deg}(o) r^{\alpha}/2
		\geq  r^{\alpha},
	\end{eqnarray*}
	where  the third line is by the definition of $w(o)$ and the induction hypothesis, the fifth line is due to the definition of $C$,
	and the last line is by the assumption $\mathrm{deg}(o)\geq 2$.
	Hence $f(r)\geq r^{\alpha}$, which proves the induction claim.
\end{proof}

\begin{lemma}
	\label{lem:minimalCut}
	An equivariant cut-set is equivariantly minimal if and only if it is almost surely minimal.
\end{lemma}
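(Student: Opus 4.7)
The plan is to show both implications by explicitly working with the edge set $\Pi' \subseteq \Pi$ of \emph{necessary} edges, where $e=(v,F(v))\in\Pi$ is called necessary if there is a leaf $\ell\in D(v)$ such that the path from $\ell$ to $v$ contains no edge of $\Pi$ other than $e$. Equivalently, $e\in\Pi$ is necessary iff $\Pi\setminus\{e\}$ fails to separate some leaf (namely $\ell$) from the end. Thus $\Pi$ is a.s.\ minimal iff $\Pi'=\Pi$ a.s.

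For the easy direction (a.s.\ minimal $\Rightarrow$ equivariantly minimal): suppose $\Pi''$ is an equivariant cut-set with $\Pi''\subseteq\Pi$ a.s. Almost surely $\Pi$ is minimal, so no proper subset of $\Pi$ is a cut-set; hence $\Pi''=\Pi$ a.s. So no other equivariant cut-set is contained in $\Pi$ a.s.

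For the harder direction (equivariantly minimal $\Rightarrow$ a.s.\ minimal): the key is to verify that the above $\Pi'$ is an equivariant cut-set. Equivariance is automatic because membership in $\Pi'$ depends only on the rooted marked tree $[\bs T,\cdot;\bs c,\Pi]$ through a measurable condition (existence of a witnessing leaf below $v$ whose upward path avoids $\Pi\setminus\{e\}$). To see that $\Pi'$ separates $\bs L$ from the end, fix any leaf $\ell$ and let $e_1$ be the edge of $\Pi$ closest to $\ell$ on the infinite path from $\ell$ to the end of $\bs T$ (this exists because $\Pi$ is a cut-set). Then $\ell$ itself witnesses that $e_1\in\Pi'$, since all edges below $e_1$ on $\ell$'s path lie outside $\Pi$ by choice of $e_1$. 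Hence every leaf is separated from the end by $\Pi'$, so $\Pi'$ is an equivariant cut-set contained in $\Pi$. By equivariant minimality of $\Pi$, we must have $\Pi'=\Pi$ almost surely, i.e., every edge of $\Pi$ is necessary a.s., which is exactly a.s.\ minimality.

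The only real subtlety is that the "lowest edge" $e_1$ need not provide the whole picture of redundancy of $\Pi$, so one must be careful to work with the full set $\Pi'$ of necessary edges rather than, say, retaining only lowest edges per leaf. The rest of the argument is routine once the definition of "necessary" is fixed; no mass transport principle is needed, only measurable selection of $\Pi'$ from $\Pi$.
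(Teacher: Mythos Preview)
Your two formulations of ``necessary'' are not equivalent, and the argument relies on both. Your first definition says $e=(v,F(v))\in\Pi$ is necessary if some leaf $\ell\in D(v)$ has a path to $v$ avoiding $\Pi\setminus\{e\}$; since $e$ is not on that path, this just says $e$ is the \emph{lowest} edge of $\Pi$ on $\ell$'s infinite upward ray. Your claimed equivalent, that $\Pi\setminus\{e\}$ fails to separate $\ell$ from the end, requires instead that $e$ be the \emph{only} edge of $\Pi$ on $\ell$'s ray. These differ whenever some edge of $\Pi$ lies above $e$. Your proof that $\Pi'$ is a cut-set uses the first meaning (the lowest $\Pi$-edge on $\ell$'s ray is witnessed by $\ell$), while your conclusion ``$\Pi'=\Pi$ is exactly a.s.\ minimality'' uses the second.

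A concrete configuration showing the step fails: take leaves $\ell,\ell'$ with $F(\ell)=v_1$, $F(\ell')=u$, $F(u)=v_1$, and $v_1,v_2,\ldots$ the ray to the end; set $\Pi=\{(\ell,v_1),(v_1,v_2)\}$. Both edges are necessary in your first sense ($\ell$ witnesses $(\ell,v_1)$, and $\ell'$ witnesses $(v_1,v_2)$), so $\Pi'=\Pi$; yet $\Pi$ is not minimal, since removing $(\ell,v_1)$ leaves the cut-set $\{(v_1,v_2)\}$. Conversely, with the second meaning $\Pi'$ need not be a cut-set at all (on a single ray with two stacked $\Pi$-edges, neither is necessary). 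So neither reading repairs the argument.

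The paper attacks it from the opposite direction. Call $e\in\Pi$ \emph{bad} if some edge of $\Pi$ lies above it; one checks that $\Pi$ is minimal in a given realization iff it has no bad edge (a bad edge is always removable, and a removable edge forces a bad edge somewhere). If bad edges occur with positive probability, let $\Pi''$ be the set of lowest bad edges; then $\Pi\setminus\Pi''$ is still an equivariant cut-set (for any leaf, the second $\Pi$-edge on its ray---or the unique one, if there is only one---always survives), contradicting equivariant minimality.
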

\begin{proof}
	Let $\Pi$ be an equivariant cut-set. If $\Pi$ is almost surely minimal,
	then it is also equivariantly minimal by definition. Conversely, assume $\Pi$ is equivariantly minimal but not almost surely minimal.
	Call an edge $e'$ \textit{above} an edge $e$ if $e'$ separates $e$ from the end. Call an edge $e\in \Pi$ \textit{bad} if there is
	an edge of $\Pi$ above $e$. Let $\Pi'$ be the set of bad edges 
	of $\Pi$. Let $\Pi''$ be the set of lowest edges in $\Pi'$; i.e., the edges $e\in\Pi'$
	such that there is no other edge of $\Pi'$ below $e$. It can be seen that the assumption implies that $\Pi''$
	is nonempty with positive probability. Now, it can be seen that $\Pi\setminus\Pi''$ is an equivariant cut-set, which contradicts the minimality of $\Pi$.
\end{proof}

\del{
	{Later: Convert the table to a section*?}

	\renewcommand{\arraystretch}{1.4} 
	\begin{longtable}{c l c} 
		\caption{List of definitions and symbols from Part~I.}\\
		\label{table:symbols}
		Symbol & Description & Reference\\
		\hline
		\endfirsthead
		
		\multicolumn{3}{c}{Continuation of Table \ref{table:symbols}}\\
		
		Symbol & Description & Reference\\
		\hline
		\endhead
		 
		& unimodular discrete space& I.\ref{I-def:unimodular}\\
		& equivariant process& I.2.8\\
		& equivariant $r$-covering& I.\ref{I-def:r-covering}\\
		& equivariant covering& I.3.15\\
		$\wedge$ and $\vee$ & minimum and maximum binary operators & \\
		$\card{A}$ & number of elements in set $A$ &\\
		%\item $\diam(A)$: diameter of set $A$;
		$\growthu{f}$ & $\limsup_{r\rightarrow\infty} {\log f(r)}/{\log r}$& I.\ref{I-def:growth}\\
		$\growthl{f}$ & $\liminf_{r\rightarrow\infty} {\log f(r)}/{\log r}$& I.\ref{I-def:growth}\\
		$\growth{f}$ & $\lim_{r\rightarrow\infty} {\log f(r)}/{\log r}$& I.\ref{I-def:growth}\\
		$\decayu{f}$ & $-\growthl{f}$& I.\ref{I-def:growth}\\
		$\decayl{f}$ & $-\growthu{f}$& I.\ref{I-def:growth}\\
		$\decay{f}$ & $-\growth{f}$& I.\ref{I-def:growth}\\
		$D$ & a discrete metric space, with elements $u, v,\ldots$&\\
		$N_r(D,v)$ & closed $r$-neighborhood of $v\in D$, with $N_0(v):=\emptyset$&\\
		$N_r(v)$ & $=N_r(D,v)$ for $v\in D$&\\
%		\item $\mathcal D$: set of equivalence classes of discrete spaces under isomorphism;
%		\item $\mathcal D'$: set of equivalence classes of marked discrete spaces under isomorphism;
		$\dstar$ & set of equivalence classes of \rooted{} discrete spaces&\\ %Subsection~I.\ref{I-subsec:D_*} \\
		$\mathcal D'_*$& as above for the marked case&\\ %Subsection~I.\ref{I-subsec:D_*} \\
		$[D,o]$ & the equivalence class containing $(D,o)$&\\
		$[\bs D,\bs o]$ & a random rooted discrete space & I.\ref{I-def:randomRooted}\\
		$[\bs D,\bs o; \bs m]$& a random rooted marked discrete space& I.\ref{I-def:randomSpaceMarked}\\
		$g^+(\bs o)$ & $=\sum_v g(\bs o,v),$ outgoing mass from $\bs o$&\\
		$g^-(\bs o)$ & $=\sum_v g(v,\bs o),$ incoming mass to $\bs o$&\\
		$\intensity{\bs D}{\bs S}$& $=\myprob{\bs o\in \bs S_{\bs D}},$ intensity of the equivariant subset $\bs S$& I.\ref{I-def:subset}\\
		$\dimMu{\bs D}$& upper unimodular Minkowski dimension& I.\ref{I-def:minkowski}\\
		$\dimMl{\bs D}$& lower unimodular Minkowski dimension& I.\ref{I-def:minkowski}\\
		$\dimM{\bs D}$& unimodular Minkowski dimension& I.\ref{I-def:minkowski}\\
		$\contentH{\alpha}{1}(\bs D)$& {$\alpha$-dimensional Hausdorff content} of $\bs D$& (I.3.3)\\
		$\contentH{\alpha}{M}(\bs D)$& same, for coverings with radii in $\{0\}\cup[M,\infty)$ &  (I.3.3)\\
		$\dimH{\bs D}$& unimodular Hausdorff dimension of $\bs D$& I.3.16\\
		$\measH{\alpha}(\bs D)$& $\alpha$-dimensional Hausdorff measure of $\bs D$&I.\ref{I-def:haus-meas}\\
	\end{longtable}
}

\section*{{Acknowledgements}}
Supported in part by a grant of the Simons Foundation (\#197982) to The University of Texas at Austin.

\bibliography{bib} 
\bibliographystyle{plain}

\end{document}